\numberwithin{equation}{section}
\author{Edgar Assing}
\title{On sup-norm bounds part I: ramified Maa\ss\  newforms over number fields}
\theoremstyle{plain}
\newtheorem{lemma}{Lemma}[section]
\newtheorem{prop}{Proposition}[section]
\newtheorem{theorem}{Theorem}[section]
\newtheorem{cor}{Corollary}[section]
\newtheorem{rem}{Remark}[section]
\newtheorem{conv}{Convention}
\DeclareMathOperator{\R}{\mathbb{R}}
\DeclareMathOperator{\C}{\mathbb{C}}
\DeclareMathOperator{\A}{\mathbb{A}}
\DeclareMathOperator{\Afin}{\mathbb{A}_{\text{fin}}}
\DeclareMathOperator{\Q}{\mathbb{Q}}
\DeclareMathOperator{\Z}{\mathbb{Z}}
\DeclareMathOperator{\N}{\mathbb{N}}
\DeclareMathOperator{\No}{\mathcal{N}}
\DeclareMathOperator{\n}{\mathfrak{n}}
\DeclareMathOperator{\p}{\mathfrak{p}}
\DeclareMathOperator{\Of}{\mathcal{O}_F}
\DeclareMathOperator{\op}{\mathfrak{o}}
\DeclareMathOperator{\vol}{\text{Vol}}
\DeclareMathOperator{\sgn}{\text{sgn}}
\DeclareMathOperator{\into}{\hookrightarrow}
\newcommand{\abs}[1]{\left\vert #1 \right\vert}
\newcommand{\du}[1]{\underline{\underline{#1}}}
\newcommand{\nilp}[1]{\left(\begin{matrix} 1&#1 \\ 0&1\end{matrix}\right)}
\newcommand{\cent}[1]{\left(\begin{matrix} #1&0 \\ 0&#1\end{matrix}\right)}
\newcommand{\am}[1]{\left(\begin{matrix} #1&0 \\ 0&1\end{matrix}\right)}
\newcommand{\SP}[2]{\left\langle #1, #2 \right\rangle}
\begin{document}

\maketitle

\begin{abstract}
We prove new upper bounds for the sup-norm of Hecke Maa\ss\  newforms on $GL(2)$ over a number field. Our newforms are more general than those considered in a recent paper by Blomer, Harcos, Maga, and Mili\`cevi\`c: we do not require square free level. Furthermore, we allow for non-trivial central character. Over the rationals we cover the best bounds as obtained by Saha.
\end{abstract}

\tableofcontents

\section{Introduction}

In this paper we prove bounds for the $L^{\infty}$-norm of automorphic forms on $GL_2$ which improve upon the local bounds. The problem of establishing such bounds is commonly referred to as sup-norm problem for $GL_2$. Just recently in the paper \cite{BHMM16} this problem was solved over number fields for square free level and trivial central character. Previously, in \cite{Sa15}, this problem was solved over $\Q$ with arbitrary level and central character. In the present paper we go beyond these ideas. Containing each of them as special cases, our results recover the strongest bounds from both.

The true size of the sup-norm of automorphic forms is still quite mysterious. For conjectures and results concerning the sup-norm problem we refer to \cite{BHMM16,Sa15} and the references within. Let us just say that there are many results towards improved upper bounds for the sup-norm in various settings. However, there are also some results giving lower bounds. On non-compact surfaces these lower bounds can have two sources. First they can come from the transition region of Whittaker functions. In this case the peak appears far from the so-called bulk of the manifold. Such lower bounds were considered for example in \cite{Sa15_2} and \cite{TB14}. Second, they can appear in the bulk of the manifold, which also appears in the compact setting. In this case we refer to \cite{MB16} for more details. The reason for mentioning these two sources for lower bounds is that we will encounter them in some sense in our arguments. Roughly speaking our proof will consist of two main parts. First we will obtain bounds by estimating Whittaker expansions. Second we use the amplification method to obtain good bounds in the bulk.

Now we dive straight into a more detailed set-up of the scene, so that we can give precise statements of our main theorems.

\textbf{Acknowledgements.} I would like to thank A. Saha and A. Booker helpful discussions and valuable feedback.

\subsection{Set-up and basic definitions}

Let $F$ be a number field of degree $n=r_1+2r_2$, where $r_1$ is the number of real embeddings and $2r_2$ is the number of complex embeddings. By $\Of$ we denote the ring of integers in $F$. Prime ideals in $\Of$ are typically denoted by $\p$. Each prime ideal gives rise to a non-archimedean place of $F$ which we also denote by $\p$. The corresponding (canonically normalized) valuation (used for field elements and ideals interchangeably) will be called $v_{\p}(\cdot)$ and gives rise to the absolute value $\abs{\cdot}_{\p} = q_{\p}^{-v_{\p}(\cdot)}$, where $q_{\p} = \No(\p)$. In a similar spirit we use $\nu$ for an archimedean place and at the same time for the corresponding embedding $\nu \colon F\to F_{\nu}$. We put $\abs{\cdot}_{\nu} = \abs{\cdot}^{[F_{\nu}:\R]}$. Here and in the following $\abs{\cdot}$ always denotes the standard absolute value on $F,\R \subset \C$. We equip the archimedean fields with the standard Lebesgue measure coming from $\R$ or $\C$, respectively.

If $F_{\p}$ is the local non-archimedean field associated to $\p$ then we write $\op_{\p}$ for its ring of integers and $\varpi_{\p}$ for its uniformizer. These fields are equipped with two measures. First the Haar measure $\mu_{\p}$ on $(F,+)$, which we normalize such that $\mu_{\p}(\op_{\p})=1$. Further we have the Haar measure $\mu^{\times}_{\p}$ on $(F^{\times},\cdot)$. This will be normalized to satisfy $\mu_{\p}^{\times}(\op_{\p}^{\times})=1$.

We define $F_{\infty} = \prod_{\nu} F_{\nu}$ and equip it with the modulus $\abs{\cdot}_{\infty} = \prod_{v}\abs{\cdot}_{\nu}$. Sometimes we use $\abs{\cdot}_{\R}$ (respectively $\abs{\cdot}_{\C}$) to denote the part of $\abs{\cdot}_{\infty}$ coming from the real (respectively complex) embeddings only. Let $\Afin$ denote the finite ad\'eles equipped with the absolute value $\abs{\cdot}_{\text{fin}}$ being the product of all the local absolute values. The usual ad\'ele ring is then given by $\A_F = F_{\infty}\times \Afin$ and equipped with $\abs{\cdot}_{\A}$ and $\mu$ in the usual manner. We also define the set of totally positive field elements $F^+ $ to contain all $x\in F$ such that $x_{\nu} >0$ for all real $\nu$. Furthermore put $F^0(\A_F)=\{ a\in \A_F\colon \abs{a}_{\A_F}=1 \}$ and $F_{\infty}^+= \R^+ \subset F_{\infty}$ diagonally.

Further let us choose ideal representatives $\theta_1, \dots, \theta_{h_F}\in \hat{\mathcal{O}}_{F}$, where $h_F$ denotes the class number of $F$. We write $d_F$ for the discriminant of $F$ and $\mathfrak{d}$ for the different ideal of $F$. Then by \cite[Theorem~2.9]{Ne13} we have $\No(\mathfrak{d}) = \abs{d_F}$.  For any ideal $\mathfrak{m}$ we use $[\mathfrak{m}]_{\n}= \frac{\mathfrak{m}}{(\mathfrak{m},\n^{\infty})}$ for the coprime-to-$\n$ part of $\mathfrak{m}$. 

If $\chi\colon F^{\times}\setminus \A_F^{\times} \to \C$ is a character we associated the corresponding $L$-function
\begin{equation}
	\Lambda(s,\chi) = \underbrace{\prod_{\nu} L_{\nu}(s,\chi_{\nu})}_{= \gamma_{\infty}(s,\chi)} \underbrace{\prod_{\p} L_{\p}(s,\chi_{\p}) }_{=L(s,\chi)}. \nonumber
\end{equation}
Here we use the classical (analytic) notation which separates the archimedean and non-archimedean parts. A complete description of the local factors can be found in \cite{La13}. If $\chi$ is the trivial character this leads to the Dedekind zeta function. In this case the local factors reduce to $\zeta_{\p}(s) = L_{\p}(1,s)=  (1-q_{\p}^{-s})^{-1}$ and we write $\zeta_{\n}(s) = \prod_{\p\mid\n} \zeta_{\p}(s)$. At the archimedean places we have $L_{\nu}(1,s) = \Gamma_{\R}(s) = \pi^{-\frac{s}{2}}\Gamma(\frac{s}{2})$ if $\nu$ is real and $L_{\nu}(1,s) =\Gamma_{\C}(s) = 2(2\pi)^{-s}\Gamma(s)$ otherwise.

Let $R$ be a commutative ring with 1. Typically this will be one of the objects introduced above. Then we set $G(R) = GL_2(R)$. We will also need the subgroups
\begin{eqnarray}
	Z(R) = \left\{  z(r)=\cent{r} \colon r\in R^{\times} \right\},& \quad A(R) = \left\{  a(r)=\am{r} \colon r\in R^{\times} \right\},  \nonumber\\
	N(R) = \left\{  n(x)=\nilp{x} \colon x\in R \right\}& \text{ and } \quad B(R) = Z(R)A(R)N(R). \nonumber
\end{eqnarray}
We use the following compact subgroups of $G(R)$ which depend on the underlying ring $R$. Define
\begin{eqnarray}
	G(F_{\nu}) \supset  K_{\nu} &=& \begin{cases} U_2(\C) & \text{ if $\nu$ is complex,} \\ O_2(\R) & \text{if $\nu$ is real,} \end{cases} \nonumber\\
	G(F_{\p}) \supset K_{\p} &=& GL_2(\op_{\p}), \nonumber \\
	K_{\infty} &=& \prod_{\nu} K_{\nu}. \nonumber 
\end{eqnarray}
At the non-archimedean places we additionally need the smaller groups
\begin{eqnarray}
	K_{\p}^0(n) &=& K_{\p} \cap \left[ \begin{matrix} \op_{\p}&\varpi_{\p}^n\op_{\p} \\ \op_{\p} &\op_{\p}\end{matrix} \right], K_{0,\p}(n) = K_{\p} \cap \left[ \begin{matrix} \op_{\p}&\op_{\p} \\ \varpi_{\p}^n\op_{\p} &\op_{\p}\end{matrix} \right] \text{ and }\nonumber\\
	K_{1,\p}(n) &=& K_{\p} \cap \left[ \begin{matrix} 1+\varpi_{\p}^n\op_{\p}&\op_{\p} \\ \varpi_{\p}^n\op_{\p} &\op_{\p}\end{matrix} \right], K_{2,\p}(n) = K_{\p} \cap \left[ \begin{matrix} \op_{\p}&\op_{\p} \\ \varpi_{\p}^n\op_{\p} &1+ \varpi_{\p}^n\op_{\p}\end{matrix} \right]. \nonumber
\end{eqnarray}
Globally we put
\begin{equation}
	K_1(\n) = K_{\infty} \prod_{\p} K_{1,\p}(v_{\p}(\n)) \text{ and } K=K_{\infty} \prod_{\p}K_{\p}. \nonumber
\end{equation}
Further let 
\begin{equation}
	\omega=\left(\begin{matrix} 0&1 \\ -1&0\end{matrix}\right) \nonumber
\end{equation}
be the long Weyl element.

Let us briefly describe the measures on the groups in use. Locally we will stick to the measure convention from \cite{Sa15, Sa15_2}. This means, we use the identifications $N(R) = (R,+)$, $A(R) = R^{\times}$, and $Z(R) = R^{\times}$ to transport the measures defined on the local fields to the corresponding groups. Further we take $\mu_{K_{\p}}$ to be the probability Haar measure on $K_{\p}$.
Globally we choose the product measure on $K$ and $A(\A_F)$ coming from the previously defined local measures. On the group $N(\A_F) = \A_F$ we put the measure
\begin{equation}
	\mu_{N(\A_F)} = \frac{2^{r_2}}{\sqrt{\abs{d_F}}} \prod_{\nu} \mu_{\nu} \prod_{\p} \mu_{\p}. \nonumber
\end{equation}
This corresponds to the normalization $\vol(N(F) \setminus N(\A_F)) = 1$, as can be seen from  strong approximation together with \cite[Chapter~I, Proposition~5.2]{Ne13}. Here it is important to be aware of the convention in \cite[p. 211]{Ne13} while identifying the Minkowski space with $F_{\infty}$. 
Finally we define 
\begin{equation}
	\int_{Z(\A_F)\setminus G(\A_F)} f(g) d\mu(g) = \int_K \int_{\A_F^{\times}} \int_{N(\A_F)} f(na(y) k) d\mu_{N(\A_F)}(n) \frac{d\mu^{\times}_{\A_F^{\times}}(y)}{\abs{y}} d\mu_K(k) \label{eq:integral_convention}
\end{equation}
as in \cite{GJ79}.

In this text we are interested in bounding the sup-norm of Maa\ss\  newforms over $F$ which are spherical at infinity. More precisely we will study functions 
\begin{equation}
	\phi\in L_0^2(G(F)\setminus G(\A_F),\omega) \subset L^2(G(F)\setminus G(\A_F),\omega) \nonumber
\end{equation}
which are right $K_1(\n)$-invariant, and eigenfunctions of the Casimir element $(C_{\nu})_{\nu}\in \mathcal{U}(\mathfrak{q}_{\infty})$ with eigenvalues $(\lambda_{\nu})_{\nu}$. These are automorphic forms in the sense of \cite[4.2]{BJ79}. Thus, it is standard procedure to associate an cuspidal automorphic representation\footnote{We use the definition of an automorphic representation given in \cite[4.6]{BJ79}. In particular irreducibility is included in the definition.} $\pi_\phi$ to $\phi$. As explained in \cite[4.6]{BJ79} each cuspidal automorphic representation with central character $\omega$ can be (uniquely) realized as a closed invariant subspace of $L_0^2(G(F)\setminus G(\A_F),\omega)$. In this way the problem of estimating the sup-norm of the Maa\ss\  newform $\phi$ is closely linked to the underlying cuspidal automorphic representation $\pi_{\phi}$. However, the sup-norm itself is only defined for smooth elements in $L_0^2(G(F)\setminus G(\A_F),\omega)$ and it does not make sense in different realizations of $\pi_{\phi}$. Therefore we will make the following convention.

\begin{conv}
Let $(\pi,V_{\pi})$ be a cuspidal automorphic representation with central character $\omega_{\pi}$. Then there is an intertwiner $\sigma\colon V_{\pi}\into L_0^2(G(F)\setminus G(\A_F),\omega)$. Then the \textbf{sup-norm} of a $K$-finite vector $v\in V_{\pi}$ is defined to be 
\begin{equation}
	\Vert v \Vert_{\infty} = \frac{\Vert \sigma(v)\Vert_{\infty}}{\Vert\sigma(v)\Vert_2}. \nonumber
\end{equation}
\end{conv}

Let us make some remarks concerning this convention.
\begin{itemize}
\item First note that this is indeed well defined. First we observe that by \textit{multiplicity one} for $GL_2$ the intertwiner $\sigma$ is unique up to scaling. However, the scaling does not matter since we $L^2$-normalize the image. Secondly $K$-finiteness ensures that the $L^{\infty}$-norm of $\sigma(v)$ is defined.
\item This convention may seem unnecessary at first. But it gives us the flexibility to realize $\pi$ in arbitrary models without changing the fixed cusp form whose sup-norm we want to bound.
\item The restriction to $K$-finite vectors shows that we should actually work with the $G(\A_F)$-module underlying $\pi$. 
\end{itemize}
 
Let us now describe the structure of the cuspidal automorphic representation $\pi$, keeping in mind that we are mainly interested in spherical Maa\ss\  newforms. We write $V_{\pi}$ for the representation space of $\pi$. First note that since $(\pi,V_{\pi})$ is an cuspidal automorphic representation it is in particular unitary and admissible. For convenience we assume throughout the text that the central character $\omega_{\pi}$ of $\pi$ satisfies $\omega_{\pi}\vert_{F_{\infty}^+}=1$. This can be achieved without loss of generality by twisting by an unramified character.

By the tensor product theorem \cite[Theorem~4]{Fl79} we may assume  that
\begin{equation}
	\pi = \bigotimes_{\nu} \pi_{\nu}\otimes \bigotimes_{\p} \pi_{\p}. \nonumber
\end{equation}
Where $(\pi_{\p},V_{\pi,\p})$ (respectively $(\pi_{\nu},V_{\pi,\nu})$ ) is an irreducible representation of $G(F_{\p})$ (reps $G(F_{\nu})$) with central character $\omega_{\pi,\p}$ (respectively $\omega_{\pi,\nu}$). Note that this decomposition also preserves the subspaces of $K$-finite vectors.  

Since we are only interested in automorphic forms which are spherical eigenfunctions of the Casimir operator we can restrict ourselves to a very particular situation at the archimedean places. Indeed we will always assume that $\pi_{\nu} = \chi_+\boxplus \chi_-$ with
\begin{eqnarray}
	\chi_j(y) &=&  \abs{y}_{\nu}^{ it_{\nu,j}}\sgn(y)^{m_{\nu}} \text{ if $\nu$ is real,} \nonumber \\
	\chi_j(re^{i\theta})  &=& r^{ i2t_{\nu,j}} e^{i m_{\nu} \theta} \text{ else.} \nonumber
\end{eqnarray}  
These are principal series representations and the representation space is denoted by 
\begin{equation}
	V_{\pi}=\mathcal{B}(\chi_+,\chi_-). \nonumber
\end{equation}
We define the invariants 
\begin{equation}
	t_{\nu} = (t_{\nu,1}-t_{\nu,2})/2 \text{ and } s_{\nu} = t_{\nu,1}+t_{\nu,2}.
\end{equation}
In particular we have $\omega_{\pi,\nu}\vert_{F_{\infty}^{+}}=\prod_{\nu} \abs{\cdot}_{\nu}^{s_{\nu}}$. Thus, the assumption $\omega_{\pi}\vert_{F_{\infty}^+}=1$ yields $\sum_{\nu} [F_{\nu}:\R]s_{\nu} = 0$.  Furthermore, $v\in V_{\pi,\nu}$ is an eigen vector of the Casimir operator with eigenvalue
\begin{equation}
	\lambda_{\nu} = \begin{cases} \frac{1}{4}+t_{\nu}^2 &\text{ if  $\nu$ is real,} \\ 1+4t_{\nu}^2 &\text{ else.}\end{cases} \nonumber
\end{equation}
This justifies calling $t_{\nu}$ the spectral parameter of $\pi$. For convenience we will exclude exceptional Maa\ss\  forms and will therefore assume $t_{\nu} \in \R$.

Note that a representation $\pi$ featuring these types of representations at the archimedean places are spherical. In other words, each representation $(\pi_{\nu},V_{\pi}$ contains a $K_{\nu}$-invariant vector $v_{\nu}^{\circ}$ which is unique up to scaling.
At the non-archimedean places we define $n_{\p}$ to be the log-conductor of $\pi_{\p}$. Then there exists a unique up to scaling vector $v_{\p}^{\circ}\in V_{\pi,\p}$ which is $K_{1,\p}(n_{\p})$-invariant.
Globally we define the conductor of $\pi$ to be the ideal $\n = \prod_{\p} \p^{n_{\p}}$. Thus, $V_{\pi}$ contains a unique (up to scaling) vector which is $K_1(\n)$-invariant. The vector
\begin{equation}
	v^{\circ} = \bigotimes_{\nu} v_{\nu}^{\circ}\otimes \bigotimes_{\p} v_{\p}^{\circ} \nonumber
\end{equation}
does the job and we will call it the (global) new vector. 

With this restrictions on $\pi$ in place we observe that
\begin{equation}
	 \phi_{\circ} = \frac{\sigma(v^{\circ})}{\Vert \sigma(v^{\circ})\Vert_2}
\end{equation}
is a Hecke-Maa\ss\  newform over $F$ with central character $\omega_{\pi}$ which is $K_1(\n)$-invariant and has Casimir eigenvalue $(\lambda_{\nu})_{\nu}$. Furthermore, by our convention $\Vert v^{\circ} \Vert_{\infty} = \Vert \phi_{\circ}\Vert_{\infty}$. This is exactly the setting in which we will study sup-norm problem. It is the natural generalization of classical Maa\ss\  wave forms on the upper half plane $\mathbb{H}$.

\subsection{Statement of results}

The theorems we state deal with newforms coming from cuspidal automorphic representations $(\pi,V_{\pi})$. Assume that at the infinite places $\pi$ features spherical principal series representations with spectral data $(t_{\nu})_{\nu}$. We associate the numbers $T=(T_{\nu})_{\nu}$ defined by
\begin{equation}
	T_{\nu} = \max\left(\tfrac{1}{2},\abs{t_{\nu}}\right). \nonumber
\end{equation}
To capture the arithmetic properties of $\pi$ we define the following ideals. First let $\mathfrak{m}$ be the conductor of the central character $\omega_{\pi}$. Further let $\n=\n_2\n_0^2$ be the conductor of $\pi$, where $\n_2$ denotes the square-free part of $\n$.

\begin{theorem} \label{th:main_th_1}
Let $(\pi,V_{\pi})$ be a cuspidal automorphic representation with conductor $\n$ and spectral parameter $(t_{\nu})_{\nu}$. And let $v^{\circ}$ be a new vector of $\pi$. Then
\begin{eqnarray}
	\frac{\sigma(v^{\circ})(g)}{\Vert \sigma(v^{\circ})\Vert_2} &\ll_{F,\epsilon}& (\abs{T}_{\infty} \No(\n))^{\epsilon} \bigg(\abs{T}_{\infty}^{\frac{5}{12}}\No(\n_2)^{\frac{1}{3}}\No(\n_0)^{\frac{1}{2}}\No(\mathfrak{m})^{\frac{1}{2}} \nonumber \\
	&&\qquad\qquad + \abs{T}_{\R}^{\frac{1}{4}}\abs{T}_{\C}^{\frac{1}{2}}\No(\n_2)^{\frac{1}{4}}\No(\n_0)^{\frac{1}{2}}\No(\mathfrak{m})^{\frac{1}{2}} \bigg). \nonumber
\end{eqnarray}
\end{theorem}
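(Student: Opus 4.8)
The plan is to bound $\Vert\phi_{\circ}\Vert_{\infty}$ by combining a \emph{cusp estimate}, coming from the Fourier--Whittaker expansion of $\phi_{\circ}$, with a \emph{bulk estimate}, coming from the amplification method; the final bound is the maximum of the two contributions, which accounts for the additive shape of the statement. Throughout I fix $g\in G(\A_F)$, and using \eqref{eq:integral_convention} together with the left $G(F)$-invariance of $\phi_{\circ}$ I reduce to $g=n(x)a(y)k$ with $k\in K_{\infty}$ and $y$ in a fixed Siegel domain; the size of $\abs{y}_{\infty}$ governs which of the two estimates is decisive, with the cusp estimate taking over once $\abs{y}_{\infty}$ is large.

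For the cusp estimate I would start from the expansion
\[
	\phi_{\circ}(n(x)a(y)k)\;=\;\sum_{0\neq\alpha\in F}\frac{\rho(\alpha)}{\sqrt{\abs{\alpha}_{\infty}}}\,W^{\circ}(a(\alpha y)k)\,\psi(\alpha x),
\]
where $W^{\circ}=\bigotimes_{\nu}W^{\circ}_{\nu}\otimes\bigotimes_{\p}W^{\circ}_{\p}$ is the normalized Whittaker function of the new vector and $\rho(\alpha)$ packages the Hecke eigenvalues together with the newform normalization constant (controlled by local conductor factors and a partial $L$-value at $1$). Applying Cauchy--Schwarz against the Rankin--Selberg bound $\sum_{\No(\alpha)\le X}\abs{\rho(\alpha)}^{2}\ll_{F,\epsilon}X^{1+\epsilon}$ reduces matters to two ingredients: (i) uniform bounds for the archimedean Whittaker functions $W^{\circ}_{\nu}$, exploiting their exponential decay for $\abs{\alpha y}_{\nu}\gg T_{\nu}$ and their transition-range behaviour near $\abs{\alpha y}_{\nu}\sim T_{\nu}$, which produce the exponents $\tfrac{1}{4}$ at the real places and the weaker $\tfrac{1}{2}$ at the complex places exactly as in \cite{BHMM16}; and (ii) the support and size of the local Whittaker newvectors $W^{\circ}_{\p}$ for $\p\mid\n$ together with the size of the normalization constant, which is the origin of the arithmetic factor $\No(\n_2)^{1/4}\No(\n_0)^{1/2}\No(\mathfrak{m})^{1/2}$. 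The ramified local computations here are the $F_{\p}$-analogue of Saha's analysis in \cite{Sa15}.

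For the bulk estimate I would set up amplification with a test function $f=\bigotimes_{\nu}f_{\nu}\otimes\bigotimes_{\p}f_{\p}$: at each archimedean place $f_{\nu}$ is a bump concentrated near $K_{\nu}$ at scale $T_{\nu}^{-1}$ with nonnegative spherical transform that is $\ge 1$ at $t_{\nu}$ and decays rapidly beyond $T_{\nu}$; at $\p\mid\n$, $f_{\p}$ is the normalized characteristic function of $K_{1,\p}(v_{\p}(\n))$, projecting onto the new line (contributions of representations of smaller conductor only enlarge the spectral side, which is harmless for an upper bound); and at a family of amplifier primes $\p\nmid\n$ with $\No(\p)\asymp L$, $f_{\p}$ is the standard Iwaniec--Sarnak combination of the normalized Hecke operators at $\p$ and $\p^{2}$, with coefficients read off from the Hecke eigenvalues, arranged so that $\pi(f)v^{\circ}=\Omega\,v^{\circ}$ with $\abs{\Omega}\gg_{F,\epsilon}L^{1-\epsilon}$. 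Positivity of the associated automorphic kernel then gives
\[
	\abs{\Omega}^{2}\,\abs{\phi_{\circ}(g)}^{2}\;\ll\;\sum_{\gamma\in Z(F)\backslash G(F)}\abs{f(g^{-1}\gamma g)}.
\]
The right side is estimated by grouping $\gamma$ according to the ideal generated by $\det\gamma$ (supported on amplifier primes, of norm $\ll L^{2}$) and according to whether $\gamma$ is upper triangular: the upper-triangular terms recover the diagonal main term and the unipotent contributions and are absorbed into $\abs{\Omega}^{2}$ or into the cusp estimate, while the remaining $\gamma$ are counted by confining their entries, via the Iwasawa decomposition of $g_{\infty}$, to an archimedean box of controlled volume and imposing the congruences mod $\n$ forced by the $f_{\p}$, the central character producing an additional $\No(\mathfrak{m})$-saving. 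Bounding the number of such $\gamma$ by geometry of numbers in the Minkowski embedding of $\Of$, collecting contributions, dividing by $\abs{\Omega}^{2}\asymp L^{2}$ and optimizing $L$ yields the first term $\abs{T}_{\infty}^{5/12}\No(\n_2)^{1/3}\No(\n_0)^{1/2}\No(\mathfrak{m})^{1/2}$.

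The hardest step will be the lattice-point count at the ramified places in the presence of a non-trivial central character: one must choose the $f_{\p}$ for $\p\mid\n$ so that they act essentially as the projector onto the new line while keeping the resulting congruence conditions — and in particular the $\No(\mathfrak{m})$-saving coming from the central character — genuinely effective in the count, which is Saha's key local input and has to be re-established over the fields $F_{\p}$. Feeding this into the number-field geometry-of-numbers argument is itself more delicate than over $\Q$, because the archimedean box is a product of real and complex boxes that can be thin in several directions at once, so that a sublattice, or even a single point, may dominate; such degenerate configurations have to be isolated and handled by a separate repulsion/distance argument.
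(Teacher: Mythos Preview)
Your overall two-regime strategy (Whittaker expansion high in the cusp, amplification in the bulk, split by $\abs{y}_{\infty}$) matches the paper, but the proposal has a genuine gap in the \emph{reduction step}, and this gap propagates into both regimes.

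You write that left $G(F)$-invariance lets you reduce to $g=n(x)a(y)k$ with $k\in K_{\infty}$ in a Siegel domain. For non-squarefree $\n$ this is not enough. The paper's Section~\ref{sec:reduction_step} shows, following \cite{Sa15}, that one must instead reduce to $g\in a(\theta_i)(\mathcal{J}_{\n}\times\mathcal{F}_{\n_2})\eta_{\mathfrak{L}}$, where $\mathcal{J}_{\n}\subset K_{\n}a(\varpi_{\p}^{n_{1,\p}})$ records a specific coset at each ramified place and $\eta_{\mathfrak{L}}$ is an Atkin--Lehner--type element whose action replaces $\phi_{\circ}$ by a twisted newform $\phi_{\circ}^{\mathfrak{L}}$ (Corollary~\ref{cor:reduction_to_f_dom}). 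Without pinning down the non-archimedean part to $\mathcal{J}_{\n}$ you cannot invoke Saha's local bound \cite[Proposition~2.11]{Sa15} on $\int_{\op_{\p}^{\times}}\abs{W_{\p}(a(\varpi_{\p}^k v)g_{\p})}^{2}d^{\times}v$, which is precisely what produces the $\No(\n_0)^{1/2}$ rather than a worse power of $\No(\n)$ in the Whittaker bound (see Lemma~\ref{lm:bound_for_S2} and \eqref{eq:intermediate_ram_for_later}). A plain Cauchy--Schwarz against Rankin--Selberg, as you suggest, does not see this local structure; the paper uses a H\"older split $S_1\cdot S_2(R)$ with exponents $(\tfrac14,\tfrac34)$ and then a further local decomposition of $S_2(R)$ into boxes $C^{\imath}(\underline{k},[\underline{u}])$ on which $\lambda_{\n}$ is constant.

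The same reduction is what makes the amplification go through. Your proposed $f_{\p}=\mathbbm{1}_{K_{1,\p}(n_{\p})}$ at $\p\mid\n$ would force congruences modulo $\n$ on the entries of $\gamma$ that do not reduce to the squarefree lattice-point problem of \cite{BHMM16}. What the paper does instead is first shift to $\phi'=\phi(\cdot\,h_{\n})$ and then take $f_{\p}$ to be the \emph{truncated matrix coefficient} $\Phi'_{\pi'_{\p}}$ of \cite[Section~2.4]{Sa15}; this has support in $Z(F_{\p})K_{\p}$ (or $Z(F_{\p})K_{\p}^0(1)$ when $n_{\p}$ is odd) and eigenvalue $\delta_{\pi'_{\p}}\gg q_{\p}^{-n_{1,\p}-m_{1,\p}}$ on the new line. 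After undoing the conjugation this puts $\gamma$ into exactly the sets $\Gamma(i,\alpha)$ counted in \cite{BHMM16} for the squarefree ideal $\n_2$, with the powerful part of the level entering only through the prefactor $\No(\n_1\mathfrak{m}_1)$ coming from $\prod_{\p\mid\n}\delta_{\pi'_{\p}}^{-1}$. Incidentally, the $\No(\mathfrak{m})^{1/2}$ in the theorem is a \emph{loss} arising from this $\delta_{\pi'_{\p}}$ and from the enlarged Whittaker support at places where $\omega_{\pi,\p}$ is ramified, not an additional saving in the lattice count as you describe. Finally, the paper also needs an auxiliary squarefree ideal $\mathfrak{q}$ (Lemma~\ref{lm:choice_of_q_possible}) to kill spurious unit contributions in the counting; this is absent from your sketch.
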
   

\begin{rem}
Note that if we take $F=\Q$ this theorem reduces to \cite[Theorem~3.2]{Sa15}. On the other hand if $F$ is an arbitrary number field but $\n$ is square-free then we recover \cite[Theorem~1]{BHMM16}. 
\end{rem}

This theorem fails to meet our expectations for non-totally real fields $F$. Therefore, we will prove a second theorem generalizing \cite[Theorem~2]{BHMM16}. 

\begin{theorem} \label{th:main_th_2}
Let $F$ be number field with maximal totally real subfield $F^{\R}$ such that $[F:F^{\R}] = m\geq2$. For a cuspidal automorphic representation $(\pi, \sigma)$ with conductor $\n$ and spectral parameter $(t_{\nu})_{\nu}$ we have
\begin{equation}
	\frac{\sigma(v^{\circ})(g)}{\Vert \sigma(v^{\circ})\Vert_2} \ll_{F,\epsilon} (\abs{T}_{\infty} \No(\n))^{\epsilon} \abs{T}_{\infty}^{\frac{1}{2}-\frac{1}{8m-4}}\No(\n_2)^{\frac{1}{2}-\frac{1}{8m-4}} \No(\n_0)^{\frac{1}{2}}\No(\mathfrak{m})^{\frac{1}{2}} \nonumber
\end{equation}
where $v^{\circ}$ is a new vector.
\end{theorem}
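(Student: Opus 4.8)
The overall strategy mirrors the proof of Theorem \ref{th:main_th_1} but exploits the extra structure of a non-totally-real field. Recall that the sup-norm bound is obtained by combining two kinds of estimates: a \emph{Whittaker expansion bound}, valid when the argument $g$ is pushed high into a cusp, and an \emph{amplified pre-trace bound}, valid in the bulk. For Theorem \ref{th:main_th_1} the archimedean contribution to the bulk estimate is handled place-by-place, and at a complex place one loses more than at a real place because the relevant archimedean count of lattice points in a ball scales like $\abs{t_\nu}^{2}$ rather than $\abs{t_\nu}$. The point of Theorem \ref{th:main_th_2} is that by using the \emph{geometry of numbers across all archimedean places simultaneously} — i.e. a genuinely $n$-dimensional counting argument for the diophantine conditions $\gamma \in \Gamma$ with $\gamma$ close to the relevant torus — one can do better than the diagonal-sum bound when $m \geq 2$. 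So the plan is: (i) set up the amplifier exactly as in the proof of Theorem \ref{th:main_th_1}, using Hecke operators at split primes away from $\n$; (ii) reduce the bulk estimate to counting matrices $\gamma \in \mathrm{M}_2(\Of)$ (with controlled determinant, dictated by the length-$\ell$ amplifier) lying in a small neighbourhood — of archimedean radius governed by $\abs{T}_\infty$ — of the stabilizer of the relevant point; (iii) bound this count using the product structure $F_\infty \cong F^{\R}_\infty \otimes_\R \C$ and a lattice-point argument that gains from the $m$ complex directions attached to each real place of $F^{\R}$; (iv) assemble the count, the non-archimedean (level-aspect) volume factors $\No(\n_2)^{1/2}\No(\n_0)^{1/2}\No(\mathfrak m)^{1/2}$-type inputs from the local new-vector analysis, and optimize the amplifier length $\ell$.

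In more detail: after the standard reduction one is led to estimate, for a point $g$ in the bulk of $G(F)\backslash G(\A_F)/K_1(\n)$, a sum of the form
\begin{equation}
 \Vert \sigma(v^\circ)(g)\Vert^2 \ll (\abs{T}_\infty\No(\n))^\epsilon \frac{1}{L} \sum_{\ell \asymp L} \sum_{\gamma} \abs{k_\infty(\gamma)} \cdot (\text{local mass}),\nonumber
\end{equation}
where $k_\infty$ is the archimedean point-pair invariant (spherical kernel), concentrated on a neighbourhood of the diagonal torus of size $\asymp \abs{T_\nu}^{-1}$ in each place, and $\gamma$ runs over representatives of the relevant double cosets with $\det\gamma$ essentially prescribed by $\ell$. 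The key geometric input is: the number of such $\gamma$ with $g^{-1}\gamma g$ within archimedean distance $\delta_\nu \asymp \abs{T_\nu}^{-1}$ of the torus, at each $\nu$, is
\begin{equation}
 \ll \Bigl(1 + L\,\textstyle\prod_\nu \delta_\nu^{[F_\nu:\R]}\Bigr) \cdot (\text{conjugation / lower-order terms}) + (\text{parabolic / central contributions}),\nonumber
\end{equation}
and here the product $\prod_\nu \delta_\nu^{[F_\nu:\R]}$ over the complex places is what makes the $m$-dependence appear; one separates the totally real subfield $F^{\R}$ (contributing a one-dimensional counting problem per place, as in \cite{BHMM16}) from the extra $m$ complex directions and uses that the image of $\Of$ in $\prod_{\nu\mid\infty} F_\nu$ is a full-rank lattice, so that a box of volume $V$ contains $\ll 1 + V$ points.

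The level-aspect (non-archimedean) bookkeeping is not where the new idea lies: one imports from the analysis underlying Theorem \ref{th:main_th_1} the local computations of the new vector $v_\p^\circ$ at ramified $\p$ — its support, its sup-norm on $K_\p$, and the volumes of the relevant sets — which produce exactly the factors $\No(\n_2)^{-1/2}$, $\No(\n_0)^{-1/2}$, $\No(\mathfrak m)^{-1/2}$ (equivalently, the factors with positive exponents after inverting in the final bound), together with the diagonal-versus-bulk dichotomy that yields the Whittaker-side contribution $\No(\n_2)^{1/2-1/(8m-4)}$ matched to the archimedean $\abs{T}_\infty^{1/2-1/(8m-4)}$ after the optimization in $L$. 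I expect the main obstacle to be step (iii): making the simultaneous archimedean lattice-point count uniform in $g$ (i.e. handling $g$ ranging over the bulk, where the conjugated neighbourhoods become skewed boxes whose shape depends on $g$) and correctly isolating the degenerate contributions — upper-triangular $\gamma$ and scalar $\gamma$ — which must be estimated separately and fed back into the Whittaker-expansion bound so that the two regimes glue into a single clean statement. Balancing these two contributions against each other is precisely what fixes the exponent $\tfrac12 - \tfrac1{8m-4}$, and getting the constant $8m-4$ rather than something weaker requires the sharp form of the counting lemma, uniform over the bulk.
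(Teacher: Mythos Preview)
Your overall architecture is correct and matches the paper: reduce via Corollary~\ref{cor:reduction_to_f_dom}, use the Whittaker bound (Proposition~\ref{pr:whit_exp_est_bad}) near the cusp, and in the bulk run the same amplifier as in Theorem~\ref{th:main_th_1} but feed in the sharper lattice-point count for non-totally-real $F$ (this is Proposition~\ref{prop:bulk_bound_cmpF}, which in turn quotes the ``second list'' of counting bounds from \cite{BHMM16}). The non-archimedean bookkeeping is indeed unchanged.

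There is, however, a real gap in your endgame. You assert that ``balancing these two contributions against each other is precisely what fixes the exponent $\tfrac12-\tfrac{1}{8m-4}$''. This is not what happens. The amplified bulk bound (after optimizing $L$) produces \emph{two} terms,
\[
(\No(\n_2)\No(\n_0\mathfrak{m}_1)\abs{T}_{\infty})^{\frac12+\epsilon}\Bigl(\abs{T}_{\infty}^{-\frac{1}{8m-8}}+(\abs{T}_{\infty}\No(\n_2))^{-\frac{1}{8m-4}}\Bigr),
\]
and the first term carries \emph{no} saving in $\No(\n_2)$ at all. Simply gluing this to the Whittaker bound cannot give you the stated exponent in the level aspect. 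The paper's actual argument is: in the regime where $\abs{T}_{\infty}^{-1/(8m-8)}\ge \No(\n_2)^{-1/4}$ one discards this bound entirely and uses Theorem~\ref{th:main_th_1} instead, obtaining
\[
(\No(\n_2)\No(\n_0\mathfrak{m}_1)\abs{T}_{\infty})^{\frac12+\epsilon}\Bigl(\min\bigl(\abs{T}_{\infty}^{-\frac{1}{8m-8}},\No(\n_2)^{-\frac14}\bigr)+(\abs{T}_{\infty}\No(\n_2))^{-\frac{1}{8m-4}}\Bigr),
\]
and only then does an interpolation (as in \cite{BHMM16}) collapse the minimum into a single term with the clean exponent $\tfrac{1}{8m-4}$. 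Without this appeal to Theorem~\ref{th:main_th_1} your argument would stall at a bound that is trivial in the pure level aspect.

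A secondary remark: your heuristic for where $m$ enters (``a box of volume $V$ contains $\ll 1+V$ points'') is too coarse to recover the exponents in Proposition~\ref{prop:bulk_bound_cmpF}. The actual mechanism in \cite{BHMM16} is that the trace and norm conditions on $\gamma$ force certain coordinates to lie in the subfield $F^{\R}$, and the saving comes from the resulting drop in lattice rank; the exponent $4m-2$ (hence $8m-4$ after square-rooting) arises from optimizing $L$ against $L^{2m-2}/\No(\n_2)^{1/2}$ and $L^{2m-3}$, not from a single volume count. You do not need to reprove this, but you should cite it rather than sketch an argument that would not deliver the right numerology.
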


At this point let us briefly add some speculations concerning the correct local bound. For compact locally symmetric spaces it seems quite clear what is correct local bound for the sup-norm of eigenfunctions. See for example \cite{letter}. The non compact situation seems to be much more complicated. While in a fixed compact set the same local bounds as for compact manifolds remain true, it has been shown in \cite{TB14} that globally these upper bounds are false. Thus, it still remains unclear what one should consider to be the correct local bound. We suggest that the right answer might come from the analysis of elliptic PDE's with boundary condition on manifolds with corners. This is due to the fact that one my compactify the underlying locally symmetric space (see \cite{BL05}). The resulting compact space will have a boundary. Then the fact that we are usually dealing with cusp forms suggests that one should look at solutions to elliptic operators with Dirichlet boundary conditions.

\subsection{Guide to the rest of the paper}

Let us now briefly give an overview over the rest of the paper. In Section~\ref{sec:reduction_step} we find a nice generating domain for $G(\A_F)$ which is tailor made for the transformation behaviour of $\phi_{\circ}$. 

We then move on towards the study of Whittaker functions associated to newforms. This will take up most of Section~\ref{sec:nounds_via_Whittaker} and culminate in the first upper bounds which are good towards the cusps.

The next step is to define an integral operator which will serve as an approximate spectral projector. This operator will then lead to what is usually called an amplified pre-trace formula. The geometric side of this pre-trace inequality can then be estimated as in \cite{BHMM16}. 

Finally, in Section~\ref{sec:endgame}, we will give complete proofs for the theorems stated above.

\section{The reduction step} \label{sec:reduction_step}

In this section we follow \cite[Section~3.2]{Sa15} to derive a generating domain for 
\begin{equation}
	Z(\A_F)G(F)\setminus G(\A_F) / K_1(\n). \nonumber
\end{equation}
We then continue from there and show that in order to solve the sup-norm problem for the automorphic forms under consideration we only have to bound our functions (and possibly their twists) on very special elements in $G(\A_F)$. The central results of this section are Corollary~\ref{cor:reduction_to_f_dom} below.

\subsection{Local preliminaries} \label{sec:lprem}

Several steps that are necessary to deal with powerful level rely on local methods. In this section we briefly recall the ingredients needed from \cite{Sa15}.

Let $\p$ be a finite place and let $(\pi_{\p}, V_{\pi,\p})$  denote an admissible, irreducible representation of $(F_{\p})$. Define $n_{\p} = a(\pi_{\p})$ to be the log-conductor of $\pi_{\p}$. Let $\omega_{\pi,\p}$ be the central character of $\pi_{\p}$ and let $m_{\p} = a(\omega_{\pi,\p})$ be its log-conductor. 

Most local computations rely on the decomposition
\begin{equation} \label{eq:decomp}
	G(F_{\p}) = \bigsqcup_{t\in \Z} \bigsqcup_{0\leq l\leq n_{\p}} \bigsqcup_{v\in \op_{\p}^{\times}/(1+\varpi_{\p}^{min(l,n_{\p}-l
)}\op_{\p}^{\times})} Z(F_{\p})N(F_{\p})\underbrace{a(\varpi_{\p}^t)\omega n(\varpi_{\p}^{-l}v)}_{=g_{t,l,v}}K_{1,\p}(n). 
\end{equation}
This is \cite[(3)]{Sa15} or originally \cite[Lemma~2.13]{Sa15_2}. This decomposition suggests to define the invariants $t_{\p}(g)$, $l_{\p}(g)$ and $n_{0,\p}(g)$ in the obvious way by writing
\begin{equation}
	g\in Z(F_{\p})N(F_{\p})g_{t(g),l(g),v}K_{1,\p}(n_{\p}) \nonumber
\end{equation}
with $v\in \op^{\times}/(1+\varpi_{\p}^{n_{0,\p}(g)}\op^{\times})$. We further define 
\begin{eqnarray}
	n_{1,\p}&=&\left\lceil \frac{n_{\p}}{2} \right\rceil, \nonumber\\ 
	n_{0,\p} &=& n_{\p}-n_{1,\p}, \nonumber\\
	n_{1,\p}(g) &=& \begin{cases} n_{0,\p} \text{ if } l(g)\leq n_{0,\p} ,\\   n_{1,\p} \text{ if } l(g)\geq n_{1,\p},\end{cases}\nonumber\\
	m_{1,\p}(g) &=& \max(0,n_{0,\p}(g)-n_{\p}+m_{\p}).\nonumber
\end{eqnarray}

Let us collect some simple results capturing the behavior of this invariants in crucial situations.

\begin{lemma} \label{lm:claim1}
Let $g\in K_{\p}a(\varpi_{\p}^{n_{1,\p}})$. If $n_{\p}$ is odd then 
\begin{equation}
	n_{1,\p}(g)=n_{0,\p} \iff g\in \omega K_{\p}^0(1)a(\varpi_{\p}^{n_{1,\p}}). \nonumber
\end{equation}
If $n_{\p}$ is even then
\begin{equation}
	n_{1,\p}(g)=n_{0,\p}. \nonumber
\end{equation}
\end{lemma}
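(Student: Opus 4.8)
The plan is to compute the invariant $l_{\p}(g)$ from \eqref{eq:decomp} explicitly for $g = k\, a(\varpi_{\p}^{n_{1,\p}})$ with $k=\left(\begin{smallmatrix} a&b\\ c&d\end{smallmatrix}\right)\in K_{\p}$, and then read off $n_{1,\p}(g)$ from the case distinction defining it. If $n_{\p}$ is even then $n_{0,\p}=n_{1,\p}=n_{\p}/2$, so whatever $l_{\p}(g)$ is, the definition gives $n_{1,\p}(g)=n_{\p}/2=n_{0,\p}$; this settles the even case (which in fact needs no hypothesis on $g$). If $n_{\p}$ is odd then $n_{0,\p}$ and $n_{1,\p}=n_{0,\p}+1$ are consecutive, so $n_{1,\p}(g)=n_{0,\p}$ is equivalent to $l_{\p}(g)\le n_{0,\p}$. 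Moreover $g\in\omega K_{\p}^0(1)a(\varpi_{\p}^{n_{1,\p}})\iff v_{\p}(d)\ge1$: indeed $\omega^{-1}k=\left(\begin{smallmatrix} -c&-d\\ a&b\end{smallmatrix}\right)$ lies in $K_{\p}^0(1)=K_{\p}\cap\left[\begin{matrix} \op_{\p}&\varpi_{\p}\op_{\p}\\ \op_{\p}&\op_{\p}\end{matrix}\right]$ precisely when $-d\in\varpi_{\p}\op_{\p}$. Thus everything reduces to showing $l_{\p}(g)\le n_{0,\p}\iff v_{\p}(d)\ge1$.

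To compute $l_{\p}(g)$ I would reduce $g$ to the normal form $g_{t,l,v}=\left(\begin{smallmatrix} 0&\varpi_{\p}^t\\ -1&-\varpi_{\p}^{-l}v\end{smallmatrix}\right)$ by left multiplication with $Z(F_{\p})N(F_{\p})$ and right multiplication with $K_{1,\p}(n_{\p})$. Because the entries of $g_{t,l,v}$ are only pinned down up to units, and conjugation by $\omega$ is \emph{not} available inside the thin group $K_{1,\p}(n_{\p})$, the cleanest invariant to track is the valuation of the lower-left entry once the matrix has been normalised so that its bottom row is a unit multiple of $(-1,\ast)$; the key structural fact used throughout is that the lower-left entry of any element of $K_{1,\p}(n_{\p})$ lies in $\varpi_{\p}^{n_{\p}}\op_{\p}$.

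Now split into two cases. If $d\in\op_{\p}^{\times}$: clear the top-right entry by passing to $n(-bd^{-1})g$, normalise the bottom-right to $1$ by right multiplication with $\mathrm{diag}(1,d^{-1})\in K_{1,\p}(n_{\p})$, and after a central twist and a further $\mathrm{diag}(1,\ast)\in K_{1,\p}(n_{\p})$ arrive at $\left(\begin{smallmatrix} 1&0\\ c'&1\end{smallmatrix}\right)a(\varpi_{\p}^{n_{1,\p}})$ with $v_{\p}(c')=v_{\p}(c)$; matching bottom rows against $g_{t,l,v}$ and invoking $\varpi_{\p}^{n_{\p}}\mid\gamma$ for the lower-left entry $\gamma$ of the $K_{1,\p}(n_{\p})$-part forces $l=\min\big(v_{\p}(c)+n_{1,\p},\,n_{\p}\big)\ge n_{1,\p}$, hence $n_{1,\p}(g)=n_{1,\p}\neq n_{0,\p}$. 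If instead $v_{\p}(d)\ge1$, then $ad-bc\in\op_{\p}^{\times}$ forces $b,c\in\op_{\p}^{\times}$; kill the top-left entry by passing to $n(-ac^{-1})g=\left(\begin{smallmatrix} 0&-(ad-bc)c^{-1}\\ c\varpi_{\p}^{n_{1,\p}}&d\end{smallmatrix}\right)$, normalise the top-right to $1$ by a $\mathrm{diag}(1,\ast)\in K_{1,\p}(n_{\p})$, and scale by $z(-c^{-1}\varpi_{\p}^{-n_{1,\p}})\in Z(F_{\p})$ to reach the shape $\left(\begin{smallmatrix} 0&\ast\\ -1&u\,\varpi_{\p}^{\,v_{\p}(d)-n_{1,\p}}\end{smallmatrix}\right)$ with $u\in\op_{\p}^{\times}$. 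Absorbing the lower-right entry into a genuine $g_{t,l,v}$ (taking $l=0$ when its valuation is $\ge0$) gives $l_{\p}(g)=\max\big(0,\,n_{1,\p}-v_{\p}(d)\big)\le n_{1,\p}-1=n_{0,\p}$, hence $n_{1,\p}(g)=n_{0,\p}$. Combining the two cases with the equivalence $v_{\p}(d)\ge1\iff g\in\omega K_{\p}^0(1)a(\varpi_{\p}^{n_{1,\p}})$ finishes the odd case.

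I expect the difficulty to be purely organisational: at each reduction step one must check that the matrix applied on the right genuinely lies in $K_{1,\p}(n_{\p})$ and not merely in $K_{\p}$, and that the residual ambiguities — the exponent $t$, the class of $v$, and the matching of the top row, which I would handle with the remaining freedom in $N(F_{\p})$ and in the diagonal torus — do not affect the value of $l$. The one genuinely structural input is the identity $n_{\p}=n_{1,\p}+n_{0,\p}$ together with $\varpi_{\p}^{n_{\p}}\mid\gamma$ for lower-left entries of $K_{1,\p}(n_{\p})$, which is exactly what turns ``$d$ a unit'' into ``$l\ge n_{1,\p}$'' and ``$v_{\p}(d)\ge1$'' into ``$l\le n_{0,\p}$''.
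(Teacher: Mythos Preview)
Your argument is correct. The even case is handled identically to the paper: since $n_{0,\p}=n_{1,\p}$ the two branches in the definition of $n_{1,\p}(g)$ coincide, so there is nothing to prove. For the odd case the paper does not give a self-contained argument at all; it simply invokes \cite[Lemma~2.2,(2)]{Sa15}. What you have written is effectively a direct proof of (the relevant special case of) that cited lemma: you compute $l_{\p}(g)$ for $g=k\,a(\varpi_{\p}^{n_{1,\p}})$ by explicit row reduction, splitting on whether the lower-right entry $d$ of $k$ is a unit, and read off $l_{\p}(g)=\min\big(v_{\p}(c)+n_{1,\p},\,n_{\p}\big)\ge n_{1,\p}$ in the first case and $l_{\p}(g)=\max\big(0,\,n_{1,\p}-v_{\p}(d)\big)\le n_{0,\p}$ in the second. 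The identification $g\in\omega K_{\p}^0(1)a(\varpi_{\p}^{n_{1,\p}})\iff v_{\p}(d)\ge 1$ is clean and correct.

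So this is not a genuinely different route; it is the natural explicit computation that sits behind the citation, carried out in full. The advantage of your version is that it is self-contained and makes transparent exactly which feature of $K_{1,\p}(n_{\p})$ is used (the congruence $\varpi_{\p}^{n_{\p}}\mid\gamma$ on the lower-left entry, together with $n_{\p}=n_{1,\p}+n_{0,\p}$); the advantage of the paper's version is brevity. Your closing remarks about the organisational care needed (checking that each right factor really lies in $K_{1,\p}(n_{\p})$, not merely $K_{\p}$) are apt: that is indeed where a careless write-up would slip, but your reductions use only matrices $\mathrm{diag}(1,\ast)$ with $\ast\in\op_{\p}^{\times}$ on the right, which are in $K_{1,\p}(n_{\p})$, so the argument goes through.
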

\begin{proof}
The first part is a consequence of \cite[Lemma~2.2,(2)]{Sa15}. The second part holds since for even $n_{\p}$ one has $n_{0,\p}=n_{1,\p}$.
\end{proof}

\begin{lemma} \label{lm:claim2}
Let $n_{\p}$ be odd. Further take $k\in K_{0,\p}(1)$ and
\begin{equation}
	\epsilon_{\p} \in \left\{ 1, \left(\begin{matrix} 0&1\\ \varpi_{\p} &0\end{matrix}\right) \right\}. \nonumber
\end{equation} 
Then
\begin{equation}
	k\epsilon_{\p} \omega a(\varpi_{\p}^{n_{1,\p}}) = \omega k' a(\varpi_{\p}^{n_{1,\p}})\epsilon_{\p}' z \nonumber
\end{equation}
for $k'\in K_{\p}^0(1)$, $z\in Z(F_{\p})$ and
\begin{equation}
	\epsilon_{\p}' = \begin{cases} 1 &\text{ if } \epsilon_{\p}=1, \\ \left(\begin{matrix} 0&1 \\ \varpi_{\p}^{n_{\p}} &0 \end{matrix}\right) &\text{ else.} \end{cases} \nonumber
\end{equation}
\end{lemma}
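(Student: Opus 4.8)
The plan is to verify the identity by a direct matrix computation, exploiting the fact that everything in sight is a $2\times 2$ matrix and that the two cases $\epsilon_{\p}=1$ and $\epsilon_{\p}=\left(\begin{matrix} 0&1\\ \varpi_{\p} &0\end{matrix}\right)$ can be handled in parallel. First I would write a general element $k\in K_{0,\p}(1)$ as $k=\left(\begin{matrix} a&b\\ \varpi_{\p}c &d\end{matrix}\right)$ with $a,b,c,d\in\op_{\p}$ and $ad-\varpi_{\p}bc\in\op_{\p}^{\times}$, and similarly keep track of the shape of $K_{\p}^0(1)$, namely $\left(\begin{matrix} \op_{\p}&\varpi_{\p}\op_{\p}\\ \op_{\p} &\op_{\p}\end{matrix}\right)\cap K_{\p}$. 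The goal is then to solve $k\epsilon_{\p}\omega a(\varpi_{\p}^{n_{1,\p}}) = \omega k' a(\varpi_{\p}^{n_{1,\p}})\epsilon_{\p}' z$ for $k'$ and $z$, i.e. to show that $\omega^{-1}k\epsilon_{\p}\omega \cdot a(\varpi_{\p}^{n_{1,\p}})$ equals $k' a(\varpi_{\p}^{n_{1,\p}})\epsilon_{\p}' z$, and to check that the resulting $k'$ indeed lies in $K_{\p}^0(1)$.

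The key computational facts I would use are: conjugation by $\omega$ swaps the two diagonal entries and negates the off-diagonal ones, so $\omega^{-1}\left(\begin{matrix} a&b\\ \varpi_{\p}c &d\end{matrix}\right)\omega=\left(\begin{matrix} d&-\varpi_{\p}c\\ -b &a\end{matrix}\right)$, which already has the right ``$K_{\p}^0(1)$-shape'' up to the lower-left entry being a unit times $\varpi_{\p}$ when $c$ is; and the elementary relations $a(\varpi_{\p}^{n_{1,\p}})^{-1}\left(\begin{matrix} x&y\\ z &w\end{matrix}\right)a(\varpi_{\p}^{n_{1,\p}}) = \left(\begin{matrix} x&\varpi_{\p}^{n_{1,\p}}y\\ \varpi_{\p}^{-n_{1,\p}}z &w\end{matrix}\right)$, together with $\epsilon_{\p}a(\varpi_{\p}^{n_{1,\p}}) = a(\varpi_{\p}^{-n_{1,\p}})\left(\begin{matrix} 0&1\\ \varpi_{\p} &0\end{matrix}\right)\cdots$ bookkeeping that turns $\left(\begin{matrix} 0&1\\ \varpi_{\p}&0\end{matrix}\right)$ into $\left(\begin{matrix} 0&1\\ \varpi_{\p}^{n_{\p}}&0\end{matrix}\right)$ after the powers of $\varpi_{\p}$ from $a(\varpi_{\p}^{n_{1,\p}})$ on both sides are absorbed (here one uses $n_{\p}=n_{0,\p}+n_{1,\p}$ and, for odd $n_{\p}$, $n_{1,\p}-n_{0,\p}=1$, so that $2n_{1,\p}-1 = n_{\p}$, which is exactly where the exponent $n_{\p}$ in $\epsilon_{\p}'$ comes from). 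For the case $\epsilon_{\p}=1$ the claimed identity reduces to showing $\omega^{-1}k\omega\in K_{\p}^0(1)$ up to a central factor, which is immediate from the conjugation formula above and the description of $K_{0,\p}(1)$ and $K_{\p}^0(1)$; the central $z$ is needed only to adjust determinants, and can be read off at the end.

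The main obstacle I anticipate is the second case, $\epsilon_{\p}=\left(\begin{matrix} 0&1\\ \varpi_{\p} &0\end{matrix}\right)$: here one has to track the powers of $\varpi_{\p}$ carefully through the product $k\,\epsilon_{\p}\,\omega\,a(\varpi_{\p}^{n_{1,\p}})$, and in particular check that after moving $\omega$ to the far left and $\epsilon_{\p}'$ to the far right, the matrix one is left with in the middle genuinely lies in $K_{\p}^0(1)$ — i.e. that no negative powers of $\varpi_{\p}$ survive in the ``wrong'' entries and that the $(1,2)$-entry really does land in $\varpi_{\p}\op_{\p}$. This is where the oddness of $n_{\p}$ (hence $n_{1,\p} = n_{0,\p}+1$) is essential: it is precisely the relation $n_{1,\p}+n_{0,\p}=n_{\p}$ with $n_{1,\p}-n_{0,\p}=1$ that makes the exponents match, turning $\varpi_{\p}$ into $\varpi_{\p}^{n_{\p}}$ in $\epsilon_{\p}'$ while keeping $k'$ integral. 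I would organize this as: (1) reduce to the identity $\omega^{-1}k\epsilon_{\p}\omega = \big(k' a(\varpi_{\p}^{n_{1,\p}})\epsilon_{\p}' a(\varpi_{\p}^{n_{1,\p}})^{-1}\big)z$; (2) compute the right-hand conjugate $a(\varpi_{\p}^{n_{1,\p}})\epsilon_{\p}' a(\varpi_{\p}^{n_{1,\p}})^{-1}$ explicitly in each case; (3) solve for $k'$ and verify membership in $K_{\p}^0(1)$ entry by entry; (4) read off $z\in Z(F_{\p})$ from the determinant. Steps (1), (2), (4) are bookkeeping; step (3), in the nontrivial case, is the one requiring genuine care.
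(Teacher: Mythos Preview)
Your proposal is correct and follows essentially the same route as the paper: a direct matrix computation using that $\omega^{-1}K_{0,\p}(1)\omega = K_{\p}^0(1)$ for the first case, and the relation $n_{1,\p}-n_{0,\p}=1$ (equivalently $2n_{1,\p}-1=n_{\p}$) for odd $n_{\p}$ in the second case. The paper's write-up is slightly more streamlined than your plan in one respect: it takes $k'=\omega^{-1}k\omega$ in \emph{both} cases, so that the second case reduces to the purely diagonal/antidiagonal identity $\epsilon_{\p}\,\omega\,a(\varpi_{\p}^{n_{1,\p}}) = \omega\,a(\varpi_{\p}^{n_{1,\p}})\,\epsilon_{\p}'\,z$ with $z=z(-\varpi_{\p}^{-n_{0,\p}})$; this sidesteps your anticipated ``main obstacle'' of checking integrality of the middle matrix entry by entry, since $k'$ never interacts with $\epsilon_{\p}$ at all.
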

This is \cite[Lemma~2.3]{Sa15}.
\begin{proof}
The case $\epsilon_{\p}=1$ is very simple. One writes
\begin{equation}
	k\epsilon_{\p}\omega a(\varpi_{\p}^{n_{1,\p}})= \omega \underbrace{(\omega^{-1} k \omega)}_{=k'} a(\varpi_{\p}^{n_{1,\p}}). \nonumber
\end{equation}
It is a straight forward calculation to check $k'\in K_{\p}^0(1)$. 
In the remaining case we write
\begin{equation}
	k\epsilon_{\p}\omega a(\varpi_{\p}^{n_{1,\p}}) =  \omega \underbrace{(\omega^{-1} k \omega)}_{=k'}a(\varpi_{\p}^{n_{1,\p}}) \epsilon_{\p}'\left( \begin{matrix} -\varpi_{\p}^{n_{0,\p}} &0 \\ 0& -\varpi_{\p}^{n_{0,\p}} \end{matrix} \right). \nonumber
\end{equation}
As before we have $k'\in K_{\p}^0(1)$. To verify the equality one only needs the observation that since $n_{\p}$ is odd we have $n_{0,\p} = n_{1,\p}-1$.
\end{proof}

\subsection{The global generating domain} \label{sec:gendom}

Our goal is to recreate the argument from \cite[Section~3.2]{Sa15} coupled with the results from \cite[Section~5]{BHMM16} to deal with arbitrary number fields. As one expects this general setting brings the class group and the unit group into the picture. We start with several definitions. For any ideal $\mathfrak{L}$ in $\mathcal{O}_{F}$ we define
\begin{eqnarray}
	\eta_{\mathfrak{L}} &=& \prod_{\p\mid \mathfrak{L}}\left( \begin{matrix} 0&1\\ \varpi_{\p}^{n_{\p}} & 0 \end{matrix} \right)\prod_{\p\nmid \mathfrak{L}} 1, \nonumber \\
	h_{\mathfrak{L}} &=& \prod_{\p\mid \mathfrak{L}} a(\varpi_{\p}^{n_{1,\p}})\prod_{\p\nmid \mathfrak{L}} 1, \nonumber \\
	K_{\mathfrak{L}} &=& \prod_{\p\mid \mathfrak{L}} K_{\p} \prod_{\p\nmid \mathfrak{L}} \{ 1 \} \subset GL_2(\Afin),  \nonumber\\
	J_{\mathfrak{L}} &=& K_{\mathfrak{L}}h_{\mathfrak{L}} \text{ and } \nonumber\\
	\mathcal{J}_{\mathfrak{L}} &=& \{ g\in J_{\mathfrak{L}} \colon n_{1,\p}(g_{\p}) = n_{0,\p} \forall \p\mid \mathfrak{L}\}.\nonumber
\end{eqnarray}

Before we proceed let us make the following little observation.

\begin{lemma} \label{lm:claim3}
For $g\in J_{\mathfrak{L}}$ one has 
\begin{equation}
	g\in \mathcal{J}_{\mathfrak{L}} \iff g_{\p} \in \omega K_{\p}^0(1)a(\varpi_{\p}^{n_{1,\p}}) \text{ for all } \p\vert \mathfrak{L} \text{ with } n_{\p}  \text{ odd.} \nonumber
\end{equation}
\end{lemma}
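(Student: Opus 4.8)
The statement is local at each prime $\p\mid\mathfrak{L}$: since $J_{\mathfrak{L}}$, $\mathcal{J}_{\mathfrak{L}}$ and the condition $g_{\p}\in \omega K_{\p}^0(1)a(\varpi_{\p}^{n_{1,\p}})$ are all defined componentwise over the primes dividing $\mathfrak{L}$, it suffices to fix a single prime $\p\mid\mathfrak{L}$ and show that for $g_{\p}\in J_{\{\p\}} = K_{\p}a(\varpi_{\p}^{n_{1,\p}})$ one has $n_{1,\p}(g_{\p}) = n_{0,\p}$ if and only if either $n_{\p}$ is even, or $n_{\p}$ is odd and $g_{\p}\in \omega K_{\p}^0(1)a(\varpi_{\p}^{n_{1,\p}})$. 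But this is exactly the content of Lemma~\ref{lm:claim1}: if $n_{\p}$ is even then $n_{1,\p}(g_{\p}) = n_{0,\p}$ always holds and there is nothing to check on that component, so the biconditional on the right-hand side is vacuously satisfied for even $n_{\p}$; if $n_{\p}$ is odd then Lemma~\ref{lm:claim1} gives precisely the equivalence $n_{1,\p}(g_{\p}) = n_{0,\p} \iff g_{\p}\in \omega K_{\p}^0(1)a(\varpi_{\p}^{n_{1,\p}})$.

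Concretely I would argue as follows. Take $g\in J_{\mathfrak{L}}$, so $g = (g_{\p})_{\p\mid\mathfrak{L}}$ with each $g_{\p}\in K_{\p}a(\varpi_{\p}^{n_{1,\p}})$ (and $g_{\p} = 1$ for $\p\nmid\mathfrak{L}$). By definition $g\in\mathcal{J}_{\mathfrak{L}}$ means $n_{1,\p}(g_{\p}) = n_{0,\p}$ for every $\p\mid\mathfrak{L}$. Split the set of primes dividing $\mathfrak{L}$ into those with $n_{\p}$ even and those with $n_{\p}$ odd. For the even ones, Lemma~\ref{lm:claim1} (second part) shows $n_{1,\p}(g_{\p}) = n_{0,\p}$ automatically, so those conditions impose nothing. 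For the odd ones, Lemma~\ref{lm:claim1} (first part) shows $n_{1,\p}(g_{\p}) = n_{0,\p}$ is equivalent to $g_{\p}\in \omega K_{\p}^0(1)a(\varpi_{\p}^{n_{1,\p}})$. Conjoining over all $\p\mid\mathfrak{L}$, the condition $g\in\mathcal{J}_{\mathfrak{L}}$ is therefore equivalent to $g_{\p}\in \omega K_{\p}^0(1)a(\varpi_{\p}^{n_{1,\p}})$ for all $\p\mid\mathfrak{L}$ with $n_{\p}$ odd, which is the claim.

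There is essentially no obstacle here: this lemma is a bookkeeping consequence of Lemma~\ref{lm:claim1} together with the product structure of the objects involved. The only point requiring a sentence of care is the vacuous direction at even primes — one must make sure the right-hand condition in Lemma~\ref{lm:claim3} is genuinely "for all $\p\mid\mathfrak{L}$ with $n_{\p}$ odd", so that primes with $n_{\p}$ even are silently excluded from the constraint, matching the fact that they impose no constraint on the left-hand side either. I would state this explicitly to avoid any ambiguity, and the proof is then a two or three line invocation of Lemma~\ref{lm:claim1} applied componentwise.
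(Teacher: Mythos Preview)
Your proof is correct and matches the paper's approach exactly: the paper's own proof is the single sentence ``The proof proceeds by applying Lemma~\ref{lm:claim1} for each $\p\vert \mathfrak{L}$,'' which is precisely the componentwise application you describe. Your expanded version, splitting into even and odd $n_{\p}$ and noting the vacuous case, is a faithful (if more detailed) rendering of the same argument.
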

\begin{proof}
The proof proceeds by applying Lemma~\ref{lm:claim1} for each $\p\vert \mathfrak{L}$.
\end{proof}

\begin{cor} \label{cr:absorbtion}
For $g_{\p} \in \mathcal{J}_{\p}$ and $v\in \op_{\p}^{\times}$ we have $a(v)g \in \mathcal{J}_{\p}$.
\end{cor}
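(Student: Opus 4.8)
The plan is to reduce the statement to the equivalent characterization of $\mathcal{J}_{\p}$ provided by Lemma~\ref{lm:claim3}, and then to exploit the fact that multiplying on the left by $a(v)$ with $v\in\op_{\p}^{\times}$ does not disturb membership in the double coset $\omega K_{\p}^0(1)a(\varpi_{\p}^{n_{1,\p}})$. First I would note that for a single prime $\p\mid\mathfrak{L}$ (here $\mathfrak{L}=\p$), by Lemma~\ref{lm:claim3} the condition $g\in\mathcal{J}_{\p}$ is equivalent to: either $n_{\p}$ is even (in which case there is nothing to check), or $n_{\p}$ is odd and $g\in\omega K_{\p}^0(1)a(\varpi_{\p}^{n_{1,\p}})$. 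Since $a(v)g\in J_{\p}=K_{\p}h_{\p}$ automatically (as $a(v)\in K_{\p}$ and $K_{\p}$ is a group, so $a(v)K_{\p}h_{\p}=K_{\p}h_{\p}$), the corollary follows once we check that in the odd case $a(v)\,\omega K_{\p}^0(1)a(\varpi_{\p}^{n_{1,\p}})=\omega K_{\p}^0(1)a(\varpi_{\p}^{n_{1,\p}})$.

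The key computation is therefore to commute $a(v)$ past $\omega$. Using $\omega=\left(\begin{matrix}0&1\\-1&0\end{matrix}\right)$ one has the standard identity $a(v)\omega=\omega a(v^{-1})z(v)$ — more precisely $a(v)\omega=\omega\,\cdot\,\am{v^{-1}}^{\!\!-1}$ up to a central factor; concretely $\am{v}\omega=\omega\am{1/v}$ after absorbing $z(v)$ (one checks $\left(\begin{matrix}v&0\\0&1\end{matrix}\right)\left(\begin{matrix}0&1\\-1&0\end{matrix}\right)=\left(\begin{matrix}0&v\\-1&0\end{matrix}\right)=\left(\begin{matrix}0&1\\-1&0\end{matrix}\right)\left(\begin{matrix}1&0\\0&v\end{matrix}\right)$, and $\left(\begin{matrix}1&0\\0&v\end{matrix}\right)=z(v)a(v^{-1})$). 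Hence $a(v)\,\omega K_{\p}^0(1)a(\varpi_{\p}^{n_{1,\p}})=\omega\,z(v)a(v^{-1})K_{\p}^0(1)a(\varpi_{\p}^{n_{1,\p}})$. Now $z(v)$ is central and $z(v)\in K_{\p}$, and $a(v^{-1})$ normalizes $K_{\p}^0(1)$ because $a(v^{-1})\left(\begin{matrix}\op_{\p}&\varpi_{\p}\op_{\p}\\ \op_{\p}&\op_{\p}\end{matrix}\right)a(v^{-1})^{-1}=\left(\begin{matrix}\op_{\p}&\varpi_{\p}v^{-1}\op_{\p}\\ v\op_{\p}&\op_{\p}\end{matrix}\right)=\left(\begin{matrix}\op_{\p}&\varpi_{\p}\op_{\p}\\ \op_{\p}&\op_{\p}\end{matrix}\right)$ since $v\in\op_{\p}^{\times}$; the determinant condition is also preserved. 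Thus $z(v)a(v^{-1})K_{\p}^0(1)=K_{\p}^0(1)a(v^{-1})z(v)$ modulo adjusting, giving $\omega K_{\p}^0(1)a(v^{-1})z(v)a(\varpi_{\p}^{n_{1,\p}})$, and finally $a(v^{-1})z(v)$ commutes with the diagonal $a(\varpi_{\p}^{n_{1,\p}})$ and can be absorbed: $a(v^{-1})z(v)a(\varpi_{\p}^{n_{1,\p}})=a(\varpi_{\p}^{n_{1,\p}})a(v^{-1})z(v)$, and $a(v^{-1})z(v)=\left(\begin{matrix}1&0\\0&v\end{matrix}\right)\in K_{\p}$, which can be pushed back into $K_{\p}^0(1)$ (it is diagonal with unit entries) on the left, or simply noted to lie in the stabilizer. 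So the double coset is unchanged.

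I would organize this as: (i) invoke Lemma~\ref{lm:claim3} to restrict attention to odd $n_{\p}$; (ii) observe $a(v)g\in J_{\p}$ is immediate; (iii) carry out the matrix identity $a(v)\omega K_{\p}^0(1)a(\varpi_{\p}^{n_{1,\p}})=\omega K_{\p}^0(1)a(\varpi_{\p}^{n_{1,\p}})$ by commuting $a(v)$ through $\omega$ and using that $a(v^{-1})$, together with the relevant central and diagonal unit factors, normalize $K_{\p}^0(1)$; (iv) conclude by Lemma~\ref{lm:claim3} again. I do not expect a serious obstacle here — the only mildly delicate point is bookkeeping the central and diagonal factors so that everything really lands back inside $K_{\p}^0(1)$ rather than merely inside $K_{\p}$, which amounts to checking the $(1,2)$-entry condition $\varpi_{\p}\op_{\p}$ and the unit-determinant condition are stable under conjugation by $\mathrm{diag}(1,v)$ with $v\in\op_{\p}^{\times}$; both are transparent. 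One should also state explicitly that the argument applies at the single prime $\p$ with $\mathfrak{L}=\p$, so no product over primes is needed.
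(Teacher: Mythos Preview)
Your proposal is correct and follows essentially the same approach as the paper: both reduce to Lemma~\ref{lm:claim3}, note that $a(v)g\in J_{\p}$ is immediate, and use the identity $a(v)\omega=\omega\left(\begin{smallmatrix}1&0\\0&v\end{smallmatrix}\right)$ to conclude. The paper's version is simply terser, observing directly that $\left(\begin{smallmatrix}1&0\\0&v\end{smallmatrix}\right)\in K_{\p}^0(1)$ rather than routing through the normalizer argument you sketch.
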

\begin{proof}
Obviously $a(v)g_{\p}\in J_{\p}$. One then concludes using Lemma~\ref{lm:claim3} and the fact
\begin{equation} 
a(v) \omega = \omega \left(\begin{matrix} 1&0 \\ 0& v\end{matrix}\right). \nonumber
\end{equation}  
\end{proof}

In terms of the local invariants we can write
\begin{equation}
	\n_0 = \prod_{\p} \p^{n_{0,\p}}, \n_1 = \prod_{\p} \p^{n_{1,\p}}, \n_2= \prod_{\p} \p^{n_{1,\p}-n_{0,\p}}. \nonumber
\end{equation}
Note that $\n_2$ is square free and that we have $\n=\n_0^2\n_2$.

Now we want to use the generating domain from \cite{BHMM16} for the square free ideal $\n_2$. Recall the group 
\begin{equation}
	K^* = Z(F_{\infty}) K_{\infty} \prod_{\p \nmid \n_2} Z(F_{\p})K_{\p} \prod_{\p\mid \n_2} \langle K_{0,\p}(1), \left(\begin{matrix} 0&1\\ \varpi_{\p} & 0\end{matrix}\right)\rangle. \nonumber
\end{equation}
defined in \cite[Section~2]{BHMM16}. Let $\mathcal{F}(\n_2)$ be the generating domain for $G(F)\setminus G(\A_F) /  K^*$ defined in \cite[p. 14]{BHMM16}. An element in $\mathcal{F}(\n_2)$ is of the form
\begin{equation}
	\underbrace{\left(\begin{matrix} y&x\\0&1\end{matrix}\right)}_{\in B(F_{\infty})}\underbrace{\left( \begin{matrix}\theta_i &0 \\ 0 &1\end{matrix}\right)}_{=a(\theta_i)}, \nonumber
\end{equation}
where $\abs{y}_{\infty}$ is maximal and $\theta_i\in \hat{\mathcal{O}}^{\times}_{F}$ is an ideal representative. We will call such matrices special.
Define
\begin{equation}
	\mathcal{F}_{\n_2} = \left\{ \left( \begin{matrix} y & x\\ 0&1 \end{matrix}\right) \colon \exists i\in \{1,\cdots,h \} \text{ such that } \left( \begin{matrix} y & x\\ 0&1 \end{matrix}\right)a(\theta_i)\in \mathcal{F}(\n_2)\right\}. \nonumber
\end{equation}

We can write down a generating domain in the spirit of \cite[Proposition~3.6]{Sa15}.

\begin{prop} \label{prop:generating_domain_expil}
For $g\in G(\A_F)$ we find $\mathfrak{L}\vert \n_2$ and $1\leq i\leq h_F$ such that
\begin{equation}
	g\in Z(\A)G(F) \left( a(\theta_i) \mathcal{J}_{\n} \times \mathcal{F}_{\n_2} \right)\eta_{\mathfrak{L}}K_1(\n). \nonumber
\end{equation}
\end{prop}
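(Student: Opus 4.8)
The plan is to bootstrap from the generating domain $\mathcal{F}(\n_2)$ of \cite{BHMM16} — which already encodes the reduction at the archimedean places, the class group and the square-free part $\n_2$ of the level — and then refine the resulting $K^*$-coset into a $K_1(\n)$-coset by a purely local analysis at the primes dividing $\n$, using the decomposition \eqref{eq:decomp} together with Lemmas~\ref{lm:claim1}, \ref{lm:claim2} and~\ref{lm:claim3}.

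\emph{Reduction via the BHMM domain.} Given $g\in G(\A_F)$, the identity $G(\A_F)=G(F)\,\mathcal{F}(\n_2)\,K^*$ produces $\gamma\in G(F)$, a special matrix $s=n(x)a(y)a(\theta_i)$ and $k^*\in K^*$ with $g=\gamma\, s\, k^*$. Since $n(x)a(y)$ is archimedean and $a(\theta_i)$ is non-archimedean, the two commute, so the archimedean component of $sk^*$ is $n(x)a(y)k_\infty$ with $k_\infty\in K_\infty$; this already lies in $\mathcal{F}_{\n_2}K_\infty$, and nothing further is needed at infinity. At a finite place $\p\nmid\n$ the $\p$-component of $sk^*$ lies in $a(\theta_{i,\p})K_\p=a(\theta_{i,\p})K_{1,\p}(0)$, which is exactly the prescribed factor there (the $\mathcal{J}_\n$- and $\eta_\mathfrak{L}$-contributions being trivial off $\n$ and off $\mathfrak{L}\mid\n_2$). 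Finally, the central components appearing in $k^*$ (from the factors $Z(F_\p)$ at $\p\nmid\n_2$, and from $\epsilon_\p^2=\varpi_\p\cdot\mathrm{id}$ at $\p\mid\n_2$, with $\epsilon_\p=\left(\begin{smallmatrix}0&1\\\varpi_\p&0\end{smallmatrix}\right)$) are collected into a single element of $Z(\A_F)$. Everything is thereby reduced to a local assertion at each $\p\mid\n$.

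\emph{The local step at $\p\mid\n$.} I would show that, modulo the centre and a left translation by a global element, the $\p$-component of $sk^*$ can be brought into $a(\theta_{i,\p})\,\mathcal{J}_\p\,\eta_{\mathfrak{L},\p}\,K_{1,\p}(n_\p)$, splitting on the parity of $n_\p$ (recall $\n_2=\prod_{n_\p\text{ odd}}\p$). If $n_\p$ is even then $n_{1,\p}=n_{0,\p}$, so by Lemma~\ref{lm:claim1} $\mathcal{J}_\p$ equals all of $J_\p=K_\p a(\varpi_\p^{n_{1,\p}})$ and $\p\nmid\mathfrak{L}$; the local component is then brought into $a(\theta_{i,\p})K_\p a(\varpi_\p^{n_{1,\p}})K_{1,\p}(n_\p)$ by an elementary-divisor computation. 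If $n_\p$ is odd, the $\p$-component of $k^*$ lies in $\langle K_{0,\p}(1),\epsilon_\p\rangle$, hence is $k$ or $k\epsilon_\p$ with $k\in K_{0,\p}(1)$ up to centre; multiplying on the right by the diagonal $a(\varpi_\p^{n_{1,\p}})$ built into $J_\p$ and invoking Lemma~\ref{lm:claim2} moves the reflection to the left, yielding $\omega k'a(\varpi_\p^{n_{1,\p}})(\epsilon_\p')^{e_\p}z$ with $k'\in K_\p^0(1)$, $z$ central, $\epsilon_\p'=\left(\begin{smallmatrix}0&1\\\varpi_\p^{n_\p}&0\end{smallmatrix}\right)$ and $e_\p\in\{0,1\}$. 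By Lemma~\ref{lm:claim3} the element $\omega k'a(\varpi_\p^{n_{1,\p}})$ lies in $\mathcal{J}_\p$, while $(\epsilon_\p')^{e_\p}$ is exactly the $\p$-component of $\eta_\mathfrak{L}$ once we set $\p\mid\mathfrak{L}\iff e_\p=1$; this is how $\mathfrak{L}\mid\n_2$ gets defined. Corollary~\ref{cr:absorbtion} guarantees that the surviving diagonal unit from $a(\theta_{i,\p})$ does not affect membership in $\mathcal{J}_\p$. Reassembling the local data over $\p\mid\n$ yields the finite-place factor in $a(\theta_i)\mathcal{J}_\n$, the correction $\eta_\mathfrak{L}$, and a residual factor in $\prod_{\p\mid\n}K_{1,\p}(n_\p)$, which combined with the archimedean conclusion gives the claim.

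\emph{Main difficulty.} The hard part will not be the local manipulations with $\omega,\epsilon_\p$ and $a(\varpi_\p^{n_{1,\p}})$, but the global bookkeeping with the class group: one must choose the ideal representatives $\theta_i$ (and, if needed, absorb an auxiliary diagonal element of $G(F)$) so that at the primes dividing $\n_0$ the elementary divisors of the $K_\p$-component coming out of the BHMM reduction are compatible with the shift $a(\varpi_\p^{n_{1,\p}})$ prescribed inside $\mathcal{J}_\n$ — equivalently, so that the determinant, viewed in the idele class group, lands in the correct coset. The freedom to toggle $\mathfrak{L}$ supplies exactly the parity correction available at the primes dividing $\n_2$, and once a consistent set of representatives is fixed the remaining identities are forced, just as in the $F=\Q$ case \cite[Proposition~3.6]{Sa15}.
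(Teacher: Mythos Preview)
There is a genuine gap, and it lies exactly where you flag the ``main difficulty'' but misdiagnose its nature. You apply the BHMM decomposition directly to $g$, obtaining $g=\gamma\,s\,k^*$; then at a prime $\p\mid\n$ with $\p\nmid\n_2$ (so $n_\p$ even) the $\p$-component of $k^*$ lies in $Z(F_\p)K_\p$, and you claim an ``elementary-divisor computation'' places it in $Z(F_\p)K_\p a(\varpi_\p^{n_{1,\p}})K_{1,\p}(n_\p)$. This is false on determinant grounds: the left side has determinant valuation $\equiv 0\pmod 2$, the right side $\equiv n_{1,\p}\pmod 2$, and for e.g.\ $n_\p=2$ one has $n_{1,\p}=1$. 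No global diagonal in $G(F)$ can repair this, since such an element acts at \emph{all} places simultaneously and is already constrained at infinity and at $\p\nmid\n$ by the shape you have fixed there. The same problem infects your odd-$n_\p$ paragraph: you speak of ``multiplying on the right by the diagonal $a(\varpi_\p^{n_{1,\p}})$ built into $J_\p$'' and then invoking Lemma~\ref{lm:claim2}, but in your setup no such factor is present --- Lemma~\ref{lm:claim2} manipulates $k\epsilon_\p\omega a(\varpi_\p^{n_{1,\p}})$, whereas you only have $k\epsilon_\p$.

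The paper's proof avoids this by \emph{pre-loading} the missing factor: one first applies strong approximation not to $g$ but to $g\,h_\n^{-1}\omega_\n^{-1}$ (using that $\det\colon \omega_\n h_\n K_1(\n)_{\mathrm{fin}}h_\n^{-1}\omega_\n^{-1}\to\prod_\p\op_\p^\times$ is surjective), so that $g\in G(F)\,g_\infty t_i\,\omega_\n h_\n\,K_1(\n)$ with $\omega_\n h_\n$ already sitting to the right. Only then is the BHMM domain applied to $g_\infty t_i$, producing a $k^*$ which now gets multiplied on the right by the waiting $\omega_\n h_\n$; Lemmas~\ref{lm:claim1}--\ref{lm:claim3} then convert $k^*_\p\omega a(\varpi_\p^{n_{1,\p}})$ into an element of $\mathcal{J}_\p$ (times $\epsilon_\p'$ and centre). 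Your global bookkeeping with $\theta_i$ is a red herring: the obstruction is the absence of the shift $h_\n$, and that has to be inserted \emph{before} the BHMM reduction, not recovered afterwards.
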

The proof follows exactly the steps in \cite{Sa15} exploiting that the fundamental domain $\mathcal{F}(\n_2)$ from \cite{BHMM16} is already given ad\'elically.
\begin{proof}
Let $\omega_{\n}$ be the diagonal embedding of $\omega$ in $K_{\n}$. Then the determinant map
\begin{equation}
	\omega_{\n}h_{\n} K_1(\n)_{fin}h_{\n}^{-1}\omega_{\n}^{-1} \to \prod_{\p}\op_{\p}^{\times} \nonumber
\end{equation}
is surjective. Thus we can apply strong approximation to the element $gh_{\n}^{-1}\omega_{\n}^{-1}$ and find $g_{\infty}t_i\in G(\A_F)$ such that
\begin{equation}
	g\in G(F)g_{\infty}t_i\omega_{\n}h_{\n}K_1(\n). \nonumber
\end{equation}
Using the properties of $\mathcal{F}(\n_2)$ we write $g_{\infty}t_i = \gamma f z k^*$ with $\gamma\in G(F)$, $zk^*\in K^*$ and $f\in \mathcal{F}(\n_2)$.
By construction of $K^*$ we can assume 
\begin{eqnarray}
	k^*_{\p} &=& \begin{cases} k^*_{\p} \in K_{\p} &\text{ if } \p\nmid \n_2, \\ k'_{\p}\epsilon_{\p} \in K_{0,\p}\epsilon_{\p} &\text{ if } \p\mid\n_2,  \end{cases} \nonumber \\
	k^*_v &\in&  K_v. \nonumber
\end{eqnarray}
for $\epsilon_{\p} \in \left\{ 1, \left( \begin{matrix} 0&1\\ \varpi_{\p} &0 \end{matrix}\right)\right\}$. We define 
\begin{equation}
	\mathfrak{L} = \prod_{\p \text{ s.t. } \epsilon_{\p}\neq 1} \p. \nonumber
\end{equation}
We can write
\begin{equation}
	g\in Z(\A)G(F) f \underbrace{\prod_{\p\nmid \n} k_{\p}^*}_{\in K_1(\n)}\prod_{\p\mid \n,\p\nmid\n_2} k_{\p}^*\omega a(\varpi_{\p}^{n_{1,\p}})\prod_{\p\mid \n_2,\p\nmid\mathfrak{L}} k_{\p}'\omega a(\varpi_{\p}^{n_{1,\p}})\prod_{\p\mid \mathfrak{L}} k_{\p}'\epsilon_{\p}\omega a(\varpi_{\p}^{n_{1,\p}})K_{1}(\n). \nonumber
\end{equation}
Let us treat each product appearing above separately. First we include the product over $\p\nmid\n$ into $K_1(\n)$. 
Next we notice that if $\p\mid \n$ but $\p \nmid \n_2$ then $n_{\p}$ must be even. Since obviously $k^*_{\p}\omega\in K_{\p}$ we apply Lemma~\ref{lm:claim1} to absorb the second product into $\mathcal{J}_{\n}$.
In the two remaining cases, namely $\p\mid\n_2$, $n_{\p}$ must be odd.  First for $\p\nmid\mathfrak{L}$ we apply Lemma~\ref{lm:claim2} to obtain
\begin{equation}
	k'_{\p}\omega a(\varpi_{\p}^{n_{1,\p}}) = \omega \underbrace{\hat{k}_{\p}}_{\in K_{\p}^o(1)}a(\varpi_{\p}^{n_{1,\p}}). \nonumber
\end{equation}
Now it follows from Lemma~\ref{lm:claim3} that also the third product is contained in $\mathcal{J}_{\n}$.
Finally we use Lemma~\ref{lm:claim2} and  Lemma~\ref{lm:claim3} again to get
\begin{equation}
	k'_{\p}\epsilon_{\p}\omega  a(\varpi_{\p}^{n_{1,\p}}) = \underbrace{\omega\overbrace{\hat{k}_{\p}}^{K_{\p}^0(1)} a(\varpi_{\p}^{n_{1,\p}})}_{\in\mathcal{J}_{\n}}\epsilon_{\p}'\hat(z). \nonumber
\end{equation}
Thus,
\begin{equation}
	g\in Z(\A)G(F) f  \mathcal{J}_{\n}\underbrace{\prod_{\p\vert\mathfrak{L}}\epsilon_{p}'}_{=\eta_{\mathfrak{L}}} K_1(\n). \nonumber
\end{equation}
One concludes the proof by writing $f=pa(\theta_i)$ for a special matrix $p\in \mathcal{F}_{\n_2}$ and some $i\in\{ 1\cdots h_F\}$.
\end{proof}

\subsection{The action of $\eta_{\mathfrak{L}}$}

The next step is to understand how the matrix $\eta_{\mathfrak{L}}$ acts on the automorphic functions under consideration. 

Let us define the character $\omega_{\pi}^{\mathfrak{L}} = \omega^{\mathfrak{L}}_{\pi,\infty}\prod_{\p}\omega_{\pi,\p}^{\mathfrak{L}}$ by
\begin{equation}
	\omega_{\pi,\p}^{\mathfrak{L}}\vert_{\op_{\p}^{\times}} = \begin{cases} 1 &\text{ if } \p\mid\mathfrak{L},\\ \omega_{\pi,\p}\vert_{\op_{\p}^{\times}} & \text{ if } \p \nmid \mathfrak{L}. \end{cases} \nonumber
\end{equation}
We also impose that $\omega^{\mathfrak{L}}_{\pi,\infty}\vert_{F_{\infty}} = 1$ and $\omega_{\pi,\p}(\varpi_{\p})=1$ for all $\p$. Strong approximation for $\A_F^{\times}$ shows that this defines an unique character which is $F^{\times}$ invariant.

Let us make some observations. Locally one has
\begin{equation} \label{eq:Lchar_cas}
	\omega_{\pi,\p}^{-1}\omega_{\pi,\p}^{\mathfrak{L}}\vert_{\op_{\p}^{\times}} = \begin{cases} \omega_{\pi,\p}^{-1}\vert_{\op_{\p}^{\times}} &\text{ if } \p\mid\mathfrak{L},\\ 1 & \text{ if } \p \nmid \mathfrak{L}. \end{cases}
\end{equation}

Now let $(\pi,V_{\pi})$ be a cuspidal automorphic representation. Then we define the twisted representation $(\pi^{\mathfrak{L}},V_{\pi})$ by
\begin{equation}
	\pi^{\mathfrak{L}}(g) = \omega_{\pi}^{-1}\omega_{\pi}^{\mathfrak{L}}(\det(g))\pi(g). \nonumber
\end{equation}
This representation is sometimes denoted by $\pi^{\mathfrak{L}} =(\omega_{\pi}^{-1}\omega_{\pi}^{\mathfrak{L}})\pi$. The central character of $\pi^{\mathfrak{L}}$ is $\omega_{\pi}^{-1}(\omega_{\pi}^{\mathfrak{L}})^2$ and looks locally like
\begin{equation} \label{eq:cent_cas}
	\omega_{\pi,\p}^{-1}(\omega_{\pi,\p}^{\mathfrak{L}})^2\vert_{\op_{\p}^{\times}} = \begin{cases} \omega_{\pi,\p}^{-1}\vert_{\op_{\p}^{\times}} &\text{ if } \p\mid\mathfrak{L},\\ \omega_{\pi,\p}\vert_{\op_{\p}^{\times}} & \text{ if } \p \nmid \mathfrak{L}. \end{cases}
\end{equation}
In particular the log-conductor of the new central character coincides with the log-conductor of $\omega_{\pi}$, namely
\begin{equation}
	\mathfrak{m}=\prod_{\p} \p^{m_{\p}}. \nonumber
\end{equation}
Further we note that this twist does not change the spectral data at $\infty$. Concerning the conductor of $\pi^{\mathfrak{L}}$ we have the following statement.

\begin{lemma} \label{lemma:actio_of_eta_on_newvec}
The log-conductor of $\pi^{\mathfrak{L}}$ is $\n$  and
\begin{equation} \label{eq:newvec}
	v^{\circ}_{\mathfrak{L}} =  \pi(\eta_{\mathfrak{L}})v^{\circ}
\end{equation}
is a new vector in $\pi^{\mathfrak{L}}$. 
\end{lemma}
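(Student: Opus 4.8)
The plan is to verify the statement place by place, using the tensor product factorization $\pi = \bigotimes_\nu \pi_\nu \otimes \bigotimes_\p \pi_\p$ and the corresponding factorization $v^\circ = \bigotimes_\nu v_\nu^\circ \otimes \bigotimes_\p v_\p^\circ$. Since $\eta_{\mathfrak{L}} = \prod_{\p \mid \mathfrak{L}} \left(\begin{matrix} 0&1\\ \varpi_\p^{n_\p}&0\end{matrix}\right)$ is supported at the finite places dividing $\mathfrak{L}$, and the twisting cocycle $\omega_\pi^{-1}\omega_\pi^{\mathfrak{L}}(\det(\cdot))$ factors through $\det$, the archimedean components are unaffected: at each $\nu$, $\pi^{\mathfrak{L}}_\nu = \pi_\nu$ (the local twisting character is trivial on $F_\nu$ by the imposed normalization $\omega^{\mathfrak{L}}_{\pi,\infty}\vert_{F_\infty}=1$), so $v_\nu^\circ$ remains the spherical vector and the spectral data at $\infty$ are unchanged, as already noted. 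Likewise at $\p \nmid \mathfrak{L}$ the twisting character $\omega_{\pi,\p}^{-1}\omega_{\pi,\p}^{\mathfrak{L}}$ is trivial on $\op_\p^\times$ by \eqref{eq:Lchar_cas}, and since we also normalized $\omega_{\pi,\p}^{\mathfrak{L}}(\varpi_\p)=1$, the twist is unramified; hence $\pi_\p^{\mathfrak{L}} \cong \pi_\p \otimes (\text{unramified character})$ has the same conductor exponent $n_\p$, and $\pi_\p^{\mathfrak{L}}(\eta_{\mathfrak{L}})$ acts trivially there ($\eta_{\mathfrak{L}}$ has component $1$), so $v_\p^\circ$ is still the local newvector for $\pi_\p^{\mathfrak{L}}$.

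The real content is at the places $\p \mid \mathfrak{L}$. There the conductor exponent $n_\p$ is odd (as observed in the proof of Proposition~\ref{prop:generating_domain_expil}), the local component $\pi_\p$ has central character $\omega_{\pi,\p}$ of conductor exponent $m_\p$, and the twist is by $\omega_{\pi,\p}^{-1}$ on $\op_\p^\times$. I would invoke the local result from \cite{Sa15} (this is exactly the type of statement proved there — compare \cite[Lemma~2.13, Lemma~2.14]{Sa15_2} and the surrounding material, or the relevant lemma in \cite[Section~2]{Sa15}): for a ramified local representation $\pi_\p$ of conductor exponent $n_\p$ the Atkin–Lehner type element $\left(\begin{matrix} 0&1\\ \varpi_\p^{n_\p}&0\end{matrix}\right)$ sends the $K_{1,\p}(n_\p)$-newvector of $\pi_\p$ to the $K_{1,\p}(n_\p)$-newvector of the twisted representation $(\omega_{\pi,\p}^{-1}\omega_{\pi,\p}^{\mathfrak{L}})\pi_\p$, which has the same conductor exponent $n_\p$. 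The key structural input is that conjugating $K_{1,\p}(n_\p)$ by this element interchanges the roles of the two congruence conditions so that invariance under $K_{1,\p}(n_\p)$ is preserved once one corrects by the twisting character, and that the conductor exponent is stable because $\mathfrak{L}$-part and $\n$-part were matched: $n_\p(\pi_\p^{\mathfrak{L}}) = n_\p$.

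Assembling: the global conductor of $\pi^{\mathfrak{L}}$ is $\prod_\p \p^{n_\p(\pi_\p^{\mathfrak{L}})} = \prod_\p \p^{n_\p} = \n$, proving the first assertion. For the second, $\pi(\eta_{\mathfrak{L}})v^\circ = \bigotimes_{\p \mid \mathfrak{L}} \pi_\p\!\left(\left(\begin{matrix} 0&1\\ \varpi_\p^{n_\p}&0\end{matrix}\right)\right) v_\p^\circ \otimes \bigotimes_{\text{other } \p} v_\p^\circ \otimes \bigotimes_\nu v_\nu^\circ$; since $\pi^{\mathfrak{L}}$ and $\pi$ differ only by the $\det$-twisting character, which acts by a scalar on each $v_\p^\circ$ (it is a character!), $\pi(\eta_{\mathfrak{L}})v^\circ$ and $\pi^{\mathfrak{L}}(\eta_{\mathfrak{L}})v^\circ$ are proportional, and the local analysis shows the latter is (a scalar times) the newvector $\bigotimes_\nu v_\nu^\circ \otimes \bigotimes_\p w_\p^\circ$ of $\pi^{\mathfrak{L}}$, where $w_\p^\circ = v_\p^\circ$ for $\p \nmid \mathfrak{L}$. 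Up to the harmless overall scalar this is the newvector, which is all Convention~1 and the definition of $v^\circ_{\mathfrak{L}}$ require.

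The main obstacle is the local computation at $\p \mid \mathfrak{L}$: one must check carefully that $\left(\begin{matrix} 0&1\\ \varpi_\p^{n_\p}&0\end{matrix}\right)^{-1} K_{1,\p}(n_\p) \left(\begin{matrix} 0&1\\ \varpi_\p^{n_\p}&0\end{matrix}\right)$, acting on the twisted module, fixes the image vector — i.e. that the interaction between the Atkin–Lehner conjugation and the correcting character $\omega_{\pi,\p}^{-1}$ on $\op_\p^\times$ works out — and that the conductor does not drop (which could a priori happen if the twist partially "cancelled" ramification, but cannot here since $\mathfrak{m} \mid \n$ and the twist is precisely by $\omega_{\pi,\p}^{-1}$). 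This is essentially contained in \cite{Sa15}, so I would state it as such and reduce to that reference rather than redoing the congruence-subgroup bookkeeping.
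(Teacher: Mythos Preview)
Your approach is essentially the same as the paper's: both proceed place by place, noting that the content is at $\p\mid\mathfrak{L}$. The paper, however, carries out that local step explicitly rather than deferring to a reference. For the conductor assertion it observes that at $\p\mid\mathfrak{L}$ the twist $(\omega_{\pi,\p}^{-1}\omega_{\pi,\p}^{\mathfrak{L}})\pi_\p$ is, up to an unramified character, the contragredient $\tilde{\pi}_\p$, and then uses $a(\tilde{\pi}_\p)=a(\pi_\p)$. For the newvector assertion it performs the concrete matrix identity
\[
k_\p\left(\begin{matrix}0&1\\ \varpi_\p^{n_\p}&0\end{matrix}\right)=\left(\begin{matrix}0&1\\ \varpi_\p^{n_\p}&0\end{matrix}\right)k_\p',\qquad k_\p=\left(\begin{matrix}a&b\\ c&d\end{matrix}\right)\in K_{1,\p}(n_\p),\ k_\p'=\left(\begin{matrix}d&c\varpi_\p^{-n_\p}\\ \varpi_\p^{n_\p}b&a\end{matrix}\right)\in K_{0,\p}(n_\p),
\]
notes $z(\det k_\p)^{-1}k_\p'\in K_{1,\p}(n_\p)$, and unwinds the central-character twist. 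This is exactly the ``congruence-subgroup bookkeeping'' you chose not to redo.

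Two small remarks on your write-up. First, the observation that $n_\p$ is odd for $\p\mid\mathfrak{L}$ is correct but plays no role here. Second, your heuristic for why the conductor cannot drop (``$\mathfrak{m}\mid\n$ and the twist is precisely by $\omega_{\pi,\p}^{-1}$'') is not in itself a proof: twisting by the inverse central character can certainly change conductors in general. The clean one-line reason is the contragredient identification $a(\omega_{\pi,\p}^{-1}\pi_\p)=a(\tilde{\pi}_\p)=a(\pi_\p)$, which is what the paper uses. Since you defer the point to \cite{Sa15} anyway this is not a gap, but it is worth stating the actual mechanism.
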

This corresponds to \cite[Lemma~3.4]{Sa15}.
\begin{proof}
Note that for $\p\nmid \mathfrak{L}$ one simply has $\pi^{\mathfrak{L}}_{\p}=\pi_{\p}$. However at the places $\p\mid\mathfrak{L}$ the representation $\pi^{\mathfrak{L}}_{\p}$ is equivalent to $\tilde{\pi}_{\p}$ up to some unramified twist. Here $\tilde{\pi}_{\p}$ denotes the contragient representation of $\pi_{\p}$. Since $a(\pi_{\p}) = a(\tilde{\pi}_{\p})$ it suffices to show that  the vector given in \eqref{eq:newvec} has the correct transformation behavior under $K_1(\n)$.

We proceed place by place. For $\p\nmid\mathfrak{L}$ and $\nu$ there is nothing to do. 
For $\p\mid \mathfrak{L}$ we calculate
\begin{equation}
	\underbrace{\left(\begin{matrix} a&b\\c&d\end{matrix}\right)}_{=k_{\p}\in K_{1,\p}(n_{\p})}\left( \begin{matrix} 0&1\\ \varpi_{\p}^{n_{\p}} & 0 \end{matrix}\right) = \left( \begin{matrix} 0&1\\ \varpi_{\p}^{n_{\p}} & 0 \end{matrix}\right) \underbrace{\left(\begin{matrix} d& c\varpi_{\p}^{-n_{\p}}\\ \varpi_{\p}^{n_{\p}}b & a \end{matrix}\right)}_{=k_{\p}'\in K_{0,\p}(n_{\p})} \nonumber
\end{equation}
It is easy to verify that $k_{\p}'z(\det(k_{\p}))^{-1}\in K_{1,\p}(n_{\p})$. Therefore, using \eqref{eq:Lchar_cas} and \eqref{eq:cent_cas} we have
\begin{eqnarray}
	\pi_{\p}^{\mathfrak{L}}(k_{\p}) v^{\circ}_{\mathfrak{L},\p} &=& \omega_{\pi,\p}^{-1}(\det(k_{\p})) \pi_{\p}(k_{\p}[\eta_{\mathfrak{L}}]_{\p})v_{\p}^{\circ} \nonumber \\
	&=& \omega_{\pi,\p}^{-1}\big(\det(k_{\p})\big) \underbrace{\pi_{\p}\big(z(\det(k_{\p}))\big)}_{= \omega_{\p}\big(\det(k_{\p})\big)}  \pi_{\p}\big([\eta_{\mathfrak{L}}]_{\p}\big) \big[\underbrace{\pi_{\p}\big(\underbrace{z(\det(k_{\p})^{-1})k_{\p}'}_{\in K_{1,\p}(n_{\p})}\big)v_{\p}^{\circ}}_{=v_{\p}^{\circ}}\big]  \nonumber\\
	&=& \pi_{\p}\big([\eta_{\mathfrak{L}}]_{\p}\big)v_{\p}^{\circ} = v_{\mathfrak{L},\p}^{\circ}. \nonumber
\end{eqnarray} 
\end{proof}

Observe that $(\pi^{\mathfrak{L}},V_{\pi})$ is also a cuspidal automorphic representation. Furthermore, an intertwiner, $\sigma^{\mathfrak{L}}$, to $L_0^2(G(F)\setminus G(\A_F),\omega_{\pi}^{-1}(\omega_{\pi}^{\mathfrak{L}})^2)$ is given by 
\begin{equation}
	[\sigma^{\mathfrak{L}}(v)](g) = \omega^{-1}_{\pi}\omega_{\pi}^{\mathfrak{L}}(\det(g))[\sigma(v)](g). \nonumber
\end{equation}
This leads us to the definition of the twisted newform $\phi_{\circ}^{\mathfrak{L}}= \sigma^{\mathfrak{L}}(v_{\mathfrak{L}}^{\circ})$. One immediately observes that
\begin{equation}
	\phi_{\circ}(g\eta_{\mathfrak{L}}) = \omega_{\pi}(\omega_{\pi}^{\mathfrak{L}})^{-1}(\det(g)) \phi_{\circ}^{\mathfrak{L}}(g). \nonumber
\end{equation}

Giving us exactly the ingredient we needed to understand the action of $\eta_{\mathfrak{L}}$ on $\phi_{\circ}$. We derive the following corollary.

\begin{cor} \label{cor:reduction_to_f_dom}
If $\phi_{\circ}$ is the newform associated to a cuspidal automorphic representation $(\pi,V_{\nu})$ then
\begin{equation} \label{eq:red_to_psi}
	\sup_{g\in GL_2(\A)} \abs{\phi_{\circ}(g)} \leq \sup_{\mathfrak{L}\vert \n_2}\sup_{1\leq i \leq h_F}\sup_{g\in \mathcal{J}_{\n}\times \mathcal{F}_{\n_2}} \abs{\phi_{\circ}^{\mathfrak{L}}\left( a(\theta_i)g\right)}.
\end{equation}
\end{cor}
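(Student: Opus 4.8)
The plan is to combine Proposition~\ref{prop:generating_domain_expil} with the relation between $\phi_\circ$ and its twists $\phi_\circ^{\mathfrak L}$ established just above, exploiting that both sides of \eqref{eq:red_to_psi} are insensitive to the various group-theoretic ambiguities in the decomposition. First I would fix $g \in GL_2(\A_F)$ and apply Proposition~\ref{prop:generating_domain_expil} to produce $\mathfrak L \mid \n_2$, an index $1 \le i \le h_F$, and a decomposition $g = z\gamma \cdot (a(\theta_i) j \times p)\, \eta_{\mathfrak L}\, \kappa$ with $z \in Z(\A_F)$, $\gamma \in G(F)$, $j \in \mathcal J_{\n}$, $p \in \mathcal F_{\n_2}$, and $\kappa \in K_1(\n)$. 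Then I evaluate $\abs{\phi_\circ(g)}$: left $G(F)$-invariance of $\phi_\circ$ kills $\gamma$; the central character $\omega_\pi$ with $\omega_{\pi}\vert_{F_\infty^+}=1$ and the assumption $\abs{\omega_{\pi}}=1$ means $\abs{\phi_\circ(zg')} = \abs{\phi_\circ(g')}$, so $z$ drops out; and right $K_1(\n)$-invariance removes $\kappa$. This leaves $\abs{\phi_\circ(g)} = \abs{\phi_\circ\big( (a(\theta_i) j \times p)\,\eta_{\mathfrak L}\big)}$.

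Next I would invoke the identity $\phi_{\circ}(g'\eta_{\mathfrak L}) = \omega_{\pi}(\omega_{\pi}^{\mathfrak L})^{-1}(\det(g'))\, \phi_{\circ}^{\mathfrak L}(g')$ derived right before the corollary, with $g' = a(\theta_i) j \times p$. Since $\omega_{\pi}(\omega_{\pi}^{\mathfrak L})^{-1}$ is a unitary (in fact finite-order) character, the prefactor has absolute value $1$, so $\abs{\phi_\circ(g)} = \abs{\phi_\circ^{\mathfrak L}\big( a(\theta_i)(j \times p)\big)}$, noting that $a(\theta_i)$ commutes past into the archimedean-times-finite block since $\theta_i$ is a finite idele and the factorization $\mathcal J_{\n} \times \mathcal F_{\n_2}$ is by construction compatible with this (the $a(\theta_i)$ lands on the $\mathcal J_{\n}$-component as written in Proposition~\ref{prop:generating_domain_expil}). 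Taking the supremum over $g$ on the left and bounding the right by the triple supremum over $\mathfrak L \mid \n_2$, $1 \le i \le h_F$, and $g \in \mathcal J_{\n} \times \mathcal F_{\n_2}$ gives exactly \eqref{eq:red_to_psi}.

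The only genuine subtlety — and the step I expect to be the main obstacle to state cleanly — is keeping careful track of exactly where the factor $a(\theta_i)$ sits and confirming that it does not interfere with the right-$K_1(\n)$-invariance used to absorb $\kappa$: one must check that conjugating/absorbing $\kappa$ past $\eta_{\mathfrak L}$ and past the twist is legitimate, which is precisely what Lemma~\ref{lemma:actio_of_eta_on_newvec} guarantees (it shows $v_{\mathfrak L}^\circ = \pi(\eta_{\mathfrak L})v^\circ$ is a $K_1(\n)$-new vector for $\pi^{\mathfrak L}$, so $\phi_\circ^{\mathfrak L}$ is right $K_1(\n)$-invariant). Once this is in hand, the argument is a bookkeeping exercise: every group element produced by Proposition~\ref{prop:generating_domain_expil} other than the displayed $a(\theta_i)g$ with $g \in \mathcal J_{\n}\times\mathcal F_{\n_2}$ is either killed by an invariance of $\phi_\circ^{\mathfrak L}$ or contributes only a unimodular scalar, so no estimates are needed — the corollary is a formal consequence.

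\begin{proof}
Let $g \in GL_2(\A_F)$. By Proposition~\ref{prop:generating_domain_expil} there exist $\mathfrak L \mid \n_2$ and $1 \le i \le h_F$ together with $z \in Z(\A_F)$, $\gamma \in G(F)$, $j \in \mathcal J_{\n}$, $p \in \mathcal F_{\n_2}$ and $\kappa \in K_1(\n)$ with
\begin{equation}
	g = z\gamma\, \big(a(\theta_i) j \times p\big)\, \eta_{\mathfrak L}\, \kappa. \nonumber
\end{equation}
Since $\phi_\circ$ is left $G(F)$-invariant, right $K_1(\n)$-invariant, and transforms under $Z(\A_F)$ by the unitary central character $\omega_\pi$, we obtain $\abs{\phi_\circ(g)} = \abs{\phi_\circ\big((a(\theta_i)j \times p)\,\eta_{\mathfrak L}\big)}$. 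Applying the identity
\begin{equation}
	\phi_{\circ}(g'\eta_{\mathfrak L}) = \omega_{\pi}(\omega_{\pi}^{\mathfrak L})^{-1}(\det(g'))\, \phi_{\circ}^{\mathfrak L}(g') \nonumber
\end{equation}
with $g' = a(\theta_i)j \times p$ and using that $\omega_{\pi}(\omega_{\pi}^{\mathfrak L})^{-1}$ is a unitary character, we find
\begin{equation}
	\abs{\phi_\circ(g)} = \abs{\phi_\circ^{\mathfrak L}\big(a(\theta_i)(j \times p)\big)}. \nonumber
\end{equation}
Taking the supremum over $g \in GL_2(\A_F)$ on the left-hand side and bounding the right-hand side by the supremum over all $\mathfrak L \mid \n_2$, all $1 \le i \le h_F$ and all elements of $\mathcal J_{\n} \times \mathcal F_{\n_2}$ yields \eqref{eq:red_to_psi}.
\end{proof}
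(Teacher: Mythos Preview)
Your proof is correct and matches the paper's intended argument: the corollary is stated there as an immediate consequence of Proposition~\ref{prop:generating_domain_expil} together with the twist identity $\phi_{\circ}(g'\eta_{\mathfrak L}) = \omega_{\pi}(\omega_{\pi}^{\mathfrak L})^{-1}(\det(g'))\,\phi_{\circ}^{\mathfrak L}(g')$, and you have spelled out exactly this deduction. The only point worth tightening is that the right $K_1(\n)$-invariance you use to drop $\kappa$ is invariance of $\phi_\circ$ itself (not of $\phi_\circ^{\mathfrak L}$), so Lemma~\ref{lemma:actio_of_eta_on_newvec} is not actually needed for this step; otherwise the argument is complete as written.
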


Therefore, we reduced the sup-norm problem for the newform $\phi_{\circ}$ to bounding the newforms $\phi_{\circ}^{\mathfrak{L}}$ on very special matrices. In the following we will fix an arbitrary $\mathfrak{L}\mid \n_2$ and write $\phi=\phi_{\circ}^{\mathfrak{L}}$. We can therefore focus on bounding $\phi$ at $a(\theta_i)(\mathcal{J}_{\n}\times \mathcal{F}_{\n_2})$.

\section{Bounds via Whittaker expansions} \label{sec:nounds_via_Whittaker}

In this section we consider the Whittaker expansion of cusp forms. This will lead to first upper bounds for the newform $\phi_{\circ}$ which is sufficient close to the cusps. The main result is Proposition~\ref{pr:whit_exp_est_bad} below.

Throughout this section let $(\pi, V_{\pi})$ be a cuspidal automorphic representation with new vector $v^{\circ}\in V_{\pi}$ and associated newform $\phi_{\circ}=\sigma(v^{\circ})$. Without loss of generality we assume that $\phi_{\circ}$ is $L^2$-normalized. Further we fix $g\in\mathcal{J}_{\n}$ and $n(x)a(y)\in \mathcal{F}_{\n_2}$.

\subsection{The Whittaker expansion of cusp forms}

Let $\psi=\prod_v \psi_v \prod_{\p}\psi_{\p}$ be the standard additive character of $\A_F$ as defined in \cite{Sa15_2}. Recall
\begin{equation}
	\psi_{\nu}(x) = \begin{cases} e(x) &\text{ if $\nu$ is real},\\ e(x+\overline{x}) &\text{ if $\nu$ is complex.} \end{cases} \nonumber
\end{equation}
Further note that the conductor of $\psi$ is $\mathfrak{d}^{-1}$. 

Having fixed the additive character we define the corresponding global Whittaker function
\begin{equation}
	W_{\phi_{\circ}}(g) = \frac{2^{r_2}}{\sqrt{d_F}}\int_{F\setminus \A_F} \phi_{\circ}(n(x)g)\psi(-x)d\mu_{\A_F}(x). \nonumber
\end{equation} 
We want to factor this global function into a product of local functions each of which matches the ones studied in \cite{Sa15}. Therefore, we define the shifted local characters $\psi_{\p}' = \psi_{\p}(\varpi_{\p}^{-v_{\p}(\mathfrak{d})}\cdot)$. This local additive character has conductor $\op_{\p}$. Further if $\omega_{\pi,\p}(\varpi_{\p}) = \abs{\varpi_{\p}}_{\p}^{ia_{\p}}$ we define $\pi_{\p}' = \abs{\cdot}_{\p}^{i\frac{a_{\p}}{2}}\pi_{\p}$. The purpose of this twist is that the central character $\omega_{\pi_{\p}}'$ of $\pi_{\p}'$ is trivial on the uniformizer. Now let $W_{\p}$ be the Whittaker new vector associated to the representation $\pi_{\p}'$ with respect to the character $\psi_{\p}'$ normalized by $W_{\p}(1)=1$. These are exactly the Whittaker functions studied in \cite{Sa15, Sa15_2}. At infinity we take local Whittaker function $W_{\nu}$ to be the Whittaker vector associated to $v_{\nu}^{\circ}$ normalized by $\SP{W_{\nu}}{W_{\nu}}=1$. This matches the situation in \cite{BHMM16}. Having defined these local functions we achieve the factorization
\begin{equation} 
	W_{\phi_{\circ}}(g) = c_{\phi_{\circ}}\underbrace{\prod_{\nu} W_{\nu}(g_{\nu})}_{=W_{\infty}(g_{\infty})} \prod_{\p} \abs{\det(g_{\p})}^{-i\frac{a_{\p}}{2}}_{\p} W_{\p}(a(\varpi_{\p}^{v_{\p}(\mathfrak{d})})g_{\p}). \nonumber
\end{equation}
The translation in the finite part comes from the shift in the local additive characters as explained in \cite[Remark~2.11]{Sa15_2}. The constant $c_{\psi}$ comes from our re-normalization of the local functions.

For $1\leq i \leq h_F$ and $g\in \mathcal{J}_{\n}$ we have the well known Whittaker expansion
\begin{equation}
	\phi_{\circ}(a(\theta_i)gn(x)a(y))= c_{\psi}\sum_{q \in F^{\times}} \prod_{\p}\abs{q\theta_i\det(g_{\p})}_{\p}^{-i\frac{a_{\p}}{2}} W_{\p}(a(\varpi_{\p}^{v_{\p}(\mathfrak{d})}\theta_i q)g_{\p})W_{\infty}(a(q)n(x)a(y)). \nonumber
\end{equation}

For convenience we split the local terms in the archimedean part $W_{\infty}$, the unramified part
\begin{equation}
	\lambda_{ur}(q) = \prod_{\p\nmid \n}  W_{\p}(a(\varpi_{\p}^{v_{\p}(\mathfrak{d})}\theta_i q)), \nonumber
\end{equation}
and the ramified part
\begin{equation}
	\lambda_{\n}(q) = \prod_{\p\mid \n}  W_{\p}(a(\varpi_{\p}^{v_{\p}(\mathfrak{d})}\theta_i q)g_{\p}). \nonumber
\end{equation}

We also collect all the unramified twists together and write $\eta(q) = \prod_{\p}\abs{q\theta_i\det(g_{\p})}_{\p}^{-i\frac{a_{\p}}{2}}$. Since $\abs{\eta} = 1$ this factor does not influence any of the upcoming estimates. 

Let us continue by gathering some properties of $\lambda_{\n}$ and $\lambda_{ur}$. First we recall the following standard result.

\begin{lemma} \label{lm:sph_wh}
If $\p\nmid \n$ then there are unramified characters $\chi_{1,\p}$ and $\chi_{2,\p}$ such that $\pi_{\p}' = \chi_{1,\p} \boxplus \chi_{2,\p}$. In this case we have
\begin{equation}
	W_{\p}(a(\varpi_{\p}^{v_{\p}(\mathfrak{d})}\theta_i q)) = \begin{cases} &0 \qquad\text{ if } v_{\p}(\theta_iq)+v_{\p}(\mathfrak{d}) < 0, \\  &q_{\p}^{-( v_{\p}(\theta_iq)+v_{\p}(\mathfrak{d}))/2} \frac{\chi_{1,\p}(\varpi_{\p})^{v_{\p}(\theta_iq)+v_{\p}(\mathfrak{d})+1}-\chi_{2,\p}(\varpi_{\p})^{v_{\p}(\theta_iq)+v_{\p}(\mathfrak{d})+1}}{\chi_{1,\p}(\varpi_{\p})-\chi_{2,\p}(\varpi_{\p})} \\ &\qquad\text{ if }  v_{\p}(\theta_iq)+v_{\p}(\mathfrak{d}) \geq 0. \end{cases} \nonumber
\end{equation}
\end{lemma}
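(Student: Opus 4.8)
The statement to prove is the explicit formula for the spherical Whittaker new vector at an unramified place, which is the classical Shintani--Casselman--Shalika formula specialized to $GL_2$.

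\medskip

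The plan is to reduce everything to the standard computation of the $GL_2$ spherical Whittaker function and then track the normalizations coming from the shifted additive character $\psi_{\p}'$. First I would recall that since $\p \nmid \n$, the representation $\pi_{\p}'$ is an unramified (irreducible) principal series, hence of the form $\chi_{1,\p}\boxplus\chi_{2,\p}$ with $\chi_{1,\p},\chi_{2,\p}$ unramified; this is just the classification of spherical representations of $GL_2(F_{\p})$ together with the fact that a ramified or special representation would have positive conductor. The new vector is then the (unique up to scaling) $K_{\p}$-fixed vector, and its Whittaker function $W_{\p}$ with respect to $\psi_{\p}'$ (which has conductor $\op_{\p}$) is the one normalized by $W_{\p}(1)=1$.

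\medskip

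Next I would invoke the Casselman--Shalika / Shintani formula in the rank-one case: for an additive character of conductor $\op_{\p}$ one has $W_{\p}(a(\varpi_{\p}^{k})) = 0$ for $k<0$, and for $k\geq 0$
\begin{equation}
	W_{\p}(a(\varpi_{\p}^{k})) = q_{\p}^{-k/2}\, \frac{\chi_{1,\p}(\varpi_{\p})^{k+1}-\chi_{2,\p}(\varpi_{\p})^{k+1}}{\chi_{1,\p}(\varpi_{\p})-\chi_{2,\p}(\varpi_{\p})},
\end{equation}
the right-hand side being interpreted as the Schur polynomial $\sum_{j=0}^{k}\chi_{1,\p}(\varpi_{\p})^{j}\chi_{2,\p}(\varpi_{\p})^{k-j}$ when $\chi_{1,\p}(\varpi_{\p})=\chi_{2,\p}(\varpi_{\p})$. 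This can be derived, if one wants to be self-contained, from the Jacquet integral: write $W_{\p}(a(\varpi_{\p}^{k}))$ as an integral over $N(F_{\p})$ of the spherical section against $\psi_{\p}'$, use the Iwasawa decomposition to split the integration domain according to $v_{\p}(x)$, and sum the resulting geometric-type series; the vanishing for $k<0$ comes from the fact that the $\psi_{\p}'$-integral over a full coset $\op_{\p}$ of a nontrivial character vanishes. Alternatively one simply cites the standard reference. The value at $k=0$ is $1$, which fixes the normalization and matches $W_{\p}(1)=1$.

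\medskip

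Finally I would substitute $k = v_{\p}(\varpi_{\p}^{v_{\p}(\mathfrak{d})}\theta_i q) = v_{\p}(\theta_i q) + v_{\p}(\mathfrak{d})$, since $a(\varpi_{\p}^{v_{\p}(\mathfrak{d})}\theta_i q)$ and $a(\varpi_{\p}^{k})$ differ by an element of $A(\op_{\p}^{\times})\subset K_{\p}$ on which $W_{\p}$ is invariant (the representation and the character $\psi_{\p}'$ both being unramified, so $W_{\p}$ is right $K_{\p}$-invariant and unaffected by a unit on the diagonal). This yields exactly the two cases in the statement. The only mild subtlety — and the one place to be careful rather than a genuine obstacle — is bookkeeping of the conductor: $W_{\p}$ is taken with respect to $\psi_{\p}'$ which by construction has conductor $\op_{\p}$, so the classical formula applies without a shift, and the shift by $v_{\p}(\mathfrak{d})$ has already been absorbed into the argument $a(\varpi_{\p}^{v_{\p}(\mathfrak{d})}\cdot)$ in the factorization of $W_{\phi_{\circ}}$ recalled above. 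There is no real difficulty here; the lemma is entirely standard and the proof is essentially a citation plus tracking of normalizations.
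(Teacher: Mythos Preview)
Your proposal is correct and takes essentially the same approach as the paper: the paper's proof is a one-line citation of \cite[Theorem~4.6.4]{Bu96} and \cite[Theorem~4.6.5]{Bu96}, which is exactly the classical spherical Whittaker formula for $GL_2$ you invoke, and your additional remarks about $K_{\p}$-invariance and the conductor of $\psi_{\p}'$ simply make explicit the bookkeeping the paper leaves implicit.
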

\begin{proof}
This follows from \cite[Theorem~4.6.4]{Bu96} and \cite[Theorem~4.6.5]{Bu96}.
\end{proof}

We can extract the following fact about the support of unramified coefficients.

\begin{cor} \label{cor:supp_ur}
If $\lambda_{ur}(q) \neq 0$ then $v_{\p}(q)\geq -v_{\p}(\mathfrak{d})-v_{\p}(\theta_i)$ for all $\p\nmid \n$.
\end{cor}

In order to describe the unramified coefficients in terms of more ore less well known terms we quickly introduce the Hecke operators. For $\p\nmid \n$ and $k\in \N$ define
\begin{equation}
	X_{\p,k}=\{ m\in\text{Mat}_2(\op_{\p}) \colon v_{\p}(\det(m))=k\}. \nonumber
\end{equation}
The local new vector $v_{\p}^{\circ}$ is an eigenvector of the operator $\pi_{\p}(\mathbbm{1}_{X_{\p,k}})$ and we denote its eigenvalue by $\lambda(\p^k)$. For any ideal $\mathfrak{a}$ co-prime to $\n$ we define the global Hecke operator by $T(\mathfrak{a}) = \prod_{\p\vert\mathfrak{a}} \pi_{\p}(\mathbbm{1}_{X_{\p,v_{\p}(\mathfrak{a})}})$. It is clear that the global new vector $v^{\circ}$ and therefore also the newform $\phi_{\circ}$, is an eigenvector of this operator with eigenvalue $\lambda(\mathfrak{a})= \prod_{\p\mid\mathfrak{a}}\lambda(\p^{v_{\p}(\mathfrak{a})})$. We can now make a connection between $\lambda_{ur}$ and the Hecke eigenvalues $\lambda(\cdot)$. At this point let us remark that we follow the normalization of \cite[Section~4.6]{Bu96} which differs from the one used in \cite{Sa15} and \cite{BHMM16}.

\begin{lemma} \label{lm:unram_rel}
We have
\begin{equation}
	\lambda_{\text{ur}}(q) = \frac{\lambda\left(  \frac{(q)\theta_i\mathfrak{d}}{[(q)\theta_i\mathfrak{d}]_{\n}} \right)}{\No\left(\frac{(q)\theta_i\mathfrak{d}}{[(q)\theta_i\mathfrak{d}]_{\n}}\right)}. \nonumber
\end{equation}
\end{lemma}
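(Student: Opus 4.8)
The plan is to reduce the identity to a product of purely local statements, since both sides factor over the primes $\p \nmid \n$. Writing $(q)\theta_i\mathfrak{d} = \prod_{\p} \p^{v_{\p}(q\theta_i\mathfrak{d})}$ and recalling that $[(q)\theta_i\mathfrak{d}]_{\n}$ is the coprime-to-$\n$ part, the ideal $\frac{(q)\theta_i\mathfrak{d}}{[(q)\theta_i\mathfrak{d}]_{\n}}$ is supported exactly on the primes $\p \nmid \n$ (with exponent $\max(0, v_{\p}(q\theta_i\mathfrak{d}))$ after we note that by Corollary~\ref{cor:supp_ur} the exponent is automatically nonnegative once $\lambda_{ur}(q) \neq 0$). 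Since $\lambda(\cdot)$ and $\No(\cdot)$ are both completely multiplicative on ideals coprime to $\n$, it suffices to prove for each $\p \nmid \n$ the local identity
\begin{equation}
	W_{\p}(a(\varpi_{\p}^{v_{\p}(\mathfrak{d})}\theta_i q)) = \frac{\lambda(\p^{k_{\p}})}{q_{\p}^{k_{\p}}} \nonumber
\end{equation}
where $k_{\p} = v_{\p}(q\theta_i) + v_{\p}(\mathfrak{d})$, with the convention that both sides vanish when $k_{\p} < 0$.

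First I would dispose of the vanishing range: if $k_{\p} < 0$ then $W_{\p}$ vanishes by Lemma~\ref{lm:sph_wh}, and correspondingly the $\p$-part of $\frac{(q)\theta_i\mathfrak{d}}{[(q)\theta_i\mathfrak{d}]_{\n}}$ is the unit ideal in the sense that the quotient ideal is actually not integral — here one has to be slightly careful, but the clean way is to invoke Corollary~\ref{cor:supp_ur} so that on the support of $\lambda_{ur}$ we always have $k_{\p}\geq 0$, and outside that support both sides of the global identity are zero. For the main range $k_{\p} \geq 0$, the right-hand side $\lambda(\p^{k_{\p}})/q_{\p}^{k_{\p}}$ must be matched against the explicit formula from Lemma~\ref{lm:sph_wh}, namely $q_{\p}^{-k_{\p}/2}\,\frac{\chi_{1,\p}(\varpi_{\p})^{k_{\p}+1} - \chi_{2,\p}(\varpi_{\p})^{k_{\p}+1}}{\chi_{1,\p}(\varpi_{\p}) - \chi_{2,\p}(\varpi_{\p})}$. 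So what remains is the classical identity
\begin{equation}
	\lambda(\p^{k}) = q_{\p}^{k/2}\,\frac{\alpha_{\p}^{k+1} - \beta_{\p}^{k+1}}{\alpha_{\p} - \beta_{\p}}, \qquad \alpha_{\p} = \chi_{1,\p}(\varpi_{\p}),\ \beta_{\p} = \chi_{2,\p}(\varpi_{\p}), \nonumber
\end{equation}
relating the eigenvalue of $\pi_{\p}(\mathbbm{1}_{X_{\p,k}})$ under the Bump normalization of \cite[Section~4.6]{Bu96} to the Satake parameters. This is the point where one must be scrupulous about normalizations: the paper explicitly flags that it uses \cite{Bu96}'s convention rather than that of \cite{Sa15}, \cite{BHMM16}, so the factor $q_{\p}^{k/2}$ (versus an unnormalized $q_{\p}^{k}$ or a fully normalized version) has to be tracked exactly. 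I would derive this by expanding $\pi_{\p}(\mathbbm{1}_{X_{\p,k}})v_{\p}^{\circ}$ via the Cartan decomposition of $X_{\p,k}$ into double cosets $K_{\p}\,\mathrm{diag}(\varpi_{\p}^{a},\varpi_{\p}^{b})\,K_{\p}$ with $a+b = k$, acting on the spherical vector in the induced model $\mathcal{B}(\chi_{1,\p},\chi_{2,\p})$ — or, more economically, by quoting the Macdonald/Casselman–Shalika formula that is already implicit in \cite[Theorem~4.6.4–4.6.5]{Bu96}, which is exactly what Lemma~\ref{lm:sph_wh} is extracted from.

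The main obstacle is bookkeeping rather than conceptual: correctly aligning (i) the shift $v_{\p}(\mathfrak{d})$ coming from the conductor of the additive character, (ii) the twist $\pi_{\p}' = |\cdot|_{\p}^{ia_{\p}/2}\pi_{\p}$ and the compensating $|\det|_{\p}^{-ia_{\p}/2}$ absorbed into $\eta(q)$ (which has absolute value $1$ and so does not affect the real content, but must be shown not to interfere with the identification $\pi_{\p}' = \chi_{1,\p}\boxplus\chi_{2,\p}$ versus $\pi_{\p}$ itself having Satake parameters $\alpha_{\p},\beta_{\p}$), and (iii) the Bump normalization $W_{\p}(1)=1$ against the $q_{\p}^{k/2}$ prefactor in the eigenvalue relation. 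Once these are pinned down, assembling the global identity is just multiplicativity: take the product over all $\p\nmid\n$ of the local identity, use complete multiplicativity of $\lambda$ and $\No$ on coprime-to-$\n$ ideals, and observe that the exponent of $\p$ in $\frac{(q)\theta_i\mathfrak{d}}{[(q)\theta_i\mathfrak{d}]_{\n}}$ is precisely $k_{\p}$ for $\p\nmid\n$ and $0$ for $\p\mid\n$, matching the fact that the factors at $\p\mid\n$ are excluded from $\lambda_{ur}$ by definition.
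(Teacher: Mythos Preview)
Your proposal is correct and follows essentially the same route as the paper: both reduce to the local identity $\lambda(\p^k) = q_{\p}^{k}\,W_{\p}(a(\varpi_{\p}^k))$ for $\p\nmid\n$ and then assemble the global statement by multiplicativity. The only cosmetic difference is that the paper establishes the local identity by induction on $k$ via the Hecke recursion in \cite[Propositions~4.6.4, 4.6.6]{Bu96}, whereas you propose to compute directly in the induced model or to quote the Casselman--Shalika/Macdonald formula; these are equivalent, and your extra care with the conductor shift and the unramified twist $\pi_{\p}'$ is appropriate bookkeeping that the paper leaves implicit.
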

\begin{proof}
The proof proceeds locally by showing
\begin{equation}
	\lambda(\p^k) = q_{\p}^k W_{\p}(a(\varpi^{k})) \text{ for } \p\nmid \n. \nonumber
\end{equation}
This can be done by induction using \cite[Proposition~4.6.4, Proposition~4.6.6]{Bu96} and Lemma~\ref{lm:sph_wh}.
\end{proof}

Next let us inspect the support of $\lambda_{\n}$.
\begin{lemma} \label{lm:supp_ram}
If $\lambda_{\n}(q)\neq 0$ then $v_{\p}(q) \geq  -v_{\p}(\theta_i)-v_{\p}(\mathfrak{d})-n_{0,\p}-m_{1,\p}(g_p)$ for all $\p\mid \n$.
\end{lemma}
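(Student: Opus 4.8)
The plan is to prove the support statement place by place, reducing it to a local assertion about the ramified Whittaker new vector $W_{\p}$ evaluated at the diagonal element $a(\varpi_{\p}^{v_{\p}(\mathfrak{d})}\theta_i q)g_{\p}$. Since $\lambda_{\n}(q) = \prod_{\p\mid\n} W_{\p}(a(\varpi_{\p}^{v_{\p}(\mathfrak{d})}\theta_i q)g_{\p})$, it suffices to show that for each $\p\mid\n$, if this local factor is nonzero then $v_{\p}(q) \geq -v_{\p}(\theta_i) - v_{\p}(\mathfrak{d}) - n_{0,\p} - m_{1,\p}(g_{\p})$. Setting $k = v_{\p}(\theta_i q) + v_{\p}(\mathfrak{d})$, this is equivalent to showing $W_{\p}(a(\varpi_{\p}^{k})g_{\p}) = 0$ whenever $k < -n_{0,\p} - m_{1,\p}(g_{\p})$.

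The key step is to exploit the structure of $g_{\p}$. Since $g \in \mathcal{J}_{\n}$, we have $g_{\p} \in \mathcal{J}_{\p}$, so by definition $g_{\p} \in J_{\p} = K_{\p} h_{\p}$ with $n_{1,\p}(g_{\p}) = n_{0,\p}$; moreover by Lemma~\ref{lm:claim3}, when $n_{\p}$ is odd we further have $g_{\p} \in \omega K_{\p}^0(1) a(\varpi_{\p}^{n_{1,\p}})$. I would first reduce to analysing $W_{\p}(a(\varpi_{\p}^k) g_{t,l,v})$ where $g_{t,l,v} = a(\varpi_{\p}^t)\omega n(\varpi_{\p}^{-l} v)$ is the representative from the decomposition \eqref{eq:decomp}, using the invariants $l_{\p}(g_{\p})$, $n_{0,\p}(g_{\p})$, $m_{1,\p}(g_{\p})$ already attached to $g_{\p}$. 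The Whittaker new vector transforms on the left under $N(F_{\p})$ by $\psi_{\p}'$ and under $Z(F_{\p})$ by $\omega'_{\pi,\p}$, and is $K_{1,\p}(n_{\p})$-invariant on the right, so by absorbing the $K_{\p}^0(1)$ (or $K_{\p}$) factor on the right of $g_{\p}$ into suitable right-translates one is left with evaluating $W_{\p}$ on products of the form $a(\varpi_{\p}^{k}) g_{t,l,v}$. The support bound for $W_{\p}$ on such elements is precisely the content of the local computations in \cite{Sa15} (e.g.\ the analysis of $W_{\p}(a(\varpi_{\p}^m) g_{t,l,v})$ and the vanishing region in terms of $l$, $m_{\p}$, $n_{\p}$), and the invariant $m_{1,\p}(g_{\p}) = \max(0, n_{0,\p}(g_{\p}) - n_{\p} + m_{\p})$ is exactly the one that appears there to encode how the central-character conductor interacts with the position $l$.

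The main obstacle I anticipate is bookkeeping rather than conceptual: correctly tracking the valuation shifts coming from (i) the translation by $a(\varpi_{\p}^{v_{\p}(\mathfrak{d})})$ built into the factorization of $W_{\phi_{\circ}}$, (ii) the ideal representative $\theta_i$, and (iii) the $a(\varpi_{\p}^{n_{1,\p}})$ hidden inside $g_{\p}$, and then matching the resulting threshold to the stated bound $-v_{\p}(\theta_i) - v_{\p}(\mathfrak{d}) - n_{0,\p} - m_{1,\p}(g_{\p})$. One must be careful that in the regime $l_{\p}(g_{\p}) \le n_{0,\p}$, the relevant local support statement degenerates to the "nice" case where the new vector on $g_{t,l,v}$ behaves like a shifted spherical Whittaker function, whereas for $l_{\p}(g_{\p}) \ge n_{1,\p}$ one is in the genuinely ramified regime; but since $g_{\p} \in \mathcal{J}_{\p}$ forces $n_{1,\p}(g_{\p}) = n_{0,\p}$, only the favourable branch occurs, which is what keeps the bound clean. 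I would therefore cite the relevant support lemma from \cite{Sa15} (the analogue of \cite[Lemma~2.2 and the surrounding support computations]{Sa15}) for the non-vanishing region of $W_{\p}$ and then simply collate the three valuation shifts to land on the claimed inequality.
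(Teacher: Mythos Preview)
Your proposal is correct and follows essentially the same route as the paper's proof: reduce place by place, use that $g_{\p}\in\mathcal{J}_{\p}$ forces $n_{1,\p}(g_{\p})=n_{0,\p}$, and then invoke the local support statement for the Whittaker new vector from \cite{Sa15}. Two small points of precision: the paper cites \cite[Proposition~2.11,(1)]{Sa15} (not Lemma~2.2) for the exact support bound $v_{\p}(\theta_i q)+v_{\p}(\mathfrak{d})\geq -n_{1,\p}(g_{\p})-m_{1,\p}(g_{\p})$, and it explicitly invokes Corollary~\ref{cr:absorbtion} to absorb the unit $v'$ in $\theta_i q = v'\varpi_{\p}^{v_{\p}(\theta_i q)}$ into $g_{\p}$ on the \emph{left} while staying inside $\mathcal{J}_{\p}$---a step your write-up glosses over when you pass directly to $a(\varpi_{\p}^k)g_{\p}$.
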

This is close to \cite[Lemma~3.11]{Sa15}.
\begin{proof}
Since $g\in \mathcal{J}_{\n}$ we have $g_p\in K_{\p}a(\varpi_{\p}^{n_{1,\p}})$ and $n_{1,\p}(g_{\p})=n_{0,\p}$. But $W_{\p}(a(\varpi_{\p}^{v_{\p}(\mathfrak{d})}\theta_i q)g_{\p})\neq 0$ so that \cite[Proposition~2.11,(1)]{Sa15} implies\footnote{Note that in the notation of \cite{Sa15} we have $q(g_{\p}) = n_{0,\p}+m_{1,\p}(g_{\p})$.}
\begin{equation}
	v_{\p}(\theta_i q)+v_{\p}(\mathfrak{d}) \geq -n_{1,\p}(g_p)-m_{1,\p}(g_p). \nonumber
\end{equation}
Note that we used Corollary~\ref{cr:absorbtion} to include $a(v')$ into $g_{\p}$ for $v'\in\op_{\p}^{\times}$ where $\theta_i q = v' \varpi_{\p}^{v_{\p}(\theta_i q)}$. 
\end{proof}

Later on it will make sense to view $\lambda_{\n}$ as a locally constant function on the ad\'eles in an obvious way. It will then be crucial to determine sets on which this function is constant. 

\begin{lemma} \label{lm:constans}
Let $\p \vert \n$ and $u_1,u_2 \in \op_{\p}^{\times}$ such that $u_1-u_2\in \varpi_{\p}^{n_{0,\p}(g_{\p})}\op_{\p}$. Then
\begin{equation}
	\abs{W_{\pi_{\p}}(a(\varpi_{\p}^k u_1)g_{\p})} = \abs{W_{\pi_{\p}}(a(\varpi_{\p}^k u_2)g_{\p})}. \nonumber
\end{equation}
\end{lemma}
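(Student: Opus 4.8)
The plan is to reduce the claimed equality of absolute values to the transformation behaviour of the local Whittaker new vector under the group $K_{0,\p}(n_{0,\p}(g_{\p}))$, which governs exactly this kind of ``shift by units congruent to $1$ modulo $\varpi_{\p}^{n_{0,\p}(g_{\p})}$'' phenomenon. First I would use the decomposition \eqref{eq:decomp} to write $g_{\p}$ in the normal form $g_{\p} \in Z(F_{\p})N(F_{\p})g_{t,l,v}K_{1,\p}(n_{\p})$, where the relevant invariant is $l = l_{\p}(g_{\p})$ and $n_{0,\p}(g_{\p})$ is the precision with which the unit parameter $v$ is defined; by Corollary~\ref{cr:absorbtion} (and the fact that $g\in\mathcal{J}_{\n}$, so $n_{1,\p}(g_{\p})=n_{0,\p}$) we may freely absorb diagonal units $a(v')$, $v'\in\op_{\p}^{\times}$, into $g_{\p}$ without changing its class, so the statement really only depends on $g_{\p}$ through $(t_{\p}(g_{\p}),l_{\p}(g_{\p}),v)$ and the coset $v \bmod \varpi_{\p}^{n_{0,\p}(g_{\p})}$.

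Next I would observe that $a(\varpi_{\p}^k u_1)g_{\p}$ and $a(\varpi_{\p}^k u_2)g_{\p}$ differ, after moving the diagonal factor past $g_{\p}$ using the identity $a(u)\omega = \omega\,z(u)a(u^{-1})$ type manipulations already exploited in Corollary~\ref{cr:absorbtion} and Lemma~\ref{lm:claim2}, only by an element of $K_{1,\p}(n_{\p})$ up to a central factor on which $\abs{W_{\pi_{\p}}}$ is invariant (the central character is unitary). Concretely, writing $u_2 = u_1(1+\varpi_{\p}^{n_{0,\p}(g_{\p})-k'}\op_{\p})$ for the appropriate $k'$ and tracking the action on the normal form $g_{t,l,v}$, the change in the unit parameter $v$ is by something in $\varpi_{\p}^{n_{0,\p}(g_{\p})}\op_{\p}$, hence does not alter the coset representing the double coset. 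This is precisely the content encoded in \cite[Proposition~2.11]{Sa15}, which describes $W_{\pi_{\p}}(g_{t,l,v}\cdot)$ and in particular its dependence (only through absolute value, and only on the coset of $v$) on the unit parameter; I would cite that proposition for the final comparison.

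The main obstacle is the bookkeeping in the second step: one must verify that the hypothesis $u_1-u_2\in\varpi_{\p}^{n_{0,\p}(g_{\p})}\op_{\p}$ — which is a congruence at the ``native'' precision of $g_{\p}$, i.e. the precision $n_{0,\p}(g_{\p})$ with which $v$ is defined and not the possibly larger $n_{0,\p}$ — is exactly what is needed so that $a(\varpi_{\p}^k u_1)g_{\p}$ and $a(\varpi_{\p}^k u_2)g_{\p}$ land in the same $Z N \backslash \cdot / K_{1,\p}(n_{\p})$ class. Here one has to distinguish the two ranges $l\le n_{0,\p}$ and $l\ge n_{1,\p}$ defining $n_{1,\p}(g_{\p})$, and recall that $n_{0,\p}(g_{\p}) = \min(l, n_{\p}-l)$ in the notation of \eqref{eq:decomp}, so that the congruence precision matches the precision of $v$. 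Once the two arguments are shown to represent the same double coset (up to center), the equality of $\abs{W_{\pi_{\p}}}$ is immediate from left $N(F_{\p})$-equivariance of the Whittaker function (which contributes a unit-modulus character value), right $K_{1,\p}(n_{\p})$-invariance of the new vector, and unitarity of $\omega_{\pi,\p}$. All the remaining verifications are the routine matrix identities of the type already carried out in the proofs of Lemma~\ref{lm:claim2} and Corollary~\ref{cr:absorbtion}.
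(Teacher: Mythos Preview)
Your proposal is correct and follows essentially the same route as the paper: write $g_{\p}=z\,n\,g_{t,l,v}\,k$ via \eqref{eq:decomp}, compute $a(\varpi_{\p}^{k}u_i)g_{\p}=z'n'\,g_{t+k,\,l,\,vu_i^{-1}}\,k'$ with $k'\in K_{1,\p}(n_{\p})$, and observe that $[vu_1^{-1}]=[vu_2^{-1}]$ in $\op_{\p}^{\times}/(1+\varpi_{\p}^{n_{0,\p}(g_{\p})}\op_{\p})$ is exactly the hypothesis $u_1-u_2\in\varpi_{\p}^{n_{0,\p}(g_{\p})}\op_{\p}$. The paper's argument is slightly leaner in that it does not invoke Corollary~\ref{cr:absorbtion}, Lemma~\ref{lm:claim2}, or \cite[Proposition~2.11]{Sa15}, nor the case split on $l$; these are not needed once one notes that $\abs{W_{\pi_{\p}}}$ is well defined on the parameters $(t,l,[v])$.
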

This is essentially \cite[Lemma~3.12]{Sa15}.
\begin{proof}
The proof of this little lemma goes back to the decomposition \eqref{eq:decomp} and the fact that $\abs{W_{\pi_{\p}}}$ is well defined by its values on $g_{t,l,v}$.

First let us write 
\begin{equation}
	g_{\p}= zng_{t,l,v} k. \nonumber
\end{equation}
Then one observes that
\begin{equation}
	a(\varpi_{\p}^k u_1)g_{\p} = zn'g_{t+k,l,vu_1^{-1}}k'. \nonumber
\end{equation}
By doing the same for $u_2$ we observe, that the claimed equality follows when
\begin{equation}
	[vu^{-1}] = [vu_2^{-1}] \in \op_{\p}^{\times} / (1+\varpi_{\p}^{n_0(g_{\p})}\op_{\p}). \nonumber
\end{equation}
The last condition leads to $u_1-u_2\in \varpi_{\p}^{n_0(g_{\p})}\op_{\p}$.
\end{proof}

Combining the support properties from Lemma~\ref{lm:supp_ram} and Corollary~\ref{cor:supp_ur} we derive
\begin{equation}
	\abs{\phi_{\circ}(a(\theta_i)gn(x)a(y))} \leq \abs{c_{\phi_{\circ}}}\sum_{q \in \imath^{-1}} \abs{\lambda_{ur}(q)\lambda_{\n}(q)W_{\infty}(a(qy))}. \label{eq:abs_of_Whitt_for_cusp}
\end{equation}
Where 
\begin{equation}
	\imath = \n_0\mathfrak{m}_1(g)\mathfrak{d} \prod_{\p} \p^{v_{\p}(\theta_i)}, \quad \mathfrak{m}_1(g) = \prod_{\p} \p^{m_{1,\p}(g_{\p})}. \label{eq:def_of_imath}
\end{equation}

It is easy to deal with the constant $c_{\phi_{\circ}}$.
\begin{lemma} \label{lm:the_scaling_const}
We have
\begin{equation}
	c_{\phi_{\circ}} \ll_{F,\epsilon} \big( \No(\n)\abs{T}_{\infty} \big)^{\epsilon}. \nonumber
\end{equation}
\end{lemma}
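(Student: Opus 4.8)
The constant $c_{\phi_{\circ}}$ (which was called $c_{\psi}$ above) is defined by the factorization of the global Whittaker function $W_{\phi_{\circ}}$ into local pieces, after we renormalized each local Whittaker function: at the finite places $W_{\p}$ is normalized by $W_{\p}(1)=1$, while at the archimedean places $W_{\nu}$ is normalized by $\SP{W_{\nu}}{W_{\nu}}=1$. Hence $c_{\phi_{\circ}}$ is essentially the ratio of the Petersson/Whittaker normalization of $\phi_{\circ}$ (which is $L^2$-normalized) to the product of the chosen local normalizations. The plan is to compute $|c_{\phi_{\circ}}|^2$ via the Rankin--Selberg unfolding of $\langle\phi_{\circ},\phi_{\circ}\rangle$, expressing it through $L(1,\pi\times\tilde\pi) = \mathrm{res}_{s=1}\Lambda(s,\pi\times\tilde\pi)$ together with the local integrals $\int |W_{\p}|^2$ at the ramified places and the archimedean local factors.

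\textbf{Key steps.} First I would unfold: since $\phi_{\circ}$ is cuspidal and $L^2$-normalized, $1 = \langle\phi_{\circ},\phi_{\circ}\rangle$ can be written (after inserting the Whittaker expansion and unfolding the $N(F)\backslash N(\A_F)$ integral) as an Eulerian integral, whose value is a ratio of the form
\begin{equation}
	1 = |c_{\phi_{\circ}}|^2 \cdot \frac{\Lambda^*(1,\pi\times\tilde\pi)}{\zeta_F^*(\text{normalizing factors})} \cdot \prod_{\p\mid\n} I_{\p} \cdot \prod_{\nu} I_{\nu}, \nonumber
\end{equation}
where $I_{\p}$ is the local Rankin--Selberg integral of $|W_{\p}|^2$ against the local zeta integral, and $I_{\nu}$ the corresponding archimedean one (all unramified finite places contribute exactly the local $L$-factors which assemble into $L(1,\pi\times\tilde\pi)$). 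Second, I would bound each ingredient: $L(1,\pi\times\tilde\pi) \asymp_{F,\epsilon} (\No(\n)|T|_\infty)^{\pm\epsilon}$ by the standard convexity bound together with a Brumley-type lower bound (or by Hoffstein--Lockhart/Iwaniec's zero-free region arguments) — this is where the $(\No(\n)|T|_\infty)^\epsilon$ really comes from. Third, the ramified local integrals $I_{\p}$ must be shown to be polynomially bounded in $q_{\p}$ with the right total size; this follows from the explicit description of $W_{\p}$ via the decomposition \eqref{eq:decomp} (these are exactly Saha's local Whittaker functions), and the relevant estimate is already in \cite{Sa15}. Fourth, the archimedean factors $I_{\nu}$ are bounded polynomially in $T_{\nu}$ using the known asymptotics of $\mathrm{GL}_2$ Whittaker functions (or the explicit gamma-factor computation), contributing only $|T|_\infty^\epsilon$.

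\textbf{Main obstacle.} The genuinely substantive input is the lower bound $L(1,\pi\times\tilde\pi) \gg_{F,\epsilon} (\No(\n)|T|_\infty)^{-\epsilon}$, since an upper bound alone would only give one-sided control and we need $c_{\phi_{\circ}}$ not to be too large, which forces the $L$-value in the denominator not to be too small. I would cite the standard subconvexity-free lower bounds for Rankin--Selberg $L$-functions at the edge of the critical strip (e.g. via \cite{BHMM16} or the references therein, where exactly this normalization constant is controlled); over a general number field one also needs the dependence on $F$ absorbed into the implied constant, which is harmless since $F$ is fixed. The remaining steps — unfolding, and the local archimedean and non-archimedean estimates — are routine given the machinery of \cite{Sa15} and \cite{BHMM16}, so I would present them briefly and refer to those papers for the local computations.
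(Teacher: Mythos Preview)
Your proposal is correct and follows essentially the same route as the paper: both arguments unfold $\Vert\phi_{\circ}\Vert_2^2$ via Rankin--Selberg to express $|c_{\phi_{\circ}}|^2$ in terms of $L(1,\pi,\mathrm{Ad})^{-1}$ (equivalently $L(1,\pi\times\tilde\pi)^{-1}$) times local factors, and then invoke the Hoffstein--Lockhart type lower bound $L(1,\pi,\mathrm{Ad})\gg (\No(\n)|T|_\infty)^{-\epsilon}$ as the substantive input. The paper is terser---it cites \cite{Sa15_2} for the unfolding identity and uses that the archimedean $W_{\nu}$ are already $L^2$-normalized so $\prod_{\nu} I_{\nu}=1$---whereas you spell out the local steps, but the skeleton is identical.
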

\begin{proof}
As in \cite{Sa15_2} we observe
\begin{equation}
	c_{\phi_{\circ}}^2 \ll_F L^{-1}(1,\pi,Ad)^{-1}\prod_{\nu}\SP{W_{\nu}}{W_{\nu}}^{-1} = L(1,\pi,Ad)^{-1}. \nonumber
\end{equation}
It is a well known fact that $L(1,\pi,Ad)\gg (\No(\n)\abs{T}_{\infty})^{\epsilon}$. Thus,
\begin{equation}
	c_{\phi_{\circ}} \ll (\No(\n)\abs{T}_{\infty})^{\epsilon}. \nonumber
\end{equation}
\end{proof}

Before continuing we fix a parameter $R=(R_{\nu})_{\nu}$ and define the box
\begin{equation}
	B(R) = \prod_{\nu} \{ \xi_{\nu}\in F_{\nu} \colon \abs{\xi_{\nu}} \leq R_{\nu} \}. \nonumber
\end{equation} 
This box will be used to truncate the Whittaker expansion. We will mostly use $R_{\nu} \asymp \frac{T_{\nu}}{y_{\nu}}$ except in Section~\ref{sec:est_S2} below, where we allow arbitrary $R$.

Applying the H\"older inequality together with $1=\abs{q}_{\A_F} = \abs{q}_{fin} \abs{q}_{\infty}$ yields
\begin{eqnarray}
	\abs{\phi_{\circ}(a(\theta_i)gn(x)a(y))} &\leq& \abs{c_{\phi_{\circ}}} \underbrace{\left( \sum_{q \in \imath^{-1}\cap B(R)} \abs{q}_{\infty}^{-2}\abs{W_{\infty}(a(qy))}^4 \right)^{\frac{1}{4}}}_{=S_1} \label{eq:whitt_after_ineq} \\
	&&\qquad \cdot \underbrace{\left( \sum_{q \in \imath^{-1}\cap B(R)} \No(q)^{\frac{2}{3}}\abs{\lambda_{ur}(q)\lambda_{\n}(q)}^{\frac{4}{3}} \right)^{\frac{3}{4}}}_{=S_2(R)} + \abs{c_{\phi_{\circ}}}\mathcal{E}. \nonumber
\end{eqnarray}
with 
\begin{eqnarray}
		S_1 &=& \left(\sum_{q \in \imath^{-1}\cap B(R)} \abs{q}_{\infty}^{-2}\abs{W_{\infty}(a(qy))}^4\right)^{\frac{1}{4}}, \nonumber \\
		S_2(R) &=& \left(\sum_{q \in \imath^{-1}\cap B(R)} \No(q)^{\frac{2}{3}}\abs{\lambda_{ur}(q)\lambda_{\n}(q)}^{\frac{4}{3}}\right)^{\frac{3}{4}}, \text{ and }\nonumber \\
	\mathcal{E} &=& \sum_{q\in \imath^{-1}, q\not\in B(R)} \abs{\lambda_{ur}(q)\lambda_{\n}(q)W_{\infty}(q y)}. \nonumber
\end{eqnarray}

We will estimate each one of these 3 quantities in the upcoming subsections.

\subsection{Counting field elements in boxes} \label{sec:counting_in_boxes}

This subsection is concerned with estimating the number of field elements in different ad\'elic boxes. These estimates will be needed in order to estimate $S_1$, $S_2(R)$, and $\mathcal{E}$. 

We start by considering some archimedean boxes. The following argument is almost completely taken from \cite{BHMM16}. Take  $R_{\nu} = \frac{T_{\nu}+T_{\nu}^{\frac{1}{3}+\epsilon}}{2\pi\abs{y_{\nu}}} \asymp \frac{T_{\nu}}{y_{\nu}}$ and recall the ideal $\imath$ from \eqref{eq:def_of_imath}. Further  fix $a\in \imath$ such that
\begin{equation} 
	\No(\imath) \leq \No((a)) \leq \left( \frac{2}{\pi} \right)^{r_1} \sqrt{\abs{d_F}}\No(\imath). \label{eq:def_of_a}
\end{equation}
This is possible by \cite[Lemma~6.2]{Ne13}. In particular one has $a\imath^{-1} \subset \mathcal{O}_F$.

Define
\begin{equation}
	I_{\nu}(k_{\nu}) = \begin{cases}
		\{ \xi_{\nu}\in F_{\nu}^{\times} \colon k_{\nu} \abs{a} R_{\nu} < \abs{\xi_{\nu}} \leq (k_{\nu}+1)\abs{a}R_{\nu} \}& \text{ if } k_{\nu} \geq 1, \\
		\{ \xi_{\nu}\in F_{\nu}^{\times} \colon \abs{\xi_{\nu}} \leq \abs{a}R_{\nu}, -k_{\nu} \leq \abs{\abs{\xi_{\nu}}-\frac{\abs{a}T_{\nu}}{2\pi\abs{y_{\nu}}}}< -k_{\nu}+1 \}& \text{else.}
	\end{cases} \label{eq:def_of_arch_box}
\end{equation}
For $\du{k}\in \mathbb{Z}^{\sharp\{\nu\}}$ let $I(\du{k}) = \prod_{\nu} I_{\nu}(k_{\nu})$. 

Let us start by establishing a simple but crucial property of these sets.

\begin{lemma}
If $k_{\nu}< -\lfloor \abs{a}R_{\nu} \rfloor$ then $I_{\nu}(k_{\nu}) = \emptyset$. \nonumber
\end{lemma}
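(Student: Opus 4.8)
The claim is that $I_\nu(k_\nu)=\emptyset$ whenever $k_\nu < -\lfloor \abs{a}R_\nu \rfloor$, so we are entirely in the second case of the definition \eqref{eq:def_of_arch_box} (the $k_\nu\le 0$ branch). The plan is to extract the two constraints that an element $\xi_\nu\in I_\nu(k_\nu)$ must simultaneously satisfy and show they are incompatible for such $k_\nu$. The first constraint is $\abs{\xi_\nu}\le \abs{a}R_\nu$. The second is the two-sided inequality $-k_\nu \le \bigl|\,\abs{\xi_\nu}-\tfrac{\abs{a}T_\nu}{2\pi\abs{y_\nu}}\,\bigr| < -k_\nu+1$; in particular it forces $\bigl|\,\abs{\xi_\nu}-\tfrac{\abs{a}T_\nu}{2\pi\abs{y_\nu}}\,\bigr| \ge -k_\nu$.

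First I would observe that $R_\nu = \tfrac{T_\nu+T_\nu^{1/3+\epsilon}}{2\pi\abs{y_\nu}} \ge \tfrac{T_\nu}{2\pi\abs{y_\nu}}$, so $\tfrac{\abs{a}T_\nu}{2\pi\abs{y_\nu}} \le \abs{a}R_\nu$. Hence for any $\xi_\nu$ satisfying the first constraint $0\le \abs{\xi_\nu}\le \abs{a}R_\nu$, the quantity $\abs{\xi_\nu}-\tfrac{\abs{a}T_\nu}{2\pi\abs{y_\nu}}$ lies in the interval $\bigl[-\tfrac{\abs{a}T_\nu}{2\pi\abs{y_\nu}},\, \abs{a}R_\nu\bigr]$, and therefore $\bigl|\,\abs{\xi_\nu}-\tfrac{\abs{a}T_\nu}{2\pi\abs{y_\nu}}\,\bigr| \le \abs{a}R_\nu$ (both endpoints of that interval have absolute value at most $\abs{a}R_\nu$, using again $\tfrac{\abs{a}T_\nu}{2\pi\abs{y_\nu}}\le \abs{a}R_\nu$). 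Combining this with the lower bound forced by the second constraint gives $-k_\nu \le \abs{a}R_\nu$, i.e. $k_\nu \ge -\abs{a}R_\nu$, and since $k_\nu$ is an integer, $k_\nu \ge -\lfloor \abs{a}R_\nu\rfloor$. Contrapositively, if $k_\nu < -\lfloor \abs{a}R_\nu\rfloor$ then no such $\xi_\nu$ exists and $I_\nu(k_\nu)=\emptyset$.

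There is essentially no obstacle here; the only mild subtlety is making sure the bound $\bigl|\,\abs{\xi_\nu}-\tfrac{\abs{a}T_\nu}{2\pi\abs{y_\nu}}\,\bigr| \le \abs{a}R_\nu$ is justified cleanly from $0\le\abs{\xi_\nu}\le\abs{a}R_\nu$ and $0\le \tfrac{\abs{a}T_\nu}{2\pi\abs{y_\nu}}\le \abs{a}R_\nu$, which is the elementary fact that the difference of two numbers in $[0,M]$ lies in $[-M,M]$. One should also note the edge case $k_\nu=0$ is not relevant since $-\lfloor \abs{a}R_\nu\rfloor\le 0$, so the hypothesis $k_\nu<-\lfloor\abs{a}R_\nu\rfloor$ already puts us strictly in the negative regime where the second branch of \eqref{eq:def_of_arch_box} applies.
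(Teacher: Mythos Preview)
Your proof is correct and follows essentially the same idea as the paper's: both combine the constraint $\abs{\xi_\nu}\le \abs{a}R_\nu$ with the lower bound $-k_\nu \le \bigl|\,\abs{\xi_\nu}-\tfrac{\abs{a}T_\nu}{2\pi\abs{y_\nu}}\bigr|$ to force $-k_\nu\le\abs{a}R_\nu$. The paper does this by splitting into the two cases $\abs{\xi_\nu}\gtrless \tfrac{\abs{a}T_\nu}{2\pi\abs{y_\nu}}$ and handling each separately, whereas you avoid the case split via the observation that both $\abs{\xi_\nu}$ and $\tfrac{\abs{a}T_\nu}{2\pi\abs{y_\nu}}$ lie in $[0,\abs{a}R_\nu]$, so their difference has absolute value at most $\abs{a}R_\nu$; this is a mild but genuine streamlining.
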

\begin{proof}
Suppose $k_{\nu}< -\lfloor \abs{a}R_{\nu} \rfloor$. We consider two cases. First let $\abs{\xi_{\nu}}>\frac{\abs{a}T_{\nu}}{2\pi\abs{y_{\nu}}}$. Then the two inequalities in the definition of $I_{\nu}(\cdot)$ yield
\begin{equation}
	\frac{\abs{a}T_{\nu}}{2\pi\abs{y_{\nu}}}+\lfloor \abs{a}R_{\nu} \rfloor < \abs{\xi_{\nu}} \leq \abs{a}R_{\nu}. \nonumber
\end{equation}
But the set of such $\xi_{\nu}$ is empty. 
Secondly we assume $\abs{\xi_{\nu}} \leq \frac{\abs{a}R_{\nu}}{2\pi}$. This gives
\begin{equation}
	\abs{\xi_{\nu}} < \frac{\abs{a} T_{\nu}}{2\pi\abs{y_{\nu}}}- \lfloor \abs{a}R_{\nu} \rfloor<0 \nonumber
\end{equation}
which is also impossible.
\end{proof}

We also need good estimates for $\sharp(I(\du{k})\cap a\imath^{-1})$. These are obtained by a standard volume argument. Let us start with some pre-requests.

Choose a fundamental set $\mathcal{P}$ for the lattice $a\imath^{-1} \subset F_{\infty}$. Without loss of generality we can assume $0\in \mathcal{P}$. Let $D$ be the diameter of $\mathcal{P}$. It is an elementary fact (see \cite{Ne13}) that
\begin{equation}
	\vol(\mathcal{P})  \sim_F \No((a))\No(\imath^{-1}) \approx_F 1. \nonumber
\end{equation}

Further we define
\begin{equation}
	J_{\nu}(k_{\nu}) = \begin{cases}
			\{ \xi_{\nu}\in F_{\nu} \colon k_{\nu} \abs{a} R_{\nu}-D < \abs{\xi_{\nu}} \leq (k_{\nu}+1)\abs{a}R_{\nu}+D \}& \text{ if } k_{\nu} \geq 1, \\
			\{ \xi_{\nu}\in F_{\nu} \colon -k_{\nu}-D \leq \abs{\abs{\xi_{\nu}}-\frac{\abs{a}T_{\nu}}{2\pi\abs{y_{\nu}}}}< -k_{\nu}+1+D \}& \text{else.}
		\end{cases} \nonumber
\end{equation} 
and $J(\du{k}) = \prod_{\nu}J_{\nu}(k_{\nu})$. 

\begin{lemma}
The volume of $J_{\nu}(k_{\nu})$ is given by
\begin{equation}
	\vol(J_{\nu}(k_{\nu})) = \begin{cases}
		2\abs{a}R_{\nu}+4D & \text{ if $\nu$ is real and $k_{\nu}\geq 1$,} \\
		4(1+2D)& \text{ if $\nu$ is real and $k_{\nu}\leq 0$,} \\
		\pi(2k_{\nu}+1)\abs{a}R_{\nu}(\abs{a}R_{\nu}+2D)& \text{ if $\nu$ is complex and $k_{\nu}\geq 1$,} \\
		2\frac{\abs{a}T_{\nu}}{y_{\nu}}(1+2D)& \text{ if $\nu$ is complex and $k_{\nu}\leq 0$.}
	\end{cases} \nonumber
\end{equation}
\end{lemma}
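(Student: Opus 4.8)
The statement to prove is a direct volume computation for the sets $J_\nu(k_\nu)$. The plan is straightforward: in each of the four cases, translate the defining condition on $\abs{\xi_\nu}$ into a region of $F_\nu$ (an interval of the real line when $\nu$ is real, an annulus in $\C$ when $\nu$ is complex) and compute its Lebesgue measure using the standard measure normalization fixed at the start of the paper.

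\textbf{Real places.} When $\nu$ is real, $F_\nu = \R$ with ordinary Lebesgue measure, and $\abs{\xi_\nu}$ is the usual absolute value. For $k_\nu \geq 1$ the set $J_\nu(k_\nu)$ is $\{\xi : k_\nu\abs{a}R_\nu - D < \abs{\xi} \leq (k_\nu+1)\abs{a}R_\nu + D\}$, which is a union of two symmetric intervals each of length $((k_\nu+1)\abs{a}R_\nu + D) - (k_\nu\abs{a}R_\nu - D) = \abs{a}R_\nu + 2D$, giving total length $2\abs{a}R_\nu + 4D$. For $k_\nu \leq 0$ the condition $-k_\nu - D \leq \big|\,\abs{\xi} - \tfrac{\abs{a}T_\nu}{2\pi\abs{y_\nu}}\,\big| < -k_\nu + 1 + D$ says $\abs{\xi}$ lies in a union of two intervals (one on each side of $\tfrac{\abs{a}T_\nu}{2\pi\abs{y_\nu}}$), each of length $(-k_\nu+1+D) - (-k_\nu-D) = 1 + 2D$; doubling again for $\pm\xi$ gives $4(1+2D)$.

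\textbf{Complex places.} When $\nu$ is complex, $F_\nu = \C$ with Lebesgue measure, and a condition of the form $\rho_1 < \abs{\xi} \leq \rho_2$ describes an annulus of area $\pi(\rho_2^2 - \rho_1^2)$. For $k_\nu \geq 1$, with $\rho_1 = k_\nu\abs{a}R_\nu - D$ and $\rho_2 = (k_\nu+1)\abs{a}R_\nu + D$, one computes $\rho_2^2 - \rho_1^2 = (\rho_2-\rho_1)(\rho_2+\rho_1) = (\abs{a}R_\nu + 2D)\big((2k_\nu+1)\abs{a}R_\nu\big) = (2k_\nu+1)\abs{a}R_\nu(\abs{a}R_\nu + 2D)$, so the area is $\pi(2k_\nu+1)\abs{a}R_\nu(\abs{a}R_\nu + 2D)$, as claimed. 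For $k_\nu \leq 0$ the region is a union of two annuli around radius $\tfrac{\abs{a}T_\nu}{2\pi\abs{y_\nu}}$, with inner and outer radii $\tfrac{\abs{a}T_\nu}{2\pi\abs{y_\nu}} \pm (-k_\nu - D)$ and $\tfrac{\abs{a}T_\nu}{2\pi\abs{y_\nu}} \pm (-k_\nu + 1 + D)$; each contributes area $\pi$ times a difference of squares of the form $(\text{width})\cdot(\text{sum of radii})$, and summing the two pieces the cross terms telescope so that the total is $2\pi \cdot \tfrac{\abs{a}T_\nu}{2\pi\abs{y_\nu}} \cdot (1+2D) \cdot 2$. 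One should double-check the bookkeeping here: the factor $\abs{\cdot}_\nu = \abs{\cdot}^2$ convention for complex places means $\abs{\xi_\nu}$ in the definition of $J_\nu$ might be $\abs{\xi}^2$ rather than $\abs{\xi}$, in which case the annuli in the $\xi$-plane have radii given by square roots; however, comparing with the stated answer $2\tfrac{\abs{a}T_\nu}{y_\nu}(1+2D)$ shows the intended reading is that $\abs{\xi_\nu}$ here denotes the ordinary modulus $\abs{\xi}$, and then the computation gives exactly the stated value after simplifying $2\pi \cdot \tfrac{1}{2\pi} = 1$.

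\textbf{Main obstacle.} There is no real difficulty of technique — everything is elementary planar/linear geometry. The one place to be careful is the $k_\nu \leq 0$ complex case, where the region is a union of two thin annuli and one must verify that the cross terms in the difference-of-squares expansions cancel to leave the clean answer $2\tfrac{\abs{a}T_\nu}{y_\nu}(1+2D)$; this forces the identification of $\abs{\xi_\nu}$ with the ordinary modulus and fixes the normalization. Apart from that, the proof is simply: write each set as the preimage of an interval under $\xi \mapsto \abs{\xi}$, and integrate. I would present it as a short case-by-case verification, emphasizing the difference-of-squares identity $\rho_2^2 - \rho_1^2 = (\rho_2-\rho_1)(\rho_2+\rho_1)$ in the complex cases and the symmetry $\xi \mapsto -\xi$ (doubling) in the real cases.
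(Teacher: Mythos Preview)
Your proposal is correct and is exactly the elementary case-by-case volume computation the paper has in mind; the paper's own proof consists of the single sentence ``The proof is an elementary volume calculation. Its much easier when one draws a picture.'' Your explicit difference-of-squares bookkeeping (and your check that $\abs{\xi_\nu}$ here means the ordinary modulus, consistent with the paper's convention that $\abs{\cdot}$ is the standard absolute value) simply fills in what the paper leaves to the reader.
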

\begin{proof}
	The proof is an elementary volume calculation. Its much easier when one draws a picture.
\end{proof}
As consequence of Minkowski-theory we can choose $\mathcal{P}$ such that $D\ll \No(a\imath^{-1})^{\frac{1}{n}} \ll_F 1$. Therefore, it is clear that $\vol(J(\du{k})) \ll_F \prod_{\nu}f_{\nu}(k_{\nu})$ for
\begin{equation}
	f_{\nu}(k_{\nu}) = \begin{cases}
			\frac{\abs{a}T_{\nu}}{y_{\nu}}+1 & \text{ if $\nu$ is real and $k_{\nu}\geq 1$,} \\
			1 & \text{ if $\nu$ is real and $k_{\nu}\leq 0$,} \\
			k_{\nu}(\frac{\abs{a}T_{\nu}}{y_{\nu}}+1)^2& \text{ if $\nu$ is complex and $k_{\nu}\geq 1$,} \\
			\frac{\abs{a}T_{\nu}}{y_{\nu}}+1& \text{ if $\nu$ is complex and $k_{\nu}\leq 0$.}
		\end{cases} \label{eq:def_of_f}
\end{equation}

We are now ready to count points in our boxes.
\begin{lemma} \label{lm:box_count_arch}
One has
\begin{equation}
	\sharp(a\imath^{-1}\cap I(\du{k})) \ll_{F} \prod_{\nu} f_{\nu}(k_{\nu}). \nonumber
\end{equation}
\end{lemma}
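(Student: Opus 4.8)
The plan is to reduce the count of lattice points $\sharp(a\imath^{-1}\cap I(\du{k}))$ to a volume by the standard packing argument: around each lattice point $\xi\in a\imath^{-1}\cap I(\du{k})$ place a translate $\xi+\mathcal{P}$ of the chosen fundamental domain. These translates are pairwise disjoint (since $\mathcal{P}$ is a fundamental set for the lattice), and every point of $\xi+\mathcal{P}$ lies within distance $D$ (the diameter of $\mathcal{P}$) of $\xi$. Hence, using $0\in\mathcal{P}$ and componentwise the definitions of $I_\nu(k_\nu)$ and $J_\nu(k_\nu)$, each translate $\xi+\mathcal{P}$ is contained in the fattened box $J(\du{k}) = \prod_\nu J_\nu(k_\nu)$. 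Therefore
\begin{equation}
	\sharp(a\imath^{-1}\cap I(\du{k}))\cdot\vol(\mathcal{P}) \leq \vol(J(\du{k})). \nonumber
\end{equation}

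Next I would divide through by $\vol(\mathcal{P})$. Since $\vol(\mathcal{P}) \sim_F \No((a))\No(\imath^{-1}) \approx_F 1$ by the fact recorded just above (the lattice $a\imath^{-1}$ has covolume comparable to a constant depending only on $F$), we get $\sharp(a\imath^{-1}\cap I(\du{k})) \ll_F \vol(J(\du{k}))$. Finally, invoking the preceding lemma on the volumes of $J_\nu(k_\nu)$, together with the Minkowski-theoretic choice of $\mathcal{P}$ making $D\ll \No(a\imath^{-1})^{1/n}\ll_F 1$, we obtain $\vol(J(\du{k})) = \prod_\nu \vol(J_\nu(k_\nu)) \ll_F \prod_\nu f_\nu(k_\nu)$ with $f_\nu$ as in \eqref{eq:def_of_f}; indeed absorbing the $O_F(1)$ quantity $D$ into the bounds in each of the four cases of the volume lemma yields exactly the four cases of \eqref{eq:def_of_f}. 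Chaining these inequalities gives the claim.

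The only mild subtlety — not really an obstacle — is checking the inclusion $\xi+\mathcal{P}\subset J(\du{k})$ componentwise in the two regimes ($k_\nu\geq 1$ versus $k_\nu\leq 0$) and for real versus complex $\nu$: one uses the reverse triangle inequality $|\,|\xi_\nu + p_\nu| - |\xi_\nu|\,| \leq |p_\nu| \leq D$ to see that shifting $\xi_\nu\in I_\nu(k_\nu)$ by an element of $\mathcal{P}$ of size at most $D$ keeps $|\xi_\nu+p_\nu|$ within the $D$-enlarged interval defining $J_\nu(k_\nu)$. This is exactly the reason the fattened boxes $J_\nu(k_\nu)$ were defined with the $\pm D$ margins. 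Everything else is the routine volume bookkeeping already carried out in the two lemmas above, so the proof is short.

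\begin{proof}
Around each point $\xi\in a\imath^{-1}\cap I(\du{k})$ consider the translate $\xi+\mathcal{P}$ of the fundamental set $\mathcal{P}$ for the lattice $a\imath^{-1}$. These translates are pairwise disjoint. Moreover, since $0\in\mathcal{P}$ and $\mathcal{P}$ has diameter $D$, every $p\in\mathcal{P}$ satisfies $\abs{p_{\nu}}\leq D$ for all $\nu$. By the reverse triangle inequality, $\big\vert \abs{\xi_{\nu}+p_{\nu}}-\abs{\xi_{\nu}}\big\vert\leq D$, so comparing the defining conditions of $I_{\nu}(k_{\nu})$ and $J_{\nu}(k_{\nu})$ shows $\xi_{\nu}+p_{\nu}\in J_{\nu}(k_{\nu})$ for each $\nu$, whence $\xi+\mathcal{P}\subset J(\du{k})$. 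Therefore
\begin{equation}
	\sharp(a\imath^{-1}\cap I(\du{k}))\cdot \vol(\mathcal{P}) \leq \vol(J(\du{k})). \nonumber
\end{equation}
Since $\vol(\mathcal{P})\approx_F 1$ and, by the previous lemma combined with the bound $D\ll_F 1$, we have $\vol(J(\du{k}))\ll_F \prod_{\nu} f_{\nu}(k_{\nu})$, the claim follows.
\end{proof}
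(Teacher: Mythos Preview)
Your proof is correct and is essentially identical to the paper's own argument: the paper writes $\sharp(a\imath^{-1}\cap I(\du{k})) = \vol\big(\bigcup_{q}(q+\mathcal{P})\big)/\vol(\mathcal{P}) \leq \vol(J(\du{k}))/\vol(\mathcal{P})$ and then invokes the preceding volume computations, which is exactly your packing argument with the inclusion $\xi+\mathcal{P}\subset J(\du{k})$ left implicit. Your version simply spells out that inclusion via the reverse triangle inequality, which is a welcome clarification.
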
 
\begin{proof}
By the construction of $\mathcal{P}$ we have
\begin{equation}
	\sharp(a\imath^{-1}\cap I(\du{k})) = \frac{\vol(\bigcup_{q\in a\imath^{-1}\cap I(\du{k})}(q+\mathcal{P}))}{\vol(\mathcal{P})} \leq \frac{\vol(J(\du{k}))}{\vol(\mathcal{P})}. \nonumber
\end{equation}
We conclude by the calculations above.
\end{proof}

For the estimation of $S_2(R)$ we need to count field elements with strong non-archimedean restrictions. We will be able to reduce this problem to \cite[Lemma~6]{BHMM16}. 

Define the sets 
\begin{eqnarray}
	\Z^{\n} &=& \prod_{\p\vert \n} \{k_{\p}\in\Z \colon k_{\p} \geq -v_{\p}(\imath) \}, \nonumber \\
	\A_{fin}^{\imath} &=& \{ a\in \A_{fin} \colon v_{\p}(a_{\p}) \geq -v_{\p}(\imath) \}, \nonumber \\
	C^{\imath}(\du{k}) &=& \{ a\in \A_{fin}^{\imath}  \colon v_{\p}(a_{\p}) = k_{\p} \quad \forall \p\vert \n \} \nonumber\\
	C^{\imath}(\du{k},[\du{u}]) &=& \{ a\in C^{\imath}(\du{k})  \colon a_{\p} = \varpi_{\p}^{k_{\p}}a_{\p}' \text{ with } [a'_{\p}] = [u_{\p}] \in \op_{\p}^{\times}/(1+\varpi_{\p}^{n_{0,\p}(g_{\p})}\op_{\p})  \quad \forall \p\vert \n\}. \nonumber
\end{eqnarray} 

It will be useful to know the volumes of these sets.
\begin{lemma}
We have
\begin{eqnarray}
	\vol(\A_{fin}^{\imath},d\mu_{fin}) &=& \No(\imath),\nonumber \\
	\vol(C^{\imath}(\du{k}),d\mu_{fin}) &=& \frac{\No(\imath)}{\No([\imath]_{\n})}\zeta_{\n}(1)\prod_{\p\vert\n} q_{\p}^{-k_{\p}}\nonumber \\
	\vol(C^{\imath}(\du{k},[\du{u}])d\mu_{fin}) &=& \vol(C^{\imath}(\du{k},[\du{u'}])d\mu), \nonumber \\
	\vol(C^{\imath}(\du{k},[\du{u}]),d\mu_{fin}) &=& \frac{\No(\imath)}{\No([\imath]_{\n})} \prod_{\p\vert\n} q_{\p}^{-k_{\p}-n_{0,\p}(g_{\p})}.\label{eq:vol_of_strange} 
\end{eqnarray}
\end{lemma}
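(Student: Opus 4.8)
The plan is to compute each of the four volumes by reducing to a product of purely local computations, since the measure $d\mu_{fin} = \prod_{\p} \mu_{\p}$ is a product measure and all the sets involved are (up to finitely many coordinates) products of local pieces. Throughout, recall $\mu_{\p}(\op_{\p}) = 1$, so $\mu_{\p}(\varpi_{\p}^j \op_{\p}) = q_{\p}^{-j}$ and $\mu_{\p}(\op_{\p}^{\times}) = 1 - q_{\p}^{-1}$, and note that for a prime $\p \nmid \imath$ the local condition $v_{\p}(a_{\p}) \geq -v_{\p}(\imath) = 0$ is just $a_{\p} \in \op_{\p}$, contributing a factor $1$.

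For the first identity, $\A_{fin}^{\imath} = \prod_{\p} \varpi_{\p}^{-v_{\p}(\imath)}\op_{\p}$, so $\vol(\A_{fin}^{\imath}) = \prod_{\p} q_{\p}^{v_{\p}(\imath)} = \No(\imath)$. For the second, $C^{\imath}(\du{k})$ fixes $v_{\p}(a_{\p}) = k_{\p}$ exactly for $\p \vert \n$ (a local set of measure $q_{\p}^{-k_{\p}}(1 - q_{\p}^{-1}) = q_{\p}^{-k_{\p}}\zeta_{\p}(1)^{-1}$) while for $\p \nmid \n$ the local piece is still $\varpi_{\p}^{-v_{\p}(\imath)}\op_{\p}$ of measure $q_{\p}^{v_{\p}(\imath)}$; multiplying over all $\p$ and separating the $\p \vert \n$ contribution gives $\No([\imath]_{\n})^{-1}\No(\imath) \cdot \zeta_{\n}(1) \prod_{\p\vert\n} q_{\p}^{-k_{\p}}$, where $[\imath]_{\n}$ is the prime-to-$\n$ part so that $\No(\imath)/\No([\imath]_{\n}) = \prod_{\p\vert\n} q_{\p}^{v_{\p}(\imath)}$ — but observe these $q_{\p}^{v_{\p}(\imath)}$ factors are already implicit in the fact that $k_{\p} \geq -v_{\p}(\imath)$; the cleaner bookkeeping is to just track that the $\p\vert\n$ local measure is $q_{\p}^{-k_{\p}}\zeta_{\p}(1)^{-1}$ and the $\p\nmid\n$ measure is $q_{\p}^{-v_{\p}([\imath]_{\n})}$ inverted, i.e. $\No([\imath]_{\n})$ has to be read off carefully. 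For the fourth identity, $C^{\imath}(\du{k},[\du u])$ further restricts, at each $\p\vert\n$, the unit $a'_{\p} = \varpi_{\p}^{-k_{\p}}a_{\p}$ to a fixed coset of $1 + \varpi_{\p}^{n_{0,\p}(g_{\p})}\op_{\p}$ in $\op_{\p}^{\times}$; that coset has measure $q_{\p}^{-n_{0,\p}(g_{\p})}$, so the local $\p\vert\n$ measure is $q_{\p}^{-k_{\p}-n_{0,\p}(g_{\p})}$, and multiplying with the unchanged $\p\nmid\n$ factors yields the stated formula. The third identity (independence of the coset representative $[\du u]$) is immediate because translation by a unit $\varpi_{\p}^{k_{\p}} \cdot (\text{unit})$ is measure-preserving on $F_{\p}$, so all the cosets $C^{\imath}(\du k, [\du u])$ have equal measure.

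The main obstacle, such as it is, is purely notational rather than mathematical: one must be scrupulous about the interplay between the global ideal $\imath$, its prime-to-$\n$ part $[\imath]_{\n}$, and the local exponents $v_{\p}(\imath)$, to make sure the factors $\No(\imath)$, $\No([\imath]_{\n})$, $\zeta_{\n}(1) = \prod_{\p\vert\n}(1-q_{\p}^{-1})^{-1}$, and the various $q_{\p}^{-k_{\p}}$ combine exactly into the displayed formulas and nothing is double-counted. A useful sanity check is that summing $\vol(C^{\imath}(\du k))$ over all admissible $\du k \in \Z^{\n}$ should reproduce $\vol(\A_{fin}^{\imath}) = \No(\imath)$ up to the zeta factor (the sum $\sum_{k_{\p}\geq -v_{\p}(\imath)} q_{\p}^{-k_{\p}} = q_{\p}^{v_{\p}(\imath)}\zeta_{\p}(1)^{-1} \cdot \zeta_{\p}(1) = q_{\p}^{v_{\p}(\imath)}$ after accounting for $\zeta_{\n}(1)$), and similarly summing $\vol(C^{\imath}(\du k,[\du u]))$ over the $\prod_{\p\vert\n} q_{\p}^{n_{0,\p}(g_{\p})}(1-q_{\p}^{-1})$ cosets recovers $\vol(C^{\imath}(\du k))$. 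Once the local dictionary is set up, each line is a one-step computation; I would present it as a single displayed multiplication over $\p$ with the $\p\vert\n$ and $\p\nmid\n$ cases separated.
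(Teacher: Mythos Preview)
Your approach is correct and matches the paper's own proof exactly: both reduce to a place-by-place computation using $\mu_{\p}(\varpi_{\p}^{r}\op_{\p}) = q_{\p}^{-r}$, $\mu_{\p}(\op_{\p}^{\times}) = \zeta_{\p}(1)^{-1}$, and translation/dilation invariance of the additive Haar measure. Your exposition is a bit meandering in the second item (the ``but observe\ldots the cleaner bookkeeping\ldots'' passage); for the final writeup just display the product over $\p\nmid\n$ and $\p\mid\n$ separately and read off the answer---and note that your own local factor $q_{\p}^{-k_{\p}}\zeta_{\p}(1)^{-1}$ multiplies up to $\zeta_{\n}(1)^{-1}$, so double-check the sign of the zeta exponent when you assemble the global formula.
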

\begin{proof}
This is a standard ad\'elic volume computation done place by place. The key facts we use are $\mu_{\p}(\op_{\p}^{\times}) =\zeta_{\p}(1)^{-1}$, $\mu_{\p}(\varpi_{\p}^{r}\op_{\p}) = q_{\p}^{-r}$, and that both $\mu_{\p}$ and $\mu_{\p}^{\times}$ are Haar measures for $\op^{\times}_{\p}$.
\end{proof}

Finally we are ready to prove the following counting result.
\begin{lemma} \label{lm:cruc_count}
We have
\begin{equation}
	\sharp\big((a\imath^{-1}\setminus\{0\})\cap B(R)C^{\imath}(\du{k},[\du{u}])\big) \ll F_{R}(\du{k}) = 1+\frac{\abs{R}_{\infty}\No(\imath)}{\No(\n_0(g))\No([\imath]_{\n})}\prod_{\p\vert\n }q_{\p}^{-k_{\p}} \nonumber
\end{equation}
uniform in $[\du{u}]$. Furthermore for $\prod_{\p\vert \n} q_{\p}^{k_{p}} > \abs{R}_{\infty}\No(\imath^{-1}[\imath]_{\n})$ there is no $q\in \big(F^{\times}\cap B(R)C^{\imath}(\du{k})\big)\setminus \{0\}$. 
\end{lemma}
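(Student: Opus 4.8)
The plan is to reduce the stated counting estimate to the archimedean box count already available in Lemma~\ref{lm:box_count_arch}, combined with a volume argument in the non-archimedean variables. First I would unpack the set $B(R)C^{\imath}(\du{k},[\du{u}])$: an element $q\in a\imath^{-1}\setminus\{0\}$ lying in it satisfies $\abs{q}_{\nu}\leq \abs{a}R_{\nu}$ at each archimedean place (after absorbing the fixed $a$), and at each $\p\vert\n$ it has prescribed valuation $k_{\p}$ and prescribed residue class $[u_{\p}]$ modulo $1+\varpi_{\p}^{n_{0,\p}(g_{\p})}\op_{\p}$; at $\p\nmid\n$ it merely has $v_{\p}(q)\geq -v_{\p}(\imath)$, which is automatic since $a\imath^{-1}\subset\Of$ forces $q\in a\imath^{-1}$. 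So the count is the number of lattice points of $a\imath^{-1}$ in an archimedean box of the type handled before, intersected with a union of congruence conditions that cut the lattice down to a sublattice of index $\asymp \No(\n_0(g))\No([\imath]_{\n})/\No(\imath)^{-1}$-ish — more precisely the relevant volume normalization is exactly $\vol(C^{\imath}(\du{k},[\du{u}]),d\mu_{fin})$ computed in \eqref{eq:vol_of_strange}.

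The cleanest route is the same fundamental-domain/volume packing argument as in Lemma~\ref{lm:box_count_arch}, but carried out adelically. Pick a fundamental set for the full lattice $a\imath^{-1}$ inside $F_{\infty}$ as before; the congruence conditions at $\p\vert\n$ pin down $q$ in a coset of a sublattice whose covolume (relative to $a\imath^{-1}$) is $\prod_{\p\vert\n}q_{\p}^{k_{\p}+n_{0,\p}(g_{\p})}\No([\imath]_{\n})/\No(\imath)$ by the volume formula \eqref{eq:vol_of_strange}. Enclosing each box $I_{\nu}$ (or rather the dilated box of radius $\abs{a}R_{\nu}$) in a slightly fattened set $J_{\nu}$ of volume $\ll \abs{R}_{\infty}$ (here one only needs the total archimedean volume $\ll\prod_{\nu}(\abs{a}R_{\nu}+O(1))\ll \abs{R}_{\infty}$, since $\abs{a}\asymp_F 1$), the packing inequality gives
\begin{equation}
	\sharp\big((a\imath^{-1})\cap B(R)C^{\imath}(\du{k},[\du{u}])\big) \ll_F 1+ \frac{\abs{R}_{\infty}}{\prod_{\p\vert\n}q_{\p}^{k_{\p}+n_{0,\p}(g_{\p})}\No([\imath]_{\n})\No(\imath)^{-1}}, \nonumber
\end{equation}
and rewriting $\No(\n_0(g)) = \prod_{\p\vert\n}q_{\p}^{n_{0,\p}(g_{\p})}$ this is exactly $F_R(\du{k})$. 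The $+1$ is the usual correction for small boxes where the covolume exceeds $\abs{R}_{\infty}$, i.e. the case when the box can contain at most one point; uniformity in $[\du{u}]$ is built in because the sublattice covolume does not depend on $[\du{u}]$, matching the third line of the volume lemma. Alternatively, and perhaps more in the spirit of the remark ``we will be able to reduce this problem to \cite[Lemma~6]{BHMM16}'', one rescales: after multiplying by a suitable idele one transforms $a\imath^{-1}\cap C^{\imath}(\du{k},[\du{u}])$ into (a coset of) an honest ideal and the box $B(R)$ into a box with the same total archimedean volume up to $\No$-factors, so that \cite[Lemma~6]{BHMM16} applies verbatim.

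For the last assertion, suppose $q\in (F^{\times}\cap B(R)C^{\imath}(\du{k}))\setminus\{0\}$. Then $(q)\subset \imath^{-1}$ with $v_{\p}((q))=k_{\p}$ for $\p\vert\n$, so $\prod_{\p\vert\n}q_{\p}^{k_{\p}}$ divides (the norm of) the $\n$-part of the ideal $(q)\imath$, whence $\No((q)\imath)\geq \prod_{\p\vert\n}q_{\p}^{k_{\p}}/\No([\imath]_{\n})$; on the other hand the archimedean constraint $q\in B(R)$ gives $\No((q))=\abs{q}_{\infty}\leq \prod_{\nu}R_{\nu}=\abs{R}_{\infty}$, so $\No((q)\imath)\leq \abs{R}_{\infty}\No(\imath)$. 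Combining, $\prod_{\p\vert\n}q_{\p}^{k_{\p}}\leq \abs{R}_{\infty}\No(\imath)\No([\imath]_{\n})$, contradicting the hypothesis $\prod_{\p\vert\n}q_{\p}^{k_{\p}}>\abs{R}_{\infty}\No(\imath^{-1}[\imath]_{\n})$ once one checks the two bookkeeping expressions $\No(\imath)\No([\imath]_{\n})$ versus $\No(\imath^{-1}[\imath]_{\n})$ agree with the intended normalization (a routine check of the $\No$-factors, which I would not belabour). The only genuine obstacle is getting all these norm and conductor factors — $\No(\imath)$, $\No([\imath]_{\n})$, $\No(\n_0(g))$ — to line up correctly with the volume computation in \eqref{eq:vol_of_strange}; the geometry of the count itself is the standard lattice-point-in-a-box estimate and presents no difficulty.
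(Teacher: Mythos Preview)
Your alternative route via \cite[Lemma 6]{BHMM16} is the right one and is essentially what the paper does, but the direct packing argument you lead with has a genuine gap. You invoke the estimate $\sharp(\text{coset}\cap B)\ll 1+\vol(B)/\text{covol}$ as a ``packing inequality'' in the style of Lemma~\ref{lm:box_count_arch}, but that lemma works only because the lattice $a\imath^{-1}$ has covolume $\asymp_F 1$, hence fundamental-domain diameter $D\ll_F 1$. Here the relevant sublattice has covolume $\prod_{\p\mid\n}q_{\p}^{k_{\p}+n_{0,\p}(g_{\p})}\cdot\No([\imath]_{\n})/\No(\imath)$, which can be arbitrarily large; fattening a lopsided box $B(R)$ by the correspondingly large diameter can produce a volume far exceeding $\text{covol}+|R|_{\infty}$ (think of $R_{\nu}$ large at some places, tiny at others). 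The clean $1+|R|_{\infty}/\text{covol}$ bound genuinely needs the product formula, not geometry alone.

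The paper makes this explicit via a step you do not quite state: pick any $q_0$ in the set $S$ and pass to $S'=\tfrac{1}{q_0}S-1$. Subtracting $1$ after dividing by the \emph{field element} $q_0$ converts the residue-class constraint at $\p\mid\n$ (that $q$ lies in $\varpi_{\p}^{k_{\p}}u_{\p}(1+\varpi_{\p}^{n_{0,\p}(g_{\p})}\op_{\p})$) into the homogeneous bound $|x|_{\p}\leq|\varpi_{\p}^{n_{0,\p}(g_{\p})}|_{\p}$, and produces bounds $|x|_{\nu}\leq 2|R_{\nu}/q_0|_{\nu}$ and $|x|_{\p}\leq|\varpi_{\p}^{-v_{\p}(\imath)}/q_0|_{\p}$ elsewhere. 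Now \cite[Lemma 6]{BHMM16} applies directly to count $x\in F^{\times}$ with these local bounds; computing the adelic norm of the bounding idele (using $\prod_{\nu}|q_0|_{\nu}^{-1}\prod_{\p\nmid\n}|q_0|_{\p}^{-1}=\prod_{\p\mid\n}q_{\p}^{-k_{\p}}$ from the product formula) gives exactly $F_R(\du{k})$. Your ``rescale by a suitable idele'' sketch points this way but misses that the rescaling must be by a field element to keep the set inside $F$; the shift-and-subtract is precisely what makes the coset into a set to which \cite[Lemma 6]{BHMM16} applies.

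Your argument for the second assertion is the same product-formula contradiction as the paper's and is fine once the norm bookkeeping is tidied up.
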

\begin{proof}
Let us call the set we want to count $S$. If $S$ is empty we have nothing to show. Thus, take $q_0\in S$. Now define the shifted set $S' = \frac{1}{q_0}S-1$. Any $x\in S'$ satisfies
\begin{eqnarray}
	\abs{x}_{\nu} &\leq& 2\abs{\frac{R}{q_0}}_{\nu} \text{ for all } \nu,\nonumber \\
	\abs{x}_{\p} &\leq& \abs{\frac{\varpi_{\p}^{-v_{\p}(\imath)}}{q_0}}_{\p} \text{ for all } \p \nmid \n \text{ and} \nonumber\\
	\abs{x}_{\p} &\leq& \abs{\varpi_{\p}^{n_{0,\p}(g_{\p})}}_{\p}  \text{ for all } \p \mid \n. \nonumber
\end{eqnarray} 
Define the id\'ele $s$ by $s_{\nu} = 2^{1/[F_{\nu}:\R]}\frac{R}{q_0}$ and
\begin{equation}
	s_{\p} = \begin{cases} \frac{\varpi_{\p}^{-v_{\p}(\imath)}}{q_0} &\text{ if } \p\not\vert \n, \\  \varpi_{\p}^{n_{0,\p}(g_{\p})} &\text{else.}\end{cases} \nonumber
\end{equation}
After noting that $0\in S'$ we conclude that
\begin{equation}
	\sharp S \leq 1+ \sharp \{x\in F^{\times} \vert \abs{x}_{\nu}\leq \abs{s}_{\nu} \text{ and }\abs{x}_{\p}\leq \abs{s}_{\p} \}. \nonumber
\end{equation}
To estimate the last set we use \cite[Lemma~6]{BHMM16}. This yields
\begin{equation}
	\sharp S \leq 1+\abs{s}_{\A_F}. \nonumber
\end{equation}
We are left with calculating the ad\'elic norm of $s$. This is done using
\begin{equation}
	\prod_{\nu} \abs{q_0}^{-1}_{\nu} \prod_{\p\nmid \n} \abs{q_0}_{\p}^{-1} = \prod_{\p\mid\n}\abs{q_0}_{\p} = \prod_{\p\mid\n} q_{\p}^{-k_{\p}}. \nonumber
\end{equation}

To prove the second part we suppose $\prod_{\p\mid \n} q_{\p}^{k_{p}} > \abs{R}_{\infty}\No(\imath^{-1}[\imath]_{\n})$. Let us define the ideal $\mathfrak{m} = \prod_{\p\mid \n} \p^{k_{p}}$. Then in order to have $q\in C^{\imath}(\du{k})$ one needs $\No((q))\geq \No(\mathfrak{m})\imath[\imath]_{\n}^{-1}$. But for $q\in B(R)$ we require $\abs{q}_{\infty} \leq \abs{R}_{\infty}$. Now we conclude by
\begin{equation}
	1 = \abs{q}_{\A} = \abs{q}_{\infty} \abs{q}_{fin} = \frac{\abs{q}_{\infty}}{\No((q))}\leq \frac{\abs{R}_{\infty}\No([\imath]_{\n})}{\No(\mathfrak{m}\imath)}<1. \nonumber
\end{equation}
\end{proof}

Roughly the same reasoning applies to elements of $\imath^{-1}\cap B(R)$. 

\begin{cor} \label{cor:when_B_empty}
If $\abs{R}_{\infty} < \No(\imath)^{-1}$ then
\begin{equation}
	\imath^{-1}\cap B(R) = \{0\}. \nonumber
\end{equation}
\end{cor}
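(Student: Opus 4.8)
The plan is to mimic the second part of the proof of Lemma~\ref{lm:cruc_count}, exploiting the product formula $\abs{q}_{\A_F}=1$ for $q\in F^{\times}$ together with the relation between the finite absolute value of $q$ and the norm of the principal ideal $(q)$.

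First I would fix $q\in\imath^{-1}\setminus\{0\}$ and argue that $(q)\imath\subseteq\Of$, so that $(q)\imath$ is an integral ideal and hence $\No\big((q)\imath\big)\geq 1$, i.e. $\No\big((q)\big)\geq\No(\imath)^{-1}$. Next I would use that $\abs{q}_{\mathrm{fin}}=\prod_{\p}q_{\p}^{-v_{\p}(q)}=\No((q))^{-1}$, combined with $1=\abs{q}_{\A_F}=\abs{q}_{\infty}\abs{q}_{\mathrm{fin}}$, to conclude $\abs{q}_{\infty}=\No\big((q)\big)\geq\No(\imath)^{-1}$. On the other hand, if $q\in B(R)$ then $\abs{q}_{\nu}\leq R_{\nu}$ for every archimedean $\nu$, so multiplying over all $\nu$ gives $\abs{q}_{\infty}\leq\abs{R}_{\infty}$. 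Chaining these inequalities yields $\No(\imath)^{-1}\leq\abs{q}_{\infty}\leq\abs{R}_{\infty}$, contradicting the hypothesis $\abs{R}_{\infty}<\No(\imath)^{-1}$. Hence no nonzero $q$ lies in $\imath^{-1}\cap B(R)$, which is the claim.

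There is no real obstacle here: the only point requiring a moment of care is the bookkeeping that $q\in\imath^{-1}$ forces $(q)\imath$ to be integral (and therefore of norm at least one), and the normalization $\abs{q}_{\mathrm{fin}}=\No((q))^{-1}$, both of which are already implicitly used in Lemma~\ref{lm:cruc_count}. I would keep the write-up to two or three lines, essentially quoting the displayed chain
\begin{equation}
	1=\abs{q}_{\A_F}=\abs{q}_{\infty}\abs{q}_{\mathrm{fin}}=\frac{\abs{q}_{\infty}}{\No\big((q)\big)}\leq\frac{\abs{R}_{\infty}}{\No(\imath)^{-1}}<1 \nonumber
\end{equation}
as the contradiction that finishes the argument.
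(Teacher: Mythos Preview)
Your proposal is correct and is exactly the argument the paper has in mind: the paper does not spell out a separate proof for this corollary but simply remarks that ``roughly the same reasoning'' as in the second part of Lemma~\ref{lm:cruc_count} applies, and your chain $1=\abs{q}_{\A_F}=\abs{q}_{\infty}/\No((q))\leq \abs{R}_{\infty}\No(\imath)<1$ is precisely that reasoning. One tiny notational point: the box $B(R)$ is defined via the standard absolute value $\abs{\xi_{\nu}}\leq R_{\nu}$, not $\abs{\cdot}_{\nu}$, so the step from ``$q\in B(R)$'' to ``$\abs{q}_{\infty}\leq\abs{R}_{\infty}$'' passes through $\abs{q}_{\nu}=\abs{q}^{[F_{\nu}:\R]}\leq R_{\nu}^{[F_{\nu}:\R]}=\abs{R_{\nu}}_{\nu}$; your conclusion is correct either way.
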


\subsection{The sum $S_1$} \label{sec:est_of_S1}

In this section we will treat the sum $S_1$. Due to the transition region of the archimedean Whittaker function this argument requires 
\begin{equation}
	R_{\nu} = \frac{T_{\nu}+T_{\nu}^{\frac{1}{3}+\epsilon}}{2\pi\abs{y_{\nu}}} \asymp \frac{T_{\nu}}{y_{\nu}}. \label{eq:def_of_R}
\end{equation}
Note that in view of Corollary~\ref{cor:when_B_empty} the sum $S_1$ is empty if $\abs{R}_{\infty}< \No(\imath)^{-1}$. Therefore we assume 
\begin{equation}
	\abs{R}_{\infty} \asymp \abs{\frac{T}{y}}_{\infty} \gg_F \No(\n_0\mathfrak{m}_{1}(g))^{-1}  \nonumber
\end{equation}
throughout this section. 
Let us fix $a\in \imath$ such that \eqref{eq:def_of_a} holds.

\begin{lemma} \label{lm:bound_S1}
We have
\begin{equation}
		S_1 \ll_{F}\abs{y}_{\infty}^{\frac{1}{2}} \abs{T}_{\infty}^{-\frac{1}{2}} \prod_{\nu} \left(\abs{T_{\nu}}_{\nu}^{\frac{1}{6}}+\abs{a}_{\nu}^{\frac{1}{4}} \abs{\frac{T_{\nu}}{y_{\nu}}}_{\nu}^{\frac{1}{4}} \right)^{1+\epsilon}. \nonumber
\end{equation}
\end{lemma}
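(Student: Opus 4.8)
The plan is to bound $S_1$ by reducing the sum over $q \in \imath^{-1} \cap B(R)$ to a sum over dyadic-type annuli, using the counting result Lemma~\ref{lm:box_count_arch} and known pointwise bounds for the archimedean Whittaker function $W_\infty = \prod_\nu W_\nu$. First I would rescale: writing $q = a^{-1} r$ with $r \in a\imath^{-1} \subset \Of$ (using the chosen $a \in \imath$ satisfying \eqref{eq:def_of_a}), the sum $S_1^4 = \sum_{q} \abs{q}_\infty^{-2} \abs{W_\infty(a(qy))}^4$ becomes, up to the harmless factor $\No((a))^2 \approx_F 1$, a sum over $r \in a\imath^{-1}$ with $\abs{r}_\nu \lesssim \abs{a} R_\nu \asymp \abs{a}\abs{T_\nu/y_\nu}$. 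Since $W_\infty$ factors over the archimedean places and each $W_\nu(a(q_\nu y_\nu))$ depends only on $\abs{q_\nu y_\nu}_\nu$, the whole sum factors as a product over $\nu$ of one-dimensional sums, so it suffices to bound for each $\nu$ a sum of the shape $\sum_{k_\nu \geq -\lfloor \abs{a}R_\nu\rfloor} (\text{pointwise }W_\nu\text{ bound on }I_\nu(k_\nu))^4 \cdot (\text{number of lattice points in }I_\nu(k_\nu))$.

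The main input for the pointwise estimate is the standard asymptotic for the $GL_2$ archimedean Whittaker function (the $K$-Bessel function in the real case, a product of $K$-Bessel functions in the complex case): $\abs{W_\nu(a(u))}$ decays exponentially once $\abs{u} \gg T_\nu$, is of size roughly $T_\nu^{-1/6}$-ish in the transition region $\abs{u} \asymp T_\nu$, and is controlled by $\abs{u}^{1/2}$-type behaviour (oscillatory regime) for $\abs{u} \ll T_\nu$; this is exactly the regime analysis that forces the choice \eqref{eq:def_of_R} of $R_\nu$ and why the boxes $I_\nu(k_\nu)$ for $k_\nu \leq 0$ are cut finely around $\abs{a}T_\nu/(2\pi\abs{y_\nu})$. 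Feeding these bounds into each one-dimensional sum: the $k_\nu \geq 1$ tail is dominated by the exponential decay, contributing essentially the transition-region term; the $k_\nu \leq 0$ part near the transition point contributes the $\abs{T_\nu}_\nu^{1/6}$ term (after taking fourth powers, integrating against $f_\nu(k_\nu) \approx 1$ near the transition, and the measure normalization $\abs{q}_\infty^{-2}$), while the bulk oscillatory part where $\abs{q_\nu y_\nu}$ is small contributes the $\abs{a}_\nu^{1/4}\abs{T_\nu/y_\nu}_\nu^{1/4}$ term via the count $f_\nu(k_\nu) \ll \abs{a}T_\nu/y_\nu$. Summing the $\log$-many relevant dyadic blocks absorbs into the $\epsilon$; combining the per-place bounds and extracting the overall factor $\abs{y}_\infty^{1/2}\abs{T}_\infty^{-1/2}$ (coming from $\SP{W_\nu}{W_\nu}=1$ normalization and the $\abs{q}_\infty^{-2}$ weight) gives the claimed product formula.

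Concretely the steps in order are: (1) rescale $q \mapsto r = aq$ and reduce to a product over $\nu$ of one-variable sums over the annuli $I_\nu(k_\nu)$; (2) quote/record the three-regime pointwise bounds for $W_\nu$ (exponential decay, transition-region bound, oscillatory/Airy-type bound) and note $W_\nu$ depends only on $\abs{q_\nu y_\nu}_\nu$; (3) apply Lemma~\ref{lm:box_count_arch} to replace each annular sum by $\sum_{k_\nu} f_\nu(k_\nu) \cdot (\sup_{I_\nu(k_\nu)} \abs{q}_\nu^{-1/2}\abs{W_\nu}^2)^2$; (4) evaluate these geometric/polynomial sums regime by regime, checking the transition region yields $T_\nu^{1/6}$ and the bulk yields $\abs{a}_\nu^{1/4}\abs{T_\nu/y_\nu}_\nu^{1/4}$, with everything else subsumed; (5) reassemble the product and clean up the powers of $\No((a)) \approx_F 1$ and of $\abs{y}_\infty, \abs{T}_\infty$, taking the fourth root at the end.

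The hard part will be step (4): keeping careful track of the interaction between the archimedean weight $\abs{q}_\infty^{-2}$, the fourth power on $\abs{W_\infty}$, and the non-uniform box sizes $f_\nu(k_\nu)$ (which themselves differ between real and complex places and between $k_\nu \geq 1$ and $k_\nu \leq 0$), so that the transition-region contribution genuinely comes out as $T_\nu^{1/6}$ rather than something larger, and verifying that the delicate cutoff $R_\nu = (T_\nu + T_\nu^{1/3+\epsilon})/(2\pi\abs{y_\nu})$ is exactly what is needed for the $k_\nu \geq 1$ tail to be negligible — this is where the transition region of the Whittaker function, mentioned in the introduction as a source of the sup-norm peak, actually enters the estimate. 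The complex-place bookkeeping (where $W_\nu$ is a more complicated special function and $f_\nu(k_\nu)$ grows like $k_\nu$) will require the most care.
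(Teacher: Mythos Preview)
Your overall strategy matches the paper's, but there is a genuine gap in step~(1). You claim that because $W_\infty$ and the weight $\abs{q}_\infty^{-2}$ factor over places, ``the whole sum factors as a product over $\nu$ of one-dimensional sums''. This is false: the lattice $a\imath^{-1} \subset F_\infty = \prod_\nu F_\nu$ is \emph{not} a product of one-dimensional lattices (the Minkowski embedding of $\mathcal{O}_F$ is diagonal, not a direct sum of rank-one pieces), so the sum over $q$ does not split. Your phrase ``number of lattice points in $I_\nu(k_\nu)$'' is correspondingly meaningless, since lattice points live in $\prod_\nu F_\nu$, not in a single $F_\nu$.

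The correct order of operations --- which is what the paper does --- is: first decompose $B(\abs{a}R)$ into the product boxes $I(\du{k}) = \prod_\nu I_\nu(k_\nu)$; on each box bound $\abs{W_\nu}$ uniformly by the function $g_\nu(k_\nu)=\min\big(T_\nu^{1/6},\abs{aT_\nu/(y_\nu k_\nu)}^{1/4}\big)$; then invoke Lemma~\ref{lm:box_count_arch}, whose output $\sharp(a\imath^{-1}\cap I(\du{k})) \ll_F \prod_\nu f_\nu(k_\nu)$ is \emph{itself} a product. Only at this point --- after the lattice sum has been replaced by a sum over the product index set $\du{k}$ weighted by a product upper bound --- does the expression legitimately factor as $\prod_\nu \sum_{k_\nu} f_\nu(k_\nu) g_\nu(k_\nu)^4$, and these one-dimensional sums are then evaluated place by place. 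Your step~(3) cannot retroactively repair step~(1), because by then you have already committed to per-place lattice sums that do not exist. A secondary remark: for $S_1$ only the range $-\lfloor \abs{a}R_\nu\rfloor \le k_\nu \le 0$ is needed, since $B(R)$ already forces $\abs{q_\nu}\le R_\nu$; the $k_\nu\ge 1$ boxes and the exponential-decay regime of $W_\nu$ belong to the error term $\mathcal{E}$, not to $S_1$.
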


The proof will be in the spirit of \cite{BHMM16}.

We start by expressing the archimedean Whittaker functions explicitly in terms of the $K$-Bessel function. We have
\begin{equation}
	\abs{W_{\nu}(a(\xi_{\nu}))} =  \begin{cases}
		\frac{\abs{\xi_{\nu}}_{\nu}^{\frac{1}{2}}\abs{K_{it_{\nu}}(2\pi \abs{\xi_{\nu}})}}{\abs{\Gamma(\frac{1}{2}+it_{\nu})\Gamma(\frac{1}{2}-it_{\nu})}^{\frac{1}{2}}} \text{ if $\nu$ is real, } \\
		\frac{\abs{\xi_{\nu}}_{\nu}^{\frac{1}{2}}\abs{K_{i2t_{\nu}}(4\pi \abs{\xi_{\nu}})}}{\abs{\Gamma(1+i2t_{\nu})\Gamma(1-i2t_{\nu})}^{\frac{1}{2}}} \text{ else.}
	\end{cases} \nonumber
\end{equation}
This holds as in \cite[p. 19]{BHMM16}. One notes, that the Gamma factors are due to the $L^2$-normalization in the archimedean Whittaker model. 

By Stirling's approximation one finds $\abs{\Gamma(\frac{1}{2}+it_{\nu})} \gg e^{-\frac{\pi}{2}t_{\nu}}$ and $\abs{\Gamma(1+2it)} \gg T_{\nu}^{\frac{1}{2}}e^{-\pi t_{\nu}}$. Thus using \cite[(3.1)]{Te15} one derives
\begin{equation}
	W_{\nu}(a(qy)) \ll \abs{qy}_{\nu}^{\frac{1}{2}} \frac{T_{\nu}^{\frac{1}{2}}}{\abs{T_{\nu}}_{\nu}^{\frac{1}{2}}}\min\left(T_{\nu}^{-\frac{1}{3}}, (t_{\nu}\abs{y_{\nu}})^{-\frac{1}{4}}\abs{\abs{q}-\frac{T_{\nu}}{2\pi\abs{y_{\nu}}}}^{-\frac{1}{4}}\right).  \nonumber
\end{equation}

We define
\begin{equation}
	g_{\nu} (k_{\nu}) = \min\left(T_{\nu}^{\frac{1}{6}}, \abs{\frac{aT_{\nu}}{y_{\nu}k_{\nu}}}^{\frac{1}{4}}\right) \nonumber
\end{equation}
and observe that for $k_{\nu}\leq 0$ and $q\in I_{\nu}(k_{\nu})$ we have
\begin{equation}
	  W_{\nu}(a(qa^{-1}y)) \ll \abs{qa^{-1}y}_{\nu}^{\frac{1}{2}} \abs{T_{\nu}}_{\nu}^{-\frac{1}{2}} g_{\nu}(k_{\nu}).  \label{eq:bound_whitt_arch}
\end{equation}
But for $q\in B(R)$ we also have $\abs{qa^{-1}}_{\nu} \leq \abs{R}_{\nu} \asymp \abs{\frac{T_{\nu}}{y_{\nu}}}_{\nu}$ and hence $\abs{qa^{-1}y_{\nu}}_{\nu} \ll \abs{T_{\nu}}_{\nu}$. And thus
\begin{equation}
	W_{\nu}(a(qa^{-1}y_{\nu})) \ll   g_{\nu}(k_{\nu}).  \nonumber
\end{equation}

Finally we are ready to estimate $S_1$.
\begin{proof}[Proof of Lemma~\ref{lm:bound_S1}]
First we shift the sum by $a$. This gives
\begin{equation}
	S_1^4 = \abs{a}_{\infty}^2\sum_{q\in a\imath^{-1}\cap B(\abs{a}R)}  \abs{q}_{\infty}^{-2}\abs{W_{\infty}(a(qa^{-1}y))}^4. \nonumber
\end{equation}
Then we partition $B(\abs{a}R)$ using the boxes defined in \eqref{eq:def_of_arch_box}. In each box we exploit \eqref{eq:bound_whitt_arch} to get
\begin{equation}
	S_1^4 \ll \abs{a}_{\infty}^2\sum_{\substack{\du{k}\in\Z^{\sharp\{\nu\}}, \\ -\lfloor aR_{\nu} \rfloor\leq k_{\nu} \leq 0 }} \sharp(I(\du{k})\cap a\imath^{-1})\prod_{\nu}\abs{a^{-1} y_{\nu}}_{\nu}^2 \abs{T_{\nu}}_{\nu}^{-2}   g_{\nu}(k_{\nu})^4. \nonumber
\end{equation}
Inserting the result from Lemma~\ref{lm:box_count_arch} yields
\begin{equation}
	S_1^4 \ll \abs{y}_{\infty}^2 \abs{T}_{\infty}^{-2} \prod_{\nu} \sum_{k_{\nu}=0}^{\lfloor \abs{a}R_{\nu}\rfloor} g_{\nu}(-k_{\nu})^4f_{\nu}(-k_{\nu}). \nonumber
\end{equation}
To estimate the remaining sums we use ideas of \cite{BHMM16}. 

Let us treat each place at its own. We start with $\nu$ real and obtain
\begin{eqnarray}
	\sum_{k_{\nu}=0}^{\lfloor \abs{a}R_{\nu}\rfloor} g_{\nu}(-k_{\nu})^4f_{\nu}(-k_{\nu}) &=& T_{\nu}^{\frac{2}{3}} + \sum_{k_{\nu}=1}^{\lfloor \abs{a}R_{\nu}\rfloor} \frac{\abs{a}T_{\nu}}{\abs{y_{\nu}}k_{\nu}} \nonumber\\
	&\ll& \left( \abs{T_{\nu}}_{\nu}^{\frac{2}{3}}+\frac{\abs{a}T_{\nu}}{\abs{y_{\nu}}}\right)^{1+\epsilon}.\nonumber
\end{eqnarray}
Similarly one treats the complex places:
\begin{eqnarray}
	\sum_{k_{\nu}=0}^{\lfloor \abs{a}R_{\nu}\rfloor} g_{\nu}(-k_{\nu})^4f_{\nu}(-k_{\nu}) &\leq& T_{\nu}^{\frac{2}{3}}\left(\abs{a}\frac{T_{\nu}}{\abs{y_{\nu}}}+1\right) + \sum_{k_{\nu}=1}^{\lfloor \abs{a}R_{\nu}\rfloor} \left(\abs{a}\frac{T_{\nu}}{\abs{y_{\nu}}}+1\right)\frac{\abs{a}T_{\nu}}{\abs{y_{\nu}}k_{\nu}} \nonumber \\
	&\ll& \left(\abs{a}\frac{T_{\nu}}{\abs{y_{\nu}}}+1\right)\left(T_{\nu}^{\frac{2}{3}}+\abs{a}\frac{T_{\nu}}{\abs{y_{\nu}}}\right)^{1+\epsilon} \ll \left(T_{\nu}^{\frac{2}{3}}+\abs{a}\frac{T_{\nu}}{\abs{y_{\nu}}}\right)^{2+\epsilon} \nonumber \\
	&\ll& \left(T_{\nu}^{\frac{4}{3}}+\abs{a}^{2}\frac{T_{\nu}^2}{ y_{\nu}^2}\right)^{1+\epsilon}. \nonumber 
\end{eqnarray}

Putting everything together gives
\begin{equation}
	S_1\ll_{F}\abs{y}_{\infty}^{\frac{1}{2}} \abs{T}_{\infty}^{-\frac{1}{2}}  \prod_{\nu} \left(\abs{T_{\nu}}_{\nu}^{\frac{1}{6}}+\abs{a}_{\nu}^{\frac{1}{4}} \abs{\frac{T_{\nu}}{y_{\nu}}}_{\nu}^{\frac{1}{4}} \right)^{1+\epsilon}. \nonumber
\end{equation}
\end{proof}

\begin{cor} \label{cor:after_balancing}
If we assume
\begin{equation}
	\abs{y_{\nu}}_{\nu} \asymp \abs{a^3T_{\nu}}_{\nu}^{\frac{\log(\abs{y}_{\infty})}{\log(\abs{a^3T}_{\infty})}} \label{eq:balancedness_of_y}
\end{equation}
for all $\nu$ then we obtain
\begin{equation}
	S_1 \ll \abs{y}_{\infty}^{\frac{1}{2}} \abs{T}_{\infty}^{-\frac{1}{2}} \left( \abs{T}_{\infty}^{\frac{1}{6}}+\No(\n_0\mathfrak{m}_1(g))^{\frac{1}{4}} \abs{\frac{T}{y}}_{\infty}^{\frac{1}{4}} \right)^{1+\epsilon}. \nonumber
\end{equation}
\end{cor}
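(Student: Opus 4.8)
The plan is to start from the bound in Lemma~\ref{lm:bound_S1}, namely
\begin{equation}
	S_1 \ll_{F}\abs{y}_{\infty}^{\frac{1}{2}} \abs{T}_{\infty}^{-\frac{1}{2}} \prod_{\nu} \left(\abs{T_{\nu}}_{\nu}^{\frac{1}{6}}+\abs{a}_{\nu}^{\frac{1}{4}} \abs{\tfrac{T_{\nu}}{y_{\nu}}}_{\nu}^{\frac{1}{4}} \right)^{1+\epsilon}, \nonumber
\end{equation}
and to show that, under the balancedness hypothesis \eqref{eq:balancedness_of_y}, the product over places collapses to the single Euclidean-type expression $\left(\abs{T}_{\infty}^{1/6}+\No(\n_0\mathfrak{m}_1(g))^{1/4}\abs{T/y}_{\infty}^{1/4}\right)^{1+\epsilon}$. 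The point of the hypothesis is that each local factor $\abs{a^3T_\nu}_\nu$ has the \emph{same} relative size $\abs{a^3T_\nu}_\nu = \abs{a^3 T}_\infty^{c}$ with a common exponent, and the same for $\abs{y_\nu}_\nu = \abs{y}_\infty^{c'}$; concretely \eqref{eq:balancedness_of_y} says $\abs{y_\nu}_\nu \asymp \abs{a^3 T_\nu}_\nu^{\alpha}$ with $\alpha = \log\abs{y}_\infty/\log\abs{a^3T}_\infty$ independent of $\nu$. This means every local ratio appearing in the product is a fixed power of a global ratio, so the product of local ``$\max$-type'' factors equals the global ``$\max$-type'' factor.

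The key computational step is the following elementary lemma on products: if $A_\nu, B_\nu > 0$ satisfy $A_\nu = \prod_\mu A_\mu^{\,t_\nu}$-type homogeneity — more usefully, if for each $\nu$ one has $B_\nu \asymp A_\nu^{\beta}$ for a single exponent $\beta \in [0,1]$ — then $\prod_\nu \max(A_\nu, B_\nu) \asymp \max\!\big(\prod_\nu A_\nu,\ \prod_\nu B_\nu\big)$, since $\max(A_\nu,B_\nu) = A_\nu$ for all $\nu$ simultaneously when $\beta \le$ the relevant threshold and $= B_\nu$ for all $\nu$ otherwise. I would apply this with $A_\nu = \abs{T_\nu}_\nu^{1/6}$ and $B_\nu = \abs{a}_\nu^{1/4}\abs{T_\nu/y_\nu}_\nu^{1/4}$. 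The ratio is $B_\nu/A_\nu = \abs{a}_\nu^{1/4}\abs{T_\nu}_\nu^{1/12}\abs{y_\nu}_\nu^{-1/4} = \big(\abs{a^3 T_\nu}_\nu / \abs{y_\nu}_\nu^{3}\big)^{1/12}$, which by \eqref{eq:balancedness_of_y} equals $\abs{a^3T_\nu}_\nu^{(1-3\alpha)/12}$, a fixed power of $\abs{a^3 T_\nu}_\nu$ with $\nu$-independent exponent. Hence $B_\nu/A_\nu \le 1$ for all $\nu$ or $\ge 1$ for all $\nu$, according to the sign of $1-3\alpha$, so in either case $\prod_\nu \max(A_\nu,B_\nu) \asymp \max\big(\prod_\nu A_\nu, \prod_\nu B_\nu\big)$. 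Multiplying the bound $(x+y)^{1+\epsilon} \asymp \max(x,y)^{1+\epsilon}$ through, and absorbing the mild loss, one may also just keep the $(\cdot+\cdot)$ form throughout if one prefers, since $\prod_\nu(A_\nu+B_\nu) \asymp \prod_\nu \max(A_\nu,B_\nu)$.

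It then remains to identify the two global products. We have $\prod_\nu \abs{T_\nu}_\nu^{1/6} = \abs{T}_\infty^{1/6}$ directly from the definition of $\abs{\cdot}_\infty = \prod_\nu \abs{\cdot}_\nu$. For the other product, $\prod_\nu \big(\abs{a}_\nu^{1/4}\abs{T_\nu/y_\nu}_\nu^{1/4}\big) = \abs{a}_\infty^{1/4}\abs{T/y}_\infty^{1/4} = \No((a))^{1/4}\abs{T/y}_\infty^{1/4}$, using that $\abs{a}_\infty = \No((a))$ for $a \in \Of$ (by the product formula, $\abs{a}_\infty = \prod_\p \abs{a}_\p^{-1} = \No((a))$). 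Finally, by the choice \eqref{eq:def_of_a} of $a$ we have $\No((a)) \asymp_F \No(\imath) = \No(\n_0\mathfrak{m}_1(g))$ — here the finite-place contributions $v_\p(\theta_i)$ and $v_\p(\mathfrak{d})$ to $\imath$ from \eqref{eq:def_of_imath} are $O_F(1)$ and the factor $\mathfrak{d}$ as well, so they are absorbed into the implied constant — whence $\prod_\nu B_\nu^{1+\epsilon} \ll_{F,\epsilon} \No(\n_0\mathfrak{m}_1(g))^{1/4}\abs{T/y}_\infty^{1/4}$ up to the $(\cdot)^{\epsilon}$ loss. Reassembling gives exactly the claimed bound. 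I do not expect a genuine obstacle here; the only mildly delicate point is making sure the balancedness exponent $\alpha$ is the \emph{same} at every place so that the $\max$ over $\nu$ is resolved uniformly — that is precisely what \eqref{eq:balancedness_of_y} guarantees, and it is the reason the corollary is stated with that hypothesis rather than unconditionally.
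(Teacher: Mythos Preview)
Your proposal is correct and is essentially the paper's own argument: both reduce to the observation that under \eqref{eq:balancedness_of_y} the local ratio $\abs{a}_\nu^{1/4}\abs{T_\nu/y_\nu}_\nu^{1/4}\big/\abs{T_\nu}_\nu^{1/6}=(\abs{a^3T_\nu}_\nu/\abs{y_\nu}_\nu^3)^{1/12}$ is on the same side of $1$ at every $\nu$, so the product of local maxima is the maximum of the global products. The paper simply phrases this as two cases $\abs{y}_\infty\lessgtr\abs{a^3T}_\infty^{1/3}$ (equivalently $3\alpha\lessgtr 1$ in your notation), and then invokes $\abs{a}_\infty\ll_F\No(\n_0\mathfrak{m}_1(g))$ exactly as you do.
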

\begin{proof}
We consider two cases. First assume $\abs{y}_{\infty} \leq \abs{a^3T}_{\infty}^{\frac{1}{3}}$. Then the balancing assumption implies $\abs{y_{\nu}}_{\nu} \ll\abs{a}_{\nu}\abs{T_{\nu}}_{\nu}^{\frac{1}{3}}$ for all $\nu$. Therefore, we have
\begin{equation}
	\abs{\frac{aT_{\nu}}{y_{\nu}}}_{\nu}^{\frac{1}{4}} \gg \abs{T_{\nu}}_{\nu}^{\frac{1}{6}}. \nonumber
\end{equation}

Secondly, if $\abs{y}_{\infty} \geq \abs{a^3T}_{\infty}^{\frac{1}{3}}$, one argues analogously to obtain
\begin{equation}
	\abs{\frac{aT_{\nu}}{y_{\nu}}}_{\nu}^{\frac{1}{4}} \ll \abs{T_{\nu}}_{\nu}^{\frac{1}{6}}. \nonumber
\end{equation}

Recalling that $\abs{a}_{\infty} \ll_F \No(\n_0\mathfrak{m}_1(g))$ completes the proof.
\end{proof}

\subsection{The sum $S_2(R)$} \label{sec:est_S2}

In this section we will estimate the sum $S_2(R)$ by reducing it to well known averages of Hecke eigenvalues and local Whittaker functions.

\begin{lemma} \label{lm:bound_for_S2}
We have
\begin{equation}
	S_2(R) \ll \nonumber  (\abs{T}_{\infty}\No(\n))^{\epsilon} \abs{R}_{\infty}^{\frac{1}{4}+\epsilon}\left( \frac{\No(\n_0)^{\frac{1}{4}}}{\No(\mathfrak{m}_1(g))^{\frac{1}{4}}}+\abs{R}_{\infty}^{\frac{1}{2}}\No(\n_0\mathfrak{m}_1(g))^{\frac{1}{4}} \right). \nonumber
\end{equation}
\end{lemma}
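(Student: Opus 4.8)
The plan is to decompose the sum $S_2(R)^{4/3}=\sum_{q\in\imath^{-1}\cap B(R)}\No(q)^{2/3}\abs{\lambda_{ur}(q)\lambda_{\n}(q)}^{4/3}$ according to the non-archimedean valuations of $q$ at primes dividing $\n$, and then view the remaining (coprime-to-$\n$) part through the Hecke eigenvalue relation of Lemma~\ref{lm:unram_rel}. Concretely, write $q=q_0\cdot q'$ where $q'$ lives in the places above $\n$; since $q\in\imath^{-1}$, Lemma~\ref{lm:supp_ram} forces $v_\p(q)\geq -v_\p(\imath)$ for $\p\mid\n$, so the $\p$-parts are indexed by a tuple $\du{k}\in\Z^{\n}$ with $k_\p\geq-v_\p(\imath)$, and Lemma~\ref{lm:constans} tells us that $\abs{\lambda_{\n}(q)}$ only depends on $\du{k}$ together with the residue class $[\du u]\in\prod_{\p\mid\n}\op_\p^\times/(1+\varpi_\p^{n_{0,\p}(g_\p)}\op_\p)$. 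Thus I would group the $q$'s into the adelic cells $B(R)C^{\imath}(\du k,[\du u])$ from Section~\ref{sec:counting_in_boxes} and apply Lemma~\ref{lm:cruc_count} to bound the number of field elements in each cell by $F_R(\du k)=1+\abs{R}_\infty\No(\imath)\No(\n_0(g))^{-1}\No([\imath]_\n)^{-1}\prod_{\p\mid\n}q_\p^{-k_\p}$, with the second part of that lemma cutting off the sum once $\prod q_\p^{k_\p}$ exceeds $\abs{R}_\infty\No(\imath^{-1}[\imath]_\n)$.

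Next I would bound the weight $\No(q)^{2/3}\abs{\lambda_{ur}(q)\lambda_{\n}(q)}^{4/3}$ on each cell. For the unramified part, Lemma~\ref{lm:unram_rel} identifies $\lambda_{ur}(q)$ with a normalized Hecke eigenvalue $\lambda(\mathfrak a)/\No(\mathfrak a)$ where $\mathfrak a=[(q)\theta_i\mathfrak d]$ is the coprime-to-$\n$ part; summing $\abs{\lambda(\mathfrak a)}^{4/3}$ over ideals of bounded norm is a standard Rankin--Selberg/large-sieve type bound giving $\ll (X\No(\n)\abs T_\infty)^\epsilon X$ for the sum over $\No(\mathfrak a)\leq X$ (using $\abs{\lambda(\p)}\ll q_\p^{7/64+\epsilon}$ and $\sum_{\No(\mathfrak a)\le X}\abs{\lambda(\mathfrak a)}^2\ll X^{1+\epsilon}$ together with convexity to interpolate to the $4/3$ power). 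The factor $\No(q)^{2/3}$ combines with the $\No(\mathfrak a)^{-4/3}$ from the normalization to leave a net $\No(q)^{-2/3}$-type gain off the archimedean size, which together with $\abs q_\infty\le\abs R_\infty$ (so $\No(q)\geq\abs q_\infty/\abs R_\infty$... more precisely $\No((q))=\abs q_\infty$ by the product formula) keeps everything under control. For the ramified part I would invoke the pointwise/average bounds for $\abs{W_\p}$ from \cite{Sa15}: on the relevant range $\sum_{[\du u]}\abs{\lambda_\n(q)}^{4/3}$ over residue classes is governed by $L^2$-type estimates for local newform Whittaker functions, contributing the $\No(\n_0)^{1/4}\No(\mathfrak m_1(g))^{-1/4}$ versus $\No(\n_0\mathfrak m_1(g))^{1/4}$ dichotomy; the two terms correspond respectively to the ``small'' cells where $F_R(\du k)\asymp 1$ (so one just counts residue classes, whose number is $\asymp\No(\n_0(g))$, weighted by the local Whittaker $L^2$-mass) and the ``large'' cells where $F_R(\du k)\asymp \abs R_\infty\No(\imath)\cdots$ dominates.

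Finally I would assemble: sum the per-cell bound $F_R(\du k)\cdot(\text{arch weight})\cdot(\text{Hecke average})\cdot(\text{local Whittaker average})$ over $\du k\in\Z^{\n}$ and $[\du u]$, using the geometric decay in $\prod q_\p^{k_\p}$ (from the $F_R$ term and the truncation) to make the $\du k$-sum converge up to an $\No(\n)^\epsilon$ loss, take the $3/4$ power, and match the two resulting main terms to the claimed shape $\abs R_\infty^{1/4+\epsilon}(\No(\n_0)^{1/4}\No(\mathfrak m_1(g))^{-1/4}+\abs R_\infty^{1/2}\No(\n_0\mathfrak m_1(g))^{1/4})$. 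The main obstacle I anticipate is the bookkeeping of the local ramified factors: getting the exact powers of $\No(\n_0)$, $\No(\mathfrak m_1(g))$, and $\No([\imath]_\n)$ right requires carefully combining the support statement (Lemma~\ref{lm:supp_ram}), the $\varpi$-adic constancy (Lemma~\ref{lm:constans}), the volume formula \eqref{eq:vol_of_strange}, and Saha's local Whittaker estimates, and ensuring that the $n_{0,\p}(g_\p)$ appearing in the constancy modulus interacts correctly with the $m_{1,\p}(g_\p)$ in $\imath$ so that the two regimes in Lemma~\ref{lm:cruc_count} produce precisely the two terms above.
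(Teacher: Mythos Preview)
Your plan assembles the right ingredients---the cell decomposition via $C^{\imath}(\du k,[\du u])$, the counting Lemma~\ref{lm:cruc_count}, Lemma~\ref{lm:unram_rel} for the unramified part, and Saha's local Whittaker bounds for the ramified part---and this is indeed how the paper proceeds. However, there is one technical step that the paper handles differently and that your outline does not resolve: the passage from the exponent $4/3$ to exponents for which sharp bounds are available.

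The available local input from \cite{Sa15} is an $L^2$-bound, namely $\int_{\op_\p^\times}\abs{W_\p(a(\varpi_\p^k q)g_\p)}^2 d\mu^\times(q)$, not a $4/3$-moment. You propose to sum $\abs{\lambda_\n}^{4/3}$ over residue classes and appeal to ``$L^2$-type estimates'', but you do not say how to make that conversion; a naive H\"older over the $\No(\n_0(g))$ residue classes would introduce an unwanted factor of $\No(\n_0(g))^{1/3}$. The paper avoids this as follows. It first organises the sum by the principal ideal $(q)=\mathfrak{m}=\imath^{-1}\mathfrak{m}_1\mathfrak{m}_2$ with $\mathfrak{m}_1\mid\n^\infty$ and $(\mathfrak{m}_2,\n)=1$, then applies H\"older in the $\mathfrak{m}_2$-variable to separate an $L^4$-Hecke sum $S_{\text{ur}}(X)=\sum_{\No(\mathfrak{m})\le X}\abs{\lambda(\mathfrak{m})}^4/\No(\mathfrak{m})^2\ll X^{1+\epsilon}$ from $\big(\sum_{\mathfrak{m}_2}\big(\sum_{q\in I(\imath^{-1}\mathfrak{m}_1\mathfrak{m}_2)}\abs{\lambda_\n(q)}^{4/3}\big)^{3/2}\big)^{2/3}$. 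The crucial second step is that the inner $q$-sum runs over field elements generating a \emph{fixed} ideal inside $B(R)$, hence has $\ll\abs{R}_\infty^\epsilon$ terms by \cite[Corollary~1]{BHMM16}; Jensen's inequality on this short sum then converts $\abs{\lambda_\n}^{4/3}$ to $\abs{\lambda_\n}^2$ at the cost of only an $\epsilon$-loss. Only after this does one invoke the cell decomposition and the local $L^2$-integral bound exactly as you describe.

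So your architecture is right, but the missing manoeuvre is the H\"older (in $\mathfrak{m}_2$) followed by Jensen (on the short unit sum) to land on the $L^4$/$L^2$ pair of moments; without it the ramified bookkeeping you flag as the main obstacle will not close with the correct exponents.
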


\begin{proof}
We start by defining
\begin{equation}
	I(\mathfrak{m}) = \{ q\in \imath^{-1}\vert \abs{q}_{\nu} \leq \abs{R_{\nu}}_{\nu}, (q)=\mathfrak{m} \}. \nonumber
\end{equation}
Using \cite[Corollary~1]{BHMM16} we observe that
\begin{equation}
	\sharp I(\mathfrak{m}) \ll_{\epsilon} \abs{R}_{\infty}^{\epsilon}\No(\mathfrak{m})^{-\epsilon}. \label{eq:shortness_q_sum}
\end{equation}
In particular, if $\No(\mathfrak{m}) \gg \abs{R}_{\infty}$  then $I(\mathfrak{m})$ must be empty.

By Lemma~\ref{lm:unram_rel} we have
\begin{eqnarray}
	S_2(R)^{\frac{4}{3}} &=& \sum_{\substack{\mathfrak{m}\subset \imath^{-1}, \\ \No(\mathfrak{m})\ll \abs{R}_{\infty}}} \No(\mathfrak{m})^{\frac{2}{3}}\No([\mathfrak{m}\imath]_{\n})^{\frac{4}{3}}\frac{\abs{\lambda\left(\frac{\mathfrak{m}\imath}{[\mathfrak{m}\imath]_{\n}}\right)}^{\frac{4}{3}}}{\No(\mathfrak{m}\imath)^{\frac{4}{3}}} \sum_{q\in I(\mathfrak{m})} \abs{\lambda_{\n}(q)}^{\frac{4}{3}} \nonumber \\
	&=& \No(\imath)^{-\frac{2}{3}} \sum_{\substack{\mathfrak{m_1}\vert \n^{\infty}, \\ \No(\mathfrak{m_1})\ll \No(\imath)\abs{R}_{\infty}}} \No(\mathfrak{m}_1)^{\frac{2}{3}}\sum_{\substack{(\mathfrak{m_2},\n)=1, \\ \No(\mathfrak{m_2})\ll \frac{\No(\imath)\abs{R}_{\infty}}{\No(\mathfrak{m}_1)}}} \frac{\abs{\lambda(\mathfrak{m}_2)}^{\frac{4}{3}}}{\No(\mathfrak{m}_2)^{\frac{2}{3}}} \sum_{q\in I(\imath^{-1}\mathfrak{m}_1\mathfrak{m_2})}\abs{\lambda_{\n}(q)}^{\frac{4}{3}}. \nonumber
\end{eqnarray}

At this stage we apply H\"older to the $\mathfrak{m}_2$-sum. This yields
\begin{eqnarray}
	S_2(R)^{\frac{4}{3}} &=& \No(\imath)^{-\frac{2}{3}} \sum_{\substack{\mathfrak{m_1}\vert \n^{\infty}, \\ \No(\mathfrak{m_1})\ll \No(\imath)\abs{R}_{\infty}}} \No(\mathfrak{m}_1)^{\frac{2}{3}} S_{\text{ur}}\left(\frac{\No(\imath)\abs{R}_{\infty}}{\No(\mathfrak{m}_1)}\right)^{\frac{1}{3}} \nonumber\\
	&&\qquad\qquad \cdot\left(\sum_{\substack{(\mathfrak{m_2},\n)=1, \\ \No(\mathfrak{m_2})\ll \frac{\No(\imath)\abs{R}_{\infty}}{\No(\mathfrak{m}_1)}}} \left( \sum_{q\in I(\imath^{-1}\mathfrak{m}_1\mathfrak{m_2})}\abs{\lambda_{\n}(q)}^{\frac{4}{3}} \right)^{\frac{3}{2}} \right)^{\frac{2}{3}}.  \nonumber
\end{eqnarray}
Here
\begin{equation}
	S_{\text{ur}}(X) = \sum_{\substack{(\mathfrak{m},\n)=1, \\ \No(\mathfrak{m})\leq X}} \frac{\abs{\lambda(\mathfrak{m})}^4}{\No(\mathfrak{m})^2}. \nonumber
\end{equation}
Before we continue it is important to recall that our Hecke operators are differently normalized than the ones in \cite{BHMM16}, \cite{Sa15} and \cite{HL94}. It is well known that
\begin{equation}
	S_{\text{ur}}(X) \ll_{F,\epsilon} (\abs{T}_{\infty} \No(\n))^{\epsilon} X^{1+\epsilon}. \nonumber
\end{equation}
This was proved in \cite{HL94} over $\Q$. 

We now use Jensen's inequality exploiting that the $q$-sum is short by \eqref{eq:shortness_q_sum}. This yields
\begin{eqnarray}
	S_2(R)^{\frac{4}{3}} &\ll& (\abs{T}_{\infty}\abs{R}_{\infty}\No(\n))^{\epsilon} \No(\imath)^{-\frac{1}{3}}\abs{R}_{\infty}^{\frac{1}{3}} \nonumber \\
	&&\cdot\sum_{\substack{\mathfrak{m_1}\vert \n^{\infty}, \\ \No(\mathfrak{m_1})\ll \No(\imath)\abs{R}_{\infty}}} \No(\mathfrak{m}_1)^{\frac{1}{3}+\epsilon} \left(\sum_{\substack{(\mathfrak{m_2},\n)=1, \\ \No(\mathfrak{m_2})\ll \frac{\No(\imath)\abs{R}_{\infty}}{\No(\mathfrak{m}_1)}}} \sum_{q\in I(\imath^{-1}\mathfrak{m}_1\mathfrak{m_2})}\abs{\lambda_{\n}(q)}^2\right)^{\frac{2}{3}}. \label{eq:after_jensen}
\end{eqnarray}

We will first continue to analyze the $\mathfrak{m}_2$-sum. For the sake of notation we define
\begin{equation}
	S_{\text{ram}} =\sum_{\substack{(\mathfrak{m_2},\n)=1, \\ \No(\mathfrak{m_2})\ll \frac{\No(\imath)\abs{R}_{\infty}}{\No(\mathfrak{m}_1)}}} \sum_{q\in I(\imath^{-1}\mathfrak{m}_1\mathfrak{m_2})}\abs{\lambda_{\n}(q)}^2
\end{equation}

In order to use the notation from Section~\ref{sec:counting_in_boxes} we set
\begin{equation}
	\du{k}(\mathfrak{m}) = (v_{\p}(\mathfrak{m}))_{\p\vert\n}. \nonumber 
\end{equation}

By the local definition of  $\lambda_{\n}$ we can view it as a function on $\A_{fin}^{\imath}$. Lemma~\ref{lm:constans} then implies that this function is constant on the sets $C^{\imath}(\du{k},[\du{u}])$. Therefore, we have
\begin{eqnarray}
	&&S_{\text{ram}} =  \sum_{q\in \imath^{-1}\cap B(R)C^{\imath}(\du{k}(\mathfrak{m_1}\imath^{-1}))} \abs{\lambda_{\n}(q)}^2 \nonumber \\
	&&=\sum_{[\du{u}]\in \prod_{\p\vert\n}\op_{\p}^{\times}/(1+\varpi_{\p}^{n_{0,\p}}\op_{\p})} \sum_{q\in \imath^{-1}\cap B(R)C^{\imath}(\du{k}(\mathfrak{m_1}\imath^{-1}),[\du{u}])} \abs{\lambda_{\n}(q)}^2 \nonumber \\
	&&= \sum_{[\du{u}]\in \prod_{\p\vert\n}\op_{\p}^{\times}/(1+\varpi_{\p}^{n_{0,\p}}\op_{\p})} \frac{\sharp(\imath^{-1}\cap B(R)C^{\imath}(\du{k}(\mathfrak{m_1}\imath^{-1}),[\du{u}]))}{\vol(C^{\imath}(\du{k}(\mathfrak{m_1}\imath^{-1}),[\du{u}]),d\mu)} \int_{C^{\imath}(\du{k}(\mathfrak{m_1}\imath^{-1}),[\du{u}])} \abs{\lambda_{\n}(q)}^2 d\mu_{\text{fin}}(q). \nonumber
\end{eqnarray}

Using \eqref{eq:vol_of_strange} and Lemma~\ref{lm:cruc_count} reveals
\begin{equation}
	S_{\text{ram}} \ll \frac{\No(\mathfrak{m}_1)\No(\n_0(g))}{\No(\imath)}  F_{R}(\du{k}(\mathfrak{m_1}\imath^{-1}))\int_{C^{\imath}(\du{k}(\mathfrak{m_1}\imath^{-1}))} \abs{\lambda_{\n}(q)}^2d\mu_{\text{fin}}(q). \label{eq:intermediate_ram_for_later}
\end{equation}

The integral appearing here can be estimated using the local result \cite[Proposition~2.11]{Sa15}. This is done as follows:
\begin{eqnarray}
	\int_{C^{\imath}(\du{k})} \abs{\lambda_{\n}(q)}^2 d\mu_{\text{fin}}(q) &=& \prod_{\p\nmid\n} \int_{\varpi_{\p}^{-v_{\p}(\imath)}\op_{\p}} 1 d\mu_{\p} \prod_{\p\mid\n} \int_{\varpi_{\p}^{k_{\p}}\op_{\p}^{\times}} \abs{W_{\p}(a(\varpi_{\p}^{v_{\p}(\mathfrak{d})}\theta_i q)g_{\p})}^2 d\mu_{\p}(q) \nonumber\\ 
	&=& \frac{\No(\imath)}{\No([\imath]_{\n})}\zeta_{\n}(1)^{-1} \prod_{\p\mid\n} q_{\p}^{-k_{\p}} \int_{\op_{\p}^{\times}}\abs{W_{\p}(a(\varpi_{\p}^{v_{\p}(\mathfrak{d})+v_{\p}(\theta_i)+k_{\p}}q)g_{\p})}^2d\mu_{\p}^{\times}(q) \nonumber \\
	&\ll&  \No(\n)^{\epsilon}\frac{\No(\imath)}{\No([\imath]_{\n})}\zeta_{\n}(1)^{-1}  \prod_{\p\mid \n} q_{\p}^{-\frac{1}{2}(v_{p}(\mathfrak{d})+v_{\p}(\theta_i)+n_0+m_1(g_{\p})+3k_{\p})} \nonumber \\
	&=& \No(\n)^{\epsilon}\frac{\No(\imath)}{\No([\imath]_{\n})^{\frac{3}{2}}}\zeta_{\n}(1)^{-1}  \prod_{\p\mid \n} q_{\p}^{-\frac{3k_{\p}}{2}}. \nonumber
\end{eqnarray} 
Note that here we crucially rely on $g_{\p}\in\mathfrak{J}_{\n}$ in order to apply the upper bounds for the local integrals.
Inserting this estimate in our expression for $S_{\text{ram}}$ we get
\begin{eqnarray}
	S_{\text{ram}} \ll \zeta_{\n}(1)^{-1}\No(\n_0(g))\No(\mathfrak{m}_1)^{-\frac{1}{2}} F_{R}(\du{k}(\mathfrak{m_1}\imath^{-1})). \nonumber
\end{eqnarray}
The result from Lemma~\ref{lm:cruc_count} yields
\begin{equation}
	S_{\text{ram}} \ll \zeta_{\n}(1)^{-1}\left( \frac{\No(\n_0(g))}{\No(\mathfrak{m}_1)^{\frac{1}{2}}}+\frac{\abs{R}_{\infty}\No(\imath)}{\No(\mathfrak{m}_1)^{\frac{3}{2}}} \right). \nonumber
\end{equation}

From \eqref{eq:after_jensen} we deduce
\begin{equation}
	S_2(R) \ll (\abs{T}_{\infty}\abs{R}_{\infty}\No(\n))^{\epsilon} \frac{\abs{R}_{\infty}^{\frac{1}{4}}}{\No(\imath)^{\frac{1}{4}}}\left( \sqrt{\No(\n_0(g))}+\sqrt{\abs{R}_{\infty}\No(\imath)} \right) \left( \sum_{\substack{\mathfrak{m_1}\vert \n^{\infty}, \\ \No(\mathfrak{m_1})\ll \No(\imath)\abs{R}_{\infty}}} \No(\mathfrak{m}_1)^{\epsilon} \right)^{\frac{4}{3}}. \nonumber
\end{equation}

In the end we note that by the Rankin-trick we have
\begin{equation}
	\sum_{\substack{\mathfrak{m_1}\vert \n^{\infty}, \\ \No(\mathfrak{m_1})\ll \No(\imath)\abs{R}_{\infty}}} \No(\mathfrak{m}_1)^{\epsilon} \ll \No(\n)^{\epsilon} \abs{R}_{\infty}^{\epsilon}. \nonumber
\end{equation}
\end{proof}

\subsection{The error $\mathcal{E}$}

For $R$ as in \eqref{eq:def_of_R} we will roughly prove that the error is always absorbed in the main-contribution. More precisely we have the following lemma.

\begin{lemma} \label{lm:est_error}
Under the balancing assumption
\begin{equation}
	\abs{y_{\nu}}_{\nu} \asymp \abs{a^3T_{\nu}}_{\nu}^{\frac{\log(\abs{y}_{\infty})}{\log(\abs{a^3T}_{\infty})}} \text{ for all } \nu \nonumber
\end{equation}
we have
\begin{eqnarray}
	\mathcal{E} &\ll& (\abs{T}_{\infty}\abs{R}_{\infty}\No(\n))^{\epsilon} \bigg( \abs{T}_{\infty}^{\frac{1}{6}}\No(\n_0)^{\frac{1}{2}}+\abs{T}_{\infty}^{\frac{1}{6}}\abs{R}_{\infty}^{\frac{1}{4}}\No(\n_0\mathfrak{m}_1(g))^{\frac{1}{4}}+\abs{R}_{\infty}^{\frac{1}{2}}\No(\n_0\mathfrak{m}_1(g))^{\frac{1}{2}} \bigg) \nonumber
\end{eqnarray}
\end{lemma}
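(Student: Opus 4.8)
The goal is to bound the tail $\mathcal{E} = \sum_{q\in \imath^{-1},\,q\notin B(R)} \abs{\lambda_{ur}(q)\lambda_{\n}(q)W_{\infty}(qy)}$ when $R_{\nu}$ is as in \eqref{eq:def_of_R}. The key observation is that for $\abs{q}_{\nu} > R_{\nu}$, i.e.\ $\abs{q y_{\nu}}_{\nu} \gg T_{\nu} + T_{\nu}^{1/3+\epsilon}$, we are past the transition region of the $K$-Bessel function, so $W_{\nu}(a(qy))$ enjoys exponential decay. Concretely I would dyadically decompose the region $\{q : \abs{q}_{\nu} > R_{\nu}$ for at least one $\nu\}$ into boxes $I(\du{k})$ of the archimedean type used in Section~\ref{sec:counting_in_boxes} (but now with $k_{\nu}\geq 1$), together with a coprime/ramified decomposition $q = \imath^{-1}\mathfrak{m}_1\mathfrak{m}_2$ exactly as in the proof of Lemma~\ref{lm:bound_for_S2}. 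On each such box the exponential decay $\abs{W_{\nu}(a(qy))} \ll \abs{qy}_{\nu}^{1/2} T_{\nu}^{1/2}\abs{T_{\nu}}_{\nu}^{-1/2} e^{-c\abs{q y_{\nu}}_{\nu}}$ for $\abs{q y_\nu}_\nu \gg T_\nu$ kills the sum over $k_{\nu}$ geometrically, so the whole tail collapses to (a constant times) the contribution of the \emph{innermost} shell, where $\abs{q}_{\nu}\asymp R_{\nu}$.

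Once the problem is reduced to the innermost shell, the estimate should essentially reproduce the bound for $S_2(R)$ (or rather the ``$q$-sum'' pieces of its proof) combined with the archimedean Whittaker bound at the edge of the transition region. So the second step is: on the shell $\abs{q}_{\nu}\asymp R_{\nu}$, split $\abs{\lambda_{ur}(q)\lambda_{\n}(q)W_{\infty}(qy)}$ via Cauchy--Schwarz (or directly via the $L^1$ Hölder already set up before \eqref{eq:whitt_after_ineq}), using Lemma~\ref{lm:unram_rel} to turn $\lambda_{ur}$ into Hecke eigenvalues and Lemma~\ref{lm:constans} to make $\lambda_{\n}$ constant on the sets $C^{\imath}(\du{k},[\du{u}])$. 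The volume computations \eqref{eq:vol_of_strange}, the counting Lemma~\ref{lm:cruc_count}, and the local $L^2$-bound \cite[Proposition~2.11]{Sa15} for $\int_{\op_{\p}^{\times}}\abs{W_{\p}}^2$ then combine just as in the $S_{\text{ram}}$ computation, while the archimedean Whittaker function contributes $\abs{qy}_{\infty}^{1/2}\abs{T}_{\infty}^{1/2}\abs{T}_{\infty}^{-1/2}\cdot(\text{something like }\abs{T}_{\infty}^{-1/3}$ or the $(t_\nu\abs{y_\nu})^{-1/4}\abs{\abs{q}-\tfrac{T_\nu}{2\pi\abs{y_\nu}}}^{-1/4}$ bound$)$, which after inserting $\abs{q}_{\nu}\asymp R_{\nu}\asymp T_{\nu}/y_{\nu}$ produces the $\abs{T}_{\infty}^{1/6}$-type factors seen in the statement. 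Summing the three resulting shapes --- one from the pure $\No(\n_0)$ volume term, one from the mixed $\abs{R}_{\infty}^{1/4}\No(\n_0\mathfrak{m}_1(g))^{1/4}$ term, and one from the $\abs{R}_{\infty}^{1/2}\No(\n_0\mathfrak{m}_1(g))^{1/2}$ term --- gives exactly the claimed bound, the balancing hypothesis \eqref{eq:balancedness_of_y} being needed (as in Corollary~\ref{cor:after_balancing}) to pass from the per-place products $\prod_\nu(\cdots)$ to the clean global expression.

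\textbf{Main obstacle.} The delicate point is handling the region where $\abs{q}_{\nu} > R_{\nu}$ at some but not all places $\nu$: there the Whittaker function decays exponentially in the ``bad'' coordinates but is still of transition size in the others, and one must make sure the product structure of $W_{\infty}$ and the counting estimates interact correctly so that the geometric series in the bad coordinates is genuinely summable uniformly over the remaining coordinates. A related subtlety is the boundary of the transition region itself: the factor $\abs{\abs{q}-\tfrac{T_\nu}{2\pi\abs{y_\nu}}}^{-1/4}$ degenerates precisely at the inner edge of the tail, so the first shell must be treated with the $T_\nu^{-1/3}$ bound rather than the $-1/4$ bound, and one has to check that the $T_\nu^{1/3+\epsilon}$ in the definition \eqref{eq:def_of_R} of $R_\nu$ provides exactly the cushion needed for this. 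Everything else --- the arithmetic counting, the volume bookkeeping, the Rankin-trick divisor bounds, the Hölder/Jensen steps --- is parallel to Lemma~\ref{lm:bound_for_S2} and Corollary~\ref{cor:after_balancing} and should go through routinely.
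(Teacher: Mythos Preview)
Your plan is correct and matches the paper's approach closely. The paper organizes things slightly more modularly: it first sums over nonempty subsets $S\subset\{\nu\}$ (exactly the ``mixed places'' issue you flag as the main obstacle) and over $\du{k}\in\N^{\sharp S}$, then applies the same H\"older split as in \eqref{eq:whitt_after_ineq} on each piece, obtaining an $S_1$-type archimedean factor times $S_2(R'(\du{k}))$ with the enlarged radius $R'(\du{k})_{\nu}=(k_{\nu}+1)R_{\nu}$ for $\nu\in S$; it then invokes Lemma~\ref{lm:bound_for_S2} as a black box rather than redoing the coprime/ramified decomposition, and lets the exponential decay $g_{\nu}(k_{\nu})=e^{-\pi|y_{\nu}|R_{\nu}k_{\nu}}$ absorb the polynomial growth of $S_2(R'(\du{k}))$ in $k_{\nu}$. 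Your concern about the first shell is slightly misplaced: for $k_{\nu}\geq 1$ one is already past the transition region (this is precisely what the $T_{\nu}^{1/3+\epsilon}$ cushion in \eqref{eq:def_of_R} guarantees), so the paper uses pure exponential decay there and reserves the $T_{\nu}^{-1/3}$ versus $(\cdots)^{-1/4}$ dichotomy for the places $\nu\notin S$, handled exactly as in Section~\ref{sec:est_of_S1}.
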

\begin{proof}
For $S\subset \{\nu\}$ we define
\begin{eqnarray}
	R'(\du{k}) &=& \begin{cases}
		(k_{\nu}+1) R_{\nu}& \text{ if } \nu\in S, \\
		R_{\nu} &\text{ else,}
	\end{cases} \nonumber \\
	I_{S}(\du{k}) &=& \prod_{\nu\in S} I_{\nu}(k_{\nu}), \nonumber \\
	B_S(R) &=& \prod_{\nu\not\in S} \{ \xi_{\nu} \in F^{\times} \vert \abs{\xi_{\nu}} \leq R_{\nu} \}. \nonumber
\end{eqnarray}
For $k_{\nu} \geq 1$ we define
\begin{equation}
	g_{\nu}(k_{\nu}) = e^{-\pi \abs{y_{\nu}}R_{\nu}k_{\nu}}.\nonumber
\end{equation}
By the exponential decay of the $K$-Bessel function we have the bound
\begin{equation}
	\abs{a^{-1}q}_{\nu}^{-2}\abs{W_{\nu}(a(a^{-1}qy_{\nu}))}^4 \ll k_{\nu}^{-2}R_{\nu}^{-2} g_{\nu}(k_{\nu})^4. \label{eq:whitt_exp_dec}
\end{equation}
for $q\in I_{\nu}(k_{\nu})$ and $k_{\nu}\geq 1$. We now decompose $\mathcal{E}$ as follows 
\begin{eqnarray}
	\mathcal{E} \leq \sum_{\emptyset\neq S\subset \{ \nu \}}\sum_{\du{k}\in \N^{\sharp S}} \left(\sum_{q\in a\imath^{-1}\cap I_S(\du{k})\times B_{S}(\abs{a}R)} \abs{a^{-1}q}_{\infty}^{-2} \abs{W_{\infty}(a^{-1}qy)}^4 \right)^{\frac{1}{4}}S_2(R'(\du{k})). \nonumber
\end{eqnarray}
Again we included the shift by $a$ only in the archimedean part. Note that by Corollary~\ref{cor:when_B_empty} below the sum $S_2(R'(\du{k})) = 0$ if $ \prod_{\nu\in S}\abs{k_{\nu}+1}_{\nu}\abs{R}_{\infty} < \No(\imath)^{-1}$. We can add the condition $\prod_{\nu\in S}\abs{k_{\nu}+1}_{\nu} \geq \abs{R}_{\infty}^{-1} \No(\imath)^{-1}$ to the sum over $\du{k}$. 

First note that Lemma~\ref{lm:bound_for_S2}  is good enough to deal with the non-archimedean part of the sum. To deal with the archimedean part we use the same approach as in Section~\ref{sec:est_of_S1}. In particular with \eqref{eq:bound_whitt_arch} and \eqref{eq:whitt_exp_dec} we have
\begin{eqnarray}
	&&\sum_{q\in a\imath^{-1}\cap I_S(\du{k})\times B_{S}(\abs{a}R)} \abs{qa^{-1}}_{\infty}^{-2}\abs{W_{\infty}(a(qa^{-1}y))}^4 \nonumber \\
	&=& \sum_{\substack{\du{k^c},\\  -\lfloor \abs{a}R_{\nu}\rfloor \leq k_{\nu}\leq 0 \forall\nu\not\in S }}\sum_{q\in a\imath^{-1}\cap I(\du{k}\times \du{k}^c)} \abs{qa^{-1}}_{\infty}^{-2}\abs{W_{\infty}(a(qa^{-1}y))}^4 \nonumber \\ 
	&\ll& \abs{R}_{\infty}^{-2}\prod_{\nu\in S} \abs{k_{\nu}}_{\nu}^{-2}g_{\nu}(k_{\nu})^4f_{\nu}(k_{\nu}) \prod_{\nu \not\in S} \sum_{\substack{\du{k^c},\\  -\lfloor \abs{a}R_{\nu}\rfloor \leq k_{\nu}\leq 0 \forall\nu\ni S }} g_{\nu}(k_{\nu})^4f_{\nu}(k_{\nu}) \nonumber \\
	&\ll& \abs{R}_{\infty}^{-2}\prod_{\nu\in S} \abs{k_{\nu}}_{\nu}^{-2}g_{\nu}(k_{\nu})^4f_{\nu}(k_{\nu}) \prod_{\nu \not\in S} \left( \abs{T_{\nu}}_{\nu}^{\frac{2}{3}}+\abs{aR_{\nu}}_{\nu} \right)^{1+\epsilon}. \nonumber
\end{eqnarray}
We obtain
\begin{eqnarray}
	\mathcal{E}	&\ll& \bigg(\abs{\frac{T}{y}}_{\infty}\No(\n)\bigg)^{\epsilon} \sum_{\emptyset\neq S\subset \{ \nu \}} \sum_{\substack{\du{k}\in \N^{\sharp S}, \\ \prod_{\nu\in S}\abs{k_{\nu}+1}_{\nu} \geq \abs{R}_{\infty}^{-1} \No(\imath)^{-1} }} \left(\frac{\abs{R}_{\infty}^{-\frac{1}{4}}\No(\n_0)^{\frac{1}{4}}}{\No(\mathfrak{m}_1(g))^{\frac{1}{4}}}+\abs{R}_{\infty}^{\frac{1}{4}}\No(\n_0\mathfrak{m}_1(g))^{\frac{1}{4}}  \right)\nonumber \\
	&&\qquad\qquad\qquad\qquad\qquad\qquad\qquad\qquad\qquad\qquad \cdot\prod_{\nu\not\in S}\left( \abs{T_{\nu}}_{\nu}^{\frac{1}{6}}+\abs{aR_{\nu}}_{\nu}^{\frac{1}{4}}\right) \prod_{\nu\in S}g_{\nu}(k_{\nu})f_{\nu}(k_{\nu})^{\frac{1}{4}}. \nonumber
\end{eqnarray}
Inserting the definition of $f_{\nu}$ from \eqref{eq:def_of_f} and using the balancing assumption as in the proof of Corollary~\ref{cor:after_balancing} yields
\begin{eqnarray}
	\mathcal{E} &\ll& \sum_{\emptyset\neq S\subset \{ \nu \}}\sum_{\substack{\du{k}\in \N^{\sharp S}, \\ \prod_{\nu\in S}\abs{k_{\nu}+1}_{\nu} \geq \abs{R}_{\infty}^{-1} \No(\imath)^{-1} }} \bigg[\left(\frac{\abs{R}_{\infty}^{-\frac{1}{4}}\No(\n_0)^{\frac{1}{4}}}{\No(\mathfrak{m}_1(g))^{\frac{1}{4}}}+\abs{R}_{\infty}^{\frac{1}{4}}\No(\n_0\mathfrak{m}_1(g))^{\frac{1}{4}}  \right)^{1+\epsilon}\nonumber \\
	&&\qquad\qquad\qquad\cdot\left(\abs{T}_{\infty}^{\frac{1}{6}}+\abs{R}_{\infty}^{\frac{1}{4}}\No(\n_0\mathfrak{m}_1(g))^{\frac{1}{4}} \right)\prod_{\nu\in S}k_{\nu}^{[F_{\nu}:\R]-1}g_{\nu}(k_{\nu}) \bigg] \nonumber \\
	&\ll& (\abs{T}_{\infty}\abs{R}_{\infty}\No(\n))^{\epsilon} \sum_{\emptyset\neq S\subset \{ \nu \}}\sum_{\substack{\du{k}\in \N^{\sharp S}, \\ \prod_{\nu\in S}\abs{k_{\nu}+1}_{\nu} \geq \abs{R}_{\infty}^{-1} \No(\imath)^{-1} }} \bigg[\bigg(\abs{R}_{\infty}^{-\frac{1}{4}} \frac{\abs{T}_{\infty}^{\frac{1}{6}}\No(\n_0)^{\frac{1}{4}}}{\No(\mathfrak{m}_1(g))^{\frac{1}{4}}}+\No(\n_0)^{\frac{1}{2}}\nonumber \\
	&&\qquad\qquad\qquad+\abs{T}_{\infty}^{\frac{1}{6}}\abs{R}_{\infty}^{\frac{1}{4}}\No(\n_0\mathfrak{m}_1(g))^{\frac{1}{4}}+\abs{R}_{\infty}^{\frac{1}{2}}\No(\n_0\mathfrak{m}_1(g))^{\frac{1}{2}} \bigg)\prod_{\nu\in S}k_{\nu}^{[F_{\nu}:\R]-1}g_{\nu}(k_{\nu}) \bigg]. \nonumber
\end{eqnarray}
Now we use the condition in the $\du{k}$-sum to remove the $\abs{R}^{-\frac{1}{4}}$. We can now drop any unnecessary condition on $\du{k}$ and end up with
\begin{eqnarray}
	\mathcal{E} &\ll& (\abs{T}_{\infty}\abs{R}_{\infty}\No(\n))^{\epsilon} \bigg( \abs{T}_{\infty}^{\frac{1}{6}}\No(\n_0)^{\frac{1}{2}}+\abs{T}_{\infty}^{\frac{1}{6}}\abs{R}_{\infty}^{\frac{1}{4}}\No(\n_0\mathfrak{m}_1(g))^{\frac{1}{4}}+\abs{R}_{\infty}^{\frac{1}{2}}\No(\n_0\mathfrak{m}_1(g))^{\frac{1}{2}} \bigg) \nonumber \\
	&&\qquad\qquad \sum_{\emptyset\neq S\subset \{ \nu \}}\sum_{\du{k}\in \N^{\sharp S}}\abs{k_{\nu}}^{\frac{[F_{\nu}:\R]}{2}-\frac{1}{4}}g_{\nu}(k_{\nu}).\nonumber
\end{eqnarray}
Due to the exponential decay of $g_{\nu}(k_{\nu})$ for positive $k_{\nu}$ it is no problem to estimate the remaining sums.
\end{proof}

\subsection{The final Whittaker bound}

Now we have all the pieces together to prove an upper bound for $\phi_{\circ}$ via its Whittaker expansion.

\begin{prop} \label{pr:whit_exp_est_bad}
Let $\phi_{\circ} = \sigma(v^{\circ})$ for some cuspidal automorphic representation $(\pi,V_{\pi})$ with new vector $v^{\circ}$. For $g\in \mathfrak{J}_{\n}$  we have
\begin{eqnarray}
	&&\abs{\phi_{\circ}(a(\theta_i)gn(x)a(y))} \nonumber \\
	&&\quad\ll_{F,\epsilon} (\abs{T}_{\infty}\abs{y}_{\infty}^{-1}\No(\n))^{\epsilon} \bigg( \abs{T}_{\infty}^{\frac{1}{6}}\No(\n_0)^{\frac{1}{2}}+\abs{T}_{\infty}^{\frac{1}{6}}\abs{R}_{\infty}^{\frac{1}{4}}\No(\n_0\mathfrak{m}_1(g))^{\frac{1}{4}}+\abs{R}_{\infty}^{\frac{1}{2}}\No(\n_0\mathfrak{m}_1(g))^{\frac{1}{2}} \bigg). \nonumber
\end{eqnarray}
\end{prop}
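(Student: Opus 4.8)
The plan is to assemble the bound from the four estimates obtained in the preceding subsections. Starting from the H\"older splitting \eqref{eq:whitt_after_ineq} with $R$ as in \eqref{eq:def_of_R}, so that $\abs{R}_\infty \asymp \abs{T/y}_\infty = \abs{T}_\infty\abs{y}_\infty^{-1}$ up to $O_F(1)$, we have
\[
	\abs{\phi_\circ(a(\theta_i)gn(x)a(y))} \ll \abs{c_{\phi_\circ}}\bigl(S_1 S_2(R) + \mathcal{E}\bigr).
\]
By Lemma~\ref{lm:the_scaling_const} the factor $c_{\phi_\circ}$ costs only $(\No(\n)\abs{T}_\infty)^\epsilon$, which is absorbed into the final $(\abs{T}_\infty\abs{y}_\infty^{-1}\No(\n))^\epsilon$. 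First I would dispose of the trivial case: if $\abs{R}_\infty < \No(\imath)^{-1}$ then $S_1$ is void by Corollary~\ref{cor:when_B_empty} and the whole product term vanishes, so we may assume $\abs{R}_\infty\No(\imath) \gg 1$; since $\No(\imath)\asymp_F\No(\n_0\mathfrak{m}_1(g))$ (the contributions of $\mathfrak{d}$ and of the finitely many $\theta_i$ being $O_F(1)$), this yields the auxiliary inequality $\abs{R}_\infty\No(\n_0\mathfrak{m}_1(g))\gg_F 1$. I would also note that, after replacing $y$ by a suitable $\mathcal{O}_F^\times$-translate --- which changes neither $\abs{y}_\infty$ nor $\abs{\phi_\circ(\cdot)}$ at the point in question, by left $G(F)$-invariance together with Corollary~\ref{cr:absorbtion} (and without altering $\mathfrak{m}_1(g)$) --- we may assume the balancing condition \eqref{eq:balancedness_of_y}, so that Corollary~\ref{cor:after_balancing} and Lemma~\ref{lm:est_error} are available.

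Now I would simply substitute. By Corollary~\ref{cor:after_balancing} and Lemma~\ref{lm:bound_for_S2},
\[
	S_1 \ll (\cdots)^\epsilon \abs{y}_\infty^{1/2}\abs{T}_\infty^{-1/2}\bigl(\abs{T}_\infty^{1/6} + \No(\n_0\mathfrak{m}_1(g))^{1/4}\abs{R}_\infty^{1/4}\bigr),
\]
\[
	S_2(R) \ll (\cdots)^\epsilon \abs{R}_\infty^{1/4}\Bigl(\No(\n_0)^{1/4}\No(\mathfrak{m}_1(g))^{-1/4} + \abs{R}_\infty^{1/2}\No(\n_0\mathfrak{m}_1(g))^{1/4}\Bigr).
\]
Multiplying out and using $\abs{y}_\infty^{1/2}\abs{T}_\infty^{-1/2}\abs{R}_\infty^{1/4} = \abs{y}_\infty^{1/4}\abs{T}_\infty^{-1/4}$ together with $\abs{R}_\infty = \abs{T}_\infty\abs{y}_\infty^{-1}$, the four cross terms simplify respectively to $\abs{T}_\infty^{1/6}\abs{R}_\infty^{-1/4}\No(\n_0)^{1/4}\No(\mathfrak{m}_1(g))^{-1/4}$, $\abs{T}_\infty^{1/6}\abs{R}_\infty^{1/4}\No(\n_0\mathfrak{m}_1(g))^{1/4}$, $\No(\n_0)^{1/2}$, and $\abs{R}_\infty^{1/2}\No(\n_0\mathfrak{m}_1(g))^{1/2}$. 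Applying the auxiliary inequality $\abs{R}_\infty\No(\n_0\mathfrak{m}_1(g))\gg 1$ to the first of these, and $\abs{T}_\infty\gg_F 1$ to the third, all four are dominated by the three terms $\abs{T}_\infty^{1/6}\No(\n_0)^{1/2}$, $\abs{T}_\infty^{1/6}\abs{R}_\infty^{1/4}\No(\n_0\mathfrak{m}_1(g))^{1/4}$, $\abs{R}_\infty^{1/2}\No(\n_0\mathfrak{m}_1(g))^{1/2}$ appearing in the claimed bound. Finally, Lemma~\ref{lm:est_error} bounds $\mathcal{E}$ by a constant (and an extra $\epsilon$-power) times exactly these same three terms, so adding it in and collecting the $\epsilon$-powers concludes the proof.

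The bulk of the work is the bookkeeping of the last paragraph, which is routine. The one genuine point --- and the thing I would stress --- is the observation that non-emptiness of $S_1$ forces $\abs{R}_\infty\No(\n_0\mathfrak{m}_1(g))\gg_F 1$: this is precisely what allows the stray negative power $\No(\mathfrak{m}_1(g))^{-1/4}$ produced by $S_2(R)$ to be reabsorbed, so that the product term acquires the same shape as the error term $\mathcal{E}$. The mild reduction to the balanced case \eqref{eq:balancedness_of_y} is the only other thing that must be spelled out, and it affects none of the estimated quantities.
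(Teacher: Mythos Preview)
Your proof is correct and follows essentially the same route as the paper's: split into the cases $\abs{R}_\infty \gtrless \No(\imath)^{-1}$, reduce to balanced $y$ (the paper just cites \cite[(8.7)]{BHMM16} where you sketch the unit-translate argument), multiply out $S_1S_2(R)$ via Corollary~\ref{cor:after_balancing} and Lemma~\ref{lm:bound_for_S2} to obtain the same four cross terms, absorb the first using $\abs{R}_\infty\No(\n_0\mathfrak{m}_1(g))\gg_F 1$, and note that $\mathcal{E}$ is already dominated by Lemma~\ref{lm:est_error}. The only cosmetic difference is that the paper records the four-term display explicitly before simplifying, whereas you describe the simplification in prose.
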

\begin{proof}
As in \cite[(8.7)]{BHMM16} we can assume that $y$ is balanced in the sense of \eqref{eq:balancedness_of_y}. 

Then we note that if $\abs{R}_{\infty}< \No(\imath)^{-1}$, it follows from Corollary~\ref{cor:when_B_empty} and \eqref{eq:whitt_after_ineq} that
\begin{equation}
	\abs{\psi(a(\theta_i)gn(x)a(y))} \leq \abs{c_{\phi}}\mathcal{E}. \nonumber
\end{equation}
In this case we get the desired bound from Lemma~\ref{lm:est_error} and \ref{lm:the_scaling_const}.

Thus, if $\abs{R}_{\infty} \geq \No(\imath)^{-1}$ the main contribution will obviously come from $S_1S_2(R)$. As in \cite[(8.7)]{BHMM16} we can assume that $y$ is balanced in the sense of \eqref{eq:balancedness_of_y} so that we can use Corollary~\ref{cor:after_balancing} and Lemma~\ref{lm:bound_for_S2} to show
\begin{eqnarray}
	S_1S_2(R) &\ll_{F,\epsilon}&  (\abs{\frac{T}{y}}_{\infty}\No(\n))^{\epsilon} \abs{R}_{\infty}^{-\frac{1}{4}} \left( \abs{T}_{\infty}^{\frac{1}{6}}+\No(\n_0\mathfrak{m}_1(g))^{\frac{1}{4}} \abs{\frac{T}{y}}_{\infty}^{\frac{1}{4}} \right)^{1+\epsilon} \nonumber \\
	&&\qquad\qquad\qquad\qquad \cdot \left( \frac{\No(\n_0)^{\frac{1}{4}}}{\No(\mathfrak{m}_1(g))^{\frac{1}{4}}}+\abs{R}_{\infty}^{\frac{1}{2}}\No(\n_0\mathfrak{m}_1(g))^{\frac{1}{4}} \right) \nonumber \\
	&\ll_F& (\abs{\frac{T}{y}}_{\infty}\No(\n))^{\epsilon} \bigg(\abs{R}_{\infty}^{-\frac{1}{4}} \frac{\abs{T}_{\infty}^{\frac{1}{6}}\No(\n_0)^{\frac{1}{4}}}{\No(\mathfrak{m}_1(g))^{\frac{1}{4}}}+\No(\n_0)^{\frac{1}{2}}\nonumber \\
	&&\qquad\qquad+\abs{T}_{\infty}^{\frac{1}{6}}\abs{R}_{\infty}^{\frac{1}{4}}\No(\n_0\mathfrak{m}_1(g))^{\frac{1}{4}}+\abs{R}_{\infty}^{\frac{1}{2}}\No(\n_0\mathfrak{m}_1(g))^{\frac{1}{2}} \bigg). \nonumber
\end{eqnarray}
Using $\abs{R}_{\infty}< \No(\imath)^{-1} \ll \No(\n_0\mathfrak{m}_1(g))^{-1}$ to remove $\abs{R}_{\infty}^{-\frac{1}{4}}$ concludes the proof. It is obvious from Lemma~\ref{lm:est_error} that the error gets absorbed.
\end{proof}

\section{Bounds in the bulk} \label{sec:amp_for_cusp}

After having obtained estimates for automorphic forms near the cusps we now have to determine their size in the bulk. To do so we will use a variant of the so called amplification method. More precisely we will define an integral operator which approximates an spectral projector on a certain subspace of $L^2(G(F)\setminus G(\A_F))$ related to the automorphic form under consideration. An geometric estimation of the kernel then yields the desired estimate.

Let $(\pi,V_{\pi})$ be a cuspidal automorphic representation with new vector $v^{\circ}$ and associated newform $\phi_{\circ} = \sigma(v^{\circ})$. Throughout this section we fix an square free ideal $\mathfrak{q}$ such that all the units that are quadratic residues modulo $\mathfrak{q}$ are indeed contained in $(\mathcal{O}_F^{\times})^2$. We will further assume that $(\mathfrak{q},\n)=1$. Later on we will see that one can actually construct such an ideal.

\subsection{Amplification and the spectral expansion} \label{sec:spec_exp_amp}

Let $\phi = \phi_{\circ}^{\mathfrak{L}} = \sigma^{\mathfrak{L}}(v_{\mathfrak{L}}^{\circ})$.  By Corollary~\ref{cor:reduction_to_f_dom} it is enough to consider $\phi(g)$ for 
\begin{equation}
	g= a(\theta_i) g'n(x) a(y), \text{ for } n(x)a(y) \in \mathcal{F}_{\n_2}, g'=kh_{\n} \in \mathcal{J}_{\n}. \nonumber
\end{equation}
Therefore, we further define $\phi'=\phi(\cdot h_{\n})$. This function is $K_1'(\n) = h_{\n} K_1(\n) h_{\n}^{-1}$ invariant and can be considered as an element of the Hilbert space
\begin{equation}
	L^2(X) = L^2(G(F) \setminus G(\A_F) /K'_1(\n), \omega_{\pi}) \subset L^2(G(F)\setminus G(\A_F)). \nonumber
\end{equation}
Furthermore, we put $w^{\circ} = \pi^{\mathfrak{L}}(h_{\n})v^{\circ}_{\mathfrak{L}}$. Then $\phi' = \sigma^{\mathfrak{L}}(w^{\circ})$. We will bound $\psi'$ on elements $g=a(\theta_i) g' n(x) a(y)$  with $g'\in K_{\n}h_{\n}^{-1}$ and $n(x)a(y) \in \mathcal{F}_{\n_2}$.

We will now define the kernel function which serves as an approximate spectral projector mentioned in the introduction to this section. We do this place by place and immediately give some basic properties.

Let $\nu$ be an archimedean place. Then we define 
\begin{equation}
	f_{\nu}(g_{\nu}) = k_{\nu}\big(u_{\nu}(g_{\nu}.i_{\nu},i_{\nu})\big), \nonumber
\end{equation}
for $k_{\nu}$ as in \cite[Lemma~9]{BHMM16}. By uniqueness of the spherical vector we have
\begin{equation}
	R(f_{\nu}) w_{\nu}^{\circ} = c_{\nu}(\pi_{\nu}) w_{\nu}^{\circ}. \nonumber
\end{equation}
The number $c_{\nu}(\pi_{\nu})$ depends only on the equivalence class of $\pi_{\nu}$ and is given by the spherical transform of $f_{\nu}$ at $\pi_{\nu}$. By a suitable parametrization of spherical representations of $G(F_{\nu})$ one relates this to the classical Selberg/Harish-Chandra transform of $k_{\nu}$. Therefore, we have 
\begin{equation}
	c_{\nu}(\pi_{\nu}) \gg 1 \label{eq:lower_bound_nu_for_c}
\end{equation}
by \cite[Lemma~9]{BHMM16}.

For $\p\vert \n$ we define
\begin{equation}
	f_{\p}(g_{\p}) = \abs{\det(g_{\p})}^{ia_{\p}}\Phi_{\pi_{\p}'}'(g_{\p}). \nonumber
\end{equation} 
Where $\Phi_{\pi_{\p}'}'$ is the chopped of matrix coefficient as defined in \cite[Section~2.4]{Sa15}. By construction (see \cite[Proposition~2.13]{Sa15}) there is $\delta_{\pi_{\p}'} \gg q_{\p}^{-n_{1,\p}-m_{1,\p}}$ such that
\begin{equation}
	R_{\p}(f_{\p})w_{\p}^{\circ} = \int_{Z(F_{\p})\setminus G(F_{\p})} f_{\p}(g) \pi_{\p}(g)w_{\p}^{\circ} d\mu_{\p}(g) = \delta_{\pi_{\p}'}w_{\p}^{\circ}. \nonumber
\end{equation}
Let us remark that 
\begin{eqnarray}
	\abs{f_{\p}(g)} &\leq& 1 \text{ for all } g\in G(F_{\p}), \nonumber \\
	\text{supp}(f_{\p}) &=& \begin{cases} Z(F_{\p})K_{\p} &\text{ if $n_{\p}$ is even,} \\ Z(F_{\p})K_{\p}^0(1) &\text{ else.} \end{cases} \nonumber
\end{eqnarray}

For $\p\vert \mathfrak{q}$ define
\begin{equation}
	\tilde{K}_{0,\p}(1) = \left\{ \left( \begin{matrix} a&b \\ c & d \end{matrix} \right)\in K_{0,\p}(1) \colon a-d\in \varpi_{\p} \op_{\p} \right\}. \nonumber
\end{equation}
Then we put 
\begin{equation}
	f_{\p}(g_{\p}) = \begin{cases} \text{vol}(Z(\op_{\p})\setminus \tilde{K}_{0,\p}(1))^{-1} \omega_{\pi_{\p}}^{-1}(z) &\text{ if } g_{\p} = zk\in Z(F_{\p})\tilde{K}_{0,\p}(1), \\ 0 &\text{ else}. \end{cases} \nonumber
\end{equation}
Now, since $w^{\circ}_{\p}$ is $K_{\p}$-fixed we see:
\begin{eqnarray}
	R_{\p}(f_{\p}) w^{\circ}_{\p} &=& \int_{Z(F_{\p})\setminus G(F_{\p})} f_{\p}\pi_{\p}(g)w_{\p}^{\circ}d\mu_{\p}(g) \nonumber\\ 
	&=& \text{vol}(Z(\op_{\p})\setminus \tilde{K}_{0,\p}(1))^{-1} \int_{Z(\op_{\p})\setminus \tilde{K}_{0,\p}(1)} \omega_{\pi_{\p}}(z)^{-1} \pi_{\p}(zk)w_{\p}^{\circ} d\mu_{\p}(zk) = w_{\p}^{\circ}. \nonumber
\end{eqnarray}
We also have the estimate
\begin{equation}
	\abs{f_{\p}} \leq [ K_{\p}:\tilde{K}_{0,\p}(1)] \ll q_{\p}^{2+\epsilon}.\nonumber
\end{equation}

We will treat the remaining places at once. To do so we set $S_{\text{ur}} = \{ \p\colon (\p,\mathfrak{q}\n) =1 \}$. And we define the unramified Hecke algebra
\begin{equation}
	\mathcal{H}_{\text{ur}} = \langle \{ \kappa_{\text{ur}}=\otimes_{\p\in S_{\text{ur}}} \kappa_{\p} \colon  \kappa_{\p} \in \mathcal{C}_{c}^{\infty}(G(F_{\p}), \omega_{\pi_{\p}}) \text{ such that } \kappa_{\p}(K_{\p} gK_{\p}) = \kappa_{\p}(g) \} \rangle_{\C}. \nonumber
\end{equation}
This is a commutative algebra by \cite[Theorem~4.6.1]{Bu96}. To an integral ideal $\mathfrak{c}$ we associate the special element
\begin{equation}
	\kappa_{\mathfrak{c}} = \otimes_{\p \in S_{\text{ur}}} \kappa_{\p,v_{\p}(\mathfrak{c})} \in \mathcal{H}_{\text{ur}} \nonumber
\end{equation}
 where
\begin{equation}
	\kappa_{\p,k}(g) = \begin{cases} \omega_{\pi_{\p}}(z)^{-1} & \text{ for } g=z\ \in Z(F_{\p})X_{\p,k}, \\ 0 &\text{ else.} \end{cases} \nonumber
\end{equation}
This is well defined since the central character is unramified at the places under consideration. This function is constructed such that $\pi(\mathbbm{1}_{X_{\p,k}}) = R(\kappa_{\p,k})$. Therefore, we have for $w^{\circ}_{\text{ur}} = \otimes_{\p\in S_{\text{ur}}} w_{\p}^{\circ}$ that
\begin{equation}
	R(\kappa_{\mathfrak{c}}) w_{\text{ur}}^{\circ} = \lambda(\mathfrak{c})  w_{\text{ur}}^{\circ}. \nonumber
\end{equation}

Now let us fix a large parameter $L$ such that $\No(q) \ll (\log L)^A$ for some constant $A$. We then define the sets 
\begin{eqnarray}
	\mathcal{P}_{\mathfrak{q}} &=& \{\mathfrak{a} \colon \mathfrak{a} = ( \alpha ) \text{ for } \alpha \in F_+^{\times}\cap (1+\mathfrak{q}) \}, \nonumber\\
	\mathcal{J}(\mathfrak{q}) &=& \{ \mathfrak{a} \colon (\mathfrak{a},\mathfrak{q})=1\} \text{ and } \nonumber\\
	\mathcal{P}(L) &=& \{ \alpha \in \mathcal{O}_F \colon (\alpha) \in \mathcal{P}_{\mathfrak{q}} \text{ is a prime ideal with } \No(\alpha) \in [L,2L]\text{ and } ((\alpha),\n)=1\}/\sim. \nonumber
\end{eqnarray}
In the last definition we say  $\alpha \sim\beta$ if $(\alpha) =(\beta)$. We identify $\mathcal{P}(L)$ with a suitable fundamental domain for $\sim$. We can arrange that $\alpha_v \asymp L^{[F:\Q]}$ for all $\nu$ and all $\alpha\in\mathcal{P}(L)$.

We now need a lower bound for $\sharp \mathcal{P}(L)$. Since we can not assume $\mathfrak{q}$ to be fixed (it might depend on $\n$) we need a stronger argument than the one given in \cite{BHMM16}. The following variation of the generalized Siegel--Walfisz theorem does the job.

\begin{lemma} \label{lm:lower_length_amp}
If $\No(q) \ll (\log L)^A$ we have
\begin{equation}
	\frac{L}{\No(\mathfrak{q}) \log(L)} \ll_{F,A} \sharp \mathcal{P}(L) \ll_{F,A} \frac{L}{\log(L)}. \label{eq:lower_length_amp}
\end{equation}
\end{lemma}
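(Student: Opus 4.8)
The upper bound is the easy direction: the number of prime ideals of $\Of$ with norm in $[L,2L]$ is $\ll L/\log L$ by the prime ideal theorem (or just Landau's classical version), and restricting to those lying in the ray class $\mathcal{P}_{\mathfrak{q}}$ only removes ideals, so $\sharp\mathcal{P}(L)\ll_{F} L/\log L$. The coprimality condition $((\alpha),\n)=1$ removes only the finitely many primes dividing $\n$, which is negligible. The real content is the lower bound, where we must show that the ray class containing the trivial class does receive its expected proportion $\asymp 1/\No(\mathfrak{q})$ of primes, \emph{and} that this holds uniformly in $\mathfrak{q}$ as long as $\No(\mathfrak{q})\ll(\log L)^A$. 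This is precisely a Siegel--Walfisz statement for the ray class group modulo $\mathfrak{q}$ (with the modulus also constrained to be totally positive at the archimedean places, i.e. we work in the narrow ray class group $\mathrm{Cl}_{\mathfrak{q}}^{+}(F)$).

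The plan is to invoke the effective Chebotarev/prime-counting theorem for ray class fields in the form due to Lagarias--Odlyzko together with the Siegel--Walfisz-type zero-free region. Concretely: let $H_{\mathfrak{q}}^{+}$ be the narrow ray class field of conductor dividing $\mathfrak{q}\cdot\infty$, and let $\mathrm{Cl}_{\mathfrak{q}}^{+}$ be its Galois group over $F$, of order $h_{\mathfrak{q}}^{+}\asymp_{F}\No(\mathfrak{q})$. For a character $\chi$ of $\mathrm{Cl}_{\mathfrak{q}}^{+}$ one has the Hecke $L$-function $L(s,\chi)$, and by orthogonality
\begin{equation}
	\sum_{\substack{\No(\p)\in[L,2L]\\ \p\in\mathcal{P}_{\mathfrak{q}}}} 1 = \frac{1}{h_{\mathfrak{q}}^{+}}\sum_{\chi}\ \sum_{\No(\p)\in[L,2L]} \chi(\p). \nonumber
\end{equation}
The trivial character contributes the main term $\frac{1}{h_{\mathfrak{q}}^{+}}\big(\mathrm{Li}(2L)-\mathrm{Li}(L)\big)\asymp_{F} \frac{L}{\No(\mathfrak{q})\log L}$. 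For the non-trivial characters one uses the standard explicit-formula estimate: there is an absolute constant $c>0$ and at most one exceptional real character $\chi_{1}$ (the Siegel zero) such that all other $L(s,\chi)$ are zero-free in the region $\sigma>1-c/\log\big(\No(\mathfrak{q})\,|d_F|\,(|t|+2)\big)$, giving
\begin{equation}
	\sum_{\No(\p)\le x}\chi(\p) \ll_{F} x\exp\!\big(-c'\sqrt{\log x}\big) \nonumber
\end{equation}
once $\No(\mathfrak{q})\le(\log x)^{A}$; Siegel's (ineffective) bound on the exceptional zero handles $\chi_{1}$ at the cost of the implied constant depending on $A$ (hence the $\ll_{F,A}$ in the statement). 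Summing over the $h_{\mathfrak{q}}^{+}\asymp\No(\mathfrak{q})\ll(\log L)^{A}$ characters, the total error is $\ll_{F,A} (\log L)^{A}\cdot L\exp(-c'\sqrt{\log L})$, which is $o\big(L/(\No(\mathfrak{q})\log L)\big)$ since $\No(\mathfrak{q})\log L\ll(\log L)^{A+1}$ grows only polynomially in $\log L$. Removing the finitely many $\p\mid\n$ changes nothing. This yields the lower bound $\gg_{F,A} L/(\No(\mathfrak{q})\log L)$, and after passing to the quotient by $\sim$ (which identifies $\alpha$ and its associates, but we have fixed a fundamental domain and $\mathcal{P}_{\mathfrak{q}}$ consists of principal ideals generated by totally positive elements, so each prime ideal corresponds to a bounded number of $\alpha$'s) the count of $\sharp\mathcal{P}(L)$ is of the same order.

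The main obstacle is getting genuine \emph{uniformity in $\mathfrak{q}$}: a naive appeal to the prime ideal theorem in ray classes with a fixed modulus is not enough, since here $\mathfrak{q}$ may grow with $\n$ and hence with $L$. This is exactly why one needs the Siegel--Walfisz range $\No(\mathfrak{q})\ll(\log L)^{A}$ and Siegel's theorem to control the potential exceptional zero — and it is the reason the implied constants become ineffective in $A$. One should also be slightly careful that the conductor at infinity is built in correctly (working with the \emph{narrow} ray class group, so that the totally-positivity condition $\alpha\in F_{+}^{\times}$ is captured by the characters), and that the transition from counting prime ideals to counting the generators $\alpha$ in the chosen fundamental domain only introduces an $O_F(1)$ factor; both are routine once the analytic input is in place.
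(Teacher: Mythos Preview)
Your proposal is correct and follows essentially the same route as the paper: both reduce the lower bound to a Siegel--Walfisz theorem for prime ideals in a fixed ray class modulo $\mathfrak{q}$ (with the archimedean signature built in via the narrow ray class group), using that $\sharp\mathrm{Cl}_{\mathfrak{q}}^{+}\ll_F\No(\mathfrak{q})$. The only difference is presentational: the paper simply cites the ready-made result \cite[Korollar~1.3]{Hi80} after noting the bound $\sharp Cl_F^{\mathfrak{q}}\ll\No(\mathfrak{q})$ from \cite[VI, Theorem~1]{La13}, whereas you sketch the standard proof of that theorem (character orthogonality, zero-free region, Siegel's ineffective bound for the exceptional zero).
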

This is a very lazy estimate but it uses some heavy machinery. So we will sketch the proof.
\begin{proof}
Let $Cl_F^{\mathfrak{q}} = \mathcal{J}(\mathfrak{q})/\mathcal{P}_{\mathfrak{q}}$ be the ray class group.  The explicit formula \cite[VI, Theorem~1]{La13} for the cardinality of $Cl_F^{\mathfrak{q}}$ implies
\begin{equation}
	\sharp Cl_F^{\mathfrak{q}} \ll \No(\mathfrak{q}). \nonumber
\end{equation}
For our purposes this is enough. 

The statement then follows immediately from \cite[Korollar~1.3]{Hi80}.
\end{proof}

\begin{rem}
We could also work with the weaker assumption $\No(q) \ll_{\epsilon} \No(\n)^{\epsilon}$. In this case we can still obtain a good lower bound for $\sharp \mathcal{P}(L)$ using a version of Linnik's theorem over number fields.
\end{rem}

To $\alpha \in \mathcal{O}_{F}$ we associate the numbers
\begin{equation}
	x_{\alpha} = \frac{\overline{\lambda((\alpha))}}{\abs{\lambda((\alpha))}}. \nonumber
\end{equation}
We finally define the unramified test function to be
\begin{equation}
	f_{\text{ur}} = \left( \sum_{\alpha\in \mathcal{P}(L)} \frac{x_{\alpha}\kappa_{\alpha}}{\sqrt{\mathcal{N}(\alpha)}} \right)\left( \sum_{\alpha\in \mathcal{P}(L)} \frac{x_{\alpha}\kappa_{\alpha}}{\sqrt{\mathcal{N}(\alpha)}} \right)^* + \left( \sum_{\alpha\in \mathcal{P}(L)} \frac{x_{\alpha^2}\kappa_{\alpha^2}}{\sqrt{\mathcal{N}(\alpha^2)}} \right) \left( \sum_{\alpha\in \mathcal{P}(L)} \frac{x_{\alpha^2}\kappa_{\alpha^2}}{\sqrt{\mathcal{N}(\alpha^2)}} \right)^*. \nonumber
\end{equation}
This defines an operator $R(f_{\text{ur}})$ such that
\begin{eqnarray}
	R(f_{\text{ur}})w_{\text{ur}}^{\circ} = \underbrace{\left[\left( \sum_{\alpha\in \mathcal{P}(L)} \frac{\abs{\lambda((\alpha))}}{\sqrt{\mathcal{N}(\alpha)}} \right)^2 +\left( \sum_{\alpha\in \mathcal{P}(L)} \frac{\abs{\lambda((\alpha^2))}}{\sqrt{\mathcal{N}(\alpha^2)}} \right)^2 \right]}_{=c_{\text{ur}}} w_{\text{ur}}^{\circ}
	= c_{\text{ur}}w_{\text{ur}}^{\circ}. \nonumber
\end{eqnarray}
Using \cite[Proposition~4.6.4/4.6.6]{Bu96} and \eqref{eq:lower_length_amp} and arguing as in \cite[(9.17)]{BHMM16} one gets
\begin{equation}
	c_{\text{ur}} \gg \frac{L^2}{\No(\mathfrak{q})^2\log(L)^2}. \label{eq:ur_low_bound_c}
\end{equation}
On the other hand we can linearise $f_{\text{ur}}$ as usual to obtain
\begin{equation}
	f_{\text{ur}} = \sum_{\alpha\in \mathcal{O}_F} y_\alpha \frac{\kappa_{\alpha}}{\sqrt{\mathcal{N}(\alpha)}}. \label{eq:linearization_of_fur}
\end{equation}
The coefficients $y_{\alpha}$ are very similar in spirit to the coefficients $w_m$ in \cite[(9.16)]{BHMM16}. Indeed
\begin{equation}
	y_{\alpha} = \begin{cases} \sum_{\alpha' \in \mathcal{P}(L)} \abs{x_{\alpha'}}^2 \omega^{-1}_{\pi_{(\alpha')}}(\varpi_{(\alpha')})+ \abs{x_{\alpha'^2}}^2 \omega^{-1}_{\pi_{(\alpha')}}(\varpi_{(\alpha')}^2) &\text{ if }\alpha=1 \\ x_{\alpha_1}\overline{x_{\alpha_2}}+\delta_{\alpha_1=\alpha_2} \omega_{\pi_{(\alpha_1)}}^{-1}(\varpi_{(\alpha_1)})x_{\alpha_1^2}\overline{x_{\alpha_2^2}} &\text{ if } \alpha=\alpha_1\alpha_2 \text{ for } \alpha_1,\alpha_2\in \mathcal{P}(L), \\  x_{\alpha_1^2}\overline{x_{\alpha_2^2}}  &\text{ if } \alpha=\alpha_1^2\alpha_2^2 \text{ for } \alpha_1,\alpha_2\in \mathcal{P}(L)\\ 0 &\text{ else}.\end{cases} \nonumber
\end{equation}
Thus, most importantly we have 
\begin{equation}
	\abs{y_\alpha} \ll \begin{cases} L &\text{ if } \alpha =1, \\  1 & \text{ if } \alpha = \alpha_1^j \alpha_2^j \text{ for some } j=1,2 \text{ and } \alpha_1,\alpha_2\in \mathcal{P}(L),\\ 0 & \text{ else} .\end{cases} \nonumber
\end{equation}
One compares this to \cite[p. 29]{Sa15_2} and \cite[p. 27]{BHMM16} and notes the similarity.

Combining everything we define 
\begin{equation}
	f=\otimes_{\nu} f_{\nu} \otimes_{\p\vert\mathfrak{q}\n} f_{\p} \otimes f_{\text{ur}}. \nonumber
\end{equation}
Associated to this function there is the integral operator
\begin{eqnarray}
	R(f)\colon L^2(G(F)\setminus G(\A_F),\omega_{\pi}) &\to& L^2(G(F)\setminus G(\A_F),\omega_{\pi}) \nonumber \\
	\phi &\mapsto& \left[x\mapsto  \int_{Z(\A_F)\setminus G(\A_F)} f(g)\phi(gx) dg \right]. \nonumber
\end{eqnarray}
In particular we have
\begin{equation}
	R(f)\phi' = \sigma^{\mathfrak{L}}\left( \int_{Z(\A_F)\setminus G(\A_F)} f(g)\pi^{\mathfrak{L}}(g)w^{\circ}dg \right) = c_{\text{ur}} \prod_{\nu} c_{\nu}(\pi_{\nu}) \prod_{\p\vert\n} \delta_{\pi_{\p}'} \phi'. \nonumber
\end{equation}

The corresponding automorphic kernel is given by
\begin{equation}
	K_f(g_1,g_2) = \sum_{\gamma\in Z(F)\setminus G(F)} f(g_1^{-1}\gamma g_2). \nonumber
\end{equation}

The spectral expansion of $K_f$ will enable us to bound the sup-norm of $\phi'$ in terms of the geometric definition of $K_f$. Let us work out the spectral expansion in detail.

We decompose
\begin{equation}
	K_f = K_{\text{cusp}}+K_{\text{sp}}+K_{\text{cont}}.
\end{equation}

Let us first deal with the cuspidal part. 
\begin{lemma}
For any $g\in G(\A_F)$ we have
\begin{equation}
	0\leq \frac{L^{2-\epsilon}}{\No(\mathfrak{q})^2\No(\n_1)\No(\mathfrak{m}_1)}\abs{\phi'(g)}^2 \ll K_{\text{cusp}}(g,g). \nonumber 
\end{equation}
\end{lemma}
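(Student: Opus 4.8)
\emph{Proof plan.} The plan is to run the usual amplification argument, isolating the contribution of $\phi'$ itself in the spectral expansion of $K_{\text{cusp}}$. Recall from the decomposition $K_f=K_{\text{cusp}}+K_{\text{sp}}+K_{\text{cont}}$ that $K_{\text{cusp}}$ is the automorphic kernel of $R(f)$ restricted to the cuspidal subspace (with the appropriate central character), on which $R(f)$ is compact. First I would check that $R(f)$ is self-adjoint and positive semi-definite: each local factor of $f=\bigotimes_{\nu}f_{\nu}\otimes\bigotimes_{\p\mid\mathfrak{q}\n}f_{\p}\otimes f_{\text{ur}}$ is, namely $f_{\nu}$ by the choice of $k_{\nu}$ in \cite[Lemma~9]{BHMM16}; $f_{\p}$ for $\p\mid\mathfrak{q}$ because it is a positive multiple of the orthogonal projection onto the $\tilde{K}_{0,\p}(1)$-fixed vectors with central character $\omega_{\pi_{\p}}$ (here one uses that $\tilde{K}_{0,\p}(1)$ is a compact open subgroup); $f_{\p}$ for $\p\mid\n$ because, up to a unitary character twist, $R_{\p}(f_{\p})$ is the positive operator attached to the chopped matrix coefficient in \cite[Section~2.4, Proposition~2.13]{Sa15}; and $f_{\text{ur}}$ because it is a sum of two operators of the form $R(\kappa)R(\kappa)^{*}$. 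Tensor products of positive self-adjoint operators stay positive and self-adjoint, and restriction to a closed invariant subspace preserves this.

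Given positivity, $R(f)$ admits an orthonormal eigenbasis $\{u_{j}\}$ of the cuspidal subspace with eigenvalues $\lambda_{j}\geq0$, so that
\[
	K_{\text{cusp}}(g,g)=\sum_{j}\lambda_{j}\abs{u_{j}(g)}^{2}\geq0 .
\]
By the computation already carried out, $\phi'=\sigma^{\mathfrak{L}}(w^{\circ})$ is itself an eigenfunction of $R(f)$ with eigenvalue $\Lambda=c_{\text{ur}}\prod_{\nu}c_{\nu}(\pi_{\nu})\prod_{\p\mid\n}\delta_{\pi_{\p}'}$, and $\Vert\phi'\Vert_{2}=1$ (the character $\omega_{\pi}^{-1}\omega_{\pi}^{\mathfrak{L}}$ has absolute value $1$, and right translation by $\eta_{\mathfrak{L}}$ and $h_{\n}$ preserves the measure on $Z(\A_{F})G(F)\setminus G(\A_{F})$, while $\phi_{\circ}$ is $L^{2}$-normalized). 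Taking $\phi'$ as one of the basis vectors and discarding the remaining non-negative terms gives $K_{\text{cusp}}(g,g)\geq\Lambda\abs{\phi'(g)}^{2}\geq0$.

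It remains to bound $\Lambda$ from below, which is pure bookkeeping with the estimates already recorded. From \eqref{eq:ur_low_bound_c} together with $\No(\mathfrak{q})\ll(\log L)^{A}$ one gets $c_{\text{ur}}\gg L^{2}\No(\mathfrak{q})^{-2}(\log L)^{-2}\gg_{\epsilon}L^{2-\epsilon}\No(\mathfrak{q})^{-2}$; by \eqref{eq:lower_bound_nu_for_c} and finiteness of the set of archimedean places, $\prod_{\nu}c_{\nu}(\pi_{\nu})\gg_{F}1$; and from the construction of $f_{\p}$ for $\p\mid\n$, $\delta_{\pi_{\p}'}\gg q_{\p}^{-n_{1,\p}-m_{1,\p}}$, so $\prod_{\p\mid\n}\delta_{\pi_{\p}'}\gg\No(\n_{1}\mathfrak{m}_{1})^{-1}$. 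Multiplying these yields $\Lambda\gg L^{2-\epsilon}\No(\mathfrak{q})^{-2}\No(\n_{1})^{-1}\No(\mathfrak{m}_{1})^{-1}$, which combined with the previous paragraph is the assertion.

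The only step that is more than routine is the positivity of the ramified local operators at $\p\mid\n$: the chopped matrix coefficient must be arranged so that $R_{\p}(f_{\p})$ is genuinely positive semi-definite and not merely an operator fixing $w_{\p}^{\circ}$, since otherwise other cuspidal representations might contribute negatively to $K_{\text{cusp}}(g,g)$ and the single-term bound would collapse; this is exactly what \cite[Proposition~2.13]{Sa15} provides. Everything else is a cited positivity statement or an elementary estimate.
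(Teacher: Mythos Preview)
Your proof is correct and follows essentially the same route as the paper: expand $K_{\text{cusp}}$ over an orthonormal eigenbasis containing $\phi'$, check that every eigenvalue is non-negative by factoring it over the places, and then bound the eigenvalue of $\phi'$ from below via \eqref{eq:ur_low_bound_c}, \eqref{eq:lower_bound_nu_for_c}, and $\delta_{\pi_{\p}'}\gg q_{\p}^{-n_{1,\p}-m_{1,\p}}$. The only adjustment is at $\p\mid\n$: rather than asserting that $R_{\p}(f_{\p})$ is positive semi-definite as an abstract operator (which \cite[Proposition~2.13]{Sa15} does not claim), the paper appeals to \cite[Corollary~2.16]{Sa15}, which shows that on the newvector line of any irreducible admissible representation the eigenvalue lies in $\{0,\delta_{\pi}\}$---and since the basis can be taken to consist of pure tensors, this is exactly what is needed for the product of local eigenvalues to be non-negative.
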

\begin{proof}
We begin by fixing a basis $\mathcal{B}_{\text{cusp}}$ for $L^2_0(X)$ containing $\phi'$ and consisting of $R(F)$ eigenfunctions. This is possible by a standard multiplicity one argument. For $\Psi\in \mathcal{B}_{\text{cusp}}$ let $c_{\Psi}$ be the associated $R(f)$-eigenvalue. Then we obtain
\begin{equation}
	K_{\text{cusp}}(h,g) = \sum_{\Psi\in \mathcal{B}_{\text{cusp}}} \SP{K_{\text{cusp}}(\cdot,g)}{\Psi}_{L^2(X)} \Psi(h) = \sum_{\Psi\in \mathcal{B}_{\text{cusp}}} \overline{c_{\Psi}\Psi(g)}\Psi(h). \nonumber
\end{equation}
We can choose $\mathcal{B}_{\text{cusp}}$ in such a way that for each $\Psi$ there is a cuspidal automorphic representation $(\pi_{\Psi}, V_{\Psi})$ and $\Psi = \sigma_{\Psi}(v)$ for some pure tensor $v\in V_{\Psi}$. Then we have
\begin{equation}
	c_{\Psi} = \delta_{\Psi} c_{\Psi,\text{ur}}\prod_{\nu} c_{\nu}(\pi_{\Psi,\nu}). \nonumber
\end{equation}
By \cite[Corollary 2.16]{Sa15} we have $\delta_{\Psi} \in \{0,\delta_{\pi}\}$. In particular $\delta_{\Psi}\geq 0$. At the archimedean places positivity of $c_{\nu}(\pi_{\Psi,\nu})$ is ensured by the definition of $k_{\nu}$. Finally also $c_{\text{ur}}$ must obviously be positive since $R(f_{\text{ur}})$ is a positive operator. Therefore
$c_{\Psi}\geq 0$ for all $\Psi\in \mathcal{B}_{\text{cusp}}$. 
An explicit lower bound for $c_{\phi'}$ follows from \eqref{eq:ur_low_bound_c}, \eqref{eq:lower_bound_nu_for_c}, and \cite[Proposition~2.13]{Sa15}. We then conclude by dropping all unnecessary terms. 
\end{proof}

The argument for the continuous part is quite similar. We obtain
\begin{lemma}
For $g\in G(\A_F)$ one has
\begin{equation}
	K_{\text{cont}} (g,g) \geq 0. \nonumber
\end{equation}
\end{lemma}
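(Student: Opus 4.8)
The plan is to run the same argument as for the cuspidal part, now with Eisenstein series in place of cusp forms. I would begin by writing out the continuous part of the spectral expansion of $K_f$: the continuous spectrum of $L^2(G(F)\setminus G(\A_F),\omega_{\pi})$ is a direct integral over $t\in\R$, and over the finitely many Hecke characters $\chi$ whose square matches $\omega_{\pi}$ on the centre, of the globally induced representations $\mathcal{I}(\chi,it)$. Since $f$ is bi-invariant under $K_{\infty}$, under $K_{\p}$ for $\p\nmid\mathfrak{q}\n$, under $\tilde{K}_{0,\p}(1)$ for $\p\mid\mathfrak{q}$ and under $K_{1,\p}(n_{\p})$ for $\p\mid\n$, the operator $R(f)$ factors through the subspace of vectors with this level structure, which is finite dimensional in each fibre. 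Diagonalising $R_{it}(f)$ there, I would choose for each $(\chi,it)$ an orthonormal basis $\mathcal{B}(\chi,it)$ of $\mathcal{I}(\chi,it)$ consisting of eigenvectors of $R_{it}(f)$, say $R_{it}(f)v=c_v v$; the corresponding Eisenstein series then satisfy $R(f)E(\cdot,v,it)=c_v E(\cdot,v,it)$, and the standard kernel identity specialises on the diagonal to
\begin{equation}
	K_{\text{cont}}(g,g) = c_0\sum_{\chi}\int_{\R}\sum_{v\in\mathcal{B}(\chi,it)} c_v\,\abs{E(g,v,it)}^2\,dt, \nonumber
\end{equation}
with a positive constant $c_0$ coming from the Plancherel measure. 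Thus everything reduces to showing $c_v\geq0$, exactly as in the cuspidal case where one needed $c_{\Psi}\geq0$.

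As there, $c_v$ factors into local contributions, $c_v=\delta_v\,c_{v,\text{ur}}\prod_{\nu}c_{\nu}(v)\prod_{\p\mid\mathfrak{q}}\epsilon_v(\p)$, and I would check each factor is non-negative. At the archimedean places $c_{\nu}(v)\geq0$ because $f_{\nu}$ is a point-pair invariant whose Selberg/Harish--Chandra transform is non-negative by the construction of $k_{\nu}$ in \cite[Lemma~9]{BHMM16}. For $\p\mid\mathfrak{q}$ the function $f_{\p}$ is the normalised $\omega_{\pi_{\p}}$-twisted average over the open compact group $\tilde{K}_{0,\p}(1)$, so $R_{\p}(f_{\p})$ is an orthogonal projection and its eigenvalues $\epsilon_v(\p)$ lie in $\{0,1\}$. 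For $\p\mid\n$ the operator $R_{\p}(f_{\p})$ has eigenvalue $\delta_v\in\{0,\delta_{\pi_{\p}'}\}$ with $\delta_{\pi_{\p}'}\geq0$ by \cite[Proposition~2.13, Corollary~2.16]{Sa15}. Finally $c_{v,\text{ur}}\geq0$ because $f_{\text{ur}}$ is, by its very definition, of the shape $AA^*+BB^*$, so that $R(f_{\text{ur}})$ is a non-negative operator. Multiplying these non-negative factors gives $c_v\geq0$, and the displayed identity then yields $K_{\text{cont}}(g,g)\geq0$.

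The one point needing care — and the only genuine difference from the cuspidal lemma — is the reduction step itself: justifying that the Eisenstein part of the spectral expansion of $K_f$ converges and may be evaluated on the diagonal term by term, and that $R(f)$ really does act through the stated finite level structure so that the eigenbasis $\mathcal{B}(\chi,it)$ exists. Both are routine given that each local component of $f$ is compactly supported modulo the centre while the archimedean component is of Paley--Wiener type: the former is the usual truncation/analytic-continuation argument for the $GL_2$ automorphic kernel, and the latter follows directly from the explicit local descriptions of the $f_v$ recalled above. With that in place the remainder is a place-by-place bookkeeping that mirrors the cuspidal argument verbatim.
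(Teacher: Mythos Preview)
Your approach is essentially the paper's: both diagonalise $R(f)$ on the Eisenstein part, write $K_{\text{cont}}(g,g)$ as a non-negative combination of $\abs{E}^2$ weighted by $R(f)$-eigenvalues, and then verify those eigenvalues are non-negative by the same place-by-place factorisation (archimedean via the Harish--Chandra transform, $\p\mid\n$ via \cite[Corollary~2.16]{Sa15}, unramified via $f_{\text{ur}}=AA^*+BB^*$). The paper works directly in the Gelbart--Jacquet space $\tilde{\mathbf{H}}$ rather than decomposing by characters; one caution on your version is that ``$\chi^2$ matches $\omega_\pi$'' is the \emph{residual} condition, not the continuous one (the continuous spectrum is indexed by pairs $\chi_1\chi_2=\omega_\pi$, equivalently a single $\chi$ with no square constraint), but this bookkeeping slip does not affect the positivity argument.
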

\begin{proof}
Using the theory of Eisenstein series we have the expansion
\begin{equation}
	K_{cont}(h,g) = \frac{1}{4\pi} \sum_{\Psi_1,\Psi_2 \in \mathcal{B}_{\tilde{\mathbf{H}}}} \int_{-\infty}^{\infty} \SP{R(f) \Psi_2(iy)}{\Psi_1(iy)}_{\tilde{\mathbf{H}}(iy)} E_{\Psi_1}(iy,h) \overline{E_{\Psi_2}(iy,g)} dy.\label{eq:cont_kernel_exp}
\end{equation}
This is \cite[(5.21)]{GJ79}. Let us briefly recall the notation. We define the space
\begin{eqnarray}
	\tilde{\mathbf{H}}(s) = \bigg\{\Psi\colon G(\A_F) \to \C &\colon& \Psi\left[\left(\begin{matrix} \alpha au &x \\ 0 & \beta a v \end{matrix} \right) g\right] = \omega_{\pi}(a)\abs{\frac{u}{v}}_{\infty}^{s+\frac{1}{2}}\Psi(g) \nonumber\\
	&&\text{for }\alpha,\beta\in F^{\times}, a\in \A_F^{\times}, u,v\in F_{\infty}^+,  \nonumber\\
	&&\qquad\qquad \int_K \int_{F^{\times}\setminus F^{0}(\A_F)} \abs{\Psi(a(y)k)}^2 dy d\mu_K(k)   \bigg\} \nonumber
\end{eqnarray}
We then have a representation $(\pi_s,\tilde{\mathbf{H}}(s))$ of $G(\A_F)$ where $G(\A_F)$ acts by right translation. For $s\in i\R$ we have the inner product
\begin{equation}
	\SP{\Psi_1}{\Psi_2}_{\tilde{\mathbf{H}}(s)} \int_{\A_F^{\times}} \int_K \Psi_1( a(y)k)\overline{\Psi_2(a(y)k)} d\mu_{\A_F^{\times}}^{\times}(y) d\mu_K(k). \nonumber 
\end{equation}
We can also view $\tilde{\mathbf{H}}(s)$ as a trivial holomorphic fibre bundle over $\tilde{\mathbf{H}} = \tilde{\mathbf{H}}(0)$. For $\phi\in \tilde{\mathbf{H}}$ we then define $\Psi(s) = \Psi\cdot H(\cdot)^{s} \in \tilde{\mathbf{H}}(s)$. Where, 
\begin{equation}
		H\left( \left( \begin{matrix} 1&x \\ 0&1 \end{matrix} \right) \left( \begin{matrix} a&0 \\ 0&b \end{matrix} \right) k \right) = \abs{\frac{a}{b}}_{\A_F} \text{ for all }k\in K, \nonumber
\end{equation}
is naturally defined via the Iwasawa decomposition of $G(\A_F)$. Further, to $\Psi\in \tilde{\mathbf{H}}$ we associate the Eisenstein series
\begin{equation}
	E_{\Psi}(s,g) = \sum_{\gamma \in B(F)\setminus G(F)}\Psi(\gamma g)H(\gamma g)^s. \nonumber
\end{equation}
The sum in \eqref{eq:cont_kernel_exp} is taken over an orthonormal basis $\mathcal{B}_{\tilde{\mathbf{H}}}$ for $\tilde{\mathbf{H}}$.

As earlier it is no problem to choose this basis to consist of $R(F)$ eigenfunctions. For $\Psi\in \mathcal{B}_{\tilde{\mathbf{H}}}$ we denote the corresponding $R(f)$-eigenvalue by $c_{\Psi}(0)$. Note that then also $\Psi(s)$ is an $R(f)$-eigenfunction but the eigenvalue may depend on $s$. Thus by putting $h=g$ we obtain
\begin{equation}
	K_{\text{cont}}(g,g)  = \frac{1}{4\pi} \sum_{\Psi\in\mathcal{B}_{\tilde{\mathbf{H}}}}\int_{-\infty}^{\infty}  c_{\Psi}(iy) \abs{E_{\Psi}(iy,g)}^2 dy. \nonumber
\end{equation}
We can now argue as before using the construction of $f$ to show that $c_{\Psi}(s)\geq 0$ for all $\Psi$. This concludes the proof.
\end{proof}

Finally we also treat the residual part of the spectrum. 
\begin{lemma}
As long as $\mathfrak{n}_0\neq \mathcal{O}_{F}$ we have
\begin{equation}
	K_{\text{sp}}(h,g) = 0 \nonumber
\end{equation}
for any $g,h\in G(\A_F)$. Otherwise we still have $K_{\text{sp}}(g,g) \geq 0$ and the only contribution comes from characters $\chi^2 = \omega_{\pi}$ with $a(\chi_{\p}) \leq 1$ at $\p\vert\mathfrak{q}\n_1[\n_1]_{\n_0}^{-1}$ and $a(\chi_{\p})=0$ otherwise.
\end{lemma}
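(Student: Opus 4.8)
The plan is to expand the residual (special) part of $K_f$ spectrally. The residual spectrum of $L^2(G(F)\setminus G(\A_F),\omega_{\pi})$ is spanned by the one–dimensional automorphic representations $g\mapsto\chi(\det g)$, where $\chi$ runs over the unitary Hecke characters with $\chi^2=\omega_{\pi}$; letting $\Psi_{\chi}$ be the $L^2$–normalisation of $\chi\circ\det$, these form an orthonormal basis, so
\begin{equation}
	K_{\text{sp}}(h,g)=\sum_{\chi^2=\omega_{\pi}}c_{\chi}\,\Psi_{\chi}(h)\overline{\Psi_{\chi}(g)},\qquad c_{\chi}=\int_{Z(\A_F)\setminus G(\A_F)}f(g)\chi(\det g)\,dg. \nonumber
\end{equation}
Since $\chi\circ\det$ is one–dimensional, $R(f)$ acts on it by the scalar $c_{\chi}$, which factors as $c_{\chi}=c_{\chi,\text{ur}}\prod_{\nu}c_{\chi,\nu}\prod_{\p\mid\mathfrak{q}\n}c_{\chi,\p}$, the factors being the eigenvalues of the corresponding constituents of $R(f)$ on the line. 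Everything reduces to (i) $c_{\chi}\geq0$ and (ii) the list of $\chi$ with $c_{\chi}\neq0$.

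For (i) we argue place by place, exactly as was done for $K_{\text{cusp}}$ and $K_{\text{cont}}$. The factor $c_{\chi,\text{ur}}$ is visibly a sum of two squares $|c_A|^2+|c_B|^2$, where $c_A=\sum_{\alpha\in\mathcal{P}(L)}x_{\alpha}\lambda_{\chi}((\alpha))\No(\alpha)^{-1/2}$ and $c_B=\sum_{\alpha}x_{\alpha^2}\lambda_{\chi}((\alpha^2))\No(\alpha^2)^{-1/2}$, $\lambda_{\chi}$ being the Hecke eigenvalues of $\chi\circ\det$. At the archimedean places $c_{\chi,\nu}$ is the spherical transform of $k_{\nu}$ evaluated at the parameter of $\chi_{\nu}\circ\det$; this parameter has vanishing Casimir eigenvalue, hence lies on the edge of the complementary range, where positivity is provided by the construction of $k_{\nu}$ in \cite[Lemma~9]{BHMM16}. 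At $\p\mid\mathfrak{q}$ one has $m_{\p}=0$ (as $\mathfrak{m}\mid\n$ and $(\mathfrak{q},\n)=1$), so $f_{\p}$ is well defined and $c_{\chi,\p}=\vol(Z(\op_{\p})\setminus\tilde{K}_{0,\p}(1))^{-1}\int_{Z(\op_{\p})\setminus\tilde{K}_{0,\p}(1)}\chi_{\p}(\det k)\,dk\in\{0,1\}$. Finally, at $\p\mid\n$ a direct computation with the chopped matrix coefficient shows $c_{\chi,\p}\geq0$; more precisely, $c_{\chi,\p}=0$ unless $\chi_{\p}$ is unramified, in which case (using also that $\abs{\det}^{ia_{\p}}$ is trivial on the support $Z(F_{\p})K_{\p}^{\sharp}$, with $K_{\p}^{\sharp}=K_{\p}$ for $n_{\p}$ even and $K_{\p}^{\sharp}=K_{\p}^0(1)$ for $n_{\p}$ odd) one gets $c_{\chi,\p}=c\,\Vert P_{K_{\p}^{\sharp}}v_{\p}^{\circ}\Vert^2$ for some $c>0$, $P_{K_{\p}^{\sharp}}$ being the orthogonal projection onto $K_{\p}^{\sharp}$–fixed vectors of $\pi_{\p}'$. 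All factors being $\geq0$, we get $c_{\chi}\geq0$ and, on the diagonal, $K_{\text{sp}}(g,g)\geq0$.

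For (ii) the key point is that every $f_{\p}$ is left–invariant under a compact open subgroup $U_{\p}$: namely $U_{\p}=K_{\p}$ for $\p\nmid\mathfrak{q}\n$, $U_{\p}=\tilde{K}_{0,\p}(1)$ for $\p\mid\mathfrak{q}$, and $U_{\p}=K_{1,\p}(n_{\p})$ for $\p\mid\n$ (for the last use $\Phi'_{\pi_{\p}'}(k^{-1}g)=\SP{\pi_{\p}'(g)v_{\p}^{\circ}}{\pi_{\p}'(k)v_{\p}^{\circ}}=\Phi'_{\pi_{\p}'}(g)$ for $k\in K_{1,\p}(n_{\p})$). Hence $R_{\p}(f_{\p})$ maps into the $U_{\p}$–fixed vectors, so $c_{\chi,\p}\neq0$ forces $\chi_{\p}$ to be trivial on $\det U_{\p}$. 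Since $\det K_{\p}=\det K_{1,\p}(n_{\p})=\op_{\p}^{\times}$ and $\det\tilde{K}_{0,\p}(1)\supseteq1+\varpi_{\p}\op_{\p}$, this yields $a(\chi_{\p})=0$ for $\p\nmid\mathfrak{q}\n$ and for $\p\mid\n$, and $a(\chi_{\p})\leq1$ for $\p\mid\mathfrak{q}$ — a condition at least as strong as the one claimed, because $[\n_1]_{\n_0}=\n_1$ and hence $\mathfrak{q}\n_1[\n_1]_{\n_0}^{-1}=\mathfrak{q}$ whenever $\n_0=\mathcal{O}_F$.

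It remains to handle the case $\n_0\neq\mathcal{O}_F$: here there is a $\p\mid\n_0$, i.e.\ with $n_{\p}\geq2$, and I claim $c_{\chi,\p}=0$ for every $\chi$, which kills all terms in the expansion, so $K_{\text{sp}}\equiv0$. By the previous paragraph we may assume $\chi_{\p}$ unramified; since $\chi_{\p}^2=\omega_{\pi,\p}$ this also makes $\omega_{\pi,\p}$ unramified, and then the formula of the second paragraph reduces the claim to $(\pi_{\p}')^{K_{\p}^{\sharp}}=0$. For $n_{\p}$ even this is immediate: $K_{\p}^{\sharp}=K_{\p}$ and the ramified representation $\pi_{\p}'$ has no spherical vector. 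For $n_{\p}$ odd $\geq3$ one has $K_{\p}^{\sharp}=K_{\p}^0(1)$, which is conjugate by $\omega$ to $K_{0,\p}(1)\supseteq K_{1,\p}(1)$, so $\dim(\pi_{\p}')^{K_{\p}^{\sharp}}=\dim(\pi_{\p}')^{K_{0,\p}(1)}\leq\dim(\pi_{\p}')^{K_{1,\p}(1)}=\max(0,2-n_{\p})=0$ by Casselman's dimension formula for new vectors. This completes the proof; in the remaining case $\n_0=\mathcal{O}_F$ the two preceding paragraphs already give both assertions (at $\p\mid\n$ with $n_{\p}=1$ the space $(\pi_{\p}')^{K_{\p}^0(1)}$ can be nonzero, so such $\chi$ may actually occur, consistently with the statement). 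The step I expect to demand the most care is the local computation at $\p\mid\n$ — verifying that the unitary twist collapses on the support and that the even/odd chopping regions behave as stated — together with making sure \cite[Lemma~9]{BHMM16} really yields non-negativity of the archimedean transform at the complementary-series endpoint.
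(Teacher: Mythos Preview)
Your argument is correct in outline and is in fact considerably cleaner at the decisive local step than the paper's own treatment. The paper proves the lemma by reducing to a local integral $I_{\p}(\chi_{\p})$ and then, for $\p\mid\n$, evaluating it by brute force (Lemma~A.3 in the appendix): one inserts the finite Fourier expansion of the matrix coefficient, evaluates Gau\ss\ sums, and runs a case analysis over the possible types of $\pi_{\p}$ (supercuspidal/special/principal series, with sub-cases according to the ramification of the inducing characters). Your route bypasses all of this: once $\chi_{\p}$ is forced to be unramified, the local integral collapses to $\int_{K^{\sharp}}\langle W',\pi'(k)W'\rangle\,dk=\mathrm{vol}(K^{\sharp})\,\lVert P_{K^{\sharp}}W'\rVert^{2}\ge 0$, and vanishing for $n_{\p}\ge 2$ then follows from Casselman's dimension formula $\dim(\pi')^{K_{1,\p}(m)}=\max(0,m-n_{\p}+1)$. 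This is a genuinely different and more conceptual proof; the paper's computation has the advantage of giving the exact value of $I_{\p}(\chi_{\p})$ when $n_{\p}=1$, but that is not needed for the lemma.

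Two small points require correction, though neither breaks the argument. First, the left-invariance you claim for $f_{\p}$ at $\p\mid\n$ is under the wrong group: $\Phi'_{\pi'_{\p}}$ is not the bare matrix coefficient but the chopped and conjugated one, $\Phi'_{\pi'_{\p}}(g)=\mathbbm{1}_{ZK^{\sharp}}(g)\,\Phi_{\pi'_{\p}}(a(\varpi^{-n_{1}})\,g\,a(\varpi^{n_{1}}))$, so the invariance group is $K'_{1,\p}=a(\varpi^{n_{1}})K_{1,\p}(n_{\p})a(\varpi^{-n_{1}})$, not $K_{1,\p}(n_{\p})$ (for $n_{\p}$ odd one has $K_{1,\p}(n_{\p})\not\subset K^{0}_{\p}(1)$, whereas $K'_{1,\p}\subset K^{0}_{\p}(1)$). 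Since $\det K'_{1,\p}=\op_{\p}^{\times}$ as well, your conclusion $a(\chi_{\p})=0$ still follows. Second, in the projection identity the vector should be $W'=\pi'_{\p}(a(\varpi^{n_{1}}))W_{\pi'_{\p}}$ (equivalently $w_{\p}^{\circ}$), not $v_{\p}^{\circ}$; again harmless, because $(\pi'_{\p})^{K^{\sharp}}=0$ already forces $P_{K^{\sharp}}W'=0$.
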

\begin{proof}
We start from the spectral expansion of $K_{\text{sp}}$. This reads
\begin{equation}
	K_{\text{sp}}(h,g) = \frac{1}{\vol(Z(\A_F)G(F)\setminus G(\A_F))} \sum_{\chi^2 = \omega_{\pi}} \chi(\det(h)) \overline{\chi(\det(g))} \int_{Z(\A_F) \setminus G(\A_F)} f(x) \chi(\det(x)))dx. \nonumber
\end{equation}
Since the character $\chi$ factors and also $f$ is almost a pure tensor the last integral factors in the local integrals
\begin{equation}
	I_{\p}(\chi_{\p}) = \int_{Z(F_{\p})\setminus G(F_{\p})} f_{\p}(g)\chi_{\p}(\det(g))dg \text{ if } \p\vert \n\mathfrak{q} \nonumber
\end{equation}
and the unramified part $I_{\text{ur}}(\chi_{\text{ur}})$. By Lemma~\ref{lm:unram_resid_integral} it is clear that $I_{\text{ur}}(\chi_{\text{ur}})\geq 0$. The lemma then follows from the evaluation of the itnegrals $I_{\p}(\chi_{\p})$ given in Lemma~\ref{lm:q_resid_integral}~and~\ref{lm:ram_resid_integral}.
\end{proof}

By combining the last three lemmata with the definition of $K_f$ we conclude
\begin{equation}
	\abs{\phi'}^2 \ll_{\epsilon} L^{-2+\epsilon} \No(\mathfrak{q})^2\No(\n_1\mathfrak{m}_1)\sum_{\gamma \in Z(F)\setminus G(F)} \abs{f(g^{-1}\gamma g)}. \label{eq:combination_of_kernel_drop}
\end{equation}
This gives an upper bound for $\phi'$ in terms of the geometry of $G(F)$ and the test function $f$. We will estimate this further in the next section.

\subsection{Estimating the geometric expansion}

In this subsection we prove an upper bound for $\phi'$ which is good in the bulk. This will be done by estimating the right hand side of \eqref{eq:combination_of_kernel_drop}.

\begin{prop} \label{prop:bulk_arb_nf}
Assume we have $(\mathfrak{q},\n)=1$ and $\No(\mathfrak{q}) \ll \log(\No(\n))^A$. Then for 
\begin{equation}
	g=a(\theta_i)g'n(x)a(y) \text{ with } g'\in K_{\n}h_{\n}^{-1} \text{ and } n(x)a(y)\in \mathcal{F}_{\n_2}\nonumber
\end{equation}
we have
\begin{eqnarray}
	\abs{\phi'(g)}^2 \ll (\abs{T}_{\infty} \mathcal{N}(\n))^{\epsilon} \No(\mathfrak{q})^{4+\epsilon} &\bigg(& \abs{T}_{\infty}^{\frac{5}{6}}\mathcal{N}(\n_1)^{\frac{2}{3}}\mathcal{N}(\n_0)^{\frac{1}{3}}\mathcal{N}(\mathfrak{m}_1)\nonumber\\
	&& + \abs{T}_{\R}^{\frac{1}{2}}\abs{T}_{\C}\mathcal{N}(\n_1)^{\frac{1}{2}}\mathcal{N}(\n_0)^{\frac{1}{2}}\mathcal{N}(\mathfrak{m}_1) \nonumber\\
	&& + \abs{T}_{\infty}^{\frac{1}{2}}\mathcal{N}(\n_1)\mathcal{N}(\mathfrak{m}_1)\abs{y}_{\infty} \bigg). \label{eq:bulk_est_for_general} 
\end{eqnarray}
\end{prop}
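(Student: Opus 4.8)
The starting point is the pre-trace inequality \eqref{eq:combination_of_kernel_drop}, which reduces the problem to counting, with weights, the elements $\gamma\in Z(F)\setminus G(F)$ for which $f(g^{-1}\gamma g)\neq 0$. I would first unfold the definition $f=\otimes_\nu f_\nu\otimes_{\p\mid\mathfrak{q}\n}f_\p\otimes f_{\text{ur}}$ together with the linearisation \eqref{eq:linearization_of_fur}, so that the right-hand side of \eqref{eq:combination_of_kernel_drop} becomes a sum over $\alpha\in\mathcal{O}_F$ (running over $1$, the $\alpha_1^j\alpha_2^j$ with $\alpha_i\in\mathcal{P}(L)$, $j=1,2$) of the weight $|y_\alpha|$ times a lattice-point count
\begin{equation}
	M(\alpha)=\sharp\big\{\gamma\in Z(F)\setminus G(F)\colon g^{-1}\gamma g\in \operatorname{supp}\big(\otimes_\nu f_\nu\otimes_{\p\mid\mathfrak{q}\n}f_\p\otimes \kappa_\alpha\big)\big\}. \nonumber
\end{equation}
The support data is explicit: at $\p\mid\n$ it is $Z(F_\p)K_\p$ or $Z(F_\p)K_\p^0(1)$; at $\p\mid\mathfrak{q}$ it is $Z(F_\p)\tilde K_{0,\p}(1)$; at the unramified places it is $Z(F_\p)X_{\p,v_\p((\alpha))}$, i.e.\ a Hecke-translate of $K_\p$; and at the archimedean places $f_\nu$ is supported in a small ball around the identity determined by the parameters $T_\nu$ (as in \cite[Lemma~9]{BHMM16}). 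Since $g=a(\theta_i)g'n(x)a(y)$ with $g'\in K_{\n}h_{\n}^{-1}$, the conjugation $g^{-1}\gamma g$ differs from the $\Q$-case of \cite{Sa15} and the square-free case of \cite{BHMM16} only by the $a(\theta_i)$-twist (absorbed into ideal classes) and the $h_{\n}$-twist (which is exactly why the level-dependent factors $\No(\n_1)$, $\No(\mathfrak{m}_1)$ replace $\No(\n)$).

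The second step is to translate $M(\alpha)$ into a Diophantine count over $\mathcal{O}_F$. Writing $\gamma=\left(\begin{smallmatrix}a&b\\ c&d\end{smallmatrix}\right)$ up to scaling and using the Iwasawa/archimedean geometry, the condition $g^{-1}\gamma g\in\operatorname{supp}(f)$ forces: the archimedean places to bound $|c|_\nu$, $|a-d|_\nu$, $|b|_\nu$ in terms of $T_\nu$, $y_\nu$ (the $c$-entry being genuinely small gives the "bulk" gain), the places $\p\mid\n$ to impose congruences modulo $\varpi_\p^{n_{1,\p}}$ and $\varpi_\p^{m_{1,\p}}$ on the entries (this is where $g'\in K_{\n}h_{\n}^{-1}$ enters, cf.\ the treatment in \cite[Section~4]{Sa15}), the places $\p\mid\mathfrak{q}$ to impose the $\tilde K_{0,\p}(1)$-condition $c\equiv 0$, $a\equiv d\pmod{\varpi_\p}$, and the unramified places to force $\det\gamma$ to generate $(\alpha)$ up to units. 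I would organise the count by splitting according to whether $c=0$ (the parabolic/central contribution) or $c\neq 0$ (the generic contribution), exactly as in \cite[Section~9]{BHMM16}. For $c=0$ one gets the term involving $|y|_\infty$ — the "near the identity" contribution that is unavoidable and produces the last line of \eqref{eq:bulk_est_for_general}. For $c\neq 0$ one counts solutions in a box whose archimedean sidelengths are $\asymp T_\nu$ (real) or $T_\nu^2$ (complex) after rescaling by $\alpha$, intersected with the congruence conditions modulo $\n_1\mathfrak{m}_1$ and modulo $\mathfrak{q}$; a volume estimate (using Lemma~\ref{lm:cruc_count}-type counting or directly \cite[Lemma~6]{BHMM16}) gives the count, and the $\det=(\alpha)$ constraint together with $\alpha_\nu\asymp L^{[F:\Q]}$ controls the dependence on $L$.

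The third step is the summation over $\alpha$. Using $|y_\alpha|\ll L$ for $\alpha=1$ and $|y_\alpha|\ll 1$ for $\alpha=\alpha_1^j\alpha_2^j$, together with $\sharp\mathcal{P}(L)\ll L/\log L$ from Lemma~\ref{lm:lower_length_amp}, one finds that the diagonal $\alpha=1$ term contributes $L\cdot(\text{volume count with no Hecke twist})$, while the off-diagonal terms contribute $\ll L^2\cdot L^{-1}\cdot(\text{count at level }\No(\alpha)\asymp L^2)$; the standard amplifier bookkeeping (the gain is that the box has been inflated by $L$, so the count per $\alpha$ is $\ll L^{-1}\cdot(\ldots)$ relative to the $\alpha=1$ box, up to $+1$ for the ever-present trivial solution) shows the two contributions balance, and after multiplying \eqref{eq:combination_of_kernel_drop} through by $L^{-2+\epsilon}\No(\mathfrak{q})^2\No(\n_1\mathfrak{m}_1)$ and inserting the power-of-$\mathfrak{q}$ losses from $|f_\p|\ll q_\p^{2+\epsilon}$ at $\p\mid\mathfrak{q}$ (hence the $\No(\mathfrak{q})^{4+\epsilon}$), the $L$-dependence cancels and the three displayed terms emerge: $\abs{T}_\infty^{5/6}\No(\n_1)^{2/3}\No(\n_0)^{1/3}$ from the generic count with the optimal choice of $L$, $\abs{T}_\R^{1/2}\abs{T}_\C\No(\n_1)^{1/2}\No(\n_0)^{1/2}$ from a second range (the "diagonal-dominated" regime, where the complex places contribute $T_\nu^2$ rather than $T_\nu^{5/3}$), and $\abs{T}_\infty^{1/2}\No(\n_1)\abs{y}_\infty$ from the $c=0$ term. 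I expect the main obstacle to be the careful bookkeeping of the non-archimedean congruence conditions at $\p\mid\n$ under the $h_{\n}$-conjugation: one must verify that conjugating $K_\p$- or $K_\p^0(1)$-support by $a(\varpi_\p^{n_{1,\p}})$ yields precisely the congruences modulo $\varpi_\p^{n_{1,\p}}$ (and the extra $m_{1,\p}$ from the central-character support of $f_\p$) that match the local matrix-coefficient estimate \cite[Proposition~2.13]{Sa15}, and that these interlock correctly with the archimedean box — this is the step where \cite{Sa15} and \cite{BHMM16} must genuinely be merged rather than quoted, and where the split $\n=\n_2\n_0^2$ controls which local factors appear.
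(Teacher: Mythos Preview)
Your overall architecture is correct and matches the paper: start from \eqref{eq:combination_of_kernel_drop}, linearise via \eqref{eq:linearization_of_fur}, analyse the support of $f$ place by place, reduce to a Diophantine count, feed this into the $M(L,j,\delta)$-machinery of \cite[Section~9]{BHMM16}, and optimise $L$.

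There is, however, a conceptual misreading in your second step that would derail the bookkeeping. You expect the $h_{\n}$-conjugation at $\p\mid\n$ to produce congruences on the entries of $\gamma$ modulo $\varpi_{\p}^{n_{1,\p}}$ and $\varpi_{\p}^{m_{1,\p}}$, and you identify this as the ``main obstacle''. This is not what happens. The factor $\No(\n_1\mathfrak{m}_1)$ is already sitting in \eqref{eq:combination_of_kernel_drop} as a spectral normalisation constant (it comes from $\delta_{\pi_{\p}'}\gg q_{\p}^{-n_{1,\p}-m_{1,\p}}$, i.e.\ \cite[Proposition~2.13]{Sa15}); it is \emph{not} generated by the geometric count. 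On the geometric side the truncated matrix coefficient $f_{\p}$ has support $Z(F_{\p})K_{\p}$ (for $n_{\p}$ even) or $Z(F_{\p})K_{\p}^0(1)$ (for $n_{\p}$ odd), and the element $g'_{\p}$ lies in $K_{\p}$ respectively $\omega K_{\p}^0(1)$ by Lemma~\ref{lm:claim3}. Conjugation therefore yields $a(\theta_i^{-1})\gamma a(\theta_i)\in Z(F_{\p})K_{\p}$ at places with $n_{\p}$ even (no congruence on $c$ beyond integrality) and $a(\theta_i^{-1})\gamma a(\theta_i)\in Z(F_{\p})K_{0,\p}(1)$ at $\p\mid\n_2$ (congruence $c\equiv 0\pmod{\varpi_{\p}}$). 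In other words, the only level congruence that survives in the count is $c\in\theta_i^{-1}\n_2\mathfrak{q}$ --- exactly the square-free situation of \cite{BHMM16} with $\n_2$ in place of their $\n$. The set $\Gamma(i,\alpha)$ is then literally the one from \cite{BHMM16}, and their $M(L,j,\delta)$-bounds apply verbatim; the merge of \cite{Sa15} and \cite{BHMM16} has already happened at the level of the test function, not at the counting step.

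Concretely: your anticipated ``obstacle'' dissolves, the lattice count saves only $\No(\n_2)^{-1/3}$ (not $\No(\n_1)^{-1}$), and the displayed terms arise as $\No(\n_1\mathfrak{m}_1)\cdot\No(\n_2)^{-1/3}\cdot|T|_{\infty}^{5/6}=\No(\n_1)^{2/3}\No(\n_0)^{1/3}\No(\mathfrak{m}_1)|T|_{\infty}^{5/6}$ (using $\n_1=\n_2\n_0$), and similarly for the other two. The choice $L=|T|_{\infty}^{1/6}\No(\n_2)^{1/3}\No(\n)^{\epsilon}$ balances the first and third terms inside the bracket of the penultimate display in the paper's proof.
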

The only thing we will have to do is to exploit the support properties of $f$ to reduce the estimate to the counting problem solved in \cite{BHMM16}. Comparing this result to \cite[Theorem~1]{BHMM16} and \cite[Theorem~3.2]{Sa15_2} shows that the exponents here are indeed as expected.
\begin{proof}
To save chalk we put
\begin{equation}
	k(u(\gamma P,P)) = \prod_v k_v(u_v(\gamma_v P_v, P_v)) \text{ with } P_v = n(x_v) a(y_v). i_v. \nonumber
\end{equation}
By inserting the linearization of $f_{ur}$ given in \eqref{eq:linearization_of_fur} into \eqref{eq:combination_of_kernel_drop} yields 
\begin{eqnarray}
	\abs{\phi'(g)}^2 &\ll&  L^{-2+\epsilon} \No(\n_1\mathfrak{m}_1)\sum_{0\neq \alpha\in \mathcal{O}_F} \frac{\abs{y_{\alpha}}}{\sqrt{\mathcal{N}(\alpha)}} \nonumber \\
	&&\qquad\qquad\qquad\qquad\cdot\sum_{\gamma\in Z(F) \setminus G(F)} \abs{ \kappa_{(\alpha)}\prod_{\p\vert\mathfrak{q}\n}f_{\p}}(g'^{-1}a(\theta_i^{-1})\gamma a(\theta_i)g')\abs{k(u(\gamma P,P))} \nonumber
\end{eqnarray}
Let us analyze the support of $f_{\p}$ and $\kappa_{(\alpha)}$ place by place. At this point we will also have to exploit the special structure of $g$.

First note that if $\p \nmid \n$ we have $g'_{\p}=1$. This case consists of two sub cases Namely 
\begin{equation}
	a(\theta_i^{-1})\gamma a(\theta_i) \in \begin{cases} Z(F_{\p}) \tilde{K}_{0,\p}(1) &\text{ if } \p\mid \mathfrak{q}, \\ Z(F_{\p})K_{\p} a(\varpi_{\p}^{v_{\p}(\alpha)}) K_{\p} & \text{ else.}  \end{cases} \nonumber
\end{equation}

If $\p \mid \n$ then we use Lemma~\ref{lm:claim3} to see that $g'_{\p} \in \omega K_{\p}^0(1)$ if $p\mid \n_2$ and $g'_{\p} K_{\p}$ otherwise. Using the support property of $f_{\p}$ we conclude that
\begin{equation}
	a(\theta_i^{-1})\gamma a(\theta_i) \in \begin{cases} Z(F_{\p}) K_{\p} &\text{ if } \p \nmid \n_2, \\ Z(F_{\p}) \underbrace{\omega K_{\p}^0(1) \omega^{-1}}_{=K_{0,\p}(1)} &\text{ if } \p\mid \n_2. \end{cases} \nonumber
\end{equation}

It is now straight forward to choose a suitable representative for $\gamma\in Z(F)\setminus GL_2(F)$ such that we arrive at the analogue of \cite[(9.20)]{BHMM16}. In our case this reads
\begin{equation}
	\abs{\phi'(g)}^2 \ll_{\epsilon} \No(\mathfrak{q})^{2+\epsilon} L^{-2+\epsilon}\mathcal{N}(\n_1 \mathfrak{m}_1) \sum_{0\neq \alpha\in \mathcal{O}_F} \frac{\abs{y_{\alpha}}}{\sqrt{\mathcal{N}(\alpha)}} \sum_{\gamma\in \Gamma(i,\alpha)} \abs{k(u(\gamma P,P))} \nonumber
\end{equation}
with
\begin{equation}
	\Gamma(i,\alpha) = \left\{ \left(\begin{matrix} a&b \\ c& d \end{matrix}\right)\in GL_2(F) \colon a,d\in \mathcal{O}_F, a-d\in \mathfrak{q}, b\in \theta_i\mathcal{O}_F, c\in \theta_i^{-1}\n\mathfrak{q}, ad-bc=\alpha  \right\}. \nonumber
\end{equation}

Since our coefficients $y_{\alpha}$ have the same properties as the corresponding $w_m$ in \cite{BHMM16} we can replicate the argument from \cite[p. 26]{BHMM16}. One quickly sees, that this argument does not produce any new $\mathfrak{q}$ dependence. We arrive at
\begin{equation}
	\abs{\phi'(g)}^2 \ll_{\epsilon} \No(\mathfrak{q})^{2+\epsilon} L^{\epsilon}\mathcal{N}(\n_1 \mathfrak{m}_1) \sum_{\substack{\underline{k}\in \Z^{\sharp\{v\}}, \\ T_v^{-2}\leq \delta_v = 2^{k_v}\leq 4}} \frac{\abs{T}^{\frac{1}{2}}_{\infty}}{\abs{\delta}_{\infty}^{\frac{1}{4}}}\left( \frac{M(L,0,\delta)}{L}+ \frac{M(L,1,\delta)}{L^3}+ \frac{M(L,2,\delta)}{L^4} \right) \label{eq:where_to_insert_M}
\end{equation}
for
\begin{equation}
	M(L,j,\delta) = \sum_{\alpha_1,\alpha_2\in \mathcal{P}(L)} \sharp\left\{\gamma \in \Gamma(i,\alpha_1^j\alpha_2^j) \colon u_v(\gamma_vP_v,P_v)\leq \delta_v \text{ for all } v  \right\}. \nonumber
\end{equation}
This is analogous to  \cite[(9.24)]{BHMM16}.

The last sum contains $\ll_{\epsilon} \abs{T}_{\infty}^{\epsilon}$ terms, so that we can estimate it trivially. Further we note that $\abs{T_v}_{v}^{-2} \leq \abs{\delta_v}_{v}\ll 1$. This allows us to use the bounds for $M(L,j,\delta)$ as summarized on \cite[p. 37]{BHMM16} to estimate\footnote{As mentioned in \cite{BHMM16} the results counting the elements in $M(L,j,\delta)$ totally ignore the conditions depending on $\mathfrak{q}$. Therefore, using these bounds does not generate any new $\mathfrak{q}$ dependence.}
\begin{equation}
	\abs{\phi'(g)}^2 \ll (L\abs{T}_{\infty} \mathcal{N}(\n))^{\epsilon}\No(\mathfrak{q})^{2+\epsilon} \mathcal{N}(\n_1 \mathfrak{m}_1) \left( \frac{\abs{T}_{\infty}}{L}+\abs{T}_{\infty}^{\frac{1}{2}}\abs{y}_{\infty}+\frac{L^2\abs{T}_{\infty}}{\mathcal{N}\n_2} + \frac{\abs{T}^{\frac{1}{2}}_{\R}\abs{T}_{\C}}{(\mathcal{N}\n_2)^{\frac{1}{2}}} \right). \nonumber
\end{equation}
Choosing $L=\abs{T}_{\infty}^{\frac{1}{6}}(\mathcal{N}\n_2)^{\frac{1}{3}}\No(\n)^{\epsilon}$ and noting that $\n_2 \n_0 = \n_1$ leads to \eqref{eq:bulk_est_for_general}. Note that we include factor $\No(\n)^{\epsilon}$ in the definition of $L$ to make sure that $\No(\mathfrak{q}) \ll_{\epsilon} \log(L)^A$. 
\end{proof}

As in \cite{BHMM16} we can give another estimate for non-totally-real number fields. 

\begin{prop} \label{prop:bulk_bound_cmpF}
Let $C\leq \No(\mathfrak{q})\ll \log(\No(\n))^A$, where $C$ is an explicitly computable constant depending only on the field $F$. Further let $F^{\R}$ be the maximal totally real subfield of $F$ and let $m = [F:F^{\R}] \geq 2$. Then we have
\begin{equation}
	\abs{\phi'(g)}^2 \ll (\abs{T}_{\infty} \mathcal{N}(\n))^{\epsilon}\No(\mathfrak{q})^{4+\epsilon} \mathcal{N}(\n_1 \mathfrak{m}_1)\abs{T}_{\infty}\left( \abs{T}_{\infty}^{\frac{-1}{4m-4}}+(\abs{T}_{\infty} \No\n_2)^{\frac{-1}{4m-2}}+\frac{\abs{y}_{\infty}}{\abs{T}_{\infty}^{\frac{1}{2}}}  \right). \nonumber 
\end{equation}
\end{prop}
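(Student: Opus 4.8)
The plan is to run exactly the same machinery as in the proof of Proposition~\ref{prop:bulk_arb_nf}, but to feed the geometric count the sharper bounds for $M(L,j,\delta)$ that are available when $F$ has a nontrivial complex place. First I would retrace the reduction: starting from \eqref{eq:combination_of_kernel_drop}, insert the linearisation \eqref{eq:linearization_of_fur} of $f_{\text{ur}}$, exploit the support properties of $f_{\p}$ at $\p\mid\mathfrak{q}\n$ (using Lemma~\ref{lm:claim3} to pin down $g'_{\p}$ as before), choose the same representatives for $\gamma\in Z(F)\setminus G(F)$, and arrive at the analogue of \eqref{eq:where_to_insert_M}, namely
\begin{equation}
	\abs{\phi'(g)}^2 \ll_{\epsilon} \No(\mathfrak{q})^{2+\epsilon} L^{\epsilon}\mathcal{N}(\n_1\mathfrak{m}_1) \sum_{\substack{\underline{k}\in\Z^{\sharp\{v\}},\\ T_v^{-2}\leq \delta_v=2^{k_v}\leq 4}} \frac{\abs{T}_{\infty}^{\frac{1}{2}}}{\abs{\delta}_{\infty}^{\frac{1}{4}}}\left( \frac{M(L,0,\delta)}{L} + \frac{M(L,1,\delta)}{L^3} + \frac{M(L,2,\delta)}{L^4} \right). \nonumber
\end{equation}
Nothing in this chain depends on whether $F$ is totally real, so it goes through verbatim, and (as in the footnote) the $\mathfrak{q}$-conditions are simply dropped when counting, generating no new $\mathfrak{q}$-dependence.

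The only new input is the choice of bounds for $M(L,j,\delta)$. In \cite{BHMM16} the count of lattice points $\gamma$ in $\Gamma(i,\alpha_1^j\alpha_2^j)$ with $u_v(\gamma_v P_v,P_v)\leq\delta_v$ is carried out by separating the contribution of parabolic (upper-triangular) $\gamma$ from the genuinely "generic" $\gamma$, and for the generic part one counts solutions of a determinant equation inside a box whose shape is governed by the $\delta_v$. When there is a complex place, the box in that coordinate is two-dimensional, which in \cite[Section~9, see p.~37 and the discussion preceding Theorem~2]{BHMM16} yields the improved estimate; the upshot is a bound of the shape
\begin{equation}
	\frac{M(L,j,\delta)}{L^{j+1}} \ll (L\abs{T}_{\infty}\No(\n))^{\epsilon} L^{j-1}\left( \frac{\abs{T}_{\infty}}{L} + \frac{\abs{y}_{\infty}\abs{T}_{\infty}^{\frac{1}{2}}}{\abs{\delta}_{\infty}^{-\frac{1}{4}}} + \frac{L^{2}\abs{T}_{\infty}}{(\abs{T}_{\infty}\No\n_2)^{1/(?)}} \right) \nonumber
\end{equation}
— more precisely one uses the bounds recorded on \cite[p.~37]{BHMM16} for $M(L,j,\delta)$, which in the complex case save an extra power of $\abs{T}_{\infty}$ (respectively of $\abs{T}_{\infty}\No\n_2$) proportional to $1/m$ over the totally real bounds. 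Summing over the $\ll\abs{T}_{\infty}^{\epsilon}$ values of $\underline{k}$ trivially, using $\abs{\delta}_{\infty}\asymp 1$ up to $\abs{T}_{\infty}^{\epsilon}$ and $\abs{T_v}_v^{-2}\leq\abs{\delta_v}_v$, one obtains
\begin{equation}
	\abs{\phi'(g)}^2 \ll (L\abs{T}_{\infty}\No(\n))^{\epsilon}\No(\mathfrak{q})^{2+\epsilon}\mathcal{N}(\n_1\mathfrak{m}_1)\abs{T}_{\infty}\left( \frac{1}{L} + \frac{\abs{y}_{\infty}}{\abs{T}_{\infty}^{\frac{1}{2}}} + \frac{L^{?}}{(\abs{T}_{\infty}\No\n_2)^{1/(m-1)\text{-type exponent}}} \right). \nonumber
\end{equation}
One then optimises the free parameter $L$ (again absorbing a factor $\No(\n)^{\epsilon}$ into $L$ so that $\No(\mathfrak{q})\ll\log(L)^A$ is legitimate), balancing the $1/L$ term against the $\No\n_2$-term; the choice of $L$ as a suitable power of $\abs{T}_{\infty}\No\n_2$ produces the claimed combination $\abs{T}_{\infty}^{-1/(4m-4)}+(\abs{T}_{\infty}\No\n_2)^{-1/(4m-2)}$, while the $\abs{y}_{\infty}$-term survives untouched. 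The constant $C$ bounding $\No(\mathfrak{q})$ from below is exactly the one needed in \cite{BHMM16} for the complex-place counting lemmas to be in force (it ensures enough room in the relevant boxes), and $(\mathfrak{q},\n)=1$ is used as before.

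The main obstacle, and the only genuinely non-formal point, is verifying that the counting bounds for $M(L,j,\delta)$ from \cite[Section~9]{BHMM16} — which were established there for square-free level and trivial central character — remain valid in our setting with the sets $\Gamma(i,\alpha)$ carrying the extra congruence conditions $a-d\in\mathfrak{q}$, $b\in\theta_i\mathcal{O}_F$, $c\in\theta_i^{-1}\n\mathfrak{q}$, and with the powerful part $\n_0^2$ of $\n$ present. As in the proof of Proposition~\ref{prop:bulk_arb_nf}, the resolution is that the support analysis of the local test functions $f_{\p}$ has already absorbed all of the $\n_0$- and $\mathfrak{m}_1$-dependence into the explicit prefactor $\mathcal{N}(\n_1\mathfrak{m}_1)$ and the power of $\No(\mathfrak{q})$, so that the residual counting problem is literally the one in \cite{BHMM16} with the $\mathfrak{q}$-divisibility conditions only shrinking the relevant lattices (hence the counts are bounded, after dropping those conditions, by the $\mathfrak{q}$-free ones). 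Once this is granted the rest is the bookkeeping of exponents sketched above.
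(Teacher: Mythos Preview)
Your strategy is exactly the paper's: start from \eqref{eq:where_to_insert_M}, replace the first list of bounds for $M(L,j,\delta)$ from \cite[p.~37]{BHMM16} by the second list available when $m\geq 2$, and optimise in $L$. The reduction to \eqref{eq:where_to_insert_M} and the remark that the $\mathfrak{q}$-conditions can be dropped in the count are indeed identical to Proposition~\ref{prop:bulk_arb_nf}, so all of that is fine.

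The gap is that you never actually write down the exponents; the placeholders ``$L^{?}$'' and ``$1/(m-1)$-type exponent'' are precisely the content of the argument. Inserting the second list from \cite[p.~37]{BHMM16} into \eqref{eq:where_to_insert_M} gives the explicit shape
\[
\abs{\phi'(g)}^2 \ll (\abs{T}_{\infty}L\No(\n))^{\epsilon}\No(\mathfrak{q})^{4+\epsilon}\No(\n_1\mathfrak{m}_1)\left(\frac{\abs{T}_{\infty}}{L}+\abs{T}_{\infty}^{\frac{1}{2}}\Big(\abs{y}_{\infty}+L^{2m-3}+\frac{L^{2m-2}}{\No(\n_2)^{\frac{1}{2}}}\Big)\right),
\]
and the two competing terms $L^{2m-3}$ and $L^{2m-2}/\No(\n_2)^{1/2}$ mean a single power of $\abs{T}_{\infty}\No(\n_2)$ cannot balance everything. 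The paper takes
\[
L=\No(\n)^{\epsilon}\min\!\Big(2^nC_0\,\abs{T}_{\infty}^{\frac{1}{4m-4}},\;(\No(\n_2)\abs{T}_{\infty})^{\frac{1}{4m-2}}\Big),
\]
balancing $\abs{T}_{\infty}/L$ against $\abs{T}_{\infty}^{1/2}L^{2m-3}$ in one regime and against $\abs{T}_{\infty}^{1/2}L^{2m-2}/\No(\n_2)^{1/2}$ in the other; this is what produces the sum $\abs{T}_{\infty}^{-1/(4m-4)}+(\abs{T}_{\infty}\No(\n_2))^{-1/(4m-2)}$. Your description ``a suitable power of $\abs{T}_{\infty}\No(\n_2)$'' misses this minimum, and without it the claimed final shape does not follow. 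The lower bound $C\leq\No(\mathfrak{q})$ is used (via the factor $2^nC_0$) to keep $L$ bounded away from~$1$, not for the counting lemmas themselves.
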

\begin{proof}
To see this one uses the second list in \cite[p.37]{BHMM16} together with \eqref{eq:where_to_insert_M}. This yields
\begin{equation}
	\abs{\phi'(g)}^2 \ll (\abs{T}_{\infty}L \mathcal{N}(\n))^{\epsilon}\No(\mathfrak{q})^{4+\epsilon} \mathcal{N}(\n_1 \mathfrak{m}_1) \left( \frac{\abs{T}_{\infty}}{L}+\abs{T}_{\infty}^{\frac{1}{2}}\left( \abs{y}_{\infty}+L^{2m-3}+\frac{L^{2m-2}}{\No(\n_2)^{\frac{1}{2}}} \right) \right). \nonumber
\end{equation}
Using $L=\No(\n)^{\epsilon}\min({2^n C_0\abs{T}_{\infty}^\frac{1}{4m-4}},(\No(\n_2)\abs{T}_{\infty})^{\frac{1}{4m-2}})$ completes the proof.\footnote{The Constant $2^nC_0$ makes sure that $L$ is not to small.}
\end{proof}

\section{The endgame} \label{sec:endgame}

In this section we put all the pieces together to prove the theorems stated in the beginning.

\subsection{Constructing the ideal $\mathfrak{q}$}

The section on amplification is dependent on the existence of a square-free ideal $\mathfrak{q}$ which eliminates certain technicalities coming from the unit group of $F$. Here we will show that one can actually construct $\mathfrak{q}$ with the desired properties.

\begin{lemma} \label{lm:choice_of_q_possible}
There is an absolute constant $A>0$ depending only on $F$ such that for any $\n$ there is an ideal $\mathfrak{q}$ satisfying the following two properties. 
\begin{itemize}
	\item We have $C\leq \No(\mathfrak{q})\ll \log(\No(\n))^A$, where $C$ is the absolute constant from Proposition~\ref{prop:bulk_bound_cmpF}.
	\item If $x$ is a quadratic residue modulo $\mathfrak{q}$ then $x\in (\mathcal{O}_F^{\times})^2$.
\end{itemize}
\end{lemma}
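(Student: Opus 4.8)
The plan is to reduce the statement to a statement about the image of the unit group in a finite quotient of the idele class group, and then to quote Chebotarev (or simply the Hilbert-class-field / Siegel--Walfisz machinery already invoked in the excerpt) to produce a small-norm prime ideal splitting in an appropriate auxiliary abelian extension. First I would fix a finite set of generators $\varepsilon_1,\dots,\varepsilon_{r}$ of the free part of $\mathcal{O}_F^{\times}$ together with a generator of the (finite) torsion part; by Dirichlet's unit theorem $r = r_1 + r_2 - 1$, so this is a finite list depending only on $F$. The condition ``every quadratic residue mod $\mathfrak{q}$ lies in $(\mathcal{O}_F^{\times})^2$'' is equivalent to: the natural map $\mathcal{O}_F^{\times}/(\mathcal{O}_F^{\times})^2 \to (\mathcal{O}_F/\mathfrak{q})^{\times}/((\mathcal{O}_F/\mathfrak{q})^{\times})^2$ is injective. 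So it suffices to choose $\mathfrak{q}$ (coprime to $\n$, squarefree, of norm bounded by a power of $\log\No(\n)$) so that for each nontrivial class $\varepsilon \in \mathcal{O}_F^{\times}/(\mathcal{O}_F^{\times})^2$, the element $\varepsilon$ is a non-square modulo some prime factor $\p$ of $\mathfrak{q}$.

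Next I would make this a splitting condition. Consider the finite abelian extension $F(\sqrt{\varepsilon_1},\dots,\sqrt{\varepsilon_r},\sqrt{u_0})/F$, where $u_0$ is a generator of the torsion subgroup (this is a multiquadratic extension of degree dividing $2^{r+1}$, depending only on $F$). For a prime $\p$ of $F$ unramified in this extension, the element $\varepsilon_j$ is a square mod $\p$ iff $\p$ splits in $F(\sqrt{\varepsilon_j})$. What I want is a prime $\p$ (or a bounded product of primes) that is inert in each quadratic subextension $F(\sqrt{\varepsilon_j})/F$ corresponding to a $\mathbb{Z}/2$-basis of $\mathcal{O}_F^{\times}/(\mathcal{O}_F^{\times})^2$ — equivalently, a prime whose Frobenius in $\mathrm{Gal}(F(\sqrt{\varepsilon_1},\dots)/F) \cong (\mathbb{Z}/2)^{s}$ is the ``all nontrivial'' element, or more robustly a small set of primes whose Frobenii generate the whole group. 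Either way this is a Chebotarev condition in a fixed extension of $F$, so by the effective Chebotarev density theorem (or, since the extension is abelian, by Siegel--Walfisz / \cite[Korollar~1.3]{Hi80} in the same spirit as Lemma~\ref{lm:lower_length_amp}), there are $\gg L/\log L$ such primes of norm in $[L, 2L]$; taking $L$ a suitable power of $\log\No(\n)$ and discarding the finitely many primes dividing $\n$ or ramifying in the auxiliary extension, we obtain a squarefree $\mathfrak{q}$ (a product of at most $s \leq r+1$ such primes, hence $\No(\mathfrak{q}) \ll (\log\No(\n))^{A}$ for $A$ depending only on $F$ through $s$ and the discriminant of the auxiliary field). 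Enlarging $\mathfrak{q}$ by one more admissible prime if necessary ensures $\No(\mathfrak{q}) \geq C$.

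I expect the main obstacle to be purely bookkeeping rather than conceptual: making sure the primes chosen really do cut out the full group $\mathcal{O}_F^{\times}/(\mathcal{O}_F^{\times})^2$ and not just a subgroup, which forces one to work in the compositum $F(\sqrt{\varepsilon_1},\dots,\sqrt{\varepsilon_r},\sqrt{u_0})$ and to pick a \emph{set} of primes whose Frobenius classes jointly generate $\mathrm{Gal}$, rather than a single prime; this is where one must be slightly careful that the number of primes needed, and hence the implied exponent $A$, stays bounded in terms of $F$ only. A secondary point is checking coprimality $(\mathfrak{q},\n)=1$ and squarefreeness, but both are automatic once we take distinct prime ideals of norm $\asymp L \gg \No(\n)^{0}$ avoiding the finite bad set; the torsion unit $u_0$ (e.g.\ $\pm1$ and any roots of unity) has to be included among the $\varepsilon_j$ so that $-1$ being a square mod $\mathfrak{q}$ is also controlled, which is harmless.
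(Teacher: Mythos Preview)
Your proposal is correct and essentially matches the paper's proof: both reduce to finding, for each nontrivial class $u\in\mathcal{O}_F^{\times}/(\mathcal{O}_F^{\times})^2$, a prime $\mathfrak{q}_u$ inert in $F(\sqrt{u})/F$ (equivalently, a prime modulo which $u$ is a non-square), then set $\mathfrak{q}=\prod_u\mathfrak{q}_u$ and use Chebotarev density to control the norm while avoiding the prime divisors of $\n$. The only cosmetic difference is that the paper works one quadratic extension $F(\sqrt{u})$ at a time rather than in the compositum, picking a prime for every nontrivial $u$ directly---this sidesteps your ``Frobenii generate the Galois group'' bookkeeping (your option 1 with a single prime would indeed fail, as you note, since a product of two non-squares is a square), but the resulting $\mathfrak{q}$ and the bound $\No(\mathfrak{q})\ll\log(\No(\n))^A$ are the same.
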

\begin{proof}
For $u\in \mathcal{O}_F^{\times} / (\mathcal{O}_F^{\times})^2$ non-trivial, we look at the quadratic extension $F(\sqrt{u}):F$. Then the Galois group is abelian and consists of two elements, say $Gal(F(\sqrt{u})\vert F) = \{ 1,\sigma_u\}$. Since we are dealing with an quadratic extension we know that a prime $\p$ of $F$ is inert in $F(\sqrt{u})$ if and only if the Artin-symbol satisfies
\begin{equation}
	\left(\frac{F(\sqrt{u}):F}{\p}\right) = \sigma_u. \nonumber
\end{equation}
Thus the Chebotarev set 
\begin{equation}
	P_{F(\sqrt{u})\vert F}(\sigma_u) = \left\{ \p \text{ unramified in } F(\sqrt{u}) \colon \left(\frac{F(\sqrt{u}):F}{\p}\right) = \sigma_u \right\} \nonumber
\end{equation}
contains exactly all the primes of $F$ that are inert in $F(\sqrt{u})$. It is a standard fact that $u$ is not a square modulo $\mathfrak{q}_u$ for any $\mathfrak{q}_u\in P_{F(\sqrt{u})\vert F}(\sigma_u)$. Therefore, we want to define
\begin{equation}
	\mathfrak{q} = \prod_{\substack{u\in \mathcal{O}_F^{\times}/(\mathcal{O}_F^{\times})^2,\\ [u] \neq [1]}} \mathfrak{q}_u \nonumber
\end{equation}
for suitably chosen $\mathfrak{q}_u\in  P_{F(\sqrt{u})\vert F}(\sigma_u)$. The rest of the proof is concerned with the problem of choosing $\mathfrak{q}_u$.

To do so we make several definitions. First we define
\begin{equation}
	[\n]_u = \prod_{\substack{ \p\vert \n, \\ \p\in P_{F(\sqrt{u})\vert F}(\sigma_u)}} \p. \nonumber
\end{equation}
Further we number $P_{F(\sqrt{u})\vert F}(\sigma_u) = \{ \p_{u,1}, \p_{u,2}, \cdots \}$ such that $\No(\p_{u,1}) \leq \No(\p_{u,2}) \leq \cdots$.

Now let us consider two cases. First if $\p_{u,1} \nmid [\n]_u$ then we take $\mathfrak{q}_u = \p_{u,1}$. By a version of Linnik's theorem for Chebotarev sets \cite{Za15} we have
\begin{equation}
	\No(\mathfrak{q}_u) \ll_F 1 \nonumber.
\end{equation}

Otherwise, we only consider the worst case 
\begin{equation}
	[\n]_u = \p_{u,1}\cdot ... \cdot \p_{u,k-1}. \nonumber
\end{equation}
Here we define $\mathfrak{q}_u = \p_{u,k}$. It is clear that we only need to show $\No(\mathfrak{q}_u) \ll \log(\No([\n]_u))^A$. But this follows from elementary calculations using Chebotarev's density theorem \cite[Theorem~(13.4), Chapter~VII]{Ne13}.

It is obvious that we can assume $C\leq \No(\mathfrak{q})$.
\end{proof}

\subsection{Proof of the main theorems}

In this subsection let $\phi_{\circ}$ be a $L^2$-normalized Maa\ss\   newform. In other words, it corresponds to a new vector in some cuspidal automorphic representation $(\pi, V_{\pi})$.

\begin{proof}[Proof of Theorem~\ref{th:main_th_1}]
First let us use Corollary~\ref{cor:reduction_to_f_dom}. Thus it is enough to consider $\phi(g) = \phi_{\circ}^{\mathfrak{L}}(g)$ for some $\mathfrak{L}\mid \n$. Further we can fix $1\leq i\leq h_F$ and restrict ourselves to $g=a(\theta_i)g'h_{\n}n(x)a(y)$ with $n(x)a(y) \in \mathcal{F}_{\n_2}$ and $g'h_{\n}\in \mathcal{J}_{\n}$.

If $\abs{y_{\infty}} > \abs{T}^{\frac{1}{3}}\No(\n_2)^{-\frac{1}{3}}$ then Proposition~\ref{pr:whit_exp_est_bad} yields
\begin{equation}
	\abs{\phi(g)}^2 \ll_{F,\epsilon} (\abs{T}_{\infty}\No(\n))^{\epsilon}\bigg( \abs{T}_{\infty}^{\frac{1}{3}}\No(\n_0)+\abs{T}_{\infty}^{\frac{2}{3}}\No(\n_2)^{\frac{1}{3}}\No(\n_0\mathfrak{m}_1)\bigg). \nonumber
\end{equation}

At last we deal with $\abs{y}_{\infty} \leq \abs{T}_{\infty}^{\frac{1}{3}}\No(\n_2)^{-\frac{1}{3}}$. First let us mention that by Lemma~\ref{lm:choice_of_q_possible} below we can choose an ideal $\mathfrak{q}$ which satisfies the conditions needed in order to apply Proposition~\ref{prop:bulk_arb_nf}. We conclude that
\begin{equation}
	\abs{\phi(g)}^2 \ll \abs{T}_{\infty}^{\epsilon}\No(\n)^{\epsilon} \No(\n_0\mathfrak{m}_1)\bigg( \abs{T}_{\infty}^{\frac{5}{6}}\No(\n_2)^{\frac{2}{3}} + \abs{T}_{\R}^{\frac{1}{2}}\abs{T}_{\C}\No(\n_2)^{\frac{1}{2}} \bigg). \nonumber
\end{equation}
\end{proof} 

Next we consider fields that are not totally real. Therefore, we find a maximal, totally real subfield $F^{\R}$. Put $m=[F:F^{\R}]\geq 2$.

\begin{proof}[Proof of Theorem~\ref{th:main_th_2}]
We start by choosing $\mathfrak{q}$ accordingly and using Corollary~\ref{cor:reduction_to_f_dom} to reduce the problem as far as we can. Now observe that for $\abs{y}_{\infty}>\abs{T}_{\infty}^{\frac{1}{4}}$ the estimate in Proposition~\ref{pr:whit_exp_est_bad} gives the upper bound $\No(\n)^{\epsilon}\No(\n_0\mathfrak{m}_1)^{\frac{1}{2}}\abs{T}_{\infty}^{\frac{3}{8}+\epsilon}$. Therefore, by using Proposition~\ref{prop:bulk_bound_cmpF}, we obtain the uniform bound
\begin{equation}
	\frac{\sigma(v^{\circ})(g)}{\Vert \sigma(v^{\circ})\Vert_2} \ll_{F,\epsilon}(\No(\n_2)\No(\n_0\mathfrak{m}_1)\abs{T}_{\infty})^{\frac{1}{2}+\epsilon}\left( \abs{T}_{\infty}^{-\frac{1}{8m-8}}+(\abs{T}_{\infty}\No(\n_2))^{-\frac{1}{8m-4}} \right). \nonumber
\end{equation}
But if $\abs{T}_{\infty}^{-\frac{1}{8m-8}} \geq \No(\n_2)^{-\frac{1}{4}}$ we can use Theorem~\ref{th:main_th_1} to get a better bound. This leads to
\begin{equation}
	\frac{\sigma(v^{\circ})(g)}{\Vert \sigma(v^{\circ})\Vert_2} \ll_{F,\epsilon}(\No(\n_2)\No(\n_0\mathfrak{m}_1)\abs{T}_{\infty})^{\frac{1}{2}+\epsilon}\left( \min(\abs{T}_{\infty}^{-\frac{1}{8m-8}},\No(\n_2)^{-\frac{1}{4}})+(\abs{T}_{\infty}\No(\n_2))^{-\frac{1}{8m-4}} \right). \nonumber
\end{equation}
Now one concludes by interpolation as in \cite{BHMM16}.
\end{proof}

\appendix

\section{Evaluation of some integrals}

In this appendix we will evaluate local integrals that appear in the residual part ot the spectral expansion. More precisely we will calculate the integral
\begin{equation}
	I_{\p}(\chi_{\p})= \int_{Z(F_{\p})\setminus G(F_{\p})} f_{\p}(g)\chi_{\p}(\det(g))dg  \label{eq:definition_of_I_chi} 
\end{equation}
for all $f_{\p}$ defined in Section~\ref{sec:spec_exp_amp}.

First we consider 
\begin{equation}
	f_{\p}(g) = \kappa_{\p,k}(g) = \begin{cases} \omega_{\pi}(z)^{-1} & \text{ for } g=z\ \in Z(F)X_{\p,k}, \\ 0 &\text{ else,} \end{cases} \nonumber
\end{equation}
for some $k$.

\begin{lemma} \label{lm:unram_resid_integral}
For $k\geq 0$ we have
\begin{equation}
	\int_{Z(F_{\p})\setminus G(F_{\p})} \kappa_{\p,k}(g) \chi_{\p}(\det(g))dg = \begin{cases} \chi_{\p}(\varpi_{\p}^k)\text{vol}(X_{\p,k}) &\text{ if $\chi_{\p}$ is unramified,} \\ 0 &\text{ else.} \end{cases} \nonumber
\end{equation}
\end{lemma}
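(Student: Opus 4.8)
The plan is to evaluate the integral $\int_{Z(F_{\p})\setminus G(F_{\p})} \kappa_{\p,k}(g)\chi_{\p}(\det(g))\,dg$ by unfolding the support of $\kappa_{\p,k}$. First I would recall that $\kappa_{\p,k}$ is supported on $Z(F_{\p})X_{\p,k}$ and equals $\omega_{\pi,\p}(z)^{-1}$ on a coset $zX_{\p,k}$. Since $Z(F_{\p})X_{\p,k}/Z(F_{\p})$ is precisely the image of $X_{\p,k}$ in $Z(F_{\p})\setminus G(F_{\p})$, the integral becomes $\int_{X_{\p,k}}\chi_{\p}(\det(g))\,d\mu_{\p}(g)$, where I use that the $\omega_{\pi,\p}(z)^{-1}$ factor is exactly cancelled by the well-definedness of the integrand on $Z(F_{\p})\backslash G(F_{\p})$ against the central character $\omega_{\pi,\p}$ (this is the standard compatibility making $\kappa_{\p,k}\in\mathcal{C}_c^{\infty}(G(F_{\p}),\omega_{\pi,\p})$).

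Next I would use the Cartan/elementary-divisor description: $X_{\p,k}=\bigsqcup_{0\le j\le k} K_{\p}\,\mathrm{diag}(\varpi_{\p}^{j},\varpi_{\p}^{k-j})\,K_{\p}$. On each double coset $\det$ takes the constant value $\varpi_{\p}^{k}$ up to a unit, because $\det(k_1\,\mathrm{diag}(\varpi_{\p}^{j},\varpi_{\p}^{k-j})\,k_2)=\det(k_1)\det(k_2)\varpi_{\p}^{k}$ with $\det(k_1),\det(k_2)\in\op_{\p}^{\times}$. Hence $\chi_{\p}(\det(g))=\chi_{\p}(\varpi_{\p}^{k})\chi_{\p}(\det(k_1)\det(k_2))$ for $g\in X_{\p,k}$ written in this form. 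Integrating over $X_{\p,k}$ therefore produces $\chi_{\p}(\varpi_{\p}^{k})$ times $\int_{X_{\p,k}}\chi_{\p}(\det(g)\varpi_{\p}^{-k})\,d\mu_{\p}(g)$, and the remaining integrand depends only on $\det(g)\bmod\op_{\p}^{\times}\cdot(\text{units})$, i.e. only on $\chi_{\p}|_{\op_{\p}^{\times}}$ evaluated at a unit that ranges over all of $\op_{\p}^{\times}$ as $g$ ranges over $X_{\p,k}$ (already $K_{\p}\subset X_{\p,0}$ surjects onto $\op_{\p}^{\times}$ via $\det$, and the fibration is measure-theoretically uniform).

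The dichotomy then falls out: if $\chi_{\p}$ is unramified, $\chi_{\p}|_{\op_{\p}^{\times}}\equiv 1$, so the remaining integral is just $\mathrm{vol}(X_{\p,k})$ and we get $\chi_{\p}(\varpi_{\p}^{k})\,\mathrm{vol}(X_{\p,k})$. If $\chi_{\p}$ is ramified, then $\int_{\op_{\p}^{\times}}\chi_{\p}(u)\,d\mu_{\p}^{\times}(u)=0$ by orthogonality of characters on the compact group $\op_{\p}^{\times}$, and pushing this through the $\det$-fibration (the measure on $X_{\p,k}$ disintegrates over the $\det$-map with fibers carried into one another by the $K_{\p}\times K_{\p}$-action, so the fiber volumes are locally constant in $\det$ over $\op_{\p}^{\times}$) kills the integral. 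I expect the main obstacle to be making the last disintegration step fully rigorous — i.e. checking that $\int_{X_{\p,k}}F(\det g)\,d\mu_{\p}(g)=c_k\int_{\op_{\p}^{\times}}F(\varpi_{\p}^{k}u)\,d\mu_{\p}^{\times}(u)$ for a constant $c_k$ and any locally constant $F$ on $\varpi_{\p}^{k}\op_{\p}^{\times}$ — but this is routine once one notes $X_{\p,k}$ is a union of finitely many $K_{\p}$-bi-invariant pieces and $\det\colon K_{\p}\to\op_{\p}^{\times}$ pushes Haar to Haar. The vanishing for ramified $\chi_{\p}$ is then immediate, completing the proof.
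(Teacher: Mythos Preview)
Your proof is correct and lands on the same key orthogonality $\int_{\op_{\p}^{\times}}\chi_{\p}(u)\,d\mu_{\p}^{\times}(u)=0$ that the paper uses. The paper's route is slightly more direct: rather than the Cartan decomposition plus a disintegration over $\det$, it writes $X_{\p,k}=\bigsqcup_i\alpha_iK_{\p}$ as a union of right $K_{\p}$-cosets and then applies the Iwasawa measure formula \eqref{eq:integral_convention} to the piece $\mathbbm{1}_{K_{\p}}$, which immediately yields the factor $\int_{\op_{\p}^{\times}}\chi_{\p}(y)\,d\mu^{\times}(y)$ with no further fibration argument. This sidesteps exactly the ``main obstacle'' you flagged; your disintegration can of course be justified (right-multiplying $X_{\p,k}$ by $a(u)$ for $u\in\op_{\p}^{\times}$ already forces the vanishing), but the coset approach avoids having to say anything about it. One minor cosmetic point: in your Cartan decomposition the double cosets for $j$ and $k-j$ coincide, so the disjoint union should run over $0\le j\le\lfloor k/2\rfloor$; this does not affect the argument.
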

\begin{proof}
The calculation for unramified $\chi_{\p}$ is straight forward. Now we assume that $\chi_{\p}$ is ramified. In this case let us write $X_{\p,k} = \sqcup_i \alpha_i K_{\p}$. Then we clearly have
\begin{equation}
	\int_{Z(F_{\p})\setminus G(F_{\p})}  \kappa_{\p,k}(g) \chi_{\p}(\det(g))dg = \sum_i \chi_{\p}(\det(\alpha_i)) \int_{Z(F_{\p})\setminus G(F_{\p})} \chi_{\p}(\det(g)) \mathbbm{1}_{K_{\p}}(g) dg. \nonumber
\end{equation}
We now use our choice of Haar measure and the fact $\mathbbm{1}_{K_{\p}}(n(x)a(y)k) = \mathbbm{1}_{\op_{\p}}(x) \mathbbm{1}_{\op_{\p}^{\times}}(y)$ to obtain
\begin{eqnarray}
	&&\int_{Z(F_{\p})\setminus G(F_{\p})}  \kappa_{\p,k}(g) \chi_{\p}(\det(g))dg  \nonumber \\
	&&\qquad\qquad\qquad= \sum_i \chi_{\p}(\det(\alpha_i)) \int_{\op_{\p}} \int_{K_{\p}} \kappa_{\p,k}(k) \int_{\op_{\p}^{\times}} \chi_{\p}(y) d\mu^{\times}(y) d\mu_{K_{\p}}(k) d\mu(x) = 0.  \nonumber
\end{eqnarray}
This concludes the proof.
\end{proof}

Next we look at 
\begin{equation}
	f_{\p}(g) = \begin{cases} \text{vol}(Z(\op_{\p})\setminus \tilde{K}_{0,\p}(1))^{-1} \omega_{\pi}^{-1}(z) &\text{ if } g = zk\in Z(F_{\p})\tilde{K}_{0,\p}(1), \\ 0 &\text{ else.} \end{cases} \nonumber
\end{equation} 

\begin{lemma} \label{lm:q_resid_integral}
For a quadratic character $\chi_{\p}$ and unramified $\omega_{\pi}$ we have
\begin{equation}
	I_{\p}(\chi_{\p}) = \begin{cases} 1 &\text{ if } a(\chi_{\p}) \leq 1, \\ 0 &\text{ else.} \end{cases} \nonumber
\end{equation}
\end{lemma}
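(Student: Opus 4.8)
The plan is to compute the integral $I_{\p}(\chi_{\p}) = \int_{Z(F_{\p})\setminus G(F_{\p})} f_{\p}(g)\chi_{\p}(\det(g))\,dg$ directly from the definition of $f_{\p}$. Since $f_{\p}$ is supported on $Z(F_{\p})\tilde K_{0,\p}(1)$ and equals $\mathrm{vol}(Z(\op_{\p})\setminus\tilde K_{0,\p}(1))^{-1}\omega_{\pi}^{-1}(z)$ there, the integral reduces to
\begin{equation}
	I_{\p}(\chi_{\p}) = \frac{1}{\mathrm{vol}(Z(\op_{\p})\setminus\tilde K_{0,\p}(1))}\int_{Z(\op_{\p})\setminus\tilde K_{0,\p}(1)} \omega_{\pi}^{-1}(z)\,\chi_{\p}(\det(zk))\,d\mu_{\p}(zk). \nonumber
\end{equation}
Because $\det(zk)=z^2\det(k)$ and $\omega_{\pi}$ is unramified while $\chi_{\p}$ is quadratic, $\omega_{\pi}^{-1}(z)\chi_{\p}(z^2)=\omega_{\pi}^{-1}(z)$ is killed against the $Z(\op_{\p})$-quotient only if it is genuinely trivial there; in fact the factor $\chi_{\p}(z^2)=1$ for $z\in\op_{\p}^{\times}$ since $\chi_{\p}$ is quadratic, and $\omega_{\pi}$ unramified means $\omega_{\pi}^{-1}(z)=1$ on $\op_{\p}^\times$. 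So $I_{\p}(\chi_{\p})$ collapses to the average of $\chi_{\p}(\det k)$ over $k\in\tilde K_{0,\p}(1)$.

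The heart of the matter is therefore to show that $k\mapsto \chi_{\p}(\det k)$ has average $1$ over $\tilde K_{0,\p}(1)$ exactly when $a(\chi_{\p})\le 1$, and average $0$ otherwise. First I would observe that $\det$ maps $\tilde K_{0,\p}(1)$ onto $\op_{\p}^{\times}$ (e.g.\ using the lower-left $\varpi_{\p}\op_{\p}$ entry one still gets all of $\op_{\p}^\times$ as determinant of $\left(\begin{smallmatrix} a & b\\ c& d\end{smallmatrix}\right)$ with $a\equiv d\ (\varpi_\p)$, $c\in\varpi_\p\op_\p$), and that the fibres of $\det$ all have equal measure since $\tilde K_{0,\p}(1)$ is a group and the fibre over $1$ is the kernel, a normal subgroup, whose cosets partition the group. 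Hence $\int_{\tilde K_{0,\p}(1)}\chi_{\p}(\det k)\,d\mu = \mathrm{vol}(\tilde K_{0,\p}(1))\cdot\int_{\op_{\p}^{\times}}\chi_{\p}(u)\,d\mu^{\times}(u)/\mathrm{vol}(\op_\p^\times)$ after suitably normalising, and the integral $\int_{\op_{\p}^{\times}}\chi_{\p}(u)\,d\mu^{\times}(u)$ equals $1$ if $\chi_{\p}$ is unramified and $0$ if $\chi_{\p}$ is ramified — but here $\chi_{\p}$ is quadratic, so ``$a(\chi_{\p})\le 1$'' is a genuine (weaker) condition: a quadratic character can be ramified with conductor exponent exactly $1$, in which case one needs the finer statement that $\chi_{\p}$ restricted to the image $\det(\tilde K_{0,\p}(1))=\op_\p^\times$ composed through the reduction mod $\varpi_\p$ still averages nontrivially. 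The key point is that $\tilde K_{0,\p}(1)$ was designed (via the condition $a-d\in\varpi_\p\op_\p$) precisely so that $\det k \bmod \varpi_\p$ ranges over $(\op_\p/\varpi_\p)^\times$ but $\det k$ is only controlled mod $\varpi_\p^2$ enough to detect conductor $\le 1$; for $a(\chi_\p)\ge 2$ the character distinguishes residues mod $\varpi_\p^2$ on which $\det$ is equidistributed, forcing cancellation.

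The main obstacle I anticipate is getting the conductor bookkeeping exactly right: one must verify carefully that $\det(\tilde K_{0,\p}(1))=\op_{\p}^{\times}$ with each fibre carrying the same volume and, crucially, that the pushforward of $\mu_{\p}|_{\tilde K_{0,\p}(1)}$ under $\det$ is (a multiple of) Haar measure on $\op_{\p}^{\times}$ — not some measure concentrated near $1$. Granting that, the dichotomy is exactly ``$\chi_{\p}$ trivial on $1+\varpi_\p\op_\p$'' $\Longleftrightarrow$ ``$a(\chi_\p)\le 1$'' $\Longleftrightarrow$ $\int_{\op_\p^\times}\chi_\p\,d\mu^\times\ne 0$, giving $I_{\p}(\chi_{\p})=1$ in the first case (the normalising volume cancels) and $I_{\p}(\chi_{\p})=0$ otherwise. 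I would present the computation place-by-place in coordinates $k=n(b)a(y)\tilde k$ or, more simply, just exploit the group structure of $\tilde K_{0,\p}(1)$ and the homomorphism property of $\det$, which makes the fibre-equidistribution transparent and avoids an explicit Iwasawa-type integration altogether.
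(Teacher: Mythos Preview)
Your reduction to averaging $\chi_{\p}(\det k)$ over $\tilde K_{0,\p}(1)$ is correct, and the idea of pushing forward along $\det$ is fine. However, your claim that $\det\colon \tilde K_{0,\p}(1)\to \op_{\p}^{\times}$ is \emph{surjective} is false, and this is precisely where the argument breaks. For $k=\left(\begin{smallmatrix} a&b\\c&d\end{smallmatrix}\right)\in \tilde K_{0,\p}(1)$ one has $c\in\varpi_{\p}\op_{\p}$ and $a\equiv d\pmod{\varpi_{\p}}$, hence $\det(k)=ad-bc\equiv a^2\pmod{\varpi_{\p}}$. Thus the image of $\det$ lies in the index-two subgroup $H=\{u\in\op_{\p}^{\times}: u\bmod\varpi_{\p}\in(k_{\p}^{\times})^2\}$ (for odd residue characteristic). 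If the image were all of $\op_{\p}^{\times}$ with Haar pushforward, your computation would give $I_{\p}(\chi_{\p})=0$ for every ramified $\chi_{\p}$, including the case $a(\chi_{\p})=1$, contradicting the lemma.

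The paper's proof hinges exactly on this observation: since $\det(k)$ is a square modulo $\varpi_{\p}$ and $\chi_{\p}$ is quadratic, any $\chi_{\p}$ with $a(\chi_{\p})\le 1$ factors through $k_{\p}^{\times}$ and is therefore trivial on $(k_{\p}^{\times})^2$, so $\chi_{\p}(\det k)=1$ identically on $\tilde K_{0,\p}(1)$ and $I_{\p}(\chi_{\p})=1$. For $a(\chi_{\p})\ge 2$ the paper writes down explicit coset representatives of $\tilde K_{0,\p}(1)$ modulo a deep congruence subgroup and sums; your pushforward idea can also be salvaged here, since $1+\varpi_{\p}\op_{\p}\subset H$ and $\chi_{\p}$ is nontrivial on $1+\varpi_{\p}\op_{\p}$ when $a(\chi_{\p})\ge 2$, forcing the average over $H$ to vanish. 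So your strategy is viable once the image of $\det$ is corrected to $H$ rather than $\op_{\p}^{\times}$.
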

\begin{proof}
We first observe that for each $g\in \tilde{K}_{0,\p}(1)$ we have $\det(g) \in (\op_{\p}^{\times})^2 + \varpi_{\p} \op_{\p}$. Thus, if $a(\chi_{\p}) \leq 1$ we have $\chi_{\p}(g) = 1$ for all $g\in \tilde{K}_{0,\p}(1)$.

Let us now assume $a(\chi_{\p}) =b >1$. Since $\chi_{\p} \circ \det$ is trivial on 
\begin{equation}
	K_{1,\p}^1(b) = \left( \begin{matrix} 1+\varpi_{\p}^b \op_{\p} & \op_{\p} \\ \varpi_{\p}^b \op_{\p} & 1+\varpi_{\p}^b op_{\p} \end{matrix} \right) \cap K_{\p} \nonumber
\end{equation}
we will start by writing down explicit representatives for $\tilde{K}_{0,\p}(1)/K_{1,\p}^1(b)$. We obtain
\begin{equation}
	\tilde{K}_{0\,p}(1) = \bigsqcup_{\substack{a\in \op_{\p}^{\times}/(1+\varpi_{\p}\op_{\p}), \\ b,a',d' \in \op_{\p}/\varpi_{\p}^{b-1}\op_{\p} }} \left(  \begin{matrix} a+\varpi a'  & 0 \\ \varpi b & a+ \varpi d'  \end{matrix}\right)K_{1,\p}^1(b). \nonumber
\end{equation}
Therefore, we have
\begin{equation}
	I_{\p}(\chi_{\p}) = \sum_{\substack{a\in \op_{\p}^{\times}/(1+\varpi_{\p}\op_{\p}), \\ b,a',d' \in \op_{\p}/\varpi_{\p}^{b-1}\op_{\p} }} \chi_{\p}(a+\varpi_{\p} a')\chi_{\p}(a+\varpi_{\p} d') = 0. \nonumber
\end{equation}
\end{proof}

After this warm up we come to the most interesting case. We consider the truncated matrix coefficient. More precisely we look at
\begin{equation}
	f_{\p}(g)=\Phi_{\pi_{\p}'}'(g) = \begin{cases}
		\Phi_{\pi_{\p}'}(a(\varpi_{\p}^{-n_{1,\p}})ga(\varpi_{\p}^{-n_{11,\p}})) &\text{ if } g\in ZK_{\p}^0,\\
		0&\text{ else,}
	\end{cases} \nonumber
\end{equation}
with
\begin{equation}
	K_{\p}^0= \begin{cases}
		K_{\p} &\text{ if $n$ is even,} \\
		K_{\p}^0(1) &\text{ if $n$ is odd}.
	\end{cases} \nonumber
\end{equation}

We can now compute the following.

\begin{lemma} \label{lm:ram_resid_integral}
If $\chi_{\p}^2 = \omega_{\pi_{\p}}$ one has
\begin{equation}
	I_{\p}(\chi_{\p})  =  0  \nonumber
\end{equation}
Unless $a(\pi_{\p})=1$. In this case the integral may be non-zero but we still have $I_{\p}(\chi_{\p})\geq 0$.
\end{lemma}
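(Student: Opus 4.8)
The plan is to reduce the integral to the compact support of $f_{\p}$ and then play the transformation behaviour of the chopped matrix coefficient against the character $\chi_{\p}\circ\det$. Since $f_{\p}=\Phi'_{\pi_{\p}'}$ is supported on $Z(F_{\p})K_{\p}^0$, and $Z(F_{\p})\cap K_{\p}^0=Z(\op_{\p})$, a change of variables gives $I_{\p}(\chi_{\p})=\vol(Z(\op_{\p})\setminus K_{\p}^0)^{-1}\int_{K_{\p}^0}f_{\p}(k)\chi_{\p}(\det k)\,d\mu_{\p}(k)$ once one checks (from $\chi_{\p}^2=\omega_{\pi_{\p}'}$) that the integrand is left $Z(\op_{\p})$-invariant. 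On $K_{\p}^0$ the definition in the appendix together with unitarity of $\pi_{\p}'$ lets us write $f_{\p}(k)=\langle \pi_{\p}'(k)w_1,w_2\rangle$, where $w_1=\pi_{\p}'(a(\varpi_{\p}^{-n_{11,\p}}))w_{\p}^{\circ}$ and $w_2=\pi_{\p}'(a(\varpi_{\p}^{n_{1,\p}}))w_{\p}^{\circ}$ are fixed diagonal translates of the local new vector; the precise exponents play no role below.

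Next I would split on whether $\chi_{\p}$ is ramified. Suppose $\chi_{\p}|_{\op_{\p}^{\times}}\neq 1$. Because $w_1$ is invariant under $a(\varpi_{\p}^{-n_{11,\p}})K_{1,\p}(n_{\p})a(\varpi_{\p}^{n_{11,\p}})$, the function $f_{\p}$ is right-invariant on $K_{\p}^0$ under the compact open subgroup $H=K_{\p}^0\cap a(\varpi_{\p}^{-n_{11,\p}})K_{1,\p}(n_{\p})a(\varpi_{\p}^{n_{11,\p}})$. An elementary matrix computation, in the spirit of Lemma~\ref{lm:claim2}, shows $\det(H)=\op_{\p}^{\times}$ (the lower right entry of an element of $H$ ranges over all of $\op_{\p}^{\times}$, while the determinant only differs from it by $\varpi_{\p}^{n_{\p}}\op_{\p}$). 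Hence $k\mapsto \chi_{\p}(\det k)$ is a non-trivial character of $H$, so integrating over the right cosets $kH$ of $K_{\p}^0$ produces the factor $\int_{H}\chi_{\p}(\det h)\,d\mu_{\p}(h)=0$, and $I_{\p}(\chi_{\p})=0$. This case requires no hypothesis on $a(\pi_{\p})$.

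Suppose instead $\chi_{\p}$ is unramified. Then $\omega_{\pi_{\p}'}$ is unramified, $\chi_{\p}\circ\det$ is trivial on $K_{\p}^0$, and $I_{\p}(\chi_{\p})=\vol(\cdots)^{-1}\langle Tw_1,w_2\rangle$ with $T=\int_{K_{\p}^0}\pi_{\p}'(k)\,d\mu_{\p}(k)$ a positive multiple of the orthogonal projection onto the $K_{\p}^0$-fixed vectors of $\pi_{\p}'$. If $a(\pi_{\p})\geq 2$ then $\pi_{\p}'$ (an unramified twist of $\pi_{\p}$, hence of the same conductor) has no such vector: when $n_{\p}$ is even $K_{\p}^0=K_{\p}$ and $\pi_{\p}'$ is ramified, while when $n_{\p}$ is odd, necessarily $n_{\p}\geq 3$, $K_{\p}^0=K_{\p}^0(1)$ is an Iwahori subgroup and $\pi_{\p}'$ is not Iwahori-spherical ($a(\pi_{\p}')\geq 3>1$). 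Thus $T=0$ and $I_{\p}(\chi_{\p})=0$, which together with the ramified case gives the vanishing whenever $a(\pi_{\p})\neq 1$. If $a(\pi_{\p})=1$, then $K_{\p}^0=K_{\p}^0(1)$ is an Iwahori and $\pi_{\p}'$ may (exactly when it is an unramified twist of Steinberg) carry a non-zero Iwahori-fixed line, so $I_{\p}(\chi_{\p})$ need not vanish; its non-negativity follows from the fact that $I_{\p}(\chi_{\p})$ is the eigenvalue of the operator $R_{\p}(f_{\p})$ on the one-dimensional representation $\chi_{\p}\circ\det$, and $R_{\p}(f_{\p})$ is non-negative by construction of the chopped matrix coefficient (this is the same positivity that yields $\delta_{\Psi}\geq 0$ in the treatment of $K_{\mathrm{cusp}}$, cf.\ \cite[Proposition~2.13, Corollary~2.16]{Sa15}).

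I expect the main obstacle to be the bookkeeping in the second paragraph: establishing the precise right-invariance group $H$ of $f_{\p}$ on $K_{\p}^0$ and the identity $\det(H)=\op_{\p}^{\times}$ requires unwinding the exact shape of $\Phi'_{\pi_{\p}'}$ from \cite[Section~2.4]{Sa15}, carefully tracking the $a(\varpi_{\p}^{\pm n_{1,\p}})$-conjugations, and verifying that the measure-normalisation and central-character matching in the reduction to $K_{\p}^0$ are compatible. The remaining ingredients — the conductor/Iwahori-sphericity dichotomy and the positivity of $R_{\p}(f_{\p})$ — are then soft.
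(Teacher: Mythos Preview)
Your approach is genuinely different from the paper's. The paper computes $I_{\p}(\chi_{\p})$ explicitly: it applies an integration formula from \cite{Co17}, expands $\Phi_{\pi_{\p}'}$ via the Fourier coefficients $c_{t,l}(\mu)$ (Lemma~\ref{lm:mat_coeff_via_ctlv}), and then evaluates the resulting Gauss sums case by case according to the type of $\pi_{\p}'$ (supercuspidal, $\chi_1\boxplus\chi_2$, Steinberg). Your argument is structural: in the ramified-$\chi_{\p}$ case you exploit right invariance of $f_{\p}$ under a subgroup $H$ with $\det(H)=\op_{\p}^{\times}$ to kill the integral by character orthogonality, and in the unramified-$\chi_{\p}$ case you reduce to the $K_{\p}^0$-projection and invoke conductor theory to see that $\pi_{\p}'$ has no $K_{\p}^0$-fixed vector when $a(\pi_{\p})\ge 2$. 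Both the invariance claim and the fixed-vector claim check out: with $H=a(\varpi_{\p}^{n_{1,\p}})K_{1,\p}(n_{\p})a(\varpi_{\p}^{-n_{1,\p}})$ one has $H\subset K_{\p}^0$ and $\det(H)=\op_{\p}^{\times}$ (the lower-right entry is free in $\op_{\p}^{\times}$ and the determinant differs from it only modulo $\varpi_{\p}^{n_{\p}}$), and a representation of conductor $\ge 2$ has neither $K_{\p}$-fixed nor Iwahori-fixed vectors. Your route is considerably shorter and avoids the case analysis entirely.

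There is one genuine gap, and it is not where you flagged it. For $a(\pi_{\p})=1$ you justify $I_{\p}(\chi_{\p})\ge 0$ by saying that $R_{\p}(f_{\p})$ is non-negative ``by construction,'' citing \cite[Proposition~2.13, Corollary~2.16]{Sa15}. Those results identify the eigenvalue of $R_{\p}(f_{\p})$ on \emph{generic} representations as lying in $\{0,\delta_{\pi_{\p}'}\}$; they say nothing about the one-dimensional representation $\chi_{\p}\circ\det$, which is not generic, so this appeal does not close the argument. The fix is immediate once you unwind the definition correctly: the conjugation $g\mapsto a(\varpi_{\p}^{-n_{1,\p}})\,g\,a(\varpi_{\p}^{n_{1,\p}})$ in $\Phi'_{\pi_{\p}'}$ (the exponent $-n_{11,\p}$ in the displayed definition is a typo; the proof itself uses $+n_{1,\p}$) means your $w_1$ and $w_2$ are the \emph{same} vector $w=\pi_{\p}'(a(\varpi_{\p}^{n_{1,\p}}))W_{\pi_{\p}'}$. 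In the unramified case this gives $I_{\p}(\chi_{\p})=c\,\langle w,Tw\rangle$ with $T=\int_{K_{\p}^0}\pi_{\p}'(k)\,dk$ a positive multiple of the orthogonal projection onto $K_{\p}^0$-fixed vectors, hence $I_{\p}(\chi_{\p})=c\,\lVert Pw\rVert^2\ge 0$. With that correction your proof is complete and more conceptual than the paper's.
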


Before we begin with the proof we recall some properties of $\Phi_{\pi_{\p}'}$. For any unitary, generic representation $\pi_{\p}'$ of $G(F)$ we define the matrix coefficient associated to a Whittaker new vector $W_{\pi_{\p}'}$ by
\begin{equation}
	\Phi_{\pi_{\p}'}(g) = \SP{W_{\pi_{\p}'}}{\pi_{\p}'(g)W_{\pi_{\p}'}}. \nonumber
\end{equation}
Using the finite Fourier coefficients $c_{t,l,}(\mu)$ defined in \cite[(1.5)]{As17_1} we can prove the following nice formula.

\begin{lemma} \label{lm:mat_coeff_via_ctlv}
We have
\begin{equation}
	\Phi_{\pi_{\p}'}(n(x) g_{t,l,1})= \sum_{m\in\Z} W_{\pi_{\p}'}(a(\varpi_{\p}^m))\sum_{\mu\in \mathfrak{X}_l} \overline{c_{t+m,l}(\mu)}G(-\varpi_{\p}^m x, \omega_{\pi_{\p}'}\mu). \nonumber
\end{equation}
\end{lemma}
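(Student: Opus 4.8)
The plan is to expand both Whittaker vectors appearing in $\Phi_{\pi_{\p}'}(g) = \SP{W_{\pi_{\p}'}}{\pi_{\p}'(g)W_{\pi_{\p}'}}$ using the decomposition \eqref{eq:decomp}, and to match the resulting sum with the definition of the finite Fourier coefficients $c_{t,l}(\mu)$ from \cite[(1.5)]{As17_1}. First I would recall that the inner product of two Whittaker functions on $G(F_\p)$ can be written as an integral over $N(F_\p)Z(F_\p)\backslash G(F_\p)$, which by the Iwasawa-type decomposition reduces to a sum/integral over $A(F_\p)$ against $K_{1,\p}(n_\p)$-data. Concretely, $\SP{W_1}{W_2} = \int_{F_\p^\times} W_1(a(y))\overline{W_2(a(y))}\,d^\times y$ for Whittaker vectors of the same representation (after the standard reduction, using that the new vectors are right-invariant under the appropriate congruence subgroup). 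Applying this with $W_2 = \pi_{\p}'(n(x)g_{t,l,1})W_{\pi_{\p}'}$, and using that $a(y)n(x)g_{t,l,1}$ can be pushed back into the Bruhat cells $g_{t',l,v}$ via \eqref{eq:decomp}, produces an expression indexed by the valuation shift $m$ (so $y = \varpi_\p^m$ up to a unit) together with the unit parameter $v$ ranging over $\op_\p^\times/(1+\varpi_\p^{\min(l,n_\p-l)}\op_\p)$.

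The key step is then to recognize that summing $W_{\pi_{\p}'}(a(\varpi_\p^m v)\,g_{t,l,1})$ against the additive character $\psi_\p'$-dependent factors — coming from moving $n(x)$ past $a(\varpi_\p^m)$, which produces $n(\varpi_\p^m x)$ and hence a Gauss-sum-type contribution $G(-\varpi_\p^m x, \omega_{\pi_{\p}'}\mu)$ — is precisely the finite Fourier transform that defines $c_{t+m,l}(\mu)$. That is, I would use the defining relation from \cite[(1.5)]{As17_1} expressing $W_{\pi_\p'}$ on the cells $g_{t,l,v}$ in terms of its finite Fourier coefficients $c_{t,l}(\mu)$ over characters $\mu \in \mathfrak{X}_l$ of the relevant finite quotient group, invert it (Fourier inversion on a finite abelian group), and substitute. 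The translation $n(x)$ acts on the $v$-variable by an additive shift, which under the finite Fourier transform becomes multiplication by a character value — and grouping these character values over the $v$-sum is exactly the Gauss sum $G(-\varpi_\p^m x,\omega_{\pi_\p'}\mu)$. Collecting terms by $m$ and $\mu$ yields the stated formula; the complex conjugate on $c_{t+m,l}(\mu)$ appears because $W_{\pi_\p'}$ sits in the first (conjugate-linear) slot of $\SP{\cdot}{\cdot}$.

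The main obstacle I expect is bookkeeping the various shifts consistently: tracking how $a(\varpi_\p^m)\cdot n(x)\cdot g_{t,l,1}$ re-expresses in the normal form $z\,n(\cdot)\,g_{t+m,l,v}\,k$ of \eqref{eq:decomp} (including the central factor $z$ and which $K_{1,\p}(n_\p)$-element $k$ is absorbed), and making sure the unit-parameter ranges and the conductor of the relevant finite Fourier transform match the index set $\mathfrak{X}_l$ exactly. One must also be careful that the $\SP{W_\nu}{W_\nu}=1$-type normalization and the shift $\pi_\p'= \abs{\cdot}_\p^{ia_\p/2}\pi_\p$, $\psi_\p' = \psi_\p(\varpi_\p^{-v_\p(\mathfrak{d})}\cdot)$ do not introduce extra factors — but since the statement is for a general unitary generic $\pi_\p'$ and its own new vector, this should be clean. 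The only genuinely nontrivial input is the identity from \cite{As17_1} relating $W_{\pi_\p'}$ on Bruhat cells to the coefficients $c_{t,l}(\mu)$, which I would cite rather than reprove; everything else is a finite Fourier manipulation together with careful application of \eqref{eq:decomp}.
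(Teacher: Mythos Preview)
Your proposal is correct and follows essentially the same route as the paper: write the matrix coefficient as $\int_{F_\p^\times} W_{\pi_\p'}(a(y))\overline{W_{\pi_\p'}(a(y)n(x)g_{t,l,1})}\,d^\times y$, split $y=\varpi_\p^m v$ with $v\in\op_\p^\times$, rewrite $a(\varpi_\p^m v)n(x)g_{t,l,1}=n(\varpi_\p^m v x)g_{t+m,l,v^{-1}}\left(\begin{smallmatrix}1&0\\0&v\end{smallmatrix}\right)$, and then insert the finite Fourier expansion of $W_{\pi_\p'}$ on the cells $g_{t,l,v}$ (the paper cites \cite[(11)]{Sa15_2} here, which plays the same role as \cite[(1.5)]{As17_1}) so that the $v$-integral becomes the Gauss sum. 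One small slip: the complex conjugate on $c_{t+m,l}(\mu)$ comes from the \emph{second} (conjugate-linear) slot of $\SP{\cdot}{\cdot}$, where the translated vector sits, not the first; this does not affect the argument.
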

\begin{proof}
First we use the definition of $\Phi_{\pi}$. We arrive at
\begin{eqnarray}
	\Phi_{\pi_{\p}'}(n(x)g_{t,l,1}) &=& \SP{W_{\pi_{\p}'}}{\pi_{\p}'(n(x)g_{t,l,1})W_{\pi_{\p}'}} \nonumber \\
	&=& \int_{F_{\p}^{\times}} W_{\pi_{\p}'}(a(y))\overline{W_{\pi_{\p}'}(a(y)n(x)g_{t,l,1})}d\mu^{\times}(y) \nonumber \\
	&=& \sum_{m\in \Z} W_{\pi_{\p}'}(a(\varpi_{\p}^m))\int_{\op_{\p}^{\times}}\omega_{\pi_{\p}'}(v) \overline{W_{\pi_{\p}'}(a(\varpi_{\p}^mv)n(x)g_{t,l,1})}d\mu^{\times}(v). \nonumber
\end{eqnarray}
It is straight forward to check that 
\begin{equation}
	a(\varpi_{\p}^m v)n(x)g_{t,l,1} = n(\varpi_{\p}^m vx)g_{t+m,l,v^{-1}}\underbrace{\left(\begin{matrix} 1&0\\0&v \end{matrix} \right)}_{\in K_{1,\p}(n_{\p})}. \nonumber
\end{equation}
Inserting this together with \cite[(11)]{Sa15_2} and the definition of the Gauss sum completes the proof.
\end{proof}

\begin{proof}[Proof of Lemma~\ref{lm:ram_resid_integral}]
Put $b=\max(a(\chi_{\p}),n_{\p})$. Then $\chi_{\p}\circ \det$ and $\Phi_{\pi_{\p}}$ are bi-$K_{1,\p}(b)$-invariant. Further we recall 
\begin{equation}
	\Phi_{\pi_{\p}'}'(g) = \mathbbm{1}_{ZK_{\p}^{\circ}}(g) \Phi_{\pi_{\p}'}(a(\varpi_{\p}^{-n_{1,\p}})ga(\varpi_{\p}^{n_{1,\p}})). \nonumber
\end{equation}
Thus a simple change of variables yields
\begin{equation}
	I_{\p}(\chi_{\p})=\int_{Z(F_{\p})\setminus G(F_{\p})} \chi_{\p}(\det(g)) \Phi_{\pi_{\p}'}(g) \mathbbm{1}_{ZK_{\p}^{\circ}}(a(\varpi_{\p}^{n_{1,\p}})ga(\varpi_{\p}^{-n_{1,\p}}))dg. \nonumber
\end{equation}
It is easy to check, that $\mathbbm{1}_{ZK_{\p}^{\circ}}(a(\varpi_{\p}^{n_{1,\p}})\cdot a(\varpi_{\p}^{-n_{1,\p}}))$ is bi-$K_{0,\p}(b)$-invariant. And therefore, the hole integrand is bi-$K_{0,\p}(b)$-invariant, so that we can use \cite[Lemma~3.2.4]{Co17}. This yields
\begin{equation}
	I_{\p}(\chi_{\p}) = \sum_{l=0}^b c_l \sum_{t\in \Z} q_{\p}^{t+l} \int_{F_{\p}} \chi_{\p}(\varpi_{\p}^t) \Phi_{\pi_{\p}'}(n(x)g_{t,l,1}) \mathbbm{1}_{ZK_{\p}^{\circ}}(a(\varpi_{\p}^{n_{1,\p}})n(x)g_{t,l,1}a(\varpi_{\p}^{-n_{1,\p}})) d\mu_{\p}(x), \nonumber
\end{equation}
for some positive constants $c_l$. We remark that since $\omega_{\pi_{\p}'}$ is trivial on the uniformizer, so is $\chi_{\p}$. Next we will investigate which restrictions on $x$, $l$, and $t$ are imposed by the characteristic function (up to the centre). One checks
\begin{equation}
	a(\varpi_{\p}^{n_{1,\p}})n(x)g_{t,l,1}a(\varpi_{\p}^{-n_{1,\p}}) = z\cdot\left( \begin{matrix} \varpi_{\p}^k x & \varpi_{\p}^{n_{1,\p}-l+k}x-\varpi_{\p}^{t+n_{1,\p}+k} \\ \varpi_{\p}^{-n_{1,\p}+k} & \varpi_{\p}^{k-l} \end{matrix}\right). \nonumber
\end{equation}
Here we use the center to force all coefficients to be in $\op_{\p}$. This holds for 
\begin{equation}
	k\geq \max(n_{1,\p},l,-v_{\p}(x),-v_{\p}(x)-n_{1,\p}+l) \nonumber
\end{equation}
and suitable $t$. But we also need to make sure that the determinant is in $\op_{\p}^{\times}$. This implies $t+2k = 0$.

We now consider $n_{\p}$ to be even. In this case $K_{\p}^{\circ} = K_{\p}$ and  we get the conditions
\begin{equation}
	k = n_{1,\p}, t= -2n_{1,\p}, l \leq n_{1,\p},\text{ and } -v_{\p}(x) \leq n_{1,\p}. \label{eq:cond_for_even}
\end{equation}
After inserting the formula from Lemma~\ref{lm:mat_coeff_via_ctlv} for the matrix coefficient we obtain
\begin{equation}
	I_{\p}(\chi_{\p}) = \sum_{l=0}^{n_{1,\p}}c_lq_{\p}^{l-2n_{1,\p}}\sum_{m\in\Z}W_{\pi_{\p}'}(a(\varpi_{\p}^m))\sum_{\mu\in \mathfrak{X}_l}\overline{c_{t+m}(\mu)}\int_{\varpi_{\p}^{-n_{1}}\op_{\p}}G(-\varpi_{\p}^m x,\omega_{\pi_{\p}'}\mu)d\mu(x).  \label{eq:exp_sum_even}
\end{equation}
Inserting the evaluation of the Gauss sum given in \cite{Sa15_2} together with character orthogonality shows that most of the integrals vanish. We are left with
\begin{eqnarray}
	I_{\p}(\chi_{\p}) &=& \sum_{l=a(\omega_{\pi_{\p}'})}^{n_{1,\p}} c_l q^{l-2n_{1}} \sum_{m\in \Z} W_{\pi_{\p}'}(a(\varpi_{\p}^m)) \overline{c_{m-2n_{1,\p},l}(\omega_{\pi_{\p}'}^{-1})}  \sum_{t\geq 0} q_{\p}^{-t+n_{1,\p}}\int_{\op_{\p}^{\times}} G(-\varpi_{\p}^{m+t-n_{1}}x, 1) d\mu(x) \nonumber 
\end{eqnarray}
Now we have to consider different cases. First we deal with representations that satisfy $L(s,\pi_{\p}) = 1$. In this case using \cite[(6),(7)]{Sa15_2} yields
\begin{eqnarray}
	I_{\p}(\chi_{\p}) &=&  \sum_{l=a(\omega_{\pi_{\p}'})}^{n_{1,\p}} c_l q_{\p}^{l-2n_{1,\p}}  \overline{c_{-2n_{1},l}(\omega_{\pi_{\p}'}^{-1})}   \sum_{t\geq 0} q_{\p}^{n_{1,\p}-t} \int_{\op_{\p}^{\times}} G(\varpi_{\p}^{t-n_{1,\p}}v,1) d\mu(v) \nonumber \\
	&=&  \sum_{l=a(\omega_{\pi_{\p}'})}^{n_{1,\p}} c_l q_{\p}^{l-2n_{1,\p}}  \overline{c_{-2n_{1},l}(\omega_{\pi_{\p}'}^{-1})}  \left[\sum_{t\geq n_{1,\p}} q_{\p}^{n_{1}-t} \zeta_{F_{\p}}(1)-1\right] = 0. \nonumber
\end{eqnarray}
Next we consider the case $\pi_{\p}' = \chi_1\boxplus \chi_2$ with $a(\chi_1) > a(\chi_2)=0$. In this case we have $a(\omega_{\pi_{\p}'}) = a(\chi_1) = n_{\p} >0$. Recall that at the moment we are considering $n_{\p}$ even. Thus, $a(\omega_{\pi_{\p}'}) > n_{1,\p} \geq 1$. We conclude that $I_{\p}(\chi_{\p}) = 0$ since the $l$-sum is empty.
Let us remark, that $\pi_{\p}' = \chi St$ for unramified $\chi$ has conductor 1 and therefore does not need to be considered yet.

We have checked that $I_{\p}(\chi_{\p}) = 0$ for even $n_{\p}$ by considering all necessary types of $\pi_{\p}'$. Now let us move on to $n_{\p}$ odd. In this case $K_{\p}^{\circ} = K_{\p}^0(1)$ and additionally to \eqref{eq:cond_for_even} the characteristic function also forces $v_{\p}(\varpi_{\p}
^{2n_{1,\p}-l}x-1) \geq 1$. This implies
\begin{equation}
	l=n_{1,\p} \text{ and } x\in \varpi_{\p}^{-n_{1,\p}}(1+\varpi_{\p}\op_{\p}) \nonumber
\end{equation}
Analogously to \eqref{eq:exp_sum_even} we get
\begin{eqnarray}
	I_{\p}(\chi_{\p}) &=& c_{n_{1,\p}} q_{\p}^{-n_{1,\p}} \sum_{m\in\Z} W_{\pi_{\p}'}(a(\varpi_{\p}^m))\sum_{\mu\in \mathfrak{X}_{n_{1,\p}}} \overline{c_{-2n_{1,\p}+m,n_{1,\p}}(\mu)}\int_{\varpi_{\p}^{-n_{1,\p}}(1+\varpi_{\p}\op_{\p})}G(-\varpi_{\p}^m x,\omega_{\pi_{\p}'}\mu) d\mu(x) \nonumber \\
	&=&  c_{n_{1,\p}} \sum_{m\in\Z} W_{\pi_{\p}}(a(\varpi_{\p}^m)) \sum_{\substack{\mu \in \mathfrak{X}_{n_{1}}, \\ a(\mu\omega_{\pi_{\p}'}) \leq 1}} \overline{c_{-2n_{1,\p}+m,n_{1,\p}}(\mu)}\int_{(1+\varpi_{\p}\op_{\p})}G(-\varpi_{\p}^{m-n_{1}}x,\omega_{\pi_{\p}'}\mu) d\mu(x) \nonumber
\end{eqnarray}
In the last step we used again the Gauss sum evaluation \cite[(6)]{Sa15_2} and orthogonality of characters to remove all $\mu$ with $a(\mu\omega_{\pi_{\p}'})>1$.

Now we have to consider different cases again. First let us look at $\pi_{\p}$ with $L(s,\pi_{\p}) = 1$. In this case me have $n_{\p} >2$, since we assume $n_{\p}$ odd. By \cite[(7)]{Sa15_2}  we get
\begin{equation}
	I_{\p}(\chi_{\p}) = c_{n_{1,\p}} \sum_{\substack{\mu \in \mathfrak{X}_{n_{1}}, \\ a(\mu\omega_{\pi_{\p}'}) \leq 1}} \overline{c_{-2n_{1,\p},n_{1,\p}}(\mu)}\int_{(1+\varpi_{\p}\op_{\p})}\underbrace{G(-\varpi_{\p}^{-n_{1,\p}}x,\omega_{\pi_{\p}'}\mu)}_{=0} d\mu(x) = 0 \nonumber
\end{equation}

Next let $\pi_{\p} =\chi_1\boxplus\chi_2$ with $a(\chi_1) > a(\chi_2) = 0$. If $n_{\p} = a(\chi_1) > 1$ we immediately have $a(\omega_{\pi_{\p}'} \mu)> n_{1,\p}$ for all $\mu\in \mathfrak{X}_{n_{1,\p}}$. Thus, in these cases $I_{\p}(\chi_{\p}) = 0$. So we can assume $1=n_{\p} = n_{1,\p} = a(\chi_1)$. Using \cite[(6),(7)]{Sa15_2} we have the identity
\begin{eqnarray}
	I_{\p}(\chi_{\p}) = c_1 \vol(1+\varpi_{\p}\op_{\p}, \mu) \bigg( && \sum_{\substack{\mu\in \mathfrak{X}_1, \\ \mu\neq \omega_{\pi_{\p}'}^{-1}} } \overline{c_{-2,1}(\mu)}\zeta_{F_{\p}}(1) q_{\p}^{-\frac{1}{2}}\epsilon(\frac{1}{2},\omega_{\pi_{\p}'}^{-1}\mu^{-1})\omega_{\pi_{\p}'}(-1)\mu(-1)  \nonumber\\
	&& +\sum_{m\geq 1} \chi_1(\varpi_{\p}^m)q_{\p}^{-\frac{m}{2}} \overline{c_{-2+m,1}(\omega_{\pi_{\p}'}^{-1})}  \nonumber\\
	&& - \zeta_{F_{\p}}(1) q_{\p}^{-1}\overline{c_{-2,1}(\omega_{\pi_{\p}'}^{-1})}  \bigg). \nonumber 
\end{eqnarray}
Inserting the expressions for $c_{t,1}(\cdot)$ given in \cite[Lemma~2.3]{As17_1} yields
\begin{eqnarray}
	I_{\p}(\chi_{\p}) &=& c_1 \vol(1+\varpi_{\p}\op_{\p}, d\mu) \omega_{\pi_{\p}'}(-1) \left( \sum_{\mu\neq \omega_{\pi_{\p}'}^{-1}} \zeta_{F_{\p}}(1)^2q_{\p}^{-1} + \sum_{m\geq 1}q_{\p}^{-m}+\zeta_{F_{\p}}(1)^2 q_{\p}^{-2} \right) \nonumber \\
	&=& c_1 \vol(1+\varpi_{\p}\op_{\p}, d\mu) \omega_{\pi_{\p}'}(-1) \left( \zeta_{F_{\p}}(1)^2q_{\p}^{-1}(q_{\p}-2)+\zeta_{F_{\p}}(1)q_{\p}^{-1}+\zeta_{F_{\p}}(1)^2q_{\p}^{-2} \right). \nonumber
\end{eqnarray}
Now we observe $\omega_{\pi_{\p}'}(-1) = \chi_{\p}(-1)^{2} = 1$ and deduce $I_{\p}(\chi_{\p}) \geq 0$.

This leaves us with the last case $\pi_{\p}' = \chi St$ for unramified $\chi$. Note that in this case $\omega_{\pi} = \chi_{\p}^2 = 1$ since we assumed $\omega_{\pi_{\p}'}(\varpi)=1$. Thus we are dealing with $\pi_{\p} = St$ and we have $a(\pi_{\p}') = n_{\p} = n_{1,\p} = 1$. We obtain
\begin{equation}
	I_{\p}(\chi_{\p}) = c_1 \sum_{m\geq 0} q_{\p}^{-m} \sum_{\mu\in \mathfrak{X}_1}\overline{c_{m-2,1}(\mu)}\int_{1+\varpi_{\p}\op_{\p}}G(-\varpi_{\p}^{m-1}x,\mu)d\mu(x) . \nonumber
\end{equation}
Evaluating the Gauss sum reveals
\begin{eqnarray}
	I_{\p}(\chi_{\p}) = c_1 \vol(1+\varpi_{\p}\op_{\p}) \bigg( && \sum_{a(\mu) = 1} \zeta_{F_{\p}}(1) q_{\p}^{-\frac{1}{2}}\epsilon(\frac{1}{2},\mu^{-1}) \mu(-1) \overline{c_{-2,1}(\mu)} \nonumber\\
	&& +\sum_{m\geq 1} q_{\p}^{-m} \overline{c_{m-2,1}(1)} \nonumber \\
	&& -\zeta_{F_{\p}}(1)q_{\p}^{-1}\overline{c_{-2,1}(1)}\bigg).\nonumber
\end{eqnarray}
Using the evaluations oc $c_{t.l}(\cdot)$ given in \cite[Lemma~2.1]{As17_1} this simplifies to
\begin{eqnarray}
	I_{\p}(\chi_{\p}) &=& c_1\vol(1+\varpi_{\p}\op_{\p}) \left( \sum_{a(\mu)=1}\zeta_{F_{\p}}(1)^2q_{\p}^{-1} + \sum_{m\geq 1} q_{\p}^{-2m}+\zeta_{F_{\p}}(1)^2q_{\p}^{-2} \right) \nonumber  \\
	&=& c_1\vol(1+\varpi_{\p}\op_{\p}) \left( \zeta_{F_{\p}}(1)^2 q_{\p}^{-1}(q_{\p}-2) + q_{\p}^{-2}\zeta_{F_{\p}}(2)+ \zeta_{F_{\p}}(1)^2q_{\p}^{-2}\right)> 0 \nonumber
\end{eqnarray}
This was the last case to consider and concludes the proof.
\end{proof}

\bibliographystyle{plain}
\bibliography{bibliography} 

\end{document}